\numberwithin{equation}{section}
\theoremstyle{plain}
\newtheorem{lemma}{Lemma}[section]
\newtheorem{proposition}[lemma]{Proposition}
\newtheorem{proposition/definition}[lemma]{Proposition/Definition}
\newtheorem{theorem}[lemma]{Theorem}
\newtheorem{corollary}[lemma]{Corollary}
\theoremstyle{definition}
\newtheorem{definition}[lemma]{Definition}
\newtheorem{remark}[lemma]{Remark}
\newtheorem{example}[lemma]{Example}
\DeclareMathOperator{\id}{id}
\DeclareMathOperator{\im}{im}
\DeclareMathOperator{\Der}{Der}
\DeclareMathOperator{\VDer}{V\frakX}
\DeclareMathOperator{\Diff}{{\textit D}}
\DeclareMathOperator{\End}{End}
\DeclareMathOperator{\Aut}{Aut}
\DeclareMathOperator{\MC}{MC}
\DeclareMathOperator{\BFV}{BFV}
\DeclareMathOperator{\BRST}{BRST}
\DeclareMathOperator{\Ham}{Ham}
\DeclareMathOperator{\Jac}{Jac}
\DeclareMathOperator{\weight}{\mathsf{weight}_{\nabla}}
\newcommand{\J}{J}
\newcommand{\vder}{\textnormal{V}\frakX}
\newcommand{\Mod}{\ \text{mod}\ }
\newcommand{\bfA}{\mathbf{A}}
\newcommand{\bfB}{\mathbf{B}}
\newcommand{\bfC}{\mathbf{C}}
\newcommand{\calA}{\mathcal{A}}
\newcommand{\calC}{\mathcal{C}}
\newcommand{\calE}{\mathcal{E}}
\newcommand{\calF}{\mathcal{F}}
\newcommand{\calG}{\mathcal{G}}
\newcommand{\calK}{\mathcal{K}}
\newcommand{\calL}{\mathcal{L}}
\newcommand{\calT}{\mathcal{T}}
\newcommand{\scrL}{\mathscr{L}}
\newcommand{\scrM}{\mathscr{M}}
\newcommand{\scrP}{\mathscr{P}}
\newcommand{\scrQ}{\mathscr{Q}}
\newcommand{\bbD}{\mathbb{D}}
\newcommand{\bbN}{\mathbb{N}}
\newcommand{\bbR}{\mathbb{R}}
\newcommand{\bbT}{\mathbb{T}}
\newcommand{\bbZ}{\mathbb{Z}}
\newcommand{\frakX}{\mathfrak{X}}
\newcommand{\frakg}{\mathfrak{g}}
\newcommand{\frakm}{\mathfrak{m}}
\renewcommand{\phi}{\varphi}
\renewcommand{\theta}{\vartheta}
\renewcommand{\tilde}[1]{\widetilde{#1}}
\renewcommand{\hat}[1]{\widehat{#1}}
\renewcommand{\bar}[1]{\overline{#1}}
\newcommand{\ldsb}{[\![}
\newcommand{\rdsb}{]\!]}
\title{Jacobi bundles and the BFV-complex}
\author{H\^ong V\^an L\^e}
\address{Institute of Mathematics of ASCR, Zitna 25, 11567 Praha 1, Czech Republic.}
\email{\href{mailto:hvle@math.cas.cz}{hvle@math.cas.cz}}
\author{Alfonso G.~Tortorella}
\address{Dipartimento di Matematica e Informatica ``Ulisse~Dini'', Universit\`a degli Studi di Firenze, Viale Morgagni 67/a,
	50134 Firenze, Italy.}
\email{\href{mailto:alfonso.tortorella@math.unifi.it}{alfonso.tortorella@math.unifi.it}}
\author{Luca Vitagliano}
\address{DipMat, Universit\`a degli Studi di Salerno \& Istituto Nazionale di Fisica Nucleare, GC Salerno, via Giovanni Paolo II n${}^\circ$ 123, 84084 Fisciano (SA) Italy.}
\email{\href{mailto:lvitagliano@unisa.it}{lvitagliano@unisa.it}}
\keywords{Jacobi manifolds, Jacobi bundles, coisotropic submanifolds, smooth deformations, moduli of coisotropic submanifolds, Hamiltonian equivalence, Jacobi equivalence, differential graded Lie algebras, BFV-BRST formalism}
\subjclass[2010]{53D35, 53D17, 16E45}
\begin{document}
	
\begin{abstract}
	We extend the construction of the BFV-complex of a coisotropic submanifold from the Poisson setting to the Jacobi setting.
	In particular, our construction applies in the contact and l.c.s.~settings.
	The BFV-complex of a coisotropic submanifold $S$ controls the coisotropic deformation problem of $S$ under both Hamiltonian and Jacobi equivalence.
\end{abstract}

\maketitle

\tableofcontents

\section{Introduction}


Jacobi structures were introduced by Kirillov~\cite{kirillov1976local}, and independently by Lichnerowicz~\cite{Lich1978}.
They generalize and unify contact structures, locally conformal symplectic (l.c.s.) structures, and Poisson structures.
Note that Kirillov's local Lie algebras with one-dimensional fibers, also called Jacobi bundles~\cite{marle1991jacobi}, are slightly more general than Lichnerowicz's Jacobi manifolds.


Coisotropic submanifolds play a significant r\^ole in Jacobi geometry as already in contact and Poisson geometry.
For instance, coisotropic submanifolds naturally appear as the zero level sets of equivariant moment maps of an Hamiltonian Lie group action on a Jacobi bundle.
Moreover, coisotropic submanifolds come equipped with a Lie algebroid whose characteristic foliation allows to perform the (singular) Jacobi reduction.

Recall that a Jacobi bundle is a line bundle $L\to M$ equipped with a Jacobi bracket $J=\{-,-\}$ on its sections (see Definition \ref{definition:abstractj}).
If $L \to M$ is a Jacobi bundle, there is an $L_\infty$-algebra attached to any coisotropic submanifold $S$ in $M$ \cite{LOTV}.
This construction unifies and generalizes analogous constructions in~\cite{oh2005deformations} (symplectic case),~\cite{cattaneo2007relative} (Poisson case) and~\cite{le2012deformations} (l.c.s.~case), and also applies to the case of a coisotropic submanifold in a contact manifold.
The $L_\infty$-algebra controls the formal coisotropic deformation problem of $S$, but fails to convey any information about the non-formal coisotropic deformations of $S$, unless $J$ satisfies a certain entireness condition~\cite{LOTV}.
The main aim of this paper is to equip $S$ with an algebraic invariant well-suited to control (both the formal and) the non-formal coisotropic deformation problem.

On another side, the BRST formalism was originally introduced by Becchi, Rouet, Stora and Tyutin~\cite{Becchi1976,Tyutin1975} as a method to deal, both on the classical and the quantum level, with physical systems possessing gauge symmetries or Dirac (first class) constraints.
The Hamiltonian counterpart of this formalism was developed by Batalin, Fradkin and Vilkovisky~\cite{Batalin1983,Batalin1977}.
It was soon realized that, for systems with finitely many degrees of freedom, the BFV-method is intimately related to symplectic and Poisson reduction~\cite{kostant1987symplectic}.
The construction of the underlying BFV-complex was recast in the context of homological perturbation theory by Henneaux and Stasheff~\cite{Henneaux1992,stasheff1988constrained,Stasheff1997}.
More recently, simplified versions (without ``ghosts of ghosts'') of the BFV-complex of a coisotropic submanifold have been constructed in~\cite{Bordemann2000} (symplectic case) and~\cite{herbig2007,schatz2009bfv,schatz2009coisotropic} (Poisson case).

Our main result is the extension of the construction of the BFV-complex of a coisotropic submanifold from the Poisson setting to the far more general Jacobi setting.
The resulting BFV-complex, seen as a graded Jacobi bundle equipped with a homological Hamiltonian derivation, provides a homological resolution of the (non-graded) Gerstenhaber-Jacobi algebra obtained from $S$ by singular Jacobi reduction.
Our results are inspired by and encompass as special cases those of Herbig and Sch\"atz.
However, we stress that we do not follow Sch\"atz in all our proofs.
In fact, we fully rely on the Homological Perturbation Lemma and the ``step-by-step obstruction'' methods of homological perturbation theory rather than on homotopy transfer.
As a new case, our BFV-complex applies also to coisotropic submanifolds in contact and l.c.s.~manifolds.

Similarly as in the Poisson case, the $L_\infty$-algebra of $S$ can be reconstructed starting from the BFV-complex by homotopy transfer along suitable contraction data.
As a consequence, the BFV-complex and the $L_\infty$-algebra are $L_\infty$-quasi-isomorphic, and control equally well the formal coisotropic deformation problem of $S$.

The BFV-complex also controls the non-formal coisotropic deformation problem of $S$ and encodes the moduli spaces of coisotropic submanifolds (at least around $S$) under both Hamiltonian and Jacobi equivalence.
We follow Sch\"atz and single out a special class of ``geometric'' Maurer-Cartan (MC) elements of the BFV-complex.
In this way we are able to establish a 1--1 correspondence between coisotropic deformations of $S$ and geometric MC elements, modulo a certain equivalence.
Additionally, such 1--1 correspondence intertwines Hamiltonian/Jacobi equivalence of coisotropic deformations and Hamiltonian/Jacobi equivalence of geometric MC elements.

In the symplectic case, an interesting example of an obstructed coisotropic submanifold has been first considered by Zambon in~\cite{zambon2008example}.
Later Zambon's example has been reconsidered by Sch\"atz~\cite{schatz2009bfv} in terms of the associated BFV-complex.
Our construction is able to deal with a larger class of examples, indeed it applies, in particular, to the contact and the l.c.s.~case where Sch\"atz construction does not apply.
We illustrate this via an Example of an obstructed coisotropic submanifold in a contact manifold first presented in~\cite{tortorella2016rigidity} (see Section~\ref{sec:example}).
Actually this example can be reconsidered in terms of the associated BFV-complex.

The paper is organized as follows.
Section~\ref{sec:abstract_jac_mfd} collects known facts about algebraic and geometric structures attached to Jacobi manifolds and their coisotropic submanifolds which are used in what follows.
In Section~\ref{sec:lifting_Jacobi_structures}, by analogy with~\cite{Rothstein1991}, we establish existence and uniqueness results for suitable liftings of a Jacobi structure to a graded Jacobi structure.
In Section~\ref{sec:BRST-charges}, we introduce the BFV-bracket, and establish existence and uniqueness results for the corresponding BRST charges.
In Section~\ref{sec:BFV-complex}, using results from Section~\ref{sec:BRST-charges}, we equip any coisotropic submanifold with a BFV-complex.
The BFV-complex is canonical up to isomorphisms.
We also construct an $L_\infty$-quasi-isomorphism from the $L_\infty$-algebra to the BFV-complex.
Section~\ref{sec:coisotropic_deformation_problem} builds upon results from Section~\ref{sec:BRST-charges}, and describes how the BFV-complex controls the coisotropic deformation problem and encodes the moduli spaces under Hamiltonian and Jacobi equivalence.
As already mentioned, Section~\ref{sec:example} describes a non-trivial example of coisotropic submanifold in a contact manifold and shows how the associated BFV-complex controls its non-formal coisotropic deformation problem.

Finally, the paper contains three appendices.
The first one collects some necessary facts about (graded symmetric) multi-derivations and graded Jacobi structures on graded line bundles.
The second one provides a self-contained version of the step-by-step obstruction method well-suited for the aims of this paper. 
The third one contains two technical results which are needed in the main body of the paper but, if included there, would have delayed the full development of the principal ideas.


\section{Jacobi bundles and coisotropic submanifolds}
\label{sec:abstract_jac_mfd}

In this section we briefly recall the main notions and results concerning Jacobi bundles and coisotropic submanifolds which are important in what follows.
The interested reader can find a more comprehensive introduction to the subject in~\cite{LOTV,tortorella2017thesis} and references therein.
However note that some changes in terminology have been introduced in the present paper, and~\cite{tortorella2017thesis}, with respect to a previous version of~\cite{LOTV}.
Specifically, according to a terminology due to Marle~\cite{marle1991jacobi}, in the present paper we use the terms Jacobi bundles and Jacobi manifolds to refer to what were respectively called abstract Jacobi structures and abstract Jacobi manifolds in the first version of~\cite{LOTV}.

After introducing Jacobi bundles, and presenting important examples, we discuss the existence of a Jacobi algebroid structure on the first jet bundle $J^1 L$ of a Jacobi bundle $(L \to M,\{-,-\})$ (Proposition~\ref{prop:jacb2}), first discovered by Kerbrat and Souici-Benhammadi in the special case $L = M \times \bbR\to M$ \cite{kerbrat1993} (see \cite{crainic2013jacobi} for the general case).
Recall that a Jacobi algebroid is a Lie algebroid together with an action on a line bundle (see also below).
Going further we propose some equivalent characterizations of coisotropic submanifolds in a Jacobi manifold $(M, L, \{-,-\})$ (Definition \ref{def:cois}).
Finally, we establish a 1--1 correspondence between coisotropic submanifolds and certain Jacobi subalgebroids of the Jacobi algebroid $(J^1 L, L)$ (Proposition \ref{prop:conormal}).
In particular, this yields a way how to introduce the characteristic distribution of a coisotropic submanifold, which allows to perform singular Jacobi reduction.

\subsection{Jacobi bundles and their associated Jacobi algebroids}
\label{subsec:a_jac_mfd}

Let $M$ be a smooth manifold.

\begin{definition}
	\label{definition:abstractj}
	A \emph{Jacobi structure}, or a \emph{Jacobi bracket}, on a line bundle $L\to M$ is a Lie bracket $\{-,-\} :\Gamma(L)\times\Gamma (L)\rightarrow \Gamma(L)$, which is a first order differential operator, hence a derivation, in both entries.
	A \emph{Jacobi bundle} (over $M$) is a line bundle (over $M$) equipped with a Jacobi bracket.
	A \emph{Jacobi manifold} is a manifold equipped with a Jacobi bundle over it.
\end{definition}

\begin{remark}
	\label{rem:GJ_algebras_of_Jacobi_manifolds}
	A Jacobi structure on a line bundle $L\to M$ is exactly the same thing as a structure of Gerstenhaber-Jacobi algebra (concentrated in degree $0$) on $(C^\infty(M),\Gamma(L))$ (see Definition~\ref{def:GJ_algebra}).
\end{remark}

We want to emphasize that \emph{skew-symmetric} multi-derivations from $L$ to $L$ are in 1--1 correspondence with \emph{graded symmetric} multi-derivations from $L[1]$ to $L[1]$ via d\'ecalage isomorphism.
Further, as recalled in Proposition~\ref{prop:Jacobi_bi-do}, within this correspondence, Jacobi structures on $L\to M$  can be equivalently seen as \emph{Jacobi bi-derivations} on $L[1]$, i.e.~degree 1 graded symmetric bi-derivations $J$ from $L[1]$ to $L[1]$, such that $\ldsb J,J\rdsb=0$, where $\ldsb -,- \rdsb$ is the Schouten-Jacobi bracket (cf.~Proposition~\ref{prop:GJalgebra} for the construction of $\ldsb-,-\rdsb$).
Basic facts, including conventions and notation, about graded symmetric multi-derivations and, in particular, (graded) Jacobi structures on (graded) line bundles are collected in Appendix~\ref{app:graded_multi-do}.
In the following, we will freely use those conventions and notation, often without further comments.
Since this is not really standard material, we suggest the reader to go through Appendix~\ref{app:graded_multi-do} before proceeding.

\begin{example}
	\label{ex:1}
	Jacobi structures encompass several well-known geometric structures.
	Here are some first examples.
	\begin{enumerate}
		\item For any (non-necessarily coorientable) contact manifold $(M,C)$, the (non-necessarily trivial) line bundle $L:=TM/C$ is naturally equipped with a Jacobi structure (see~\cite[Section~5]{LOTV} and~\cite[Section 2.5.1]{tortorella2017thesis}).
		\item Any locally conformal symplectic (l.c.s.) manifold is naturally equipped with a Jacobi bundle (see~\cite[Appendix A]{vitagliano2014vector} and~\cite[Section 2.5.2]{tortorella2017thesis}).
	\end{enumerate}
\end{example}

\begin{example}
	\label{ex:2}
	We recover Lichnerowicz's notion of Jacobi pair on a manifold $M$ (see~\cite{Lich1978}) as the special case of Definition~\ref{definition:abstractj} when $L$ is the trivial line bundle $\bbR_M:=M\times\bbR\to M$.
	Recall that a \emph{Jacobi pair} $(\Lambda,\Gamma)$ on $M$ consists of a bivector $\Lambda$ and a vector field $\Gamma$ on $M$ such that
	\begin{equation*}
	\ldsb\Gamma,\Lambda\rdsb=0,\qquad \ldsb\Lambda,\Lambda\rdsb+2\Gamma\wedge\Lambda=0,
	\end{equation*} 
	with $\ldsb-,-\rdsb$ denoting here the Schouten-Nijenhuis bracket on multivector fields.
	So, in particular, if $\Gamma=0$ we recover the \emph{Poisson bivectors} $\Lambda$ on $M$.
	Jacobi pairs $(\Lambda,\Gamma)$ on $M$ identify with Jacobi structures $\{-,-\}$ on $\bbR_M\to M$ by means of the relation
	\begin{equation*}
	\{f,g\}=\Lambda(df,dg)+f\Gamma(g)-g\Gamma(f),
	\end{equation*}
	for all $f,g\in C^\infty(M)$.
	In particular, Poisson bivectors $\Lambda$ on $M$ identify with those Jacobi structures $\{-,-\}$ on $\bbR_M\to M$ such that $\{1,-\}=0$, by means of $\Lambda(df,dg)=\{f,g\}$.
\end{example}

Let $(M,L,\{-,-\})$ be a Jacobi manifold and $\lambda\in\Gamma(L)$.
The associated Hamiltonian derivation $\Delta_\lambda\in\Diff L$ is the first order differential operator, hence derivation of $L$, defined by $\Delta_\lambda:=\{\lambda,-\}$.
The associated Hamiltonian vector field $X_\lambda\in\frakX(M)$ is given by the symbol of $\Delta_\lambda$.
Here $DL$ is the space of derivations of $L$ (see Appendix~\ref{app:graded_multi-do} for details).

The following proposition (cf.~\cite[Theorem 1]{kerbrat1993}, \cite[(2.7)]{iglesias2001generalized}, \cite[Theorem 13]{GM2001}) shows how each Jacobi bundle determines a Jacobi algebroid.
Jacobi algebroids were first introduced in~\cite{GM2001} and~\cite{iglesias2001generalized} under the name generalized Lie algebroids.
A more general definition, adapted to the realm of non-necessarily trivial line bundles, is presented in~\cite[Definition~2.6]{LOTV} under the name abstract Jacobi algebroid.
We adopt the latter definition but use the simpler terminology Jacobi algebroid.
A \emph{Jacobi algebroid} is a pair $(A,L)$ where $A\to M$ is a Lie algebroid and $L\to M$ is a line bundle carrying a representation $\nabla$ of $A$.
A \emph{Jacobi subalgebroid} of $(A,L)$ is a pair $(B,\ell)$ where $B\to N$ is a Lie subalgebroid of $A$, and $\ell=L|_N\to N$.
Notice that, since $B\to N$ is a Lie subalgebroid of $A\to M$, $\nabla_b$ is tangent to $\ell=L|_N\to N$ for all $b\in\Gamma(B)$, and so $\nabla$ restricts to a representation of $B$ in $\ell$.
As a consequence a Jacobi subalgebroid $(B,\ell)$ inherits a natural structure of Jacobi algebroid from $(A,L)$.

\begin{proposition}
	\label{prop:jacb2}
	Let $(L,J \equiv \{-,-\})$ be a Jacobi bundle over $M$.
	Then there is a canonical Jacobi algebroid structure on $(J^1 L,L)$ whose anchor map $\rho_{\J}$, Lie bracket $[-,-]_{\J}$, and representation $\nabla^{\J}$ are uniquely determined by setting	\begin{equation*}
	\rho_{\J} (j^1 \lambda)=X_\lambda,\qquad
	[j^1 \lambda , j^1 \mu]_{\J}=j^1 \{\lambda,\mu\},\qquad
	\nabla^{\J}_{j^1 \lambda}\mu=\{\lambda,\mu\},
	\end{equation*}
	for all $\lambda, \mu \in \Gamma (L)$.
\end{proposition}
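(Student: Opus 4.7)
The plan is to build the Jacobi algebroid structure directly from the Jacobi bi-derivation associated with $\{-,-\}$, define the structure maps on holonomic sections, and propagate them by the Leibniz rule. Since $\{-,-\}$ is a first-order bi-differential operator, the Hamiltonian derivation $\Delta_\lambda=\{\lambda,-\}\in\Diff L$ depends only on the $1$-jet of $\lambda$; hence $j^1\lambda\mapsto\Delta_\lambda$ extends uniquely to a $C^\infty(M)$-linear bundle map $J^\sharp\colon J^1 L\to\Diff L$. I would first define
\begin{equation*}
\rho_{\J}:=\sigma\circ J^\sharp,\qquad \nabla^{\J}_\alpha\mu:=J^\sharp(\alpha)(\mu),
\end{equation*}
with $\sigma\colon\Diff L\to TM$ the symbol map, so that $\rho_{\J}(j^1\lambda)=X_\lambda$ and $\nabla^{\J}_{j^1\lambda}\mu=\{\lambda,\mu\}$ on holonomic sections, as required.

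Next I would construct the Lie bracket on $\Gamma(J^1L)$ by setting $[j^1\lambda,j^1\mu]_{\J}:=j^1\{\lambda,\mu\}$ on holonomic sections and extending via the Leibniz rule
\begin{equation*}
[\alpha,f\beta]_{\J}=f\,[\alpha,\beta]_{\J}+\rho_{\J}(\alpha)(f)\,\beta.
\end{equation*}
Since holonomic sections generate $\Gamma(J^1L)$ over $C^\infty(M)$, the Leibniz rule forces a unique extension, and the main technical step is well-definedness: two presentations of the same section as a $C^\infty(M)$-combination of holonomic sections must yield the same bracket. Using the defining relation $j^1(f\lambda)=f\,j^1\lambda+df\otimes\lambda$ in $J^1L$, this reduces to matching the two expansions of $[j^1(f\lambda),j^1\mu]_{\J}$, which in turn follows from the derivation property of $\{-,-\}$ in each argument together with the definitions of $\rho_{\J}$ and $j^1$.

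Finally the axioms of a Jacobi algebroid must be verified. That $\rho_{\J}$ is a bracket morphism reduces on holonomic sections to $X_{\{\lambda,\mu\}}=[X_\lambda,X_\mu]$, which follows from the Jacobi identity of $\{-,-\}$ by taking symbols; the flatness $[\nabla^{\J}_\alpha,\nabla^{\J}_\beta]=\nabla^{\J}_{[\alpha,\beta]_{\J}}$ reduces likewise. The hardest axiom is the Jacobi identity of $[-,-]_{\J}$: reducing to holonomic triples via the Leibniz rules yields $j^1\bigl(\{\lambda_1,\{\lambda_2,\lambda_3\}\}+\mathrm{cyclic}\bigr)=0$, which is the Jacobi identity on $\Gamma(L)$. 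The main obstacle throughout is the careful bookkeeping of Leibniz terms arising from the $C^\infty(M)$-linear extensions; once well-definedness of $[-,-]_{\J}$ is settled, all remaining axioms propagate from holonomic sections by the same pattern.
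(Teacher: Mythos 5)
The paper does not actually prove Proposition~2.7: it defers to the cited references (Kerbrat--Souici-Benhammadi, Iglesias--Marrero, Grabowski--Marmo) and points to Section~2.2 of the companion paper for explicit global formulas for $\rho_{\J}$, $[-,-]_{\J}$ and $\nabla^{\J}$. Your self-contained construction is a correct alternative route, and essentially the standard one. The observation that $\lambda\mapsto\Delta_\lambda=\{\lambda,-\}$ is first order in $\lambda$, hence factors through a $C^\infty(M)$-linear bundle map $J^\sharp\colon J^1L\to \Diff L$, immediately produces $\rho_{\J}$ and $\nabla^{\J}$ with the stated values on holonomic sections; and since the defects of all the remaining axioms (anchor being a bracket morphism, flatness of $\nabla^{\J}$, Jacobi identity of $[-,-]_{\J}$) become $C^\infty(M)$-multilinear once the Leibniz rules are in place, checking them on holonomic sections --- where they reduce to the Jacobi identity of $\{-,-\}$ and to the fact that the symbol map $\Diff L\to TM$ is a Lie algebra morphism --- is indeed sufficient. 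What the explicit-formula approach of the references buys is that well-definedness is automatic; what your approach buys is that uniqueness of the structure maps is manifest.

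The one step you should not leave as an assertion is the well-definedness of the Leibniz extension of $[-,-]_{\J}$, since that is where all the content sits. Concretely: if $\sum_i f_i\, j^1\lambda_i=0$, then projecting to $L$ gives $\sum_i f_i\lambda_i=0$, and applying $j^1$ to this and subtracting gives $\sum_i df_i\otimes\lambda_i=0$ as well. The expression forced by the Leibniz rule, $\sum_i\bigl(f_i\, j^1\{\lambda_i,\mu\}-X_\mu(f_i)\, j^1\lambda_i\bigr)$, is then computed (using $\{f\lambda,\mu\}=f\{\lambda,\mu\}-X_\mu(f)\lambda$ and $j^1(f\lambda)=f\,j^1\lambda+\gamma(df\otimes\lambda)$) to equal $\gamma\bigl(\textstyle\sum_i\bigl(L_{X_\mu}(df_i)\otimes\lambda_i+df_i\otimes\Delta_\mu\lambda_i\bigr)\bigr)$, and this vanishes because $L_{X_\mu}\otimes\id+\id\otimes\Delta_\mu$ is a well-defined derivation of $T^\ast M\otimes L$ (here one uses that $\Delta_\mu$ is a derivation of $L$ with symbol $X_\mu$) applied to the zero section $\sum_i df_i\otimes\lambda_i=0$. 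This is precisely the ``derivation property of $\{-,-\}$ in each argument'' you invoke, but it deserves to be written out, as it is the crux of the construction.
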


\begin{remark}
	Proposition~\ref{prop:jacb2} extends to Jacobi bundles a similar well-known result for Poisson manifolds.
	More details about Jacobi algebroids and explicit formulas for the structure maps $\rho_{\J}$, $[-,-]_{\J}$ and $\nabla^{\J}$ of the Jacobi algebroid associated to a Jacobi bundle can be found in~\cite[Section~2.2]{LOTV}.
\end{remark}

\begin{remark}
	\label{rem:jmap}
	Let $(M,L,\{-,-\})$ be a Jacobi manifold.
	The Hamiltonian vector fields generate a singular distribution $\calK \subset TM$, called \emph{the characteristic distribution} of $(M,L,\{-,-\})$.
	The Jacobi manifold is said to be \emph{regular}/\emph{transitive} when its characteristic distribution $\calK$ has constant/full rank.
	Since $\calK$ coincides with the image of $\rho_{\J}$, the anchor of the Jacobi algebroid of $(M,L,\{-,-\})$, the characteristic distribution $\calK$ is integrable in the sense of Stefan and Sussmann.
	Consequently, $\calK$ defines a singular foliation $\calF$ of $M$, whose leaves are called \emph{characteristic leaves} of the Jacobi manifold.
	For each characteristic leaf $\calC$, the ambient Jacobi structure on $L$ determines a unique transitive Jacobi structure $\{-,-\}_{\calC}$ on $L|_{\calC}$, such that the inclusion $L|_\calC \hookrightarrow L$ is a Jacobi map, or equivalently such that
	\begin{equation*}
	\{ \lambda |_{\calC}, \mu|_{\calC}\}_\calC= \{ \lambda, \mu \}|_{\calC},
	\end{equation*}
	for all $\lambda,\mu\in\Gamma(L)$.
	Moreover, a transitive Jacobi manifold $(M,L,\{-,-\})$ is either an abstract l.c.s.~manifold in the sense of \cite{vitagliano2014vector} (if $\dim M$ is even) or a contact manifold (if $\dim M$ is odd)~\cite{kirillov1976local}.
	More details about transitive Jacobi manifolds and the characteristic foliation of Jacobi manifolds can be found in~\cite[Sec.~2.5 and 2.6]{tortorella2017thesis}. 
\end{remark}

\begin{remark}
	\label{rem:d_J_ungraded}
	In view of Definition~\ref{def:Jacobi_bi-do} and Proposition~\ref{prop:Jacobi_bi-do}, a Jacobi structure $\J \equiv \{-,-\}$ on $L\to M$ determines differential graded Lie algebra $(\Diff^\star(L[1]),d_\J,\ldsb-,-\rdsb)$, with $d_\J:=\ldsb\J,-\rdsb$.
	Here $\ldsb-,-\rdsb$ denotes the Schouten-Jacobi bracket (cf.~Proposition~\ref{prop:GJalgebra} and Remark~\ref{rem:Gerstenhaber_product_and_SJ_bracket}).
	The cohomology of $(\Diff^\star(L[1]),d_\J,\ldsb-,-\rdsb)$ will be denoted by $H_{CE}(M,L,J)$ (cf.~Remark~\ref{rem:d_J}).
	Additionally, $d_{\J}$ coincides with  the de Rham differential $d_{J^1L,L}$
	of the Jacobi algebroid $(J^1L,L)$ associated with $(M,L,\J)$ (cf.~\cite[Remark~2.13]{LOTV}).
\end{remark}

\subsection{Coisotropic submanifolds and their subalgebroids}

There exists a notion of coisotropic submanifolds of Jacobi manifolds, unifying coisotropic submanifolds of Poisson, l.c.s., and contact manifolds.

Let $(M,L,\{-,-\})$ be a Jacobi manifold, and let $S \subset M$ be a submanifold.
Denote by $\Gamma_S$ the set of sections $\lambda\in\Gamma(L)$ such that $\lambda |_S = 0$.
\begin{definition}
	\label{def:cois}
	$S$ is said to be \emph{coisotropic} if the following equivalent conditions hold:
	\begin{enumerate}
		\item \label{2} $\Gamma_S$ is a Lie subalgebra in $\Gamma (L)$,
		\item \label{3} $X_\lambda$ is tangent to $S$, for all $\lambda \in \Gamma_S$.
	\end{enumerate}
\end{definition}

\begin{remark}
	Definition~\ref{def:cois} is well-posed~\cite[Lemma~3.1]{LOTV}.
	Analogous characterizations of coisotropic submanifolds of a Poisson manifold are well known~\cite[Section~2]{cattaneo2007relative} (see also~\cite[Lemma 13.3]{oh2005deformations} for the symplectic case).
	Moreover any coisotropic submanifold (in particular any Legendrian submanifold) in a contact manifold is coisotropic wrt~the associated Jacobi structure (see~\cite[Section~5.1]{LOTV} for more details).
\end{remark}

Denote by $NS:=TM|_S/TS$ the normal bundle of $S$ in $M$.
Then $N^\ast S:= (NS)^\ast$, the conormal bundle of $S$ in $M$, is canonically identified with $T^0 S\subset T^\ast M |_S$, the annihilator of $TS$ in $T^\ast M$.
Set $\ell :=L|_S\to S$, and $N_\ell S:= NS\otimes\ell^\ast$.
Moreover denote $N_\ell{}^\ast S:= (N_\ell S)^\ast = N^\ast S \otimes \ell$.
The vector bundle $N_\ell {}^\ast S$ will be also regarded as a vector subbundle of $(J^1 L)|_S$ via the vector bundle embedding
\begin{equation}
\label{eq:embedding_ell_adjoint}
N_\ell {}^\ast S \lhook\joinrel\longrightarrow (T^\ast M \otimes L)|_S \overset{\gamma}{\longrightarrow} J^1 L|_S,
\end{equation}
where the vector bundle map $\gamma:T^\ast M\otimes L\to J^1L$ is the \emph{co-symbol} map defined by $\gamma(df\otimes\lambda)=j^1(f\lambda)-fj^1\lambda$, for all $f\in C^\infty(M)$, and $\lambda\in\Gamma(L)$.
Here $J^1 L \to M$ is the first jet bundle of $L$.
Consequently, there exists a unique degree $0$ graded module epimorphism 
\begin{equation*}
P:\Diff^\star(L[1])\longrightarrow\Gamma(\wedge(N_\ell S)\otimes\ell)[1]=:\frakg(S)[1],
\end{equation*}
covering a degree $0$ graded algebra epimorphism $\underline{P}:\Diff^\star(L[1],\bbR_M)\to\Gamma(\wedge(N_\ell S))$, such that
\begin{itemize}
	\item $P$ acts on $\Gamma(L)$ by taking the restriction to $S$, and
	\item $P$ acts on $\Diff(L[1])$ by pulling-back along the composition~\eqref{eq:embedding_ell_adjoint}.
\end{itemize}
\begin{remark}
	\label{rem:derivation_along_morphism}
	Let us recall, for the reader convenience, that, if $M_i$ is a (graded) module over a (graded) algebra $A_i$, $i=0,1$, then a module morphism $\phi:M_0\to M_1$, covering an algebra morphism $\underline{\smash{\phi}}:A_0\to A_1$, is a linear map $\phi:M_0\to M_1$ such that $\phi(am)=\underline{\smash{\phi}}(a)\phi(m)$, for all $a\in A_0$, $m\in M_0$.
	For future use we remark here also what follows.
	Let $\phi:M_0\to M_1$ be a degree $0$ graded module morphism covering a degree $0$ algebra morphism $\underline{\smash{\phi}}:A_0\to A_1$.
	A \emph{degree $k$ graded derivation covering $\underline{\smash{\phi}}$} is a degree $k$ graded linear map $X:A_0\to A_1$ such that $X(aa')=X(a)\underline{\smash{\phi}}(a')+(-)^{k|a|}\underline{\smash{\phi}}(a)X(a')$ for all homogeneous $a,a'\in A_0$.
	Additionally, \emph{a degree $k$ graded derivation covering $\phi$}, with symbol $X$, is a degree $k$ graded linear map $\square:M_0\to M_1$ such that $\square(am)=X(a)\phi(m)+(-)^{k|a|}\underline{{\phi}}(a)\square(m)$, for all homogeneous $a\in A_0,m\in M_0$.
\end{remark}

Now we further assume that $S\subset M$ is a closed submanifold, and consider $(J^1L,L)$ equipped with the Jacobi algebroid structure determined by the Jacobi structure $\J$ on $L$ (cf.~Proposition~\ref{prop:jacb2}).
Then the following proposition (cf.~\cite[Proposition 5.2]{iglesias2003jacobi}) establishes a 1--1 correspondence between coisotropic submanifolds and certain Jacobi subalgebroids of $(J^1 L,L)$.

\begin{proposition}
	\label{prop:conormal}
	The following conditions are equivalent:
	\begin{enumerate}
		\item\label{enum:conormal:1} $S$ is a coisotropic submanifold,
		\item\label{enum:conormal:2} $(N_\ell {}^\ast S, \ell)$ is a Jacobi subalgebroid of $(J^1 L, L)$,
		\item\label{enum:conormal:3} there exists a (unique) homological derivation $d_{N_\ell{}^\ast S,\ell}$ of the graded module $\frakg(S)$, over the graded commutative algebra $\Gamma(\wedge (N_\ell S))$, such that
		\begin{equation*}
		d_{N_\ell{}^\ast S,\ell}\circ P=P\circ d_{J^1L,L}.
		\end{equation*}
	\end{enumerate}
	If conditions~\eqref{enum:conormal:1}-\eqref{enum:conormal:3} hold, $d_{N_\ell{}^\ast S,\ell}$ coincides with de Rham differential of the Jacobi algebroid $(N_\ell{}^\ast S,\ell)$.
\end{proposition}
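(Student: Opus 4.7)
The plan is to establish $(1)\Leftrightarrow(2)$ and $(2)\Leftrightarrow(3)$, deduce uniqueness of $d_{N_\ell{}^\ast S,\ell}$ from surjectivity of $P$, and then read off the final identification as a tautology.

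The starting observation for $(1)\Leftrightarrow(2)$ is that $\Gamma(N_\ell{}^\ast S)$ is locally generated as a $C^\infty(S)$-module by the jets $j^1\lambda|_S$ with $\lambda\in\Gamma_S$: under the embedding~\eqref{eq:embedding_ell_adjoint} any such section has the form $\gamma(df\otimes\mu)|_S=j^1(f\mu)|_S$ with $f\in C^\infty(M)$ vanishing on $S$ and $\mu\in\Gamma(L)$. Using the structure equations of Proposition~\ref{prop:jacb2} I then read off that the condition $\rho_{\J}(\Gamma(N_\ell{}^\ast S))\subset\Gamma(TS)$ amounts to $X_\lambda$ being tangent to $S$ for every $\lambda\in\Gamma_S$, and (given this) the closure of $\Gamma(N_\ell{}^\ast S)$ under $[-,-]_{\J}$ amounts, via $C^\infty(S)$-bilinearity and the Leibniz rule, to $\{\lambda,\mu\}\in\Gamma_S$ whenever $\lambda,\mu\in\Gamma_S$. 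These are exactly the two equivalent characterizations of coisotropicity in Definition~\ref{def:cois}. Since $\nabla^{\J}_{j^1\lambda}$ is a derivation of $L$ with symbol $X_\lambda$, the representation automatically restricts to a representation of $N_\ell{}^\ast S$ on $\ell$ as soon as the anchor is tangent to $S$; hence $(N_\ell{}^\ast S,\ell)$ is then a Jacobi subalgebroid.

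For $(2)\Leftrightarrow(3)$ I would invoke the standard correspondence between Jacobi algebroid structures on a pair $(A,L')$ and square-zero degree-$1$ derivations of $\Gamma(\wedge A^\ast\otimes L')$ (recalled in Remark~\ref{rem:d_J_ungraded} and treated in Appendix~\ref{app:graded_multi-do}). Under the identification $\frakg(S)=\Gamma(\wedge(N_\ell{}^\ast S)^\ast\otimes\ell)$, a Jacobi subalgebroid structure on $(N_\ell{}^\ast S,\ell)$ is the same datum as a homological degree-$1$ derivation $d_{N_\ell{}^\ast S,\ell}$ of $\frakg(S)$ covering a homological derivation of $\Gamma(\wedge(N_\ell S))$; and the inclusion $(N_\ell{}^\ast S,\ell)\hookrightarrow(J^1L,L)$ along $S\hookrightarrow M$ is a Jacobi algebroid morphism precisely when the induced pullback $P$ intertwines the two de Rham differentials. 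Since $P$ is a surjective degree-$0$ graded module epimorphism, existence of such $d_{N_\ell{}^\ast S,\ell}$ is equivalent to $d_{J^1L,L}$ preserving $\ker P$; when it exists it is unique and coincides, by construction, with the de Rham differential of the Jacobi algebroid $(N_\ell{}^\ast S,\ell)$, which gives the final sentence of the proposition for free.

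The main technical step is the direction $(3)\Rightarrow(2)$: one must check that the bare preservation of $\ker P$ by $d_{J^1L,L}$ is strong enough to force the Lie subalgebroid axioms. My plan is to analyse $\ker P$ in low degrees, noting that at degree zero it is generated by $\Gamma_S$ while at degree one it contains precisely those derivations of $L$ whose restriction to $S$ has symbol annihilating $N_\ell{}^\ast S$. Applying $d_{J^1L,L}$ to these generators and imposing that the image again lie in $\ker P$ recovers, respectively, the tangency condition $X_\lambda|_S\in\Gamma(TS)$ for $\lambda\in\Gamma_S$ and the bracket-closure condition $\{\Gamma_S,\Gamma_S\}\subset\Gamma_S$; all remaining degrees follow by the graded Leibniz rule since these low-degree elements generate $\Diff^\star(L[1])$ as a graded module. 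Once this analysis is complete the three conditions of the proposition are linked in a closed loop, and the statement follows.
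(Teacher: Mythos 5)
The paper itself contains no proof of Proposition~\ref{prop:conormal}: it records the statement with pointers to \cite[Proposition 5.2]{iglesias2003jacobi} and to \cite[Section~3]{LOTV} (see Remark~\ref{rem:defcois}), so there is no in-text argument to compare yours against line by line. Judged on its own terms, your proof is correct and follows the route one would expect. The key observations all check out: $\Gamma(N_\ell{}^\ast S)$ is indeed locally generated over $C^\infty(S)$ by the $j^1\lambda|_S$ with $\lambda\in\Gamma_S$, since $\gamma(df\otimes\mu)|_S=j^1(f\mu)|_S$ for $f\in I_S$, and a first jet $j^1\nu|_S$ lands in the image of the embedding~\eqref{eq:embedding_ell_adjoint} precisely when $\nu\in\Gamma_S$; feeding these generators into the structure maps of Proposition~\ref{prop:jacb2} converts the Lie-subalgebroid axioms into exactly the two characterizations of Definition~\ref{def:cois}, and the automatic restriction of the representation is consistent with the paper's definition of Jacobi subalgebroid. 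For $(2)\Leftrightarrow(3)$, the identification of $d_{J^1L,L}$ with $d_{\J}$ (Remark~\ref{rem:d_J_ungraded}), the fact that the inclusion of a Lie subalgebroid is a Lie algebroid morphism, and the surjectivity of $P$ and $\underline{P}$ give existence, uniqueness, and the final identification with the de Rham differential, as you say.

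Two points of hygiene, neither of which affects validity. First, your ``main technical step'' $(3)\Rightarrow(2)$ is heavier than necessary: condition (3) applied only to the arity-$0$ part of $\ker P$ already yields $P(d_{\J}\lambda)=d_{N_\ell{}^\ast S,\ell}(\lambda|_S)=0$ for all $\lambda\in\Gamma_S$, i.e.\ $X_\lambda$ tangent to $S$, which is condition (1); the loop $(1)\Rightarrow(2)\Rightarrow(3)\Rightarrow(1)$ then closes without any analysis of $\ker P$ in arity one (and without having to justify that the low-arity part of $\ker P$ generates it). Second, two phrasings should be tightened: the bracket $[-,-]_{\J}$ is of course not $C^\infty(S)$-bilinear --- the reduction to generators uses the Leibniz rule together with the already-established tangency of the anchor --- and the arity-one part of $\ker P$ consists of those derivations whose symbol along $S$ is tangent to $S$ (equivalently, annihilates $N^\ast S$); saying the symbol ``annihilates $N_\ell{}^\ast S$'' does not typecheck, since the symbol is a vector field and pairs with $N^\ast S\subset T^\ast M|_S$ rather than with $N^\ast S\otimes\ell$.
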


\begin{remark}
	\label{rem:defcois}
	Proposition~\ref{prop:conormal} extends to the Jacobi setting a similar  well-known result for coisotropic submanifolds of Poisson manifolds~\cite[Proposition 3.1.3]{Weinstein1988}, \cite[Proposition 5.1]{cattaneo2004integration}, \cite[Theorem 10.4.2]{mackenzie2005general}.
	More details about Jacobi subalgebroids and a streamlined proof of Proposition~\ref{prop:conormal} can be found in~\cite[Section~3]{LOTV}.
\end{remark}

\begin{remark}
	\label{rem:coisotropic_characteristic_foliation}
	Let $(M,L,\{-,-\})$ be a Jacobi manifold, and let $S\subset M$ be a closed coisotropic submanifold.
	In view of Definition~\ref{def:cois}, the vector fields $\left.X_\lambda\right|_S\in\frakX(S)$, with $\lambda\in\Gamma_S$, generate a singular tangent distribution $\calK_S \subset TS$, called \emph{the characteristic distribution} of $S$.
	Since $\calK_S$ coincides with the image of the anchor map of $(N_\ell{}^\ast S,\ell)$, the Jacobi algebroid associated with $S$, the characteristic distribution $\calK_S$ is integrable in the sense of Stefan and Sussmann.
	Hence $\calK_S$ defines a singular foliation $\calF_S$ of $S$, whose leaves are called \emph{characteristic leaves} of the coisotropic submanifold.
	Finally note that, in particular, if $S=M$, then $\calK_S$ reduces to $\calK$,  the characteristic distribution of $(M,L,\{-,-\})$ as defined in Remark~\ref{rem:jmap}.
\end{remark}

\subsection{Singular Jacobi reduction}
\label{sec:Jacobi_reduction}

Let $(M,L,\{-,-\})$ be a Jacobi manifold, and let $S\subset M$ be an arbitrary coisotropic submanifold.
Denote by $I_S\subset C^\infty(S)$ the ideal of functions vanishing on $S$ and by $\Gamma_S\subset\Gamma(L)$ the $C^\infty(M)$-submodule and Lie subalgebra of sections vanishing on $S$.
Define the associative subalgebra $N(I_S)\subset C^\infty(M)$, and the Lie subalgebra $N(\Gamma_S)\subset\Gamma(L)$ by setting
\begin{align*}
N(\Gamma_S)&:=\{\nu\in\Gamma(L)\colon\{\Gamma_S,\nu\}\subset\Gamma_S\},\\
N(I_S)&:=\{f\in C^\infty(M)\colon X_\lambda(f)\in I_S,\ \textnormal{for all}\ \lambda\in\Gamma_S\}.
\end{align*}
Clearly, $N(\Gamma_S)$ is the normalizer of $\Gamma_S$ in $\Gamma (L)$, and consists of those sections $\lambda\in\Gamma(L)$ such that $X_\lambda$ is tangent to $S$.
Moreover $N(I_S)$ consists of those functions $f\in C^\infty(M)$ which are constant along the leaves of $\calF_S$.
The pair $(C^\infty(M_{\textnormal{red}}),\Gamma(L_{\textnormal{red}})):=(N(I_S)/I_S,N(\Gamma_S)/\Gamma_S)$ admits an obvious structure of Gerstenhaber-Jacobi algebra (concentrated in degree $0$), that we call the \emph{reduced Gerstenhaber-Jacobi algebra of $S$}.
The latter is morally the Gerstenhaber-Jacobi algebra of the ``singular'' Jacobi manifold $(M_{\textnormal{red}},L_{\textnormal{red}},\{-,-\}_{\textnormal{red}})$ obtained by performing a singular reduction of $S$ wrt its characteristic foliation $\calF_S$.

If $S$ is closed, then there exists a canonical module isomorphism $\phi:\Gamma(L_{\textnormal{red}})\to H^0(N_\ell{}^\ast S,\ell)$, covering an algebra isomorphism $\underline{\smash\phi}:C^\infty(M_{\textnormal{red}})\to H^0(N_\ell{}^\ast S)$, defined by $\underline{\smash\phi}(f+I_S)=[f|_S]$, and $\phi(\lambda+\Gamma_S)=[\lambda|_S]$, for all $f\in N(I_S)$, and $\lambda\in N(\Gamma_S)$.

\section{Lifted graded Jacobi structures}
\label{sec:lifting_Jacobi_structures}

In this section we will describe a procedure to lift a given Jacobi structure $\J$ on an ordinary line bundle $L\to M$ to a graded Jacobi structure $\hat{\J}$ on a certain graded line bundle $\hat{L}\to\hat{M}$.
The interest in this procedure is at least two-fold.
First, it extends, exploiting simplified techniques, similar lifting procedures in~\cite{Rothstein1991} (symplectic case) and~\cite{herbig2007,schatz2009bfv} (Poisson case).
Second, as we will show in Sections~\ref{sec:BRST-charges} and~\ref{sec:BFV-complex}, the lifting of Jacobi structures represents the first step towards the construction of the BFV-complex of a coisotropic submanifold (see also~\cite{schatz2009bfv}).

The lifting procedure is based on the set of contraction data that Proposition~\ref{prop:1contraction_data_1} associates to a $\Diff L$-connection.
In fact Theorem~\ref{theor:existence_BFV_brackets} will outline how to use these contraction data to inductively construct $\hat{\J}$ by implementing the ``step-by-step obstruction'' method of homological perturbation theory.
The same method can be used to construct a BRST charge (cf.~Theorem~\ref{theor:existence_BRST-charge}), and goes back to Stasheff~\cite{Stasheff1997}.

Notice that there is an alternative approach to the lifting of Jacobi structures.
Namely, the scheme adopted by Sch\"atz~\cite{schatz2009bfv}, based on the same set of contraction data~\eqref{eq:1contraction_data_1}, extends from the Poisson to the Jacobi setting.
To do this we need to use two main ingredients:
\begin{itemize}
	\item homotopy transfer along the contraction data~\eqref{eq:1contraction_data_1} to lift the quasi-isomorphism of co-chain complexes $i_\nabla$ to a quasi-isomorphism of $L_\infty$-algebras $\hat{i_\nabla}$ (see Section~\ref{subsec:first_relevant_contraction_data} for more details, including a definition, about $i_\nabla$), 
	\item transfer of formal Maurer-Cartan (MC) elements via $L_\infty$-quasi-isomorphisms to transform a Jacobi bi-derivation $\J$ on $L[1]$ into a new Jacobi bi-derivation $\hat{\J}$ on $\hat{L}[1]$.
\end{itemize}
However, we have preferred the first approach to the second one, because, in our opinion, it is simpler and does not involve unnecessary sophisticated tools.

\subsection{The initial setting}
\label{subsec:initial_setting}

Let $E\to M$ be a vector bundle, and let $L\to M$ be a line bundle.
Define the vector bundle $E_L\to M$ by setting $E_L:=E\otimes L^\ast$.

Denote by $\hat{M}$ the graded manifold, with support $M$, represented by the graded vector bundle $\pi:(E_L)^\ast[1]\oplus E[-1]\to M$, and by $\hat{L}$ the graded line bundle over $\hat{M}$ given by $\hat{L}:=\pi^\ast L\to\hat{M}$.
This means that $C^\infty(\hat{M})$, the unital associative $\bbZ$-graded commutative $C^\infty(M)$-algebra of functions on $\hat{M}$, and $\Gamma(\hat{L})$, the graded $C^\infty(\hat{M})$-module of sections of $\hat{L}\to\hat{M}$, are given by
\begin{gather*}
C^\infty(\hat{M})=S_{{\scriptscriptstyle C^\infty(M)}}\Gamma(E_L[-1]\oplus E^\ast[1]),\qquad
\Gamma(\hat{L})=S_{{\scriptscriptstyle C^\infty(M)}}\Gamma(E_L[-1]\oplus E^\ast[1])\underset{{\scriptscriptstyle C^\infty(M)}}{\otimes}\Gamma(L),
\end{gather*}
where, as in what follows, $S_{R}M$ denotes the graded commutative algebra of a graded module $M$ over a (non graded) algebra $R$.
\begin{remark}
	The algebra $C^\infty(\hat{M})=\bigoplus_{n\in\bbZ}C^\infty(\hat{M})^n$ is graded commutative wrt the $\bbZ$-grading  provided by the \emph{total ghost number} $n$, where
	\begin{equation*}
	C^\infty(\hat{M})^n:=\bigoplus_{\genfrac{}{}{0pt}{}{(h,k)\in\bbN^2}{h-k=n}}C^\infty(\hat{M})^{(h,k)},\qquad\textnormal{with}\quad C^\infty(\hat{M})^{(h,k)}:=\Gamma((\wedge^h E_L)\otimes(\wedge^k E^\ast)).
	\end{equation*}
	Hence its multiplication is also compatible with the finer $\bbN^2$-grading provided by the \emph{ghost/anti-ghost bi-degree} $(h,k)$.
	Similarly, the $C^\infty(\hat{M})$-module structure on $\Gamma(\hat{L})=\bigoplus_{n\in\bbZ}\Gamma(\hat{L})^n$ is graded wrt the $\bbZ$-grading provided by the \emph{total ghost number} $n$:
	\begin{equation*}
	\Gamma(\hat{L})^n:=\bigoplus_{\genfrac{}{}{0pt}{}{(h,k)\in\bbN^2}{h-k=n}}\Gamma(\hat{L})^{(h,k)},\qquad\textnormal{with}\quad \Gamma(\hat{L})^{(h,k)}:=\Gamma((\wedge^h E_L)\otimes(\wedge^k E^\ast)\otimes L).
	\end{equation*}
	Hence its $C^\infty(\hat{M})$-module structure is also compatible wrt the finer $\bbN^2$-grading provided by the \emph{ghost/anti-ghost bi-degree} $(h,k)$.
\end{remark}

\begin{remark}
	\label{rem:local_frames1}
	Let $x^i$ be an arbitrary local coordinate system on $M$.
	Fix a local frame $\xi^A$ on $E\to M$, and denote by $\xi^\ast_A$ the dual local frame on $E^\ast\to M$.
	Fix also a local frame $\mu$ on $L\to M$, and denote by $\mu^\ast$ the dual local frame on $L^\ast\to M$.
	Then the $C^\infty(M)$-algebra $C^\infty(\hat{M})$ is locally generated by
	\begin{equation*}
	\underbrace{\xi^A\otimes\mu^\ast}_{(1,0)}\qquad \underbrace{\xi^\ast_B}_{(0,1)},
	\end{equation*}
	where the subscripts denote the bi-degrees.
	Moreover, for all $(h,k)\in\bbN^2$, a local generating system of the $C^\infty(M)$-module $C^\infty(\hat{M})^{(h,k)}$ is given by
	\begin{equation*}
	(\xi^{A_1}\otimes\mu^\ast)\cdots(\xi^{A_h}\otimes\mu^\ast)\xi^\ast_{B_1}\cdots\xi^\ast_{B_k},
	\end{equation*}
	and a local generating system of the $C^\infty(M)$-module $\Gamma(\hat{L})^{(h,k)}$ is given by
	\begin{equation*}
	(\xi^{A_1}\otimes\mu^\ast)\cdots(\xi^{A_h}\otimes\mu^\ast)\xi^\ast_{B_1}\cdots\xi^\ast_{B_k}\otimes\mu.
	\end{equation*}
	Here, and in what follows, the graded commutative product is denoted by juxtaposition.
	Hence an arbitrary $f\in C^\infty(\hat{M})$ and an arbitrary $\lambda\in\Gamma(\hat{L})$ admit the following local expression
	\begin{equation*}
	f=f^{\bfC}_{\bfB}(\xi^{\bfB}\otimes\mu^\ast)\xi_{\bfC}^\ast,\qquad\lambda=f^{\bfC}_{\bfB}(\xi^{\bfB}\otimes\mu^\ast)\xi_{\bfC}^\ast\otimes\mu,
	\end{equation*}
	where, for any ordered $n$-tuple $\bfB=(B_1,\ldots,B_n)$, we understand the following abbreviations: $\xi^{\bfB}\otimes\mu^\ast$ for $(\xi^{B_1}\otimes\mu^\ast)\cdots(\xi^{B_n}\otimes\mu^\ast)$, and $\xi_{\bfB}^\ast$ for $\xi_{B_1}^\ast\cdots\xi_{B_n}^\ast$.
	This notation, together with $|\bfB|=n$, will be adopted below without further comments.
\end{remark}

\begin{remark}
	\label{rem:compatible_gradings}
	A graded symmetric $n$-ary derivation $\Delta\in\Diff^n(\hat{L}[1],\bbR_{\hat{M}})$ is said to have (ghost/anti-ghost) bi-degree $(h,k)\in\bbZ^2$ if
	\begin{equation*}
	\Delta(\Gamma(\hat{L})^{(p_1,q_1)}\times\cdots\times\Gamma(\hat{L})^{(p_n,q_n)})\subseteq C^\infty(\hat{M})^{(h+\sum_ip_i,k+\sum_iq_i)}.
	\end{equation*}
	Denote by $\Diff^n(\hat{L}[1],\bbR_{\hat{M}})^{(h,k)}\subseteq\Diff^n(\hat{L}[1],\bbR_{\hat{M}})$ the $C^\infty(M)$-submodule of graded symmetric $n$-ary derivations of bi-degree $(h,k)$.
	The associative algebra $\Diff^\star(\hat{L}[1],\bbR_{\hat{M}})$ is graded commutative wrt the $\bbZ$-grading provided by the \emph{total ghost number} $K$ as follows
	\begin{equation*}
	\Diff^\star(\hat{L}[1],\bbR_{\hat{M}}):=\bigoplus_{K\in\bbZ}\Diff^\star(\hat{L}[1],\bbR_{\hat{M}})^K,\quad\textnormal{with}\ \Diff^\star(\hat{L}[1],\bbR_{\hat{M}})^K=\bigoplus_{K=n+h-k}\Diff^n(\hat{L}[1],\bbR_{\hat{M}})^{(h,k)}.
	\end{equation*}
	Moreover, the product is compatible with both the arity $n$, and the ghost/anti-ghost bi-degree $(h,k)$, i.e.~$\Diff^{n_1}(\hat{L}[1],\bbR_{\hat{M}})^{(h_1,k_1)}\cdot\Diff^{n_2}(\hat{L}[1],\bbR_{\hat{M}})^{(h_2,k_2)}\subseteq\Diff^{n_1+n_2}(\hat{L}[1],\bbR_{\hat{M}})^{(h_1+h_2,k_1+k_2)}$.
\end{remark}

\begin{remark}
	\label{rem:local_generating_system_for_do_from_calL_to_calA}
	A local generating system of the $C^\infty(\hat{M})$-algebra $\Diff^\star(\hat{L}[1],\bbR_{\hat{M}})$ is provided by
	\begin{equation}\label{gen_2}
	\underbrace{\mu^\ast}_{1},\quad\underbrace{\slashed\Delta_i}_{1},\quad\underbrace{\slashed\Delta_A}_{0},\quad\underbrace{\slashed\Delta^A}_{2},
	\end{equation}
	with $\mu^\ast\in\Diff^1(\hat{L}[1],\bbR_{\hat{M}})^{(0,0)}$, $\slashed\Delta_i\in\Diff^1(\hat{L}[1],\bbR_{\hat{M}})^{(0,0)}$, $\slashed\Delta_A\in\Diff^1(\hat{L}[1],\bbR_{\hat{M}})^{(-1,0)}$, and $\slashed\Delta^A\in\Diff^1(\hat{L}[1],\bbR_{\hat{M}})^{(0,-1)}$ defined by:
	\begin{align*}
	\mu^\ast(f\mu)&=f,&\mu^\ast(\xi^B)={}&\xi^B\otimes\mu^\ast,&\mu^\ast(\xi_B^\ast\otimes\mu)={}&\xi_B^\ast,\\
	\slashed\Delta_i(f\mu)&=\frac{\partial f}{\partial x^i},&\slashed\Delta_i(\xi^B)={}&0,&\slashed\Delta_i(\xi_B^\ast\otimes\mu)={}&0,\\
	\slashed\Delta_A(f\mu)&=0,&\slashed\Delta_A(\xi^B)={}&\delta_A^B,&\slashed\Delta_A(\xi_B^\ast\otimes\mu)={}&0,\\
	\slashed\Delta^A(f\mu)&=0,&\slashed\Delta^A(\xi^B)={}&0,&\slashed\Delta^A(\xi_B^\ast\otimes\mu)={}&\delta^A_B.
	\end{align*}
	In (\ref{gen_2}) the subscripts denote the total ghost number.
	Hence an arbitrary $\slashed\Delta\in\Diff^1(\hat{L}[1],\bbR_{\hat{M}})$ will be locally given by:
	\begin{equation*}
	\slashed\Delta=f^{\bfC}_{\bfB}(\xi^{\bfB}\!\otimes\!\mu^\ast)\xi_{\bfC}^\ast\mu^\ast+f^{i\bfC}_{\phantom{i}\bfB}(\xi^{\bfB}\!\otimes\!\mu^\ast)\xi_{\bfC}^\ast\slashed\Delta_i+f^{A\bfC}_{\phantom{A}\bfB}(\xi^{\bfB}\!\otimes\!\mu^\ast)\xi_{\bfC}^\ast\slashed\Delta_A+f^{\phantom{A}\bfC}_{A\bfB}(\xi^{\bfB}\!\otimes\!\mu^\ast)\xi_{\bfC}^\ast\slashed\Delta^A.
	\end{equation*}
\end{remark}

\begin{remark}
	A graded symmetric $n$-ary derivation $\square\in\Diff^n(\hat{L}[1])$ is said to have (ghost/anti-ghost) bi-degree $(h,k)$ if
	\begin{equation*}
	\square(\Gamma(\hat{L})^{(p_1,q_1)}\times\cdots\times\Gamma(\hat{L})^{(p_n,q_n)})\subseteq\Gamma(\hat{L})^{(h+\sum_ip_i,k+\sum_iq_i)}.
	\end{equation*}
	Denote by $\Diff^n(\hat{L}[1])^{(h,k)}\subseteq\Diff^n(\hat{L}[1])$ the $C^\infty(M)$-submodule of graded symmetric $n$-ary derivations of bi-degree $(h,k)$.
	Similarly as in Remark~\ref{rem:compatible_gradings}, all the algebraic structures on $\Diff^\star(\hat{L}[1])$ are compatible with both the arity $n$, the ghost/anti-ghost bi-degree $(h,k)$, and the total ghost number $K$
	\begin{equation*}
	\Diff^\star(\hat{L}[1])=\bigoplus_{K\in\bbZ}\Diff^\star(\hat{L}[1])^K,\quad\textnormal{with}\ \Diff^\star(\hat{L}[1])^K=\!\!\!\bigoplus_{K=n-1+h-k}\!\!\!\Diff^n(\hat{L}[1])^{(h,k)}.
	\end{equation*}
	In the following, for every $(h,k)\in\bbZ^2$, we denote by $\operatorname{pr}^{(h,k)}:\Diff^\star(\hat{L}[1]) \to\Diff^\star(\hat{L}[1])^{(h,k)}$ the projection onto the homogeneous component of bi-degree $(h,k)$.
\end{remark}

\begin{remark}
	\label{rem:local_generating_system_for_do_from_calL_to_calL}
	A local generating system for the $C^\infty(\hat{M})$-module $\Diff(\hat{L}[1])$ is provided by
	\begin{equation*}
	\underbrace{\id}_{0},\quad\underbrace{\Delta_i}_{0},\quad\underbrace{\Delta_A}_{-1},\quad\underbrace{\Delta^A}_{1},
	\end{equation*}
	with $\id\in\Diff(\hat{L}[1])^{(0,0)}$, $\Delta_i\in\Diff(\hat{L}[1])^{(0,0)}$, $\Delta_A\in\Diff(\hat{L}[1])^{(-1,0)}$, and $\Delta^A\in\Diff(\hat{L}[1])^{(0,-1)}$ defined by:
	\begin{align*}
	\id(f\mu)&=f\mu,&\id(\xi^B)={}&\xi^B,&\id(\xi_B^\ast\otimes\mu)={}&\xi_B^\ast\otimes\mu,\\
	\Delta_i(f\mu)&=\frac{\partial f}{\partial x^i}\mu,&\Delta_i(\xi^B)={}&0,&\Delta_i(\xi_B^\ast\otimes\mu)={}&0,\\
	\Delta_A(f\mu)&=0,&\Delta_A(\xi^B)={}&\delta_A^B\mu,&\Delta_A(\xi_B^\ast\otimes\mu)={}&0,\\
	\Delta^A(f\mu)&=0,&\Delta^A(\xi^B)={}&0,&\Delta^A(\xi_B^\ast\otimes\mu)={}&\delta^A_B\mu.
	\end{align*}
	Hence an arbitrary $\square\in\Diff(\hat{L}[1])$ will be locally given by:
	\begin{equation*}
	\square=f^{\bfC\phantom{i}}_{\phantom{\bfA}\bfB}(\xi^{\bfB}\!\otimes\!\mu^\ast)\xi_{\bfC}^\ast\id+f^{i\bfC}_{\phantom{A}\bfB}(\xi^{\bfB}\!\otimes\!\mu^\ast)\xi_{\bfC}^\ast\Delta_i+f^{A\bfC}_{\phantom{A}\bfB}(\xi^{\bfB}\!\otimes\!\mu^\ast)\xi_{\bfC}^\ast\Delta_A+f^{\phantom{i}\bfC}_{A\bfB}(\xi^{\bfB}\!\otimes\!\mu^\ast)\xi_{\bfC}^\ast\Delta^A.
	\end{equation*}
\end{remark}

In the following, \emph{with no risk of confusion}, $\mu^\ast,\slashed\Delta_i$  will also denote the local generating system of the $C^\infty(M)$-algebra $\Diff^\star(L[1],\bbR_M)$ defined by
\begin{equation*}
\mu^\ast(f\mu)=f,\quad\slashed\Delta_i(f\mu)=\frac{\partial f}{\partial x^i},
\end{equation*}
similarly $\id,\Delta_i$ will also denote the local generating system of the $C^\infty(M)$-module $\Diff(L[1])$ defined by
\begin{equation*}
\id(f\mu)=f\mu,\quad\Delta_i(f\mu)=\frac{\partial f}{\partial x^i}\mu.
\end{equation*}

\subsection{Existence and uniqueness of the liftings: the statements}
\label{subsec:lifted_graded_Jacobi}

The graded line bundle $\hat{L}\to\hat{M}$ admits a tautological graded Jacobi structure, given by the canonical bi-degree $(-1,-1)$ Jacobi bi-derivation $G$, with corresponding Jacobi bracket $\{-,-\}_G$.
The latter can be seen as the natural extension, from the Poisson setting to the Jacobi one, of the so-called big bracket (cf.~\cite{kosmann1996poisson}).

\begin{definition}
	\label{def:G}
	The \emph{tautological Jacobi bi-derivation} on $\hat{L}[1]$ is the unique $G\in\Diff^2(\hat{L}[1])^{(-1,-1)}$ such that
	\begin{equation}
	\label{eq:def:G}
	\{e,\alpha\}_G=\{\alpha,e\}_G=\alpha(e),
	\end{equation}
	for all $e\in\Gamma(\hat{L}[1])^{(1,0)}=\Gamma(E)$, and $\alpha\in\Gamma(\hat{L}[1])^{(0,1)}=\Gamma(E^\ast\otimes L)$.
\end{definition}

\begin{remark}
	\label{rem:d_G}
	Set $d_G:=\ldsb G,-\rdsb$.
	Then $d_G$ is a homological derivation of $(\Diff^\star(\hat{L}[1]),\ldsb-,-\rdsb)$, and its bi-degree is $(-1,-1)$ as well, i.e.
	\begin{equation*}
	d_G(\Diff^n(\hat{L}[1])^{(h,k)})\subseteq\Diff^{n+1}(\hat{L}[1])^{(h-1,k-1)}.
	\end{equation*}
	In particular, each section $\lambda\in\Gamma(\hat{L}[1])^{(0,0)}=\Gamma(L)[1]$ is a co-cycle wrt $d_G$, i.e.~$d_G\lambda=0$.
\end{remark}

\begin{remark}
	\label{rem:local_expression_G}
	Clearly, $G$ is locally given by $G=\slashed\Delta_A\slashed\Delta^A\otimes\mu$.
	Moreover, according to Remark~\ref{rem:Gerstenhaber_product_and_SJ_bracket}, $d_G$ is completely determined by:
	\begin{gather*}
	d_G(f\mu)=0,\quad d_G(\xi^A)=\Delta^A,\quad d_G(\xi_A^\ast\otimes\mu)=\Delta_A,\\
	d_G(\id)=G,\quad d_G(\Delta_i)=d_G(\Delta_A)=d_G(\Delta^A)=0.
	\end{gather*}
\end{remark}

\begin{definition}
	\label{def:BFV_brackets}
	A Jacobi bi-derivation $\hat{\J}$ on $\hat{L}[1]$ is said to be a \emph{lifting} of a Jacobi bi-derivation $J$ on $L[1]$ if
	\begin{enumerate}
		\item ${\operatorname{pr}^{(0,0)}}\circ\{-,-\}_{\hat{\J}}$ agrees with $\{-,-\}_G$ on $\Gamma(\hat{L})^{(1,0)}\oplus\Gamma(\hat{L})^{(0,1)}$,
		\item ${\operatorname{pr}^{(0,0)}}\circ\{-,-\}_{\hat{\J}}$ agrees with $\{-,-\}_{\J}$ on $\Gamma(\hat{L})^{(0,0)}$.
	\end{enumerate}
\end{definition}

Every Jacobi structure $\J$ on $L$ admits a unique lifting $\hat{\J}$ (up to isomorphisms).

\begin{theorem}[Existence]
	\label{theor:existence_BFV_brackets}
	Every Jacobi structure $\J$ on the line bundle $L\to M$ admits a lifting to a graded Jacobi structure $\hat{\J}$ on the graded line bundle $\hat{L}\to\hat{M}$.
	Specifically, for every fixed Jacobi structure $\J$ on $L\to M$, there exists a canonical map
	\begin{equation*}
	\{\textnormal{$\Diff L$-connections in $E\to M$}\}\longrightarrow\{\textnormal{liftings of $\J$ to $\hat{L}\to\hat{M}$}\},\quad \nabla\longmapsto\hat{\J}^\nabla.
	\end{equation*}
\end{theorem}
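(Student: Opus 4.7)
The plan is to build $\hat{\J}^\nabla$ as a formal series in the ghost/antighost bi-grading and solve the master equation $\ldsb\hat{\J},\hat{\J}\rdsb=0$ order by order through the step-by-step obstruction scheme, with the contraction data attached to $\nabla$ by Proposition~\ref{prop:1contraction_data_1} playing the role of inverting the relevant coboundary equations and singling out a canonical primitive at each stage.

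First I would set up the ansatz. A graded Jacobi bi-derivation $\hat{\J}\in\Diff^2(\hat{L}[1])$ has total ghost number $1$, and the relation $2-1+h-k=1$ forces $h=k$; hence $\hat{\J}=\sum_{h\geq -1}\hat{\J}^{(h,h)}$ with $\hat{\J}^{(h,h)}\in\Diff^2(\hat{L}[1])^{(h,h)}$. The lifting conditions of Definition~\ref{def:BFV_brackets} pin down $\hat{\J}^{(-1,-1)}=G$ and demand that $\hat{\J}^{(0,0)}$ extend $\J$. Using $\nabla$, which prescribes canonical lifts of derivations of $L$ to derivations of $\hat{L}$ through the induced derivations of $E$, $E^\ast$ and $E_L$, one obtains a canonical $J^\nabla\in\Diff^2(\hat{L}[1])^{(0,0)}$ restricting to $\J$ on $\Gamma(L)$. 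Taking $\hat{\J}_0:=G+J^\nabla$ as the initial approximation, one verifies the lifting conditions and observes that the defect $\ldsb\hat{\J}_0,\hat{\J}_0\rdsb$ vanishes in bi-degrees $(h,h)$ with $h\leq-1$, since $\ldsb G,G\rdsb=0$ and $\ldsb G,J^\nabla\rdsb$ vanishes in bi-degree $(-1,-1)$ by construction of $J^\nabla$ through $\nabla$.

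The inductive step then proceeds by standard homological perturbation. Suppose $\hat{\J}_N=G+J^\nabla+\hat{\J}^{(1,1)}+\cdots+\hat{\J}^{(N,N)}$ has been constructed so that $\operatorname{pr}^{(h,h)}\ldsb\hat{\J}_N,\hat{\J}_N\rdsb=0$ for every $h\leq N$. Writing $\hat{\J}_{N+1}=\hat{\J}_N+\hat{\J}^{(N+1,N+1)}$ and using that $d_G=\ldsb G,-\rdsb$ shifts bi-degree by $(-1,-1)$, the condition $\operatorname{pr}^{(N+1,N+1)}\ldsb\hat{\J}_{N+1},\hat{\J}_{N+1}\rdsb=0$ reduces to
\begin{equation*}
2\,d_G\hat{\J}^{(N+1,N+1)}\;=\;-\operatorname{pr}^{(N+1,N+1)}\ldsb\hat{\J}_N,\hat{\J}_N\rdsb.
\end{equation*}
The graded Jacobi identity for $\ldsb-,-\rdsb$ combined with the inductive hypothesis forces the right-hand side to be $d_G$-closed; applying the homotopy $h_\nabla$ from Proposition~\ref{prop:1contraction_data_1}, the natural choice is
\begin{equation*}
\hat{\J}^{(N+1,N+1)}\;:=\;-\tfrac{1}{2}\,h_\nabla\!\left(\operatorname{pr}^{(N+1,N+1)}\ldsb\hat{\J}_N,\hat{\J}_N\rdsb\right).
\end{equation*}

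The hard part will be verifying that this indeed solves the equation, i.e.~that the projection $\pi$ associated with the contraction annihilates each obstruction, so that the obstructions are genuinely $d_G$-exact. This ought to reduce to a bi-degree analysis: $\pi$ factors through the subspace of bi-degree $(0,0)$ computing $H(d_G)$, whereas the obstruction at order $N+1\geq 1$ lives in bi-degree $(N+1,N+1)$ with $N+1>0$ and hence lies in $\ker\pi$ for type reasons. Once this is secured, the homotopy identity $\id-i\pi=d_G h_\nabla+h_\nabla d_G$ produces the required primitive, and the series $\hat{\J}^\nabla:=\lim_N\hat{\J}_N$ stabilizes in each finite bi-degree, producing a lifting canonically determined by $\nabla$. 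The whole argument is the specialization of the step-by-step obstruction lemma of the second appendix to the setting $(\Diff^\star(\hat{L}[1]),d_G,\ldsb-,-\rdsb)$.
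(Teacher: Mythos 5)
Your overall strategy is the paper's own: the proof is precisely an instance of the step-by-step obstruction scheme of Appendix~\ref{app:SBSO} applied to $(\Diff^\star(\hat{L}[1]),d_G,\ldsb-,-\rdsb)$, with the contraction data of Proposition~\ref{prop:1contraction_data_1}, the filtration by anti-ghost degree, and the initial guess $\bar{Q}=G+i_\nabla\J$. There is, however, a genuine gap exactly at the step you flag as ``the hard part''. For a $d_G$-closed obstruction $R$ to be $d_G$-exact you need $p(R)=0$, and you justify this ``for type reasons'', claiming every obstruction sits in positive bi-degree. That is true of all obstructions except the first one, which is the decisive one: the defect of $\bar{Q}$ is $\ldsb i_\nabla\J,i_\nabla\J\rdsb$, which lives in bi-degree $(0,0)$ (it obstructs the construction of the bi-degree $(1,1)$ correction, since $d_G$ lowers bi-degree by $(1,1)$), and $p$ is \emph{surjective}, not zero, on $\Diff^\star(\hat{L}[1])^{(0,0)}$. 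Killing this obstruction is exactly the verification of condition~\eqref{eq:prop:SBSO_existence} in the paper, and it requires two facts: $p$ restricted to bi-degree $(0,0)$ is a morphism of graded Lie algebras (Proposition~\ref{prop:p}), and $\ldsb\J,\J\rdsb=0$. This is the only point where the hypothesis that $\J$ is a Jacobi structure, rather than an arbitrary bi-derivation, enters; as written, your argument never invokes $\ldsb\J,\J\rdsb=0$ and would therefore ``prove'' that every bi-derivation lifts, which is false.

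Two smaller slips. Your displayed recursion equates quantities of different bi-degrees: $d_G\hat{\J}^{(N+1,N+1)}$ has bi-degree $(N,N)$, so the equation to be solved at this stage is $2d_G\hat{\J}^{(N+1,N+1)}=-\operatorname{pr}^{(N,N)}\ldsb\hat{\J}_N,\hat{\J}_N\rdsb$, and correspondingly the inductive hypothesis should read $\operatorname{pr}^{(h,h)}\ldsb\hat{\J}_N,\hat{\J}_N\rdsb=0$ for $h\leq N-1$ (as stated, with $h\leq N$, the base case $N=0$ already presupposes $\ldsb i_\nabla\J,i_\nabla\J\rdsb=0$). Also note that your homotopy identity $\id-i\pi=[d_G,h_\nabla]$ has the opposite sign to the paper's convention $i_\nabla\circ p-\id=[d_G,H_\nabla]$ of Lemma~\ref{lem:1contraction_data_1b}; your formula $-\tfrac12 h_\nabla(\cdot)$ is internally consistent with your convention, but with the paper's $H_\nabla$ the primitive is $+\tfrac12 H_\nabla(\cdot)$.
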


\begin{proposition}
	\label{prop:existence_BFV_brackets}
	If the $DL$-connection $\nabla$ in $E\to M$ is flat, then $\hat{\J}^\nabla=G+i_\nabla\J$.
\end{proposition}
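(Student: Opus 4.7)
The plan is to verify directly that $G+i_\nabla \J$ meets the specifications that characterize $\hat\J^\nabla$. First, I would check that $G+i_\nabla \J$ is a lifting in the sense of Definition~\ref{def:BFV_brackets}: its $(-1,-1)$-component is exactly $G$, so the first condition is trivial; its $(0,0)$-component, when restricted to $\Gamma(\hat L)^{(0,0)}=\Gamma(L)[1]$, reduces to $\{-,-\}_\J$ because $i_\nabla$ is, by construction, an extension to $\hat L[1]$ that acts as the identity on $L$-valued arguments (it inserts horizontal lifts built out of $\nabla$ on the $E$- and $E^\ast\otimes L$-factors, which are absent in the $(0,0)$-component).

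Second, and most substantively, I would verify the Maurer-Cartan equation $\ldsb G+i_\nabla\J,\, G+i_\nabla\J\rdsb=0$. Expanding,
\[
\ldsb G+i_\nabla\J,\,G+i_\nabla\J\rdsb=\ldsb G,G\rdsb+2\ldsb G,i_\nabla\J\rdsb+\ldsb i_\nabla\J,i_\nabla\J\rdsb.
\]
The first summand vanishes because $G$ is tautologically Jacobi. For the other two, the key input is that when $\nabla$ is flat, $i_\nabla$ upgrades from a quasi-isomorphism of co-chain complexes to a genuine morphism of differential graded Lie algebras $(\Diff^\star(L[1]),\ldsb-,-\rdsb,d_\J)\to(\Diff^\star(\hat L[1]),\ldsb-,-\rdsb,d_G)$. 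Granted this, since $\J$ is Jacobi one has
\[
\ldsb G,i_\nabla\J\rdsb=d_G(i_\nabla\J)=i_\nabla(d_\J\J)=0,\qquad \ldsb i_\nabla\J,i_\nabla\J\rdsb=i_\nabla\ldsb\J,\J\rdsb=0.
\]

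Third, I would match the resulting candidate with the output of Theorem~\ref{theor:existence_BFV_brackets}. The construction there is the step-by-step obstruction scheme initialized by the zeroth order lift $G+i_\nabla\J$, adding correction terms of strictly increasing anti-ghost bi-degree to kill the obstructions produced at each previous order. Because $G+i_\nabla\J$ already solves the Maurer-Cartan equation, every obstruction vanishes trivially, the inductive procedure stabilizes at the zeroth step, and $\hat\J^\nabla=G+i_\nabla\J$.

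The main obstacle lies in the second step: showing that flatness of $\nabla$ promotes $i_\nabla$ to a morphism of DGLAs compatible with $\ldsb-,-\rdsb$. This is the expected analogue, in the $\Diff L$-connection setting, of the classical fact that flat connections induce morphisms of Chevalley-Eilenberg type complexes, but it has to be checked at the level of the full graded multi-derivation algebra of $\hat L[1]$, where the curvature of $\nabla$ would otherwise appear as an obstruction to compatibility with both $d_G$ and the Schouten-Jacobi bracket. Carrying this out requires a careful unwinding of the local expression for $i_\nabla$ in terms of the generators $\id,\Delta_i,\Delta_A,\Delta^A$ introduced in the initial setting, combined with the local description of $G$ as $\slashed\Delta_A\slashed\Delta^A\otimes\mu$ from Remark~\ref{rem:local_expression_G}.
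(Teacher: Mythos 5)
Your proof follows essentially the same route as the paper's: expand $\ldsb G+i_\nabla\J,\,G+i_\nabla\J\rdsb$, kill the $\ldsb G,G\rdsb$-term and the cross term, use flatness to rewrite $\ldsb i_\nabla\J,i_\nabla\J\rdsb$ as $i_\nabla\ldsb\J,\J\rdsb=0$ (this is exactly Proposition~\ref{prop:flatness}), and observe that the step-by-step obstruction scheme of Theorem~\ref{theor:existence_BFV_brackets} then adds no corrections. One correction to your justification of the cross term: the vanishing of $\ldsb G,i_\nabla\J\rdsb=d_G(i_\nabla\J)$ does \emph{not} come from an intertwining $d_G\circ i_\nabla=i_\nabla\circ d_{\J}$ --- that identity is false in general even for flat $\nabla$ (test it on $\lambda\in\Gamma(L)$: one has $d_G(i_\nabla\lambda)=d_G\lambda=0$ by Remark~\ref{rem:d_G}, whereas $i_\nabla(d_{\J}\lambda)=\nabla_{\{\lambda,-\}_{\J}}$ is generically nonzero). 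The correct statement is Proposition~\ref{prop:i_nabla}(3), namely $d_G\circ i_\nabla=0$, which holds for \emph{every} $\Diff L$-connection, flat or not; your conclusion survives only because $d_{\J}\J=\ldsb\J,\J\rdsb$ also happens to vanish. With that substitution, your argument coincides with the paper's; the preliminary verification of the lifting conditions and the closing remark about the obstruction method stabilizing at order zero are both fine.
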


\begin{theorem}[Uniqueness]
	\label{theor:uniqueness_BFV_brackets}
	Fix an arbitrary Jacobi structure $\J$ on $L\to M$.
	Let $\hat{\J}$ and $\hat{\J}'$ be Jacobi structures on $\hat{L}\to\hat{M}$.
	If $\hat{\J}$ and $\hat{\J}'$ are both liftings of $\J$, then there exists a degree $0$ graded automorphism $\phi$ of the graded line bundle $\hat{L}\to\hat{M}$ such that
	\begin{equation*}
	\hat{\J}=\phi^\ast\hat{\J}'.
	\end{equation*}
	Moreover such $\phi$ can be chosen so to have the additional property that
	\begin{equation}
	\label{eq:theor:uniqueness_BFV_brackets}
	\phi(\Omega)-\Omega\in\bigoplus_{k\geq 1}\Gamma(\hat{L})^{(p+k,q+k)},\qquad\Omega\in\Gamma(\hat{L})^{(p,q)},\ (p,q)\in\bbN^2.
	\end{equation}
\end{theorem}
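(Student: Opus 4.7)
My plan is to adapt the step-by-step obstruction method (recalled in the second appendix and used already for the existence Theorem~\ref{theor:existence_BFV_brackets}) and build $\phi$ as an infinite composition $\phi=\cdots\circ\exp(X_2)\circ\exp(X_1)$, where each $X_k$ is a degree $0$ derivation of $\hat{L}$ strictly raising the bi-degree by $(k,k)$. Any such $\phi$ automatically satisfies~(\ref{eq:theor:uniqueness_BFV_brackets}), so the real task is to select the $X_k$ so that $\phi^\ast\hat{\J}'=\hat{\J}$. Set $\phi_0:=\id$, $\hat{\J}_0:=\hat{\J}'$. By the two conditions in Definition~\ref{def:BFV_brackets} the $(-1,-1)$-components of $\hat{\J}$ and $\hat{\J}'$ both equal $G$, and their $(0,0)$-components agree on $\Gamma(L)^{\otimes 2}$, so the difference $\hat{\J}_0-\hat{\J}$ starts at bi-degree at least $(0,0)$ (with the $(0,0)$-part tightly constrained by the lifting data). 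Suppose inductively that a graded automorphism $\phi_n$ satisfying~(\ref{eq:theor:uniqueness_BFV_brackets}) has been built with
\[
\hat{\J}_n:=\phi_n^\ast\hat{\J}'\in\hat{\J}+\bigoplus_{k\geq n}\Diff^2(\hat{L}[1])^{(k,k)}.
\]

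At the $(n{+}1)$-th step I pick $X_{n+1}\in\Diff(\hat{L}[1])^{(n+1,n+1)}$ and set $\phi_{n+1}:=\exp(X_{n+1})\circ\phi_n$. Expanding $\exp(X_{n+1})^\ast\hat{\J}_n$ in powers of $\ldsb-,X_{n+1}\rdsb$, the only contribution in bi-degree $(n,n)$ comes from $\ldsb G,X_{n+1}\rdsb=d_G X_{n+1}$ (every other summand lies in strictly higher bi-degree), so the requirement $(\hat{\J}_{n+1}-\hat{\J})^{(n,n)}=0$ collapses to the cohomological equation
\[
d_G X_{n+1}=O_n,\qquad O_n:=(\hat{\J}_n-\hat{\J})^{(n,n)},
\]
with $d_G$ the bi-degree $(-1,-1)$ homological derivation from Remark~\ref{rem:d_G}. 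The cocycle property $d_G O_n=0$ is obtained by projecting the difference of the Maurer--Cartan identities $\ldsb\hat{\J}_n,\hat{\J}_n\rdsb=0=\ldsb\hat{\J},\hat{\J}\rdsb$ onto bi-degree $(n-1,n-1)$ and invoking the inductive hypothesis.

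The coboundary property is the main technical ingredient: I would propagate the contraction data of Proposition~\ref{prop:1contraction_data_1} from $\Gamma(\hat{L})$ to the enveloping graded algebra $\Diff^\star(\hat{L}[1])$ by a Koszul-type argument on the generators $\slashed\Delta_A,\slashed\Delta^A$ listed in Remarks~\ref{rem:local_generating_system_for_do_from_calL_to_calA} and~\ref{rem:local_generating_system_for_do_from_calL_to_calL}, obtaining an explicit contracting homotopy for $d_G$ on the subcomplex of strictly positive bi-degree; applying it to $O_n$ produces the required $X_{n+1}$. Once this is available, the infinite composition $\phi=\cdots\circ\exp(X_2)\circ\exp(X_1)$ is well defined---on any fixed bi-degree only finitely many factors act non-trivially---and it satisfies both $\phi^\ast\hat{\J}'=\hat{\J}$ and~(\ref{eq:theor:uniqueness_BFV_brackets}) by construction. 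I expect the main obstacle to be precisely this acyclicity step: lifting the contraction data from $\Gamma(\hat{L})$ to the multi-derivations while preserving the bi-degree filtration and the compatibility with $\ldsb-,-\rdsb$ requires the delicate Koszul-style analysis, whereas the rest of the argument is formal bookkeeping in the step-by-step obstruction scheme.
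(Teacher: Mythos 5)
Your proposal is essentially the paper's own proof: the paper deduces Theorem~\ref{theor:uniqueness_BFV_brackets} from Proposition~\ref{prop:SBSO_uniqueness} and Corollary~\ref{cor:SBSO_uniqueness} (the step-by-step obstruction method) applied with the contraction data~\eqref{eq:1contraction_data_1} and the filtration by anti-ghost degree, which is exactly your recursion $\phi_{n+1}=\exp(X_{n+1})\circ\phi_n$ with $X_{n+1}$ a degree $0$ derivation of bi-degree $(n+1,n+1)$ killing the bi-degree $(n,n)$ discrepancy $O_n$. The only remark worth making is that the step you flag as the main obstacle is already settled in the paper: the contraction data of Proposition~\ref{prop:1contraction_data_1} live on $\Diff^\star(\hat{L}[1])$ itself (not merely on $\Gamma(\hat{L})$), so for your $d_G$-cocycle $O_n$, which lies in $\ker p$ since $p(\hat{\J}_1)=p(\hat{\J}'_1)=\J$ and $p$ annihilates all nonzero bi-degrees, Lemma~\ref{lem:1contraction_data_1b} gives $O_n=-d_G(H_\nabla O_n)$ directly, and $X_{n+1}:=-H_\nabla O_n$ works with no further Koszul-type analysis.
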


The following proposition, describing an interesting property of liftings of Jacobi structures, generalizes a similar result obtained in the Poisson setting by Herbig (cf.~\cite[Theorem 3.4.5]{herbig2007}).
\begin{proposition}
	\label{prop:lifting_CE_cohom}
	If a Jacobi structure $\hat{\J}$ on the graded line bundle $\hat{L}\to\hat{M}$ is a lifting of a Jacobi structure $\J$ on $L\to M$, then the Chevalley--Eilenberg cohomologies of $\J$ and $\hat{\J}$ are isomorphic:
	\begin{equation*}
	H_{CE}^\bullet(M,L,\J)\simeq H_{CE}^\bullet(\hat{M},\hat{L},\hat{\J}).
	\end{equation*}
	More precisely, every $\Diff L$-connection in $E\to M$ determines a quasi-isomorphism from $(\Diff^\star(L[1]),d_{\J})$ to $(\Diff^\star(\hat{L}[1]),d_{\hat{\J}})$.
\end{proposition}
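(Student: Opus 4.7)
The plan is to apply the Homological Perturbation Lemma (HPL). Given a $\Diff L$-connection $\nabla$ in $E\to M$, Proposition~\ref{prop:1contraction_data_1} supplies contraction data
\[
(\Diff^\star(L[1]),0)\ \underset{p_\nabla}{\overset{i_\nabla}{\rightleftarrows}}\ (\Diff^\star(\hat{L}[1]),d_G),\qquad h_\nabla,
\]
satisfying the usual side conditions $p_\nabla i_\nabla=\id$, $\id-i_\nabla p_\nabla=d_G h_\nabla+h_\nabla d_G$, $h_\nabla^2=0$, $h_\nabla i_\nabla=0$, $p_\nabla h_\nabla=0$, with $i_\nabla$ of bi-degree $(0,0)$ and $h_\nabla$ of bi-degree $(1,1)$ (opposite to $d_G$). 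Since $\hat{\J}$ lifts $\J$, we may write $\hat{\J}=G+Q$ where, by the structure recalled in Section~\ref{subsec:lifted_graded_Jacobi}, $Q$ decomposes only along bi-degrees $(h,h)$ with $h\geq 0$. Consequently $d_{\hat{\J}}=d_G+\delta$ with $\delta:=\ldsb Q,-\rdsb$, and the task is to transfer the above contraction data along the perturbation $\delta$.

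The only non-trivial hypothesis to check is the local nilpotence of $h_\nabla\delta$ (equivalently $\delta h_\nabla$), which ensures that the formal inverse $(\id-h_\nabla\delta)^{-1}$ makes sense. By construction, every homogeneous component of $\delta$ has bi-degree $(h,h)$ with $h\geq 0$, while $h_\nabla$ has bi-degree $(1,1)$, so each application of $h_\nabla\delta$ strictly raises the total bi-degree $h+k$ by at least $2$. Since $E\to M$ has finite rank $r$, the bi-degrees are confined to $0\leq h,k\leq r$, whence $(h_\nabla\delta)^N=0$ pointwise for $N$ sufficiently large. The HPL then produces perturbed contraction data
\[
(\Diff^\star(L[1]),D)\ \underset{P_\nabla}{\overset{I_\nabla}{\rightleftarrows}}\ (\Diff^\star(\hat{L}[1]),d_{\hat{\J}}),\qquad H_\nabla,
\]
with $I_\nabla=(\id-h_\nabla\delta)^{-1}i_\nabla$ automatically a quasi-isomorphism of cochain complexes, and induced differential $D=p_\nabla\,\delta\,(\id-h_\nabla\delta)^{-1}\,i_\nabla$.

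The main remaining obstacle is to identify $D$ with $d_{\J}$ exactly. Expanding $D=\sum_{n\geq 0}p_\nabla(\delta h_\nabla)^n\delta\,i_\nabla$, the same bi-degree accounting shows that for $n\geq 1$ the output of $(\delta h_\nabla)^n\delta\,i_\nabla$ lies in bi-degrees $(m,m)$ with $m\geq 1$, which is killed by $p_\nabla$ (selecting bi-degree $(0,0)$); so only the $n=0$ term survives and $D=p_\nabla\delta\,i_\nabla$. On a section $\lambda\in\Gamma(L)[1]$ one has $i_\nabla\lambda=\lambda$ under the canonical inclusion $\Gamma(L)[1]\hookrightarrow\Gamma(\hat{L})^{(0,0)}$, and $\ldsb G,\lambda\rdsb=0$ by Remark~\ref{rem:d_G}, so $\delta i_\nabla\lambda=\ldsb\hat{\J},\lambda\rdsb$; projecting to bi-degree $(0,0)$, the lifting property (Definition~\ref{def:BFV_brackets}) yields $p_\nabla\delta\,i_\nabla\lambda=d_{\J}\lambda$. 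The identification extends from $\Gamma(L)[1]$ to higher-arity multi-derivations using the fact that $i_\nabla$ and $p_\nabla$, built out of $\nabla$, are compatible with the graded-commutative algebra structures of $\Diff^\star(L[1])$ and $\Diff^\star(\hat{L}[1])$; tracing the bi-degrees through the Schouten--Jacobi bracket (as in Proposition~\ref{prop:existence_BFV_brackets} for the flat case) forces $D=d_{\J}$ on all arities. Hence $I_\nabla$ is the sought quasi-isomorphism from $(\Diff^\star(L[1]),d_{\J})$ to $(\Diff^\star(\hat{L}[1]),d_{\hat{\J}})$, yielding the stated isomorphism in Chevalley--Eilenberg cohomology.
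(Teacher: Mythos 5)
Your proposal is correct and follows essentially the same route as the paper's proof: the Homological Perturbation Lemma applied to the contraction data of Proposition~\ref{prop:1contraction_data_1} with perturbation $\delta=d_{\hat{\J}}-d_G$, nilpotence of $\delta h_\nabla$ by bi-degree counting, and the observation that only the $n=0$ term survives in the transferred differential, so that $D=p\,\delta\, i_\nabla$. The one place where you are looser than the paper is the final identification $D=d_{\J}$: the paper closes this in one line by noting that $\ldsb\hat{\J}-G,i_\nabla\square\rdsb$ projects onto $\ldsb p(\hat{\J}_1),p(i_\nabla\square)\rdsb=\ldsb\J,\square\rdsb$ because $p$ restricted to bi-degree $(0,0)$ is a Lie algebra morphism (Proposition~\ref{prop:p}), whereas you verify the identity only on $\Gamma(L)[1]$ and appeal vaguely to ``compatibility with the algebra structures''; the missing check on the generators $\Diff(L[1])$ is immediate from $p(\hat{\J}_1)=\J$ and $p\circ i_\nabla=\id$, so this is a presentational shortfall rather than a gap.
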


In order to develop the necessary technical tools first, we postpone the proofs of Theorems~\ref{theor:existence_BFV_brackets} and~\ref{theor:uniqueness_BFV_brackets}, and Propositions~\ref{prop:existence_BFV_brackets} and~\ref{prop:lifting_CE_cohom} to the end of this section.

\subsection{A first relevant set of contraction data}
\label{subsec:first_relevant_contraction_data}

\begin{definition}
	\label{def:contraction_data}
	A set of \emph{contraction data} between co-chain complexes $(\calK, \partial)$ and $(\underline{\smash{\calK}}, \underline{\smash{\partial}})$ consists of:
	\begin{enumerate}[label=(\roman*)]
		\item\label{item:1} a surjective co-chain map $q:(\calK,\partial)\to(\underline{\smash{\calK}},\underline{\smash{\partial}})$ that we simply call the \emph{projection},
		\item\label{item:2} an injective co-chain map $j:(\underline{\smash{\calK}},\underline{\smash{\partial}})\to(\calK,\partial)$, that we call the \emph{immersion}, such that $q\circ j=\id_{\underline{\smash{\calK}}}$,
		\item\label{item:3} a \emph{homotopy} $h : (\calK, \partial) \to (\calK, \partial)$ between $j\circ q$ and $\id_{\calK}$.
	\end{enumerate}
	Additionally, $q,j,h$ satisfy the following \emph{side conditions}: $h^2=0$, $h\circ j=0$, $q\circ h=0$.
\end{definition}

\subsubsection*{The projection}
\label{subsubsec:p}
There is a degree $0$ graded module morphism $p:\Diff^\star(\hat{L}[1])\to\Diff^\star(L[1])$, covering a degree $0$ graded $\bbR$-algebra morphism $\underline{\smash{p}}:\Diff^\star(\hat{L}[1],\bbR_{\hat{M}})\to\Diff^\star(L[1],\bbR_M)$, given by
\begin{equation}
\label{eq:p}
(p\square)(\lambda_1,\ldots,\lambda_k)=\operatorname{pr}^{(0,0)}(\square(\lambda_1,\ldots,\lambda_k)),
\end{equation}
for all $\square\in\Diff^k(\hat{L}[1])$, and $\lambda_1,\ldots,\lambda_k\in\Gamma(L[1])$.
The following proposition lists some properties of $p$.

\begin{proposition}
	\label{prop:p} \ 
	
	(1) $p$ preserves the arity, i.e.~$p(\Diff^k(\hat{L}[1]))\subseteq\Diff^k(L[1])$.
	
	(2) $p$ annihilates $\Diff^\star(\hat{L}[1])^{(h,k)}$, for all $(h,k)\in\bbZ^2\setminus\{(0,0)\}$, and induces a degree $0$ graded Lie algebra morphism from $\Diff^\star(\hat{L}[1])^{(0,0)}$ onto $\Diff^\star(L[1])$, i.e.
	\begin{equation*}
	p(\ldsb\square_1,\square_2\rdsb)=\ldsb p\square_1,p\square_2\rdsb,\qquad\square_1,\square_2\in\Diff^\star(\hat{L}[1])^{(0,0)}.
	\end{equation*} 
	
	(3) $p$ is a co-chain map from $(\Diff^\star(\hat{L}[1]),d_G)$ to $(\Diff^\star(L[1]),0)$, i.e.~$p\circ d_G=0$.
\end{proposition}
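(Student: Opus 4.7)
The plan is to verify the three parts of Proposition~\ref{prop:p} in the order stated, since each builds on the previous.

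Part~(1) is immediate from the defining formula~\eqref{eq:p}: $p\square$ has the same arity as $\square$ by construction, and the fact that $p\square$ is a genuine multi-derivation on $L$ follows from the corresponding property of $\square$ once one observes that $\operatorname{pr}^{(0,0)}:\Gamma(\hat{L})\to\Gamma(\hat{L})^{(0,0)}=\Gamma(L)$ is $C^\infty(M)$-linear, so the Leibniz rule satisfied by $\square$ in each entry specializes to a Leibniz rule for $p\square$.

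For Part~(2), the annihilation is a direct bi-degree count: if $\square\in\Diff^n(\hat{L}[1])^{(h,k)}$ with $(h,k)\neq(0,0)$ and $\lambda_i\in\Gamma(L)=\Gamma(\hat{L})^{(0,0)}$, then $\square(\lambda_1,\ldots,\lambda_n)\in\Gamma(\hat{L})^{(h,k)}$ has trivial $(0,0)$-component. For the Lie bracket compatibility, the key observation is that every $\square\in\Diff^\star(\hat{L}[1])^{(0,0)}$ sends the submodule $\Gamma(L)\subset\Gamma(\hat{L})$ into itself, so $p\square$ coincides with the mere restriction of $\square$ to $\Gamma(L)$. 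Since the Schouten-Jacobi bracket is built out of graded symmetric compositions of multi-derivations (cf.~Appendix~\ref{app:graded_multi-do}), and restriction to a stable subspace commutes with such compositions, the identity $p(\ldsb\square_1,\square_2\rdsb)=\ldsb p\square_1,p\square_2\rdsb$ follows. Surjectivity onto $\Diff^\star(L[1])$ will be witnessed by any splitting $i_\nabla$ coming from a $\Diff L$-connection, as constructed in the sequel.

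Part~(3) is the main content. The essential step is to establish that $G$ vanishes whenever one argument lies in $\Gamma(L)$, that is, $G(\sigma,\lambda)=0$ for every $\sigma\in\Gamma(\hat{L})$ and $\lambda\in\Gamma(L)$. To prove this, note first that for $\lambda\in\Gamma(\hat{L})^{(0,0)}$, the Hamiltonian symbol $X_\lambda$ of the derivation $\{-,\lambda\}_G$ is itself a derivation of $C^\infty(\hat{M})$ of bi-degree $(-1,-1)$; applied to any generator $f\in C^\infty(M)$, $\xi^A\otimes\mu^\ast\in C^\infty(\hat{M})^{(1,0)}$, or $\xi_B^\ast\in C^\infty(\hat{M})^{(0,1)}$, it lands in a bi-degree component which is zero, so $X_\lambda=0$ as a derivation of $C^\infty(\hat{M})$. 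A parallel bi-degree argument on the $\Gamma(\hat{L})$-side shows $G(\omega,\lambda)=0$ for every element $\omega$ in the generating set $\{f\mu,\xi^A,\xi_B^\ast\otimes\mu\}$ of $\Gamma(\hat{L})$. Combining these two facts with the Leibniz rule in the first slot of $G$ yields $G(\Gamma(\hat{L}),\Gamma(L))=0$, and graded symmetry then gives $G(\Gamma(L),\Gamma(\hat{L}))=0$ as well. Expanding $d_G\square=\ldsb G,\square\rdsb$ as a Schouten-Jacobi bracket and evaluating on $\lambda_1,\ldots,\lambda_{n+1}\in\Gamma(L)$, every term of the resulting sum involves either $G$ applied to two of the $\lambda_i$, or $G$ applied to $\square(\ldots)$ and one remaining $\lambda_j$; both patterns vanish by the above, whence $p\circ d_G=0$.
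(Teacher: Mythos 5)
Your proof is correct and follows essentially the same route as the paper: parts (1) and (2) by bi-degree counting from the defining formula~\eqref{eq:p}, and part (3) by reducing to the vanishing of $G$ against sections of $\Gamma(\hat{L})^{(0,0)}$ (your explicit lemma is exactly the content of Remark~\ref{rem:d_G}, i.e.\ $d_G\lambda=0$) combined with the expansion of $\ldsb G,\square\rdsb$ from Remark~\ref{rem:Gerstenhaber_product_and_SJ_bracket}. The only difference is cosmetic: the paper cites $d_G\lambda=0$ and uses the iterated-bracket identity~\eqref{eq:Gerstenhaber_product_and_SJ_bracket_bis} together with the derivation property of $d_G$, whereas you re-derive the vanishing by hand and expand via the Gerstenhaber product directly.
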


\begin{proof}
	Properties (1) and (2) are immediate consequences of~\eqref{eq:p} and Remark~\ref{rem:Gerstenhaber_product_and_SJ_bracket}.
	Moreover, from Remarks~\ref{rem:d_G} and~\ref{rem:Gerstenhaber_product_and_SJ_bracket}, it follows that
	\begin{align*}
	(d_G\square)(\lambda_1,\ldots,\lambda_{k+1})&=\ldsb \ldsb \ldots\ldsb d_G\square,\lambda_1\rdsb ,\ldots\rdsb ,\lambda_{k+1}\rdsb \\
	&=d_G\underbrace{\ldsb \ldsb \ldots\ldsb \square,\lambda_1\rdsb ,\ldots\rdsb ,\lambda_{k+1}\rdsb }_{=0}\\
	&\phantom{=}+\sum_{i=1}^{k+1}(-)^{|\square|-i}\ldsb \ldsb \ldots\ldsb \ldsb \ldots\ldsb \square,\lambda_1\rdsb ,\ldots\rdsb ,\underbrace{d_G\lambda_i}_{=0}\rdsb ,\ldots\rdsb ,\lambda_{k+1}\rdsb =0,
	\end{align*}
	for all homogeneous $\square\in\Diff^k(\hat{L}[1])$, and $\lambda_1,\ldots,\lambda_{k+1}\in\Gamma(L)[1]$.
	This concludes the proof.
\end{proof}

\begin{remark}
	\label{rem:local_expression_p}
	The module morphism $p:\Diff^\star(\hat{L}[1])\to\Diff^\star(L[1])$ is locally given by
	\begin{gather*}
	p(f\mu)=f\mu,\quad p(\xi^A)=0,\quad p(\xi_A^\ast\otimes\mu)=0,\\
	p(\id)=\id,\quad p(\Delta_i)=\Delta_i,\quad p(\Delta_A)=0,\quad p(\Delta^A)=0.
	\end{gather*}
\end{remark}

\begin{remark}
	\label{rem:BFV_brackets}
	Let $\J$ be a Jacobi structure on $L\to M$.
	An arbitrary $\hat{\J}\in\Diff^2(\hat{L}[1])^1$ decomposes as follows
	\begin{equation*}
	\hat{\J}=\sum_{k=0}^\infty\hat{\J}_k,
	\end{equation*}
	where $\hat{\J}_k\in\Diff^2(\hat{L}[1])^{(k-1,k-1)}$, for all $k\in\bbN$.
	It follows that the liftings of $\J$ are given by the degree $1$ graded symmetric bi-derivations $\hat{\J}$ such that
	\begin{gather}
	\label{eq:rem:BFV_brackets1}
	\hat{\J}_0=G\\
	\label{eq:rem:BFV_brackets2}
	p(\hat{\J}_1)=\J\\
	\label{eq:rem:BFV_brackets3}
	2d_G\hat{\J}_k+\sum_{i=1}^{k-1}\ldsb \hat{\J}_i,\hat{\J}_{k-i}\rdsb = 0 ,\quad\textnormal{for all }k>0.
	\end{gather}
\end{remark}

The line bundle $L\to M$ comes equipped with a canonical flat $\Diff L$-connection, i.e.~the tautological representation given by the identity map $\id:\Diff L\to\Diff L$.
Accordingly each choice of a $\Diff L$-connection $\nabla$ in $E\to M$ determines a $\Diff L$-connection in $E^\ast\otimes L\to M$, again denoted by $\nabla$.
This shows that every connection $\nabla:\Diff L\to \Diff E$ admits a canonical extension $\nabla:\Diff L\to(\Diff \hat{L})^{(0,0)}\subset\Diff \hat{L}$.
Note here that $\Diff L=\Diff(L[1])$ and $\Diff \hat{L}=\Diff(\hat{L}[1])$.

\begin{lemma}
	\label{lem:metric_connection}
	Let $\nabla$ be a $\Diff L$-connection in $E\to M$.
	Its canonical extension $\nabla:\Diff L\to\Diff \hat{L}$ takes values in $\mathfrak{aut}(\hat{M},\hat{L},G)$, i.e., for all $\square\in\Diff L$, the following equivalent conditions hold:
	\begin{itemize}
		\item\label{en:lem:metric_connection_1}
		$\nabla_\square$ is a Jacobi derivation, or, which is the same, an infinitesimal Jacobi automorphism of $(\hat{M},\hat{L},G)$, i.e.
		\begin{equation*}
		\nabla_\square\{\lambda_1,\lambda_2\}_G=\{\nabla_\square\lambda_1,\lambda_2\}_G+\{\lambda_1,\nabla_\square\lambda_2\}_G,\qquad\lambda_1,\lambda_2\in\Gamma(\hat{L}),
		\end{equation*}
		\item\label{en:lem:metric_connection_2}
		$\nabla_\square$ is a co-cycle of the co-chain complex $(\Diff^\star(\hat{L}[1]),d_G)$, i.e.~$d_G(\nabla_\square)=0$.
	\end{itemize}
\end{lemma}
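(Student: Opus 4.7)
The plan is to reduce the verification to a small computation on the bi-degree-one generators of $\Gamma(\hat L)$ by exploiting bi-degree considerations, and then to recognise the resulting identity as a direct consequence of the defining properties of the induced connections.

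\emph{Equivalence of the two conditions.} Both displayed conditions are instances of the general identity in the Gerstenhaber--Jacobi algebra $(\Diff^\star(\hat L[1],\bbR_{\hat M}),\Diff^\star(\hat L[1]),\ldsb-,-\rdsb)$ of Appendix~\ref{app:graded_multi-do}: for any bi-derivation $J\in\Diff^2(\hat L[1])$ and any derivation $\square\in\Diff(\hat L[1])$,
\begin{equation*}
\ldsb J,\square\rdsb(\lambda_1,\lambda_2)=\pm\bigl(\square\{\lambda_1,\lambda_2\}_J-\{\square\lambda_1,\lambda_2\}_J-\{\lambda_1,\square\lambda_2\}_J\bigr),
\end{equation*}
for all $\lambda_1,\lambda_2\in\Gamma(\hat L)$, which is nothing but Remark~\ref{rem:Gerstenhaber_product_and_SJ_bracket} unravelled. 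Specialising to $J=G$ and $\square=\nabla_\square$ reduces the lemma to proving $\ldsb G,\nabla_\square\rdsb=0$.

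\emph{Reduction to generators.} By construction the canonical extension of $\nabla$ to $\Diff\hat L$ assembles the given connection on $E$, its dual on $E^\ast\otimes L$, and the tautological representation on $L$; in particular $\nabla_\square$ preserves the bi-grading, i.e.\ $\nabla_\square\in\Diff(\hat L[1])^{(0,0)}$. Combined with $G\in\Diff^2(\hat L[1])^{(-1,-1)}$, this yields $\ldsb G,\nabla_\square\rdsb\in\Diff^2(\hat L[1])^{(-1,-1)}$. Now a graded symmetric bi-derivation of bi-degree $(-1,-1)$ is determined by its action on the basic generators $\Gamma(L)$, $\Gamma(E)=\Gamma(\hat L)^{(1,0)}$ and $\Gamma(E^\ast\otimes L)=\Gamma(\hat L)^{(0,1)}$; for every pair of such generators other than one from $\Gamma(E)\times\Gamma(E^\ast\otimes L)$, the output would sit in a bi-degree with at least one negative component and must therefore vanish. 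This is exactly the uniqueness statement underlying Definition~\ref{def:G}, and it reduces the claim to checking $\ldsb G,\nabla_\square\rdsb(e,\alpha)=0$ for $e\in\Gamma(E)$ and $\alpha\in\Gamma(E^\ast\otimes L)$.

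\emph{Computation on generators.} Using the identity from the first step, it remains to verify
\begin{equation*}
\nabla_\square\{e,\alpha\}_G-\{\nabla_\square e,\alpha\}_G-\{e,\nabla_\square\alpha\}_G=0.
\end{equation*}
By~\eqref{eq:def:G} one has $\{e,\alpha\}_G=\alpha(e)\in\Gamma(L)$, on which $\nabla$ acts tautologically, so the first term equals $\square(\alpha(e))$; the other two terms contribute $\alpha(\nabla_\square e)$ and $(\nabla_\square\alpha)(e)$ respectively, again by~\eqref{eq:def:G}. By the very definition of the induced $\Diff L$-connection on $E^\ast\otimes L$, the natural pairing $E\otimes(E^\ast\otimes L)\to L$ is $\nabla$-equivariant, i.e.\ $\square(\alpha(e))=(\nabla_\square\alpha)(e)+\alpha(\nabla_\square e)$, whence the three terms cancel.

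The only non-routine ingredient is the reduction step, which relies on the standard fact that graded symmetric bi-derivations of prescribed bi-degree on $\hat L[1]$ are controlled by their values on the basic bi-homogeneous generators of $\Gamma(\hat L)$; everything else is built into the definitions of $G$ and of the induced connections on $E^\ast\otimes L$ and $L$.
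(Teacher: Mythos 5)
Your proof is correct and follows essentially the same route as the paper's: the paper likewise observes that $\ldsb G,\nabla_\square\rdsb\in\Diff^2(\hat{L}[1])^{(-1,-1)}$ and therefore vanishes once one checks $\ldsb G,\nabla_\square\rdsb(\alpha,e)=0$ on generators $e\in\Gamma(E)$, $\alpha\in\Gamma(E^\ast\otimes L)$, which is exactly your bi-degree reduction followed by the $\nabla$-equivariance of the tautological pairing. You have merely written out the "straightforward computation" that the paper leaves implicit.
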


\begin{proof}
	Fix an arbitrary $\square\in\Diff L$.
	From Remark~\ref{rem:Gerstenhaber_product_and_SJ_bracket}, for all $\alpha\in\Gamma(E^\ast\otimes L)$ and $e\in\Gamma(E)$, a straightforward computation shows that
	\begin{equation*}
	\ldsb G,\nabla_\square\rdsb (\alpha,e)=0,
	\end{equation*}
	and so, since $\ldsb G,\nabla_\square\rdsb \in\Diff^2(\hat{L}[1])^{(-1,-1)}$, we get that $d_G(\nabla_\square)=0$.
\end{proof}

\subsubsection*{The immersion}
There is a degree $0$ graded module morphism $i_\nabla:\Diff^\star(L[1])\to\Diff^\star(\hat{L}[1])$, covering a degree $0$ graded algebra morphism $\underline{\smash{i_\nabla}}:\Diff^\star(L[1],\bbR_M)\to\Diff^\star(\hat{L}[1],\bbR_{\hat{M}})$, completely determined by
\begin{equation}
\label{eq:i_nabla}
i_\nabla(\lambda)=\lambda,\qquad i_\nabla(\square)=\nabla_\square,
\end{equation}
for all $\lambda\in\Gamma(L)$, and $\square\in\Diff(L[1])$.
In the following proposition we list some properties of $i_\nabla$.

\begin{proposition}
	\label{prop:i_nabla} \ 
	
	(1) $i_\nabla$ pr\underline{}eserves the arity, and takes values of bi-degree $(0,0)$, i.e.
	\begin{equation*}
	i_\nabla(\Diff^k(L[1]))\subseteq\Diff^k(\hat{L}[1])^{(0,0)}.
	\end{equation*}
	
	(2) $i_\nabla$ is a section of $p$, i.e.~$p\circ i_\nabla=\id$ on $\Diff^\star(L[1])$.
	
	(3) $i_\nabla$ is a co-chain map from $(\Diff^\star(L[1]),0)$ to $(\Diff^\star(\hat{L}[1]),d_G)$, i.e.~$d_G\circ i_\nabla=0$.
\end{proposition}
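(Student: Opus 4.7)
The three properties can all be established by reducing to a generating set and then propagating via the multiplicative structures. Since $i_\nabla$ is a degree~$0$ graded module morphism covering a degree~$0$ graded algebra morphism, and the products on $\Diff^\star(L[1])$ and $\Diff^\star(\hat L[1])$ are compatible with both the arity and the bi-degree (cf.~Remark~\ref{rem:compatible_gradings}), it suffices to verify each claim on sections $\lambda\in\Gamma(L)$ and on derivations $\square\in\Diff(L[1])$, the values on which are explicitly prescribed by~\eqref{eq:i_nabla}.

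For~(1) I would simply observe that $i_\nabla(\lambda)=\lambda$ lies in $\Gamma(\hat L)^{(0,0)}=\Diff^0(\hat L[1])^{(0,0)}$, while the canonical extension of $\nabla$ described just before Lemma~\ref{lem:metric_connection} ensures that $i_\nabla(\square)=\nabla_\square$ has arity~$1$ and bi-degree~$(0,0)$. Additivity of arity and bi-degree under the product then yields~(1) on the whole algebra.

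For~(2) I would check $p\circ i_\nabla=\id$ on the same generators. For a section, $p(\lambda)=\operatorname{pr}^{(0,0)}(\lambda)=\lambda$. For a derivation $\square$, evaluating $p(\nabla_\square)$ on $\lambda\in\Gamma(L)\subset\Gamma(\hat L)^{(0,0)}$ returns $\operatorname{pr}^{(0,0)}(\nabla_\square\lambda)$; but the canonical extension of $\nabla$ restricts on $\Gamma(L)$ to the tautological $\Diff L$-representation $\id:\Diff L\to\Diff L$, so $\nabla_\square\lambda=\square\lambda\in\Gamma(L)$ and hence $p(\nabla_\square)=\square$. Multiplicativity of $p$ (Proposition~\ref{prop:p}) and of $i_\nabla$ then propagate these identities to the whole of $\Diff^\star(L[1])$.

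For~(3) the two crucial inputs are Remark~\ref{rem:d_G}, according to which every section $\lambda\in\Gamma(\hat L)^{(0,0)}$ is a cocycle for $d_G$, and Lemma~\ref{lem:metric_connection}, according to which $d_G\nabla_\square=0$ for every $\square\in\Diff L$. Since $d_G=\ldsb G,-\rdsb$ is a graded derivation of the Gerstenhaber product and $i_\nabla$ is multiplicative, the vanishing of $d_G\circ i_\nabla$ on sections and derivations propagates by the Leibniz rule to all of $\Diff^\star(L[1])$. I expect the main bookkeeping hazard, when writing this up carefully, will be tracking Koszul signs during that propagation, but given the sign conventions collected in the appendix on graded multi-derivations this is routine; the only substantive ingredient is Lemma~\ref{lem:metric_connection}, which has already been established.
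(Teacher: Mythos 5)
Your proposal is correct and follows essentially the same route as the paper: reduce each claim to the generators $\Gamma(L)[1]$ and $\Diff(L[1])$ via the module-morphism (resp.\ derivation-along-a-morphism) structure of $p\circ i_\nabla$ and $d_G\circ i_\nabla$, then invoke the defining formulas~\eqref{eq:p} and~\eqref{eq:i_nabla}, Remark~\ref{rem:d_G}, and Lemma~\ref{lem:metric_connection}. No gaps.
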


\begin{proof}\ 
	
	(1) It is an immediate consequence of~\eqref{eq:i_nabla}.
	
	(2) Since $p\circ i_\nabla:\Diff^\star(L[1])\to\Diff^\star(L[1])$ is a module morphism, covering the algebra morphism $\underline{\smash{p}}\circ\underline{\smash{i_\nabla}}:\Diff^\star(L[1],\bbR_M)\to\Diff^\star(L[1],\bbR_M)$, it is enough to check that $p\circ i_\nabla$ agrees with the identity on $\Gamma(L)[1]$ and $\Diff(L[1])$.
	This is exactly the case because of~\eqref{eq:p} and~\eqref{eq:i_nabla}.
	
	(3) Since $d_G\circ i_\nabla:\Diff^\star(L[1])\to\Diff^\star(\hat{L}[1])$ is a derivation along the module morphism $i_\nabla:\Diff^\star(L[1])\to\Diff^\star(\hat{L}[1])$ (cf.~Remark~\ref{rem:derivation_along_morphism}), it is enough to check that $d_G\circ i_\nabla$ vanishes on $\Gamma(L)[1]$ and $\Diff(L[1])$, and indeed this is the case because of~\eqref{eq:i_nabla}, Remark~\ref{rem:d_G} and Lemma~\ref{lem:metric_connection}.
\end{proof}

\begin{proposition}
	\label{prop:flatness}
	The following conditions are equivalent:
	\begin{itemize}
		\item the $DL$-connection $\nabla$ in $E\to M$ is flat,
		\item the immersion $i_\nabla:D^\star(L[1])\to D^\star(\hat{L}[1])$ is a Lie algebra morphism.
	\end{itemize}
\end{proposition}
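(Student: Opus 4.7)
The plan is to reduce the Lie algebra morphism property to a check on generators and then identify the only non-trivial case with the flatness condition. Since $i_\nabla\colon\Diff^\star(L[1])\to\Diff^\star(\hat{L}[1])$ is by construction a degree zero graded module morphism covering an algebra morphism (cf.~\eqref{eq:i_nabla}), and the Schouten-Jacobi bracket is a biderivation with respect to the multi-derivation product (cf.~Proposition~\ref{prop:GJalgebra} and Remark~\ref{rem:Gerstenhaber_product_and_SJ_bracket}), once the identity $i_\nabla\ldsb A,B\rdsb=\ldsb i_\nabla A, i_\nabla B\rdsb$ is known on generators it propagates under $C^\infty(M)$-linear combinations and graded symmetric products to arbitrary $A,B\in\Diff^\star(L[1])$. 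It will therefore suffice to test the identity when $A,B$ each lie either in $\Gamma(L)[1]$ or in $\Diff(L[1])$.

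Three cases then arise. When $A=\lambda$ and $B=\mu$ are both sections of $L$, $\ldsb\lambda,\mu\rdsb=0$ and $\ldsb i_\nabla\lambda,i_\nabla\mu\rdsb=\ldsb\lambda,\mu\rdsb=0$, so the identity is automatic. When $A=\square\in\Diff(L[1])$ and $B=\lambda\in\Gamma(L)[1]$, I compute on the one hand $i_\nabla\ldsb\square,\lambda\rdsb=i_\nabla(\square(\lambda))=\square(\lambda)$, and on the other hand $\ldsb i_\nabla\square,i_\nabla\lambda\rdsb=\ldsb\nabla_\square,\lambda\rdsb=\nabla_\square(\lambda)$; these agree because the canonical extension of $\nabla$ from $E$ to $\hat{L}$ acts on $\Gamma(L)\subset\Gamma(\hat{L})$ via the tautological representation $\id\colon\Diff L\to\Diff L$, so $\nabla_\square(\lambda)=\square(\lambda)$. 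Finally, when $A=\square_1$ and $B=\square_2$ are both derivations the Schouten-Jacobi bracket reduces on both sides to the graded commutator of derivations, so the morphism property becomes exactly
\begin{equation*}
\nabla_{[\square_1,\square_2]}=[\nabla_{\square_1},\nabla_{\square_2}],\qquad \square_1,\square_2\in\Diff L,
\end{equation*}
which is the flatness of $\nabla$.

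This establishes both implications. For the non-trivial direction, assuming $\nabla$ is flat one needs that the displayed equality, hypothesized on $\Gamma(E)$, also persists on the other summands of $\Gamma(\hat{L})$: on $\Gamma(L)$ it holds because $\nabla$ restricts there to the tautological representation (and $[\square_1,\square_2]$ agrees with $\square_1\square_2-\square_2\square_1$), and on $\Gamma(E^\ast\otimes L)$ it is inherited by duality from the connection on $E\otimes L^\ast$. Modulo this routine verification, the proposition reduces to the elementary identification of the morphism condition with flatness; the main (mild) obstacle is simply the organizational point of reducing to generators via the biderivation property of $\ldsb-,-\rdsb$.
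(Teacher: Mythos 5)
Your proof is correct and follows essentially the same route as the paper, whose entire argument is the one-line remark that the claim ``reduces to a straightforward computation on generators of the module $\Diff^\star(L[1])$ over the algebra $\Diff^\star(L[1],\bbR_M)$''. You have simply carried out that computation explicitly, with the three cases on generators and the identification of the derivation--derivation case with the vanishing of the curvature handled correctly.
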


\begin{proof}
	It reduces to a straightforward computation on generators of the module $D^\star(L[1])$ over the algebra $D^\star(L[1],\bbR_{M})$.
\end{proof}

\begin{remark}
	\label{rem:local_expression_i_nabla}
	Let us keep the same notations of Remark~\ref{rem:local_frames1}.
	The tautological $\Diff L$-connection in $L$ is locally given by:
	\begin{align*}
	\nabla_{\id}\mu&=\mu,&\nabla_{\Delta_i}\mu&=0,
	\intertext{Clearly, if the $\Diff L$-connection in $E$ is locally given by:}
	\nabla_{\id}\xi^A&=\Gamma^A_B\xi^B,&\nabla_{\Delta_i}\xi^A&=\Gamma^{\phantom{i}A}_{iB}\xi^B,
	\end{align*}
	then the $\Diff L$-connection in $E^\ast\otimes L$, obtained by tensor product, is locally given by:
	\begin{align*}
	\nabla_{\id}(\xi^\ast_A\otimes\mu)&=(\delta_A^B-\Gamma_A^B)\xi^\ast_B\otimes\mu,&\nabla_{\Delta_i}(\xi^\ast_A\otimes\mu)&=-\Gamma^{\phantom{i}B}_{iA}\xi^\ast_B\otimes\mu.
	\end{align*}
	Accordingly its extension $\nabla:\Diff L\to\Diff\hat{L}$ is locally given by:
	\begin{align*}
	\nabla_{\id}&=\id-(\delta_B^A-\Gamma_B^A)(\xi^B\otimes\mu^\ast)\Delta_A-\Gamma_A^B\xi_B^\ast\Delta^A,\\
	\nabla_{\Delta_i}&=\Delta_i+\Gamma_{iB}^{\phantom{i}A}(\xi^B\otimes\mu^\ast)\Delta_A-\Gamma_{iA}^{\phantom{i}B}\xi_B^\ast\Delta^A.
	\end{align*}  	
	Hence, the action of the module morphism $i_\nabla:\Diff^\star(L[1])\to\Diff^\star(\hat{L}[1])$ is locally given by
	\begin{align*}
	i_\nabla(f\mu)&=f\mu,\\
	i_\nabla(\id)&=\id+(\Gamma_B^A-\delta_B^A)(\xi^B\otimes\mu^\ast)\Delta_A-\Gamma_A^B\xi_B^\ast\Delta^A,\\
	i_\nabla(\Delta_i)&=\Delta_i+\Gamma_{iB}^{\phantom{i}A}(\xi^B\otimes\mu^\ast)\Delta_A-\Gamma_{iA}^{\phantom{i}B}\xi_B^\ast\Delta^A.
	\end{align*}
\end{remark}

\subsubsection*{The homotopy}
We construct the homotopy in several steps.
First of all, each $\Diff L$-connection $\nabla$ in $E$ determines a degree $0$ graded module isomorphism
\begin{equation*}
\psi_\nabla:
\Diff^\star(\hat{L}[1])\overset{\simeq}{\longrightarrow} S_{{\scriptscriptstyle C^\infty(M)}}(\Gamma(E_L[-1]\oplus E^\ast[1]\oplus E_L[-2]\oplus E^\ast[0]))\underset{{\scriptscriptstyle C^\infty(M)}}{\otimes}\Diff^\star(L[1]),
\end{equation*} 
covering a degree $0$, $C^\infty(\hat{M})$-linear, graded algebra isomorphism
\begin{equation*}
\underline{\smash{\psi_\nabla}}:
\Diff^\star(\hat{L}[1],\bbR_{\hat{M}})\overset{\simeq}{\longrightarrow} S_{{\scriptscriptstyle C^\infty(M)}}(\Gamma(E_L[-1]\oplus E^\ast[1]\oplus E_L[-2]\oplus E^\ast[0]))\underset{{\scriptscriptstyle C^\infty(M)}}{\otimes}\Diff^\star(L[1],\bbR_M),
\end{equation*}
see Appendix~\ref{app:graded_multi-do}, Section~\ref{subsec:psi_nabla}.
Using the same notations of Remark~\ref{rem:local_frames1}, $\psi_\nabla$ is locally given by
\begin{gather*}
\psi_\nabla(f\mu)=f\mu,\quad\psi_\nabla(\xi^A)=\xi^A,\qquad\psi_\nabla(\xi_A^\ast\otimes\mu)=\xi_A^\ast\otimes\mu,\\
\psi_{\nabla}(\nabla_{\id})=\id,\quad \psi_\nabla(\nabla_{\Delta_i})=\Delta_i,\\ \psi_{\nabla}(\Delta_A)=\xi_A^\ast\otimes\mu,\quad \psi_{\nabla}(\Delta^A)=\xi^A.
\end{gather*}
Now we can use $\psi_\nabla$ to define a degree $0$ graded derivation $\weight:\Diff^\star(\hat{L}[1])\to\Diff^\star(\hat{L}[1])$ as follows: $\weight$ is completely determined by the following conditions:	\begin{itemize}
	\item $\psi_\nabla\circ\weight\circ\psi_\nabla^{-1}$ vanishes on $\Diff^\star(L[1])$, and
	\item $\underline{\smash{\psi_\nabla}}\circ\underline{\smash{\weight}}\circ\underline{\smash{\psi_\nabla}}^{-1}$ agrees with the identity map on $\Gamma(E_L[-1]\oplus E^\ast[1]\oplus E_L[-2]\oplus E^\ast[0])$,
\end{itemize}
where $\underline{\smash{\weight}}$ is the symbol of $\weight$.
In particular, $\weight$ measures the tensorial degree of multi-derivations w.r.t.~$E_L[-1]$, $E^\ast[1]$, $E_L[-2]$ and $E^\ast[0]$.
\begin{remark}
	Because of its very definition, $\weight$ is $C^\infty(M)$-linear, and preserves both the arity and the bi-degree, i.e.
	\begin{equation*}
	\weight(\Diff^n(\hat{L}[1])^{(h,k)})\subseteq\Diff^n(\hat{L}[1])^{(h,k)}.
	\end{equation*}
\end{remark}

\begin{remark}
	Derivation $\weight$ is locally given by:
	\begin{gather*}
	\weight(f\mu)=\weight(i_\nabla(\id))=\weight(i_\nabla(\Delta_i))=0,\\
	\weight(\xi^A)=\xi^A,\quad\weight(\xi_A^\ast\otimes\mu)=\xi_A^\ast\otimes\mu,\\
	\weight(\Delta_A)=\Delta_A,\quad\weight(\Delta^A)=\Delta^A.
	\end{gather*}
\end{remark}
We also define a degree $(-1)$ graded derivation $\tilde H_\nabla$ of $\Diff^\star(\hat{L}[1])$ as follows: $\tilde H_\nabla$ is completely determined by the following conditions:
\begin{itemize}
	\item $\psi_\nabla\circ{\tilde H}_\nabla\circ\psi_\nabla^{-1}$ vanishes on $\Diff^\star(L[1])$,
	\item $\underline{\smash{\psi_\nabla}}\circ\underline{\smash{{\tilde H}_\nabla}}\circ\underline{\smash{\psi_\nabla}}^{-1}$ vanishes on $\Gamma(E_L[-1]\oplus E^\ast[1])$, is $C^\infty(\hat{M})$-linear, and maps $\Gamma(E_L[-2]\oplus E^\ast[0])$ to $\Gamma(E_L[-1]\oplus E^\ast[1])$ acting as the desuspension map,
\end{itemize}
where $\underline{\smash{{\tilde H}_\nabla}}$ is the symbol of $\tilde H_\nabla$.

\begin{remark}
	\label{rem:H_nabla}
	From its very definition $\tilde H_\nabla$ is also graded $C^\infty(\hat{M})$-linear, of bi-degree $(1,1)$, so that
	\begin{equation*}
	\tilde H_\nabla(\Diff^n(\hat{L}[1])^{(h,k)})\subseteq\Diff^{n-1}(\hat{L}[1])^{(h+1,k+1)}.
	\end{equation*}
\end{remark}

\begin{remark}
	\label{def:H^tilde}
	Derivation $\tilde H_\nabla$ is locally given by:
	\begin{gather*}
	\tilde H_\nabla(f\mu)= \tilde H_\nabla(i_\nabla(\id))= \tilde H_\nabla(i_\nabla(\Delta_i))=0,\\
	\tilde H_\nabla(\xi^A)=\tilde H_\nabla(\xi_A^\ast\otimes\mu)=0,\quad \tilde H_\nabla(\Delta_A)=\xi_A^\ast\otimes\mu,\quad \tilde H_\nabla(\Delta^A)=\xi^A.
	\end{gather*}
\end{remark}

\begin{lemma}
	\label{lem:1contraction_data_1a}
	The following identities hold:
	\begin{equation}
	\label{eq:1contraction_data_1a}
	p\circ\tilde H_\nabla=0,\qquad \tilde H_\nabla\circ i_\nabla=0,\qquad \tilde H_\nabla^2=0,\qquad [\tilde{H}_\nabla,d_G]=\weight.
	\end{equation}
\end{lemma}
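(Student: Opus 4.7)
The plan is to verify each identity by reducing it to the action on the local generators of $\Diff^\star(\hat L[1])$ listed in Remarks~\ref{rem:local_generating_system_for_do_from_calL_to_calA} and~\ref{rem:local_generating_system_for_do_from_calL_to_calL}. All five operators $p,i_\nabla,d_G,\tilde H_\nabla,\weight$ are either (graded) module morphisms covering algebra morphisms, or (graded) derivations, or compositions thereof. Such maps are determined by their values on a set of algebra generators of $\Diff^\star(\hat L[1],\bbR_{\hat M})$ together with a set of module generators of $\Diff^\star(\hat L[1])$; hence agreement on these generators implies agreement everywhere.

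The first identity is then almost immediate from bi-degrees: by Remark~\ref{rem:H_nabla} the image of $\tilde H_\nabla$ lands in strictly positive bi-degree, while by Proposition~\ref{prop:p}(2) $p$ annihilates everything of bi-degree different from $(0,0)$, so $p\circ\tilde H_\nabla=0$. Concretely, the check reduces to the explicit formulas in Remark~\ref{def:H^tilde}: $\tilde H_\nabla$ vanishes on $f\mu,\xi^A,\xi_A^\ast\otimes\mu,\nabla_{\id},\nabla_{\Delta_i}$, while $\tilde H_\nabla(\Delta_A)=\xi_A^\ast\otimes\mu$ and $\tilde H_\nabla(\Delta^A)=\xi^A$ are killed by $p$ because they sit in bi-degrees $(0,1)$ and $(1,0)$ respectively. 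The second identity $\tilde H_\nabla\circ i_\nabla=0$ is then read off from the definition of $i_\nabla$ (equation~\eqref{eq:i_nabla}): on the module generators $\lambda,\id,\Delta_i$ of $\Diff^\star(L[1])$, $i_\nabla$ outputs $\lambda,\nabla_{\id},\nabla_{\Delta_i}$, on which $\tilde H_\nabla$ vanishes by construction.

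For the third identity, $\tilde H_\nabla^2=\tfrac12[\tilde H_\nabla,\tilde H_\nabla]$ is again a graded derivation of $\Diff^\star(\hat L[1])$ (of total degree $-2$ and bi-degree $(2,2)$), so it suffices to check it on generators. On $f\mu,\xi^A,\xi_A^\ast\otimes\mu,\nabla_{\id},\nabla_{\Delta_i}$ we already have $\tilde H_\nabla=0$; on $\Delta_A$ and $\Delta^A$ a second application of $\tilde H_\nabla$ to $\xi_A^\ast\otimes\mu$, respectively $\xi^A$, yields zero. For the fourth and most substantial identity, both sides $[\tilde H_\nabla,d_G]=\tilde H_\nabla d_G+d_G\tilde H_\nabla$ and $\weight$ are graded derivations of total degree $0$ and bi-degree $(0,0)$. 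Using Remark~\ref{rem:local_expression_G} for $d_G$ and Remark~\ref{def:H^tilde} for $\tilde H_\nabla$, the check on each generator is a Hodge-type cancellation: on $\xi^A$, $\tilde H_\nabla d_G(\xi^A)=\tilde H_\nabla(\Delta^A)=\xi^A=\weight(\xi^A)$; on $\xi_A^\ast\otimes\mu$ analogously; on $\Delta_A$, $d_G\tilde H_\nabla(\Delta_A)=d_G(\xi_A^\ast\otimes\mu)=\Delta_A=\weight(\Delta_A)$, and symmetrically on $\Delta^A$; on the remaining generators $f\mu,\nabla_{\id},\nabla_{\Delta_i}$, $d_G$ vanishes (for the last two by Lemma~\ref{lem:metric_connection}), $\tilde H_\nabla$ vanishes, and $\weight$ vanishes as well, so all three sides agree. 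The only genuine hurdle is the bookkeeping required to ensure that checking on the above list truly constitutes checking on a full system of module and algebra generators and that signs in the graded commutator are tracked correctly; this is guaranteed by Remarks~\ref{rem:local_generating_system_for_do_from_calL_to_calA} and~\ref{rem:local_generating_system_for_do_from_calL_to_calL} together with the Leibniz rule in its graded form.
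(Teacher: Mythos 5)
Your proof is correct and follows essentially the same route as the paper: the first three identities are read off from the local expressions of $p$, $i_\nabla$ and $\tilde H_\nabla$, and the fourth is obtained by checking that the degree $0$ graded derivations $[\tilde H_\nabla,d_G]$ and $\weight$ agree on the local generators $f\mu,\xi^A,\xi_A^\ast\otimes\mu,\nabla_{\id},\nabla_{\Delta_i},\Delta_A,\Delta^A$, using $d_G\circ i_\nabla=\tilde H_\nabla\circ i_\nabla=0$. The generator-by-generator computations you display are exactly the "straightforward computation in local coordinates" the paper invokes.
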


\begin{proof}
	The first three identities follow immediately from the local coordinate expressions for $p$, $i_\nabla$, and $\tilde{H}_\nabla$.
	Moreover, from $d_G\circ i_\nabla=\tilde{H}_\nabla\circ i_\nabla=0$, a straightforward computation in local coordinates shows that the graded derivations $[\widetilde H_\nabla, d_G]$ and $\weight$ agree on generators.
	Hence they coincide.
\end{proof}

\begin{remark}
	\label{rem:eigenspaces_deg}
	Since both $d_G$ and $\tilde{H}_{\nabla}$ commute with $\weight$, the eigenspaces of $\weight$ are invariant under $d_G$ and ${\tilde H}_\nabla$, i.e., for every $k\in\bbN$,
	\begin{align*}
	d_G(\ker(\weight-k\id))&\subseteq\ker(\weight-k\id),\\
	{\tilde H}_\nabla(\ker(\weight-k\id))&\subseteq\ker(\weight-k\id).
	\end{align*}
	Moreover, we have the spectral decomposition of $\weight$, namely
	\begin{equation*}
	\Diff^\star(\hat{L}[1])=\bigoplus_{k\geq 0}\ker(\weight-k\id).
	\end{equation*}
	Actually, $\Diff^\star(\hat{L}[1])=\ker p\oplus\im i_\nabla$, with $\im i_\nabla=\ker\weight$,\ and $\ker p=\bigoplus_{k>0}\ker(\weight-k\id)$.
\end{remark}

\begin{lemma}
	\label{lem:1contraction_data_1b}
	Let $H_\nabla:\Diff^\star(\hat{L}[1])\to\Diff^\star(\hat{L}[1])$ be the degree $(-1)$ graded $C^\infty(M)$-linear map, of bi-degree $(1,1)$, defined by setting:
	\begin{equation*}
	H_\nabla=
	\begin{cases}
	0,&\textnormal{ on }\ker\weight,\\
	-k^{-1}{\tilde H}_\nabla,&\textnormal{ on }\ker(\weight-k\id),\textnormal{ for all }k>0.
	\end{cases}
	\end{equation*}
	Then
	\begin{equation}
	\label{eq:1contraction_data_1b}
	i_\nabla\circ p-\id=[d_G,H_\nabla].
	\end{equation}
	Additionally, $p,i_\nabla$ and $H_\nabla$ satisfy the side conditions $H_\nabla^2=0$, $H_\nabla\circ i_\nabla=0$, $p\circ H_\nabla=0$.
\end{lemma}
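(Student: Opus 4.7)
The approach I would take exploits the spectral decomposition $\Diff^\star(\hat{L}[1]) = \bigoplus_{k \geq 0} \ker(\weight - k\id)$ from Remark~\ref{rem:eigenspaces_deg}. Since $H_\nabla$ is defined piecewise on this decomposition, and both $d_G$ and $\tilde{H}_\nabla$ preserve each eigenspace (again by Remark~\ref{rem:eigenspaces_deg}), verifying the homotopy identity~\eqref{eq:1contraction_data_1b} and the side conditions reduces to a case analysis on each $\ker(\weight - k\id)$.

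The plan is to first treat the $k=0$ case, i.e., the eigenspace $\ker \weight = \im i_\nabla$. There $H_\nabla$ vanishes identically by definition, so $d_G H_\nabla = 0$; moreover since $d_G$ preserves $\ker \weight$, the composition $H_\nabla \circ d_G$ also vanishes on $\ker \weight$, giving $[d_G, H_\nabla] = 0$ there. On the other hand, for $\Omega \in \im i_\nabla$, writing $\Omega = i_\nabla(x)$ and using $p \circ i_\nabla = \id$ from Proposition~\ref{prop:i_nabla}(2), we get $(i_\nabla \circ p - \id)(\Omega) = i_\nabla(p(i_\nabla(x))) - i_\nabla(x) = 0$, so the two sides match.

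Next, for $k > 0$, on $\ker(\weight - k\id) \subseteq \ker p$ one has $H_\nabla = -k^{-1} \tilde{H}_\nabla$. Because $\tilde{H}_\nabla$ preserves this eigenspace, we compute
\begin{equation*}
[d_G, H_\nabla] = -k^{-1}\,[d_G, \tilde{H}_\nabla] = -k^{-1}\, \weight = -\id,
\end{equation*}
using $[\tilde{H}_\nabla, d_G] = \weight$ from Lemma~\ref{lem:1contraction_data_1a} (the graded commutator of two odd maps is symmetric). Simultaneously, since $\ker(\weight - k\id) \subseteq \ker p$ for $k>0$, the right-hand side $i_\nabla \circ p - \id$ also reduces to $-\id$ on this subspace. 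The identity~\eqref{eq:1contraction_data_1b} then follows on all of $\Diff^\star(\hat{L}[1])$ by the spectral decomposition.

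The side conditions are then immediate corollaries: $H_\nabla^2 = 0$ follows from $\tilde{H}_\nabla^2 = 0$ (on positive eigenspaces) and from $H_\nabla \equiv 0$ on $\ker \weight$; $H_\nabla \circ i_\nabla = 0$ follows from $\im i_\nabla = \ker \weight$ together with $H_\nabla|_{\ker \weight} = 0$; and $p \circ H_\nabla = 0$ follows because $H_\nabla$ sends each positive eigenspace $\ker(\weight - k\id)$ into itself (hence into $\ker p$) and vanishes on $\ker \weight$. I do not foresee a real obstacle here: the construction of $H_\nabla$ has been engineered precisely to invert $d_G$ on $\ker p$ while respecting the grading by $\weight$, so the main ``work'' was done in setting up Lemma~\ref{lem:1contraction_data_1a} and Remark~\ref{rem:eigenspaces_deg}; the only mild subtlety is keeping track of the sign convention $[\tilde{H}_\nabla, d_G] = [d_G, \tilde{H}_\nabla]$ for odd operators.
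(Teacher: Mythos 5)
Your proof is correct and is exactly the argument the paper intends: the paper's own proof is the one-liner ``immediate consequence of Lemma~\ref{lem:1contraction_data_1a} and Remark~\ref{rem:eigenspaces_deg}'', and you have simply spelled out that eigenspace-by-eigenspace verification, including the correct use of $[\tilde H_\nabla,d_G]=\weight$ on $\ker(\weight-k\id)$ and of $\im i_\nabla=\ker\weight$, $\ker p=\bigoplus_{k>0}\ker(\weight-k\id)$. No gaps.
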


\begin{proof}
	It is an immediate consequence of Lemma~\ref{lem:1contraction_data_1a} and Remark~\ref{rem:eigenspaces_deg}.
\end{proof}

The following proposition summarizes the above discussion.

\begin{proposition}
	\label{prop:1contraction_data_1}
	Every  $\Diff L$-connection $\nabla$ in $E$ determines the following set of contraction data:
	\begin{equation}
	\label{eq:1contraction_data_1}
	\begin{tikzpicture}[>= stealth,baseline=(current bounding box.center)]
	\node (u) at (0,0) {$(\Diff^\star(\hat{L}[1]),d_G)$};
	\node (d) at (5,0) {$(\Diff^\star(L[1]),0)$};
	\draw [transform canvas={yshift=-0.5ex},->] (d) to node [below] {\footnotesize $i_\nabla$} (u);
	\draw [transform canvas={yshift=+0.5ex},<-] (d) to node [above] {\footnotesize $p$} (u);
	\draw [->] (u.south west) .. controls +(210:1) and +(150:1) .. node[left=2pt] {\footnotesize $H_\nabla$} (u.north west);
	\end{tikzpicture}
	\end{equation}
	In particular, $p$ is a quasi-isomorphism, so that $H_{CE}(\hat{M},\hat{L},G)\simeq\Diff^\ast(L[1])$, in a canonical way.
\end{proposition}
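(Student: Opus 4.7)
The plan is to verify each of the three clauses of Definition~\ref{def:contraction_data} together with the side conditions, and then to deduce the quasi-isomorphism assertion as a formal consequence. This is essentially a bookkeeping step, since all the substantive work has been distributed across Propositions~\ref{prop:p} and~\ref{prop:i_nabla} and Lemma~\ref{lem:1contraction_data_1b}.

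First I would observe that the identity $p\circ i_\nabla=\id$, recorded as Proposition~\ref{prop:i_nabla}(2), forces $p$ to be surjective and $i_\nabla$ to be injective. That $p$ is a co-chain map from $(\Diff^\star(\hat{L}[1]),d_G)$ to $(\Diff^\star(L[1]),0)$ is Proposition~\ref{prop:p}(3), and that $i_\nabla$ is a co-chain map in the opposite direction is Proposition~\ref{prop:i_nabla}(3). This establishes the projection and immersion clauses. Next, Lemma~\ref{lem:1contraction_data_1b} provides $H_\nabla$ as a homotopy witnessing $i_\nabla\circ p-\id=[d_G,H_\nabla]$, and simultaneously records the three side conditions $H_\nabla^2=0$, $H_\nabla\circ i_\nabla=0$, $p\circ H_\nabla=0$. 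Hence the data $(p,i_\nabla,H_\nabla)$ fit the axiomatics of Definition~\ref{def:contraction_data}.

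For the quasi-isomorphism claim, the plan is to invoke the standard fact that the projection in any set of contraction data is a quasi-isomorphism. Passing to cohomology, $p\circ i_\nabla=\id$ gives $H(p)\circ H(i_\nabla)=\id$, and the homotopy identity $i_\nabla\circ p-\id=[d_G,H_\nabla]$ gives $H(i_\nabla)\circ H(p)=\id$. Since the target complex carries the zero differential, $H(\Diff^\star(L[1]),0)=\Diff^\ast(L[1])$, and by Remark~\ref{rem:d_J_ungraded} the source cohomology is exactly $H_{CE}(\hat{M},\hat{L},G)$; this yields the asserted canonical isomorphism $H_{CE}(\hat{M},\hat{L},G)\simeq\Diff^\ast(L[1])$.

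There is no genuine obstacle at this stage: the real work was carried out in Lemmas~\ref{lem:1contraction_data_1a}--\ref{lem:1contraction_data_1b}, where $H_\nabla$ is produced by dividing $\widetilde H_\nabla$ by the eigenvalue $k$ on each eigenspace $\ker(\weight-k\,\id)$ with $k>0$, and the key identity $[\widetilde H_\nabla,d_G]=\weight$ is then used to verify both the homotopy relation and the side conditions. Once that is in hand, the present proposition follows by pure assembly.
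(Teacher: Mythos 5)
Your proof is correct and follows essentially the same route as the paper, which simply presents Proposition~\ref{prop:1contraction_data_1} as a summary of Propositions~\ref{prop:p} and~\ref{prop:i_nabla} and Lemmas~\ref{lem:1contraction_data_1a}--\ref{lem:1contraction_data_1b}, with the quasi-isomorphism claim being the standard consequence of contraction data. The only nitpick is that the identification of the source cohomology with $H_{CE}(\hat{M},\hat{L},G)$ rests on the graded Remark~\ref{rem:d_J} rather than the ungraded Remark~\ref{rem:d_J_ungraded}, but this is a citation detail and not a mathematical gap.
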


\subsection{Existence and uniqueness of the liftings: the proofs}
\label{subsec:existence_uniqueness_lifted_graded_Jacobi}

\begin{proof}[Proof of Theorem~\ref{theor:existence_BFV_brackets}]
	Fix a $\Diff L$-connection $\nabla$ in $E$.
	The corresponding $\hat{J}^\nabla$ is constructed by applying Proposition~\ref{prop:SBSO_existence}.
	It is enough to use contraction data~\eqref{eq:1contraction_data_1} for contraction data~\eqref{eq:homotopy_equivalence}, and set $\calF_n:=\bigoplus_{j\geq n}\Diff^\star(\hat{L}[1])^{(i,j)}$, $N=0$, and $\bar{Q}:=G+i_\nabla(J)$.
	In this special case, the necessary and sufficient condition~\eqref{eq:prop:SBSO_existence} is trivially satisfied.
\end{proof}

\begin{proof}[Proof of Proposition~\ref{prop:existence_BFV_brackets}]
	Because of Proposition~\ref{prop:i_nabla}(3), and $\ldsb G,G\rdsb=0$, we just have
	\begin{equation*}
	\ldsb G+i_\nabla\J,G+i_\nabla\J\rdsb=\ldsb i_\nabla\J,i_\nabla\J\rdsb.
	\end{equation*}
	Hence, when $\nabla$ is flat, Proposition~\ref{prop:flatness} guarantees that $\ldsb G+i_\nabla\J,G+i_\nabla\J\rdsb=0$.
	In such case the step-by-step obstruction method does not add any perturbative correction to $G+i_\nabla\J$, and so the output is $\hat{\J}^\nabla=G+i_\nabla\J$.
\end{proof}

\begin{proof}[Proof of Theorem~\ref{theor:uniqueness_BFV_brackets}]
	It follows from Proposition~\ref{prop:SBSO_uniqueness} and Corollary~\ref{cor:SBSO_uniqueness}.
\end{proof}

\begin{proof}[Proof of Proposition~\ref{prop:lifting_CE_cohom}]
	\label{proof:prop:lifting_CE_cohom}
	Let $\J$ and $\hat{\J}$ be Jacobi structures on $L\to M$ and $\hat{L}\to\hat{M}$ respectively.
	Assume that $\hat{\J}$ is a lifting of $\J$, and fix a $\Diff L$-connection $\nabla$ in $E\to M$.
	
	Using the same terminology as in~\cite{Crainic_perturbation}, $\delta:=d_{\hat{\J}}-d_G$ provides a small perturbation of the contraction data~\eqref{eq:1contraction_data_1} determined by $\nabla$.
	Actually, from Remarks~\ref{rem:BFV_brackets} and~\ref{rem:H_nabla}, it follows that
	\begin{align*}
	\delta(\Diff^n(\hat{L}[1])^{(p,q)})&\subseteq\bigoplus_{k\geq 0}\Diff^{n+1}(\hat{L}[1])^{(p+k,q+k)},&
	(\delta H_\nabla)(\Diff^n(\hat{L}[1])^{(p,q)})&\subseteq\bigoplus_{k\geq 1}\Diff^n(\hat{L}[1])^{(p+k,q+k)},
	\end{align*}
	so that $\delta H_\nabla$ is nilpotent and $(\id-\delta H_\nabla)$ is invertible with $(\id-\delta H_\nabla)^{-1}=\sum_{k=0}^\infty(\delta H_\nabla)^k$.
	Hence the Homological Perturbation Lemma (see, e.g., \cite{brown1967twisted,Crainic_perturbation}) applies with the contraction data~\eqref{eq:1contraction_data_1} and their small perturbation $\delta$ as input.
	The output is a new (deformed) set of contraction data
	\begin{equation*}
	\begin{tikzpicture}[>= stealth,baseline=(current bounding box.center)]
	\node (d) at (0,0) {$(\Diff^\star(L[1]),d')$};
	\node (u) at (-5,0) {$(\Diff^\star(\hat{L}[1]),d_{\hat{\J}})$};
	\draw [transform canvas={yshift=-0.5ex},->] (d) to node [below] {\footnotesize $i_\nabla'$} (u);
	\draw [transform canvas={yshift=0.5ex},<-] (d) to node [above] {\footnotesize $p'$} (u);
	\draw [->] (u.south west) .. controls +(210:1) and +(150:1) .. node[left=2pt] {\footnotesize $H_\nabla'$} (u.north west);
	\end{tikzpicture}
	\end{equation*}
	given by
	\begin{equation*}
	i_\nabla'=\sum_{k=0}^\infty(H_\nabla\delta)^ki_\nabla,\quad p'=\sum_{k=0}^\infty p(\delta H_\nabla)^k,\quad H_\nabla'=\sum_{k=0}^\infty H_\nabla(\delta H_\nabla)^k,\quad d'=\sum_{k=0}^\infty p\delta(H_\nabla\delta)^k i_\nabla.
	\end{equation*}
	It follows from Propositions~\ref{prop:p} and~\ref{prop:i_nabla} that $p\delta(H_\nabla\delta)^ki_\nabla=0$, for all $k\geq 1$.
	Moreover, using the same notations as in Remark~\ref{rem:BFV_brackets}, we can write
	\begin{equation*}
	d'\square=p\ldsb \hat{\J}-G,i_\nabla\square\rdsb=p\ldsb\hat{\J}_1,i_\nabla\square\rdsb=\ldsb\J,\square\rdsb=d_{\J}\square,
	\end{equation*}
	for all $\square\in\Diff^\star(L[1])$.
	Hence $d'=d_{\J}$, and $i_\nabla'$ is the desired quasi-isomorphism.
\end{proof}

\section{BRST charges}

\label{sec:BRST-charges}

Let $S$ be a coisotropic submanifold of a Jacobi manifold $(M,L,J)$.
In analogy with the Poisson case~\cite{herbig2007,schatz2009bfv}, we will attach to $S$ an algebraic invariant, the BFV-complex.
The BFV-complex provides a homological resolution of the reduced Gerstenhaber-Jacobi algebra of $S$, and encodes its coisotropic deformations and their local moduli spaces.
Since we are only interested in small deformations of $S$, we can restrict to work within a tubular neighborhood of $S$ in $M$.
Accordingly, we will also need a suitable tubular neighborhood of the restricted line bundle $\ell:=L|_S\to S$ in $L\to M$.
Recall, from~\cite{LOTV}, that a fat tubular neighborhood $(\tau,\underline{\smash{\tau}})$ of $\ell\to S$ in $L\to M$ consists of two layers:
\begin{itemize}
	\item a tubular neighborhood $\underline{\smash{\tau}}:NS\to M$ of $S$ in $M$,
	\item an embedding $\tau:L_{NS}\to L$ of line bundles, over $\underline{\smash{\tau}}:NS\to M$, such that $\tau=\id$ on $L_{NS}|_S\simeq\ell$,
\end{itemize}
where $\pi:NS\to S$ is the normal bundle to $S$ in $M$, and $L_{NS}:=\pi^\ast\ell\to NS$.
By transferring Jacobi structures along a fat tubular neighborhood, we end up with the following local model for a Jacobi manifold $(M,L,J)$ around an arbitrary submanifold $S\subset M$.
\begin{itemize}
	\item The manifold $M$ is modeled on the total space $\calE$ of a vector bundle $\pi:\calE\to S$, and $S$ is identified with the image of the zero section of $\pi$.
	\item The line bundle $L\to M$ is modeled on $\pi^\ast\ell\to\calE$, for some line bundle $\ell\to S$.
\end{itemize}
In this section, working within such local model, we will apply the lifting procedure of the previous section to the case when $E\to M$ is $V\calE \simeq \pi^\ast\calE\to\calE$, the vertical bundle of $\calE\to S$.
In particular, $E\to M = \calE$ admits a tautological section, that we denote by $\Omega_E$, mapping $x \in \calE$ to $(x,x) \in \pi^\ast \calE = \calE \times_S \calE$.

\subsection{Existence and uniqueness of the BRST charges: the statements}
\label{subsec:BRST-charges}

Let $u^i$ be a system of local coordinates on $S$, $\eta^A$ a local frame of $\pi:\calE\to S$, and $\mu$ a local frame of $\ell\to S$.
Denote by $\eta_A^\ast$ the local frame of $\calE^\ast\to S$ dual to $\eta^A$, and by $y_A$ the corresponding fiber-wise linear functions on $\calE$.
Then $\xi^A:=\pi^\ast\eta^A$ is a local frame of $E\to\calE$, and $\xi_A^\ast=\pi^\ast\eta_A^\ast$ is the dual frame of $E^\ast\to\calE$.
Furthermore a local frame of $E_L\to\calE$ is given by $\xi^A\otimes \pi^\ast\mu^\ast=\pi^\ast(\eta^A\otimes\mu^\ast)$, with $\xi_A^\ast\otimes \pi^\ast\mu=\pi^\ast(\eta_A^\ast\otimes\mu)$ the dual local frame of $(E_L)^\ast\to\calE$.

\begin{proposition}
	\label{prop:delta_s}
	Let $s$ be an arbitrary section of $\pi:\calE\to S$.
	The section $\Omega_E[s]:=\Omega_E-\pi^\ast s\in\Gamma(E)$ is a MC element of $(\Gamma(\hat{L}),\{-,-\}_G)$.
	In particular $d[s]:=\{\Omega_E[s],-\}_G$ is a bi-degree $(0,-1)$ homological Hamiltonian derivation of the graded Jacobi manifold $(\hat{M},\hat{L},\{-,-\}_G)$.
\end{proposition}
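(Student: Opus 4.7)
The plan is to reduce both claims to a bi-degree count plus one application of the graded Jacobi identity, with no genuine computation required.

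First, I would check the grading of the input: both $\Omega_E$ and $\pi^\ast s$ lie in $\Gamma(E)$, so $\Omega_E[s] \in \Gamma(E) = \Gamma(\hat{L})^{(1,0)}$. Since $G \in \Diff^2(\hat{L}[1])^{(-1,-1)}$, the associated Jacobi bracket shifts bi-degrees by $(-1,-1)$, in the sense that
\begin{equation*}
\{-,-\}_G : \Gamma(\hat{L})^{(h_1,k_1)} \times \Gamma(\hat{L})^{(h_2,k_2)} \longrightarrow \Gamma(\hat{L})^{(h_1 + h_2 - 1,\, k_1 + k_2 - 1)}.
\end{equation*}
In particular, the bracket of two sections of $\Gamma(\hat{L})^{(1,0)}$ lands in $\Gamma(\hat{L})^{(1,-1)}$, which vanishes because anti-ghost degrees are non-negative. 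Hence $\{\Omega_E[s], \Omega_E[s]\}_G = 0$, which is the Maurer--Cartan condition (the bracket has total ghost number $0$, and $\Omega_E[s]$ has total ghost number $1$, odd, so the self-bracket is not automatically zero by skew-symmetry; this is what makes the statement non-empty).

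Next, the bi-degree of $d[s] = \{\Omega_E[s], -\}_G$ is the sum $(1,0) + (-1,-1) = (0,-1)$, as asserted. For the homological property, since $\Omega_E[s]$ is odd and the Jacobi bracket has total ghost number $0$, the graded Jacobi identity yields
\begin{equation*}
d[s]^2(\lambda) = \{\Omega_E[s], \{\Omega_E[s], \lambda\}_G\}_G = \tfrac{1}{2} \{\{\Omega_E[s], \Omega_E[s]\}_G, \lambda\}_G = 0,
\end{equation*}
and the fact that $d[s]$ is a derivation of $\{-,-\}_G$ is again a reformulation of the graded Jacobi identity; this is the defining property of a Hamiltonian derivation.

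There is no serious obstacle: the Maurer--Cartan property is forced purely by the grading (there is simply no ``room'' for a non-trivial self-bracket of an element of pure bi-degree $(1,0)$), and the homological property is the standard MC-implies-$d^2=0$ consequence. An equivalent verification could be carried out locally, using the formula $G = \slashed\Delta_A \slashed\Delta^A \otimes \mu$ from Remark~\ref{rem:local_expression_G} together with the fact that $\slashed\Delta^A$ annihilates $\xi^B$, but the degree argument above is shorter and more conceptual.
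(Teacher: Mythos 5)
Your argument is correct and is exactly the one the paper intends: its proof reads ``It is straightforward for ghost/anti-ghost bi-degree reasons,'' and your observation that $\{\Omega_E[s],\Omega_E[s]\}_G$ must land in $\Gamma(\hat{L})^{(1,-1)}=0$ is precisely that bi-degree argument, with the homological/Hamiltonian claims then following from the graded Jacobi identity as you say. No gaps.
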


\begin{proof}
	It is straightforward for ghost/anti-ghost bi-degree reasons.
\end{proof}

\begin{remark}
	\label{rem:local_expression_delta_s}
	The tautological section $\Omega_E$, and an arbitrary $s\in\Gamma(\pi)$  are locally given by $\Omega_E=y_A\xi^A$, and $s=g_A(u^i)\eta^A$.
	Hence $\Omega_E[s]$, and the associated homological derivation $d[s]$ are locally given by
	\begin{equation*}
	\Omega_E[s]=(y_A-g_A(u^i))\xi^A,\quad\textnormal{and}\quad d[s]=(y_A-g_A(u^i))\Delta^A.
	\end{equation*}
\end{remark}

Now, let $\J$ be a Jacobi structure on $L\to M$, and let $\hat{\J}$ be a lifting of $\J$ to $\hat{L}\to\hat{M}$.
Fix an arbitrary $ s\in\Gamma(\pi)$.
In general $\Omega_E[s]$ fails to be a MC element of $(\Gamma(\hat{L}),\{-,-\}_{\hat{\J}})$.
The aim of this section is to find conditions on $s$ so that $\Omega_E [s]$ can be deformed into a suitable MC element of $(\Gamma(\hat{L}),\{-,-\}_{\hat{\J}})$.
The latter will be called an \emph{$s$-BRST charge}.
It turns out that an $s$-BRST charge exists precisely when the image of $s$ is a coisotropic submanifold of $(M,L,J)$.
Now, suppose $S$ is coisotropic itself.
There are two reasons why $s$-BRST charges are interesting.
First of all, as it will be shown in Section~\ref{sec:BFV-complex}, the choice of a $0$-BRST charge represents the second and last step in the construction of the BFV-complex of $S$.
Moreover, as it will be shown in Section~\ref{sec:coisotropic_deformation_problem}, the small coisotropic deformations of $S$ are encoded into its BFV-complex through the BRST charges.
\begin{definition}
	\label{def:BRST-charge}
	An \emph{$s$-BRST charge} wrt $\hat{\J}$ is a MC element $\Omega$ of $(\Gamma(\hat{L}),\{-,-\}_{\hat{\J}})$ having $\Omega_E[s]$ as its bi-degree $(1,0)$ component.
	Explicitly, $\Omega\in\Gamma(\hat{L})^1$, $\{\Omega,\Omega\}_{\hat{\J}}=0$, and $\operatorname{pr}^{(1,0)}\Omega=\Omega_E[s]$.
\end{definition}

\begin{remark}
	Our $s$-BRST charges are analogous to what Sch\"atz calls \emph{normalized MC elements}. In particular, $0$-BRST charges are analogous to Sch\"atz's \emph{BFV-charges}~\cite{schatz2009bfv}. We adopted the terminology ``BRST charge'' because it seems to be more standard in the Physics literature on the subject.
\end{remark}

\begin{remark}
	\label{rem:BRST_charge}
	Assume that $\hat{\J}=\sum_{k=0}^\infty\hat{\J}_k$, a lifting of $\J$ to $\hat{L}\to\hat{M}$, has been decomposed as in Remark~\ref{rem:BFV_brackets}.
	Every $\Omega\in\Gamma(\hat{L})^1$ decomposes as follows
	\begin{equation*}
	\Omega=\sum_{i=0}^\infty\Omega_i,
	\end{equation*}
	with $\Omega_i\in\Gamma(\hat{L})^{(i+1,i)}$.
	Accordingly, an $s$-BRST charge wrt $\hat{\J}$ is a degree $1$ section $\Omega$ of $\hat{L}\to\hat{M}$ such that
	\begin{gather}
	\label{eq:rem:BRST_charge1}
	\Omega_0=\Omega_E[s],\\
	\label{eq:rem:BRST_charge2}
	2d[s]\Omega_h+\sum_{\genfrac{}{}{0pt}{}{i,j< h,k\geq 0}{i+j+k=h}}\{\Omega_i,\Omega_j\}_{\hat\J_k}=0,\quad\textnormal{for all } h\geq 1.
	\end{gather}
\end{remark}

Given $s\in\Gamma(\pi)$, next Theorem~\ref{theor:existence_BRST-charge} shows that, as already anticipated, an $s$-BRST charge wrt $\hat{\J}$ exists precisely when the image of $s$ is coisotropic.
In this case, the $s$-BRST charge is also unique up to isomorphisms (Theorem \ref{theor:uniqueness_BRST-charge}).

\begin{theorem}[Existence]
	\label{theor:existence_BRST-charge}
	Let $\J$ be a Jacobi structure on $L\to M$, let $\hat{\J}$ be a lifting of $\J$ to $\hat{L}\to\hat{M}$, and let $s\in\Gamma(\pi)$.
	Then there exists an $s$-BRST charge wrt $\hat{\J}$ iff  the image of $s$ is coisotropic in $(M,L,\J)$.
\end{theorem}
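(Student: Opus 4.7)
My plan is to prove both implications by analyzing the bi-degree expansion of the Maurer--Cartan equation given in Remark~\ref{rem:BRST_charge}. For ($\Leftarrow$), I would construct $\Omega=\sum_{i\geq 0}\Omega_i$ inductively using the step-by-step obstruction (SBSO) method of Proposition~\ref{prop:SBSO_existence}, filtering $\Gamma(\hat{L})$ by anti-ghost number via $\calF_q := \bigoplus_{q'\geq q}\Gamma(\hat{L})^{(q'+1,q')}$ and fixing the initial term $\bar\Omega := \Omega_E[s]$. For ($\Rightarrow$), the coisotropic condition on $\im(s)$ will emerge from the lowest-order component of the MC equation satisfied by any assumed $s$-BRST charge.

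The essential preliminary for the existence direction is a set of contraction data for the co-chain complex $(\Gamma(\hat{L}),d[s])$, analogous to~\eqref{eq:1contraction_data_1}. Since $d[s]$ is given locally by $(y_A-g_A)\Delta^A$ (cf.~Remark~\ref{rem:local_expression_delta_s}) and $\Delta^A$ acts only on anti-ghosts, $d[s]$ is a Koszul-type differential for the section $\Omega_E[s]$ of $E$ whose zero locus is $\im(s)$. A Poincar\'e-lemma argument built from a fibrewise-linear retraction of $\calE$ onto $\im(s)$ along $s$-transverse directions will then provide a projection $p_{[s]}$, immersion $i_{[s]}$, and homotopy $H_{[s]}$ satisfying the contraction side conditions; crucially, the cohomology of $d[s]$ is concentrated in anti-ghost degree $0$, with $H^{(h,0)}(\Gamma(\hat{L}),d[s])\simeq\Gamma(\wedge^h N_\ell(\im s)\otimes\ell|_{\im s})$ after the canonical identifications. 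With this apparatus, the SBSO recursion at order $h\geq 1$ asks to solve $2d[s]\Omega_h=-\mathrm{obs}_h$, where $\mathrm{obs}_h := \sum_{i+j+k=h,\ i,j<h}\{\Omega_i,\Omega_j\}_{\hat{J}_k}$. A routine Jacobi-identity calculation (using $\{\hat{J},\hat{J}\}=0$ and the prior MC equations at orders strictly less than $h$) shows that $\mathrm{obs}_h$ is a $d[s]$-cocycle, so solvability reduces to $p_{[s]}(\mathrm{obs}_h)=0$.

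The main obstacle, and the step where the coisotropic hypothesis actually enters, is the case $h=1$: here $\mathrm{obs}_1=\{\Omega_E[s],\Omega_E[s]\}_{\hat{J}_1}$, and using $p(\hat{J}_1)=J$ from~\eqref{eq:rem:BFV_brackets2} together with the local expression $\Omega_E[s]=(y_A-g_A)\xi^A$, a direct computation identifies $p_{[s]}(\mathrm{obs}_1)\in\Gamma(\wedge^2 N_\ell(\im s)\otimes\ell|_{\im s})$ with the class represented by the Jacobi brackets of the local generators $(y_A-g_A)\mu$ of $\Gamma_{\im(s)}$ restricted to $\im(s)$. By Definition~\ref{def:cois} this class vanishes if and only if $\im(s)$ is coisotropic, which settles both implications simultaneously at the critical order: for ($\Rightarrow$), applying $p_{[s]}$ to the $h=1$ MC equation of any assumed $s$-BRST charge kills $d[s]\Omega_1$ and forces $p_{[s]}(\mathrm{obs}_1)=0$, i.e.~coisotropy of $\im(s)$. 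For $h\geq 2$, the obstruction $\mathrm{obs}_h$ lies in bi-degree $(h+1,h-1)$ with anti-ghost number $h-1>0$, hence is annihilated by $p_{[s]}$ for bi-degree reasons alone, and the SBSO induction completes successfully without further input.
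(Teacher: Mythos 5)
Your proposal is correct and follows essentially the same route as the paper: the contraction data you sketch via the fibrewise retraction onto $\im(s)$ are exactly those of Proposition~\ref{prop:2contraction_data}, the step-by-step obstruction recursion with the anti-ghost filtration is the paper's application of Proposition~\ref{prop:SBSO_existence} with $\calF_n=\calL_{\geq n+1}$, $N=-1$, $\bar{Q}=\Omega_E[s]$, and your identification of the sole nontrivial obstruction $\wp[s]\{\Omega_E[s],\Omega_E[s]\}_{\hat{\J}}$ with the coisotropy of $\im(s)$ is Lemma~\ref{lem:BRST_coisotropic}. The only cosmetic imprecision is that your filtration should be taken on all of $\Gamma(\hat{L})$ (all bi-degrees with anti-ghost number $\geq n+1$), not just on the degree-$1$ components, so that it is compatible with the graded Lie bracket as the obstruction method requires.
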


\begin{remark}
	\label{rem:filtration_Ham}
	Let $\{\calL^\bullet_{\geq n}\}_{n\geq 0}$ be the \emph{finite} decreasing filtration of $\Gamma(\hat{L})^\bullet$ by the graded $C^\infty(\hat{M})$-submodules $\calL^\bullet_{\geq n}$ defined as the sum of those $\Gamma(\hat{L})^{(h,k)}$, with $k\geq n$.
	Then there is a \emph{finite} decreasing filtration $\{\Ham_{\geq n}(\hat{M},\hat{L},\hat{\J})\}_{n\geq 0}$ of the group $\Ham(\hat{M},\hat{L},\hat{\J})$ of Hamiltonian automorphisms of $(\hat{M},\hat{L},\hat{\J})$.
	Namely, for $n\geq 0$, $\Ham_{\geq n}(\hat{M},\hat{L},\hat{\J})$ consists of those $\Phi\in\Ham(\hat{M},\hat{L},\hat{\J})$ such that $\Phi=\Phi_1$ for a smooth path of Hamiltonian automorphisms $\{\Phi_t\}_{t\in I}$ integrating $\{\lambda_t,-\}_{\hat{\J}}$, with $\{\lambda_t\}_{t\in I}\subset\calL^0_{\geq n}$ (cf.~Definitions~\ref{def:Hamiltonian_family} and~\ref{def:Hamiltonian_single}).
	Here, as in what follows, $I$ denotes the closed interval $[0,1]$. 
\end{remark}

\begin{theorem}[Uniqueness]
	\label{theor:uniqueness_BRST-charge}
	Let $\J$ be a Jacobi structure on $L\to M$, and let $\hat{\J}$ be a lifting of $\J$ to $\hat{L}\to\hat{M}$.
	Moreover, let $s\in\Gamma(\pi)$, and let $\Omega$ and $\Omega'$ be $s$-BRST charges wrt $\hat{\J}$.
	Then there exists $\phi\in\Ham_{\geq 2}(\hat{M},\hat{L},\hat{\J})$ such that $\phi^\ast\Omega'=\Omega$.
\end{theorem}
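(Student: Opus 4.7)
The plan is to construct $\phi$ by a step-by-step obstruction argument along the finite anti-ghost filtration $\{\calL_{\geq n}\}$, producing inductively Hamiltonian automorphisms $\phi_n \in \Ham_{\geq 2}(\hat{M},\hat{L},\hat{\J})$ with $\phi_n^\ast\Omega' - \Omega \in \calL^1_{\geq n}$. The base case $\phi_1 := \id$ works because both $s$-BRST charges agree in bi-degree $(1,0)$, being equal to $\Omega_E[s]$. For the inductive step, I let $\delta_n \in \Gamma(\hat{L})^{(n+1,n)}$ denote the leading bi-degree component of $\phi_n^\ast\Omega' - \Omega$ and look for $\lambda_n \in \Gamma(\hat{L})^{(n+1,n+1)} \subset \calL^0_{\geq 2}$; I then set $\phi_{n+1} := \phi_n \circ \Phi_1^{\lambda_n}$, with $\Phi_t^{\lambda_n}$ the time-$t$ Hamiltonian flow of $\lambda_n$. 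A bi-degree count using $\hat{\J}_k \in \Diff^2(\hat{L}[1])^{(k-1,k-1)}$ (so that only $\hat{\J}_0 = G$ contributes at bi-degree $(n+1,n)$, and only the $(1,0)$-piece of $\phi_n^\ast\Omega'$ participates) identifies the bi-degree $(n+1,n)$ component of $\phi_{n+1}^\ast\Omega' - \Omega$ as $\delta_n - d[s]\lambda_n$, so the induction advances precisely when the equation $d[s]\lambda_n = \delta_n$ is solvable.

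The solvability is the cohomological heart of the proof and rests on two claims. First, $d[s]\delta_n = 0$: writing $\Delta := \phi_n^\ast\Omega' - \Omega$ and subtracting the MC identities for $\phi_n^\ast\Omega'$ and $\Omega$ yields $\{\Omega,\Delta\}_{\hat{\J}} = -\tfrac{1}{2}\{\Delta,\Delta\}_{\hat{\J}}$; inspection at bi-degree $(n+1,n-1)$ shows that the right-hand side vanishes (the bracket of two elements of $\calL_{\geq n}$ via any $\hat{\J}_k$ sits in anti-ghost $\geq 2n + k - 1 > n-1$ for $n\geq 1$) while the left-hand side reduces to $\{\Omega_E[s],\delta_n\}_G = d[s]\delta_n$. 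Second, and this I expect to be the principal obstacle, $(\Gamma(\hat{L}),d[s])$ must be acyclic in positive anti-ghost degree: by Remark~\ref{rem:local_expression_delta_s}, $d[s]=(y_A-g_A(u^i))\Delta^A$ is locally the Koszul differential associated with the regular sequence $(y_A-g_A)$ cutting out $\im s \subset \calE$, so the corresponding Koszul complex is a free resolution of $C^\infty(\im s)\simeq C^\infty(S)$ and hence acyclic in positive degree; tensoring with the $E_L$-factors (on which $\Delta^A$ vanishes) preserves acyclicity, and a partition-of-unity argument globalizes local primitives into a global $\lambda_n$.

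Combining closedness and acyclicity yields $\lambda_n \in \Gamma(\hat{L})^{(n+1,n+1)}$ solving $d[s]\lambda_n = \delta_n$, so the induction proceeds. Since $\calL_{\geq n} = 0$ for $n > \rank\calE$, the procedure stabilizes after $N := \rank\calE$ steps, producing a finite composition of Hamiltonian flows whose generators all lie in $\calL^0_{\geq 2}$. The resulting $\phi$ therefore belongs to $\Ham_{\geq 2}(\hat{M},\hat{L},\hat{\J})$ and satisfies $\phi^\ast\Omega' = \Omega$ by construction. The whole scheme can alternatively be packaged as a direct application of the general step-by-step obstruction results of Appendix~B (compare Proposition~\ref{prop:SBSO_uniqueness} and Corollary~\ref{cor:SBSO_uniqueness}), which is presumably the route the paper takes; I have preferred the hands-on variant above because it isolates the Koszul resolution as the single essential cohomological input.
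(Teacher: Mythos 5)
Your proof is correct, and it is in essence the paper's proof with the abstraction stripped away: the paper simply invokes the general step-by-step obstruction uniqueness result (Proposition~\ref{prop:SBSO_uniqueness}) applied to the graded Lie algebra $(\Gamma(\hat{L}),\{-,-\}_{\hat{\J}})$ with the filtration $\calF_n:=\calL_{\geq n+1}$, $N=-1$, $\bar{Q}=\Omega_E[s]$, and the contraction data~\eqref{eq:2contraction_data_1}, which is exactly your induction on anti-ghost degree with the corrections $\exp(\{R,-\}_{\hat{\J}})$, $R=h[s](\Omega'-\Omega)\in\calL^0_{\geq 2}$. The one genuine difference is how the cohomological input is supplied. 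The paper does not need your Koszul/regular-sequence argument: the acyclicity of $(\Gamma(\hat{L}),d[s])$ in positive anti-ghost degree, together with a \emph{canonical} choice of primitive, is already packaged in the explicit global homotopy $h[s]$ of Proposition~\ref{prop:2contraction_data}, built from the fiberwise retraction $\psi^\ast_t[s]$ onto $\im s$; the identity $\iota\circ\wp[s]-\id=[d[s],h[s]]$ gives $\lambda_n=-h[s]\delta_n$ directly, with no partition of unity and no local-to-global step. Your route via the Koszul complex of the regular sequence $(y_A-g_A)$ is sound (the sequence is regular in the $C^\infty$ sense after the fiberwise change of coordinates $\tilde y_A=y_A-g_A$, and $d[s]$ is $C^\infty(\calE)$-linear, so local primitives glue by a partition of unity), but it re-proves, less explicitly, what $h[s]$ already encodes; the explicit homotopy has the added benefit of satisfying the side conditions needed elsewhere (e.g.\ for the homotopy transfer in Theorem~\ref{theor:L_infty_qi}). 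Your closedness computation for $\delta_n$ and the bi-degree bookkeeping identifying the new leading term as $\delta_n\mp d[s]\lambda_n$ are both correct, as is the observation that the finitely many generators all lie in $\calL^0_{\geq 2}$, so the composite flow is in $\Ham_{\geq 2}(\hat{M},\hat{L},\hat{\J})$.
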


In order to develop the necessary technical tools first, we postpone the proofs of Theorems~\ref{theor:existence_BRST-charge} and~\ref{theor:uniqueness_BRST-charge} to the end of this section.

\subsection{A second relevant set of contraction data}
\label{subsec:second_relevant_contraction_data}

Fix a section $s\in\Gamma(\pi)$ locally given by $s=g_A(u^i)\eta^A$.
In the following, for any vector bundle $F\to S$, we will understand the canonical isomorphism $F\overset{\simeq}{\longrightarrow}(\pi^\ast F)|_{\im s}$, given by $F_x \ni v\mapsto(s(x),v)\in(\pi^\ast F)_{s(x)}$, for all $x\in S$.
\subsubsection*{The projection}
There is a degree $0$ graded module epimorphism $\wp[s]:\Gamma(\hat{L})\to\Gamma( (\wedge\calE_\ell)\otimes\ell)$, covering a degree $0$ graded algebra morphism $\underline{\smash{\wp[s]}}:C^\infty(\hat{M})\to\Gamma(\wedge\calE_\ell)$, completely determined by:
\begin{equation*}
\wp[s]\lambda=\lambda|_{\im s},\qquad \wp[s](e)=e|_{\im s},\qquad \wp[s](\alpha)=0,
\end{equation*}
for all $\lambda\in\Gamma(L)$, $e\in\Gamma(E)$, and $\alpha\in\Gamma(E^\ast\otimes L)$.
This means that $\wp[s]$ is obtained by restricting to $\im s$ and killing the components with non-zero anti-ghost degree.
Locally
\begin{equation*}
\wp[s]\left(f_{\bfA}^{\bfB}(u^i,y_C)\xi^\ast_{\bfB}(\xi^{\bfA}\otimes \pi^\ast\mu^\ast)\otimes\pi^\ast\mu\right)=f_{\bfA}(u^i,g_C(u^i))(\eta^{\bfA}\otimes\mu^\ast)\otimes\mu.
\end{equation*}
\subsubsection*{The immersion}
There is a degree $0$ graded module monomorphism $\iota:\Gamma( (\wedge\calE_\ell)\otimes\ell)\to\Gamma(\hat{L})$, covering a degree $0$ graded algebra morphism $\underline{\smash{\iota}}:\Gamma(\wedge\calE_\ell)\to C^\infty(\hat{M})$, completely determined by:
\begin{equation*}
\iota\lambda'=\pi^\ast\lambda',\qquad \iota\eta=\pi^\ast\eta,
\end{equation*}
for all $\lambda'\in\Gamma(\ell)$, and $\eta\in\Gamma(\calE)$.
Locally
\begin{equation*}
\iota(f_{\bfA}(u^i)(\eta^{\bfA}\otimes\mu^\ast)\otimes\mu)=f_{\bfA}(u^i)(\xi^{\bfA}\otimes \pi^\ast\mu^\ast)\otimes\pi^\ast\mu.
\end{equation*}

It immediately follows from the definitions of $d[s]$, $\wp[s]$ and $\iota$, that
\begin{itemize}
	\item $\wp[s]$ and $\iota$ are differential graded module morphisms between $(\Gamma(\hat{L}),d[s])$ and $(\Gamma((\wedge\calE_\ell)\otimes\ell),0)$, i.e.
	\begin{equation*}
	\wp[s]\circ d[s]=0,\qquad d[s]\circ \iota=0,
	\end{equation*}
	\item $\iota$ is a section of $\wp[s]$, i.e.~$\wp[s]\circ \iota=\id$.
\end{itemize}
Conversely $\iota \circ \wp[s]=\id$ holds only up to a homotopy of differential graded modules that we construct now.
\subsubsection*{The homotopy}
Let $\{\psi^\ast_t[s]\}_{t\in I}$ be the smooth path of bi-degree $(0,0)$ graded module endomorphisms of $\Gamma(\hat{L})$, covering a smooth path $\{\underline{\smash{\psi}}^\ast_t[s]\}_{t\in I}$ of bi-degree $(0,0)$ graded algebra endomorphisms of $C^\infty(\hat{M})$, locally given by
\begin{multline*}
\label{eq:local_psi_t}
\psi_t^\ast[s]\left(f_{\bfA}^{\bfB}(u^i,y_C)\xi^\ast_{\bfB}(\xi^{\bfA}\otimes \pi^\ast\mu^\ast)\otimes\pi^\ast\mu\right)=\\=f_{\bfA}^{\bfB}(u^i,y_C-t(y_C-g_C(u^i)))(1-t)^{|\bfB|}\xi^\ast_{\bfB}(\xi^{\bfA}\otimes\pi^\ast\mu^\ast)\otimes\pi^\ast\mu.
\end{multline*}
We remark that $\{\psi^\ast_t[s]\}_{t\in I}$ connects $\id$ to $\iota\circ \wp[s]$.
Define a smooth path $\{\square_t[s]\}_{t\in I}$ of bi-degree $(0,0)$ graded derivations of $\hat{L}\to\hat{M}$ along $\{\psi_t^\ast[s]\}_{t\in I}$ by setting
\begin{equation*}
\square_t[s]:=\frac{d}{dt}\psi_t^\ast[s],
\end{equation*}
(see~Remark~\ref{rem:derivation_along_morphism} for the meaning of derivation along a module morphism).
There is a smooth path $\{j_t[s]\}_{t\in I}$ of bi-degree $(0,1)$ graded derivations of $\hat{L}\to\hat{M}$, along $\{\psi_t^\ast[s]\}_{t\in I}$, completely determined by
\begin{gather*}
j_t[s](f(u^i,y_C)\pi^\ast\mu)=-(\partial_{y_A}f)(u^i,y_C-t(y_C-g_C(u^i)))\xi_A^\ast\otimes\pi^\ast\mu,\\
j_t[s](\xi^A)=0,\qquad j_t[s](\xi_A^\ast\otimes\pi^\ast\mu)=0.
\end{gather*}
Since $d[s]$ and $\psi_t^\ast[s]$ commute, $\{[d[s],j_t[s]]\}_{t\in I}$ is a smooth path of bi-degree $(0,0)$ graded derivations, along $\{\psi_t^\ast[s]\}_{t\in I}$, as well.
Actually, a straightforward computation in local coordinates shows that $[d[s], j_t[s]]$ and $\square_t$ agree on generators.
Hence they coincide.

Finally, define a bi-degree $(0,1)$ graded $C^\infty(M)$-linear map $h[s]:\Gamma(\hat{L})\to\Gamma(\hat{L})$ by setting
\begin{equation*}
h[s]:=\int_0^1 j_t[s]dt.
\end{equation*}
The map $h[s]$ is a homotopy between the co-chain morphisms $\id,\iota\circ \wp[s]:(\Gamma(\hat{L}),d[s])\to(\Gamma(\hat{L}),d[s])$.
Indeed
\begin{align*}
\iota\circ \wp[s]-\id=\int_0^1\frac{d}{dt}\psi_t^\ast[s]dt=\int_0^1\square_t[s]dt=\int_0^1[d[s],j_t[s]]dt=[d[s],h[s]].
\end{align*}
In addition $\iota$, $\wp[s]$ and $h[s]$ satisfy the side conditions $h[s]^2=0$, $h[s]\circ\iota=0$, $\wp[s]\circ h[s]=0$.

The above discussion is summarized in the following.

\begin{proposition}
	\label{prop:2contraction_data}
	Every section $s\in\Gamma(\pi)$ determines a set of contraction data
	\begin{equation}
	\label{eq:2contraction_data_1}
	\begin{tikzpicture}[>= stealth,baseline=(current bounding box.center)]
	\node (u) at (0,0) {$(\Gamma(\hat{L}),d[s])$};
	\node (d) at (5,0) {$(\Gamma((\wedge\calE_\ell)\otimes\ell),0)$};
	\draw [transform canvas={yshift=-0.5ex},->] (d) to node [below] {\footnotesize $\iota$} (u);
	\draw [transform canvas={yshift=0.5ex},<-] (d) to node [above] {\footnotesize $\wp[s]$} (u);
	\draw [->] (u.south west) .. controls +(210:1) and +(150:1) .. node[left=2pt] {\footnotesize $h[s]$} (u.north west);
	\end{tikzpicture}
	\end{equation}
	In particular $\wp[s]$ is a quasi-isomorphism, and $H^\bullet(\Gamma(\hat{L}),d[s])\simeq\Gamma((\wedge\calE_\ell)\otimes\ell)$ in a canonical way.
\end{proposition}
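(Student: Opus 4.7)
The plan is to assemble the proof from the individual verifications that have already been set up in the subsection, organized around the three axioms in Definition~\ref{def:contraction_data} plus the side conditions. Since $\wp[s]$, $\iota$, and $h[s]$ have been constructed explicitly, the task is to verify: (a) $\wp[s]$ is a surjective co-chain map with $\wp[s]\circ d[s]=0$; (b) $\iota$ is an injective co-chain map with $d[s]\circ\iota=0$ and $\wp[s]\circ\iota=\id$; (c) $h[s]$ is a chain homotopy between $\iota\circ\wp[s]$ and $\id$; (d) the three side conditions hold; and (e) the quasi-isomorphism claim.

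The key step is (c), which is already essentially carried out in the text: using that $d[s]$ is $t$-independent, linearity of the graded commutator under the integral gives
\[
[d[s],h[s]]\;=\;\int_0^1[d[s],j_t[s]]\,dt\;=\;\int_0^1\square_t[s]\,dt\;=\;\psi^\ast_1[s]-\psi^\ast_0[s]\;=\;\iota\circ\wp[s]-\id,
\]
where the crucial middle equality $[d[s],j_t[s]]=\square_t[s]$ is the one asserted ``by a straightforward computation in local coordinates'' and the final one follows from inspecting the local formula for $\psi_t^\ast[s]$, which evaluates to the identity at $t=0$ and to $\iota\circ\wp[s]$ at $t=1$. For (a), the identities $\wp[s]\circ d[s]=0$ and $\iota\circ d[s]=0$ reduce to checks on generators because $d[s]$ is a graded derivation (along a module morphism): using the local expression $d[s]=(y_A-g_A(u^i))\Delta^A$ from Remark~\ref{rem:local_expression_delta_s}, one sees that the output either vanishes identically or carries an overall factor $(y_A-g_A)$, which is killed by restriction to $\im s$, and $\iota$ produces sections of the form $\pi^\ast\lambda'$ and $\pi^\ast\eta$ on which $\Delta^A$ vanishes; the relation $\wp[s]\circ\iota=\id$ is immediate.

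For the side conditions in (d), the identities $h[s]^2=0$, $h[s]\circ\iota=0$, and $\wp[s]\circ h[s]=0$ can be checked pointwise in $t$ at the level of $j_t[s]$: the first because $j_t[s]$ is a derivation whose action on generators always lies in the module generated by $\{\xi^\ast_A\otimes\pi^\ast\mu\}$ on which $j_t[s]$ vanishes again; the second because $\iota$ lands in sections whose coefficients are independent of $y_A$, so the $\partial_{y_A}$-derivative in the definition of $j_t[s]$ annihilates them, and $\xi^A$, $\pi^\ast\mu$ are killed outright; the third because the image of $j_t[s]$ always contains a factor $\xi^\ast_A\otimes\pi^\ast\mu$, which carries strictly positive anti-ghost degree and hence is killed by $\wp[s]$. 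Finally, (e) is a general fact about contraction data: the existence of $h[s]$ forces $\wp[s]$ to be a quasi-isomorphism, and since the target differential is zero, the cohomology is the whole $\Gamma((\wedge\calE_\ell)\otimes\ell)$, identified canonically via $\iota$.

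The principal obstacle is the relation $[d[s],j_t[s]]=\square_t[s]$. Although both sides are derivations along $\psi^\ast_t[s]$ so it suffices to compare them on the algebra/module generators, the bookkeeping is delicate: one must differentiate the $(1-t)^{|\bfB|}$ prefactors and the shifted $y_C-t(y_C-g_C)$ arguments in the formula for $\psi^\ast_t[s]$, and match the result against the combined action of $d[s]\circ j_t[s]+j_t[s]\circ d[s]$ acting through $(y_A-g_A)\Delta^A$ and the $\partial_{y_A}$-style derivation defining $j_t[s]$, with careful sign tracking for graded commutators of derivations along morphisms as in Remark~\ref{rem:derivation_along_morphism}. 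Once this single identity is established, every other piece of the proposition follows cleanly from the local formulas.
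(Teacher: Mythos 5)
Your proposal follows essentially the same route as the paper: the paper's ``proof'' of this proposition \emph{is} the preceding discussion of Section~\ref{subsec:second_relevant_contraction_data}, which checks the co-chain-map identities and $\wp[s]\circ\iota=\id$ on generators, establishes $[d[s],j_t[s]]=\square_t[s]$ by comparing two derivations along $\psi_t^\ast[s]$ on generators, and integrates over $t\in I$ to obtain $\iota\circ\wp[s]-\id=[d[s],h[s]]$; you correctly identify that single identity as the crux. One caveat: your justification of the side condition $h[s]^2=0$ does not hold up as stated. The fact that $j_t[s]$ annihilates the generators $\xi^A$ and $\xi_A^\ast\otimes\pi^\ast\mu$ does not make $j_t[s]\circ j_{t'}[s]$ vanish, because the image of $j_{t'}[s]$ has coefficient functions of the form $-(\partial_{y_A}f)(u^i,y_C-t'(y_C-g_C))$, which still depend on $y$, and the symbol $\underline{j_t[s]}$ acts nontrivially on such functions. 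The composition does vanish, but for a different reason: the only surviving term is proportional to $(\partial_{y_B}\partial_{y_A}f)\,\xi_B^\ast\xi_A^\ast$, which is zero because the second partials are symmetric in $A,B$ while the odd generators $\xi_A^\ast$ anticommute. This is harmless here --- the paper also asserts the side conditions without proof, and (as noted in Appendix~\ref{app:SBSO}) they are not needed for the quasi-isomorphism conclusion, only for the homotopy transfer in Theorem~\ref{theor:L_infty_qi} --- but the reason you give would not survive scrutiny.
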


\begin{remark}
	\label{rem:2contraction__data}
	All the above constructions and Proposition~\ref{prop:2contraction_data} hold true even after replacing the section $s\in\Gamma(\pi)$ with a smooth path $\{s_\tau\}_{\tau\in I}$ in $\Gamma(\pi)$.
	The obvious details are left to the reader.
\end{remark}

\subsection{Existence and uniqueness of the BRST charges: the proofs}
\label{subsec:existence_uniqueness_BRST-charges}

\begin{lemma}
	\label{lem:BRST_coisotropic}
	Let $\J$ be a Jacobi structure on $L\to M$, and $s\in\Gamma(\pi)$.
	For any lifting $\hat{\J}$ of $\J$ to $\hat{L}\to\hat{M}$, we have that  $\{\Omega_E[s],\Omega_E[s]\}_{\hat{\J}}\in\ker\wp[s]$ iff the image of $s$ is coisotropic in $(M,L,J)$.
\end{lemma}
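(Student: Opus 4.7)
The plan is to exploit the bi-degree bi-grading on $\Diff^\star(\hat{L}[1])$ in order to collapse $\wp[s]\{\Omega_E[s],\Omega_E[s]\}_{\hat{\J}}$ to a quantity depending only on the bi-degree $(0,0)$ component $\hat{\J}_1$ of $\hat{\J}$, and then to verify that this quantity encodes exactly the $\J$-brackets of the local generators of the vanishing ideal $\Gamma_{\im s}$, evaluated on $\im s$. The coisotropy criterion of Definition~\ref{def:cois} then closes the loop.

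First I would decompose $\hat{\J}=\sum_{k\geq 0}\hat{\J}_k$ as in Remark~\ref{rem:BFV_brackets}, with $\hat{\J}_k$ of bi-degree $(k-1,k-1)$. Since $\Omega_E[s]\in\Gamma(\hat{L})^{(1,0)}$, the component $\{\Omega_E[s],\Omega_E[s]\}_{\hat{\J}_k}$ lies in $\Gamma(\hat{L})^{(k+1,k-1)}$. The $k=0$ contribution, corresponding to $\hat{\J}_0=G$, vanishes because by Definition~\ref{def:G} the bracket $\{-,-\}_G$ pairs $\Gamma(E)$ only with $\Gamma(E^\ast\otimes L)$; for $k\geq 2$ the anti-ghost degree is positive, so the contribution lies in $\ker\wp[s]$. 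Hence
\begin{equation*}
\wp[s]\{\Omega_E[s],\Omega_E[s]\}_{\hat{\J}}=\wp[s]\{\Omega_E[s],\Omega_E[s]\}_{\hat{\J}_1},
\end{equation*}
so the problem is reduced to analyzing the bi-degree $(2,0)$ part.

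Second, in the local coordinates of Section~\ref{subsec:BRST-charges}, write $\Omega_E[s]=f_A\xi^A$ with $f_A:=y_A-g_A(u)$, and expand $\{f_A\xi^A,f_B\xi^B\}_{\hat{\J}_1}$ by the graded Leibniz rule in both slots. Every term carrying an undifferentiated factor $f_A$ or $f_B$ either vanishes on $\im s$ (hence lies in $\ker\wp[s]$) or has anti-ghost degree $\geq 1$ (also in $\ker\wp[s]$). The surviving piece is the one in which the Hamiltonian symbol of $\hat{\J}_1$ differentiates both $f_A$ and $f_B$ while leaving the factor $\xi^A\xi^B$ intact. The lifting condition (Definition~\ref{def:BFV_brackets}) forces the restriction of $\{-,-\}_{\hat{\J}_1}$ to $\Gamma(L)\times\Gamma(L)$ to coincide with $\{-,-\}_{\J}$, so in particular the Hamiltonian derivations $X^{\hat{\J}_1}_\lambda$ for $\lambda\in\Gamma(L)$ agree with $X^{\J}_\lambda$ on $C^\infty(M)\subset C^\infty(\hat{M})$. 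Propagating this through the Leibniz expansion yields, under the canonical tensor identifications,
\begin{equation*}
\wp[s]\{\Omega_E[s],\Omega_E[s]\}_{\hat{\J}_1}=\sum_{A,B}\{f_A\mu,f_B\mu\}_{\J}\big|_{\im s}\otimes(\eta^A\otimes\mu^\ast)(\eta^B\otimes\mu^\ast).
\end{equation*}

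Third, by Definition~\ref{def:cois}, $\im s$ is coisotropic iff $\Gamma_{\im s}$ is a Lie subalgebra of $(\Gamma(L),\{-,-\}_{\J})$. Locally, by Hadamard's lemma, $\Gamma_{\im s}$ is generated as a $C^\infty(M)$-module by the sections $f_A\mu$, so this Lie-subalgebra condition is equivalent to $\{f_A\mu,f_B\mu\}_{\J}\in\Gamma_{\im s}$, i.e.~$\{f_A\mu,f_B\mu\}_{\J}|_{\im s}=0$ for all $A,B$. Combined with the $C^\infty(S)$-linear independence of the anti-symmetrized $(\eta^A\otimes\mu^\ast)(\eta^B\otimes\mu^\ast)$, this is exactly equivalent to $\wp[s]\{\Omega_E[s],\Omega_E[s]\}_{\hat{\J}}=0$.

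The hard part will be the middle step: the careful bookkeeping of bi-degree-and-d\'ecalage signs and the verification that none of the off-diagonal pieces of $\hat{\J}_1$ (for instance the connection-correction terms that appear through Remark~\ref{rem:local_expression_i_nabla}) leave a residue after $\wp[s]$. The uniqueness statement in Theorem~\ref{theor:uniqueness_BFV_brackets} guarantees that the conclusion does not depend on the chosen lifting, since any two liftings differ by an automorphism strictly raising the anti-ghost degree and hence acting trivially modulo $\ker\wp[s]$.
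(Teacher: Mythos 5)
Your proposal is correct and follows essentially the same route as the paper's proof, which simply asserts the local identity $\wp[s]\{\Omega_E[s],\Omega_E[s]\}_{\hat{\J}}=(\eta^B\otimes\mu^\ast)(\eta^A\otimes\mu^\ast)\{(y_A-g_A)\mu,(y_B-g_B)\mu\}_{\J}|_{\im s}$ as a straightforward consequence of $\hat{\J}$ being a bi-derivation with $p(\hat{\J})=\J$, and then reads off coisotropy of $\im s$. Your bi-degree bookkeeping (only $\hat{\J}_1$ survives $\wp[s]$) and the Hadamard/Leibniz argument in the last step are exactly the details the paper leaves implicit; the closing appeal to Theorem~\ref{theor:uniqueness_BFV_brackets} is unnecessary since the computation already works for an arbitrary lifting.
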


\begin{proof}
	Since $\hat{\J}$ is a bi-derivation with $p(\hat{\J})=\J$, it is straightforward to check that, locally,
	\begin{equation*}
	\wp[s]\vphantom{\xi^A}\{\Omega_E[s],\Omega_E[s]\}_{\hat{\J}}=\left.(\eta^B\otimes\mu^\ast)(\eta^A\otimes\mu^\ast)\{(y_A-g_A(u^i))\mu,(y_B-g_B(u^i))\mu\}_{\J}\right|_{\im s}.
	\end{equation*}
	So $\wp[s]\{\Omega_E[s],\Omega_E[s]\}_{\hat{\J}}=0$ iff $\{(y_A-g_A(u^i))\mu,(y_B-g_B(u^i))\mu\}_{\J}$ vanishes on $\im s$, for all $A$ and $B$.
	The last condition means exactly that $\im s$ is coisotropic in $(M,L,\J)$.
\end{proof}

\begin{proof}[Proof of Theorem~\ref{theor:existence_BRST-charge} (resp.~Theorem~\ref{theor:uniqueness_BRST-charge})]
	It follows immediately as a special case of Proposition~\ref{prop:SBSO_existence} (resp.~Proposition~\ref{prop:SBSO_uniqueness}).
	It will be enough to use the contraction data~\eqref{eq:2contraction_data_1} for the contraction data~\eqref{eq:homotopy_equivalence}, and set $\calF_n:=\calL_{\geq n+1}$, $N=-1$, and $\bar{Q}:=\Omega_E[s]$.
	Indeed, in this case, from Lemma~\ref{lem:BRST_coisotropic} the necessary and sufficient condition~\eqref{eq:prop:SBSO_existence} coincides with $\im(s)$ being coisotropic.
\end{proof}

\section{The BFV-complex of a coisotropic submanifold}

\label{sec:BFV-complex}

Let $(M,L,\J)$ be a Jacobi manifold, and let $S\subset M$ be a coisotropic submanifold.
Recall that $\pi:NS\to S$ denotes the normal bundle to $S$ in $M$, $\ell:=L|_S\to S$ is the restricted line bundle, and we have set $L_{NS}:=\pi^\ast\ell\to NS$.
We will use a fat tubular neighborhood $(\tau,\underline{\smash{\tau}})$ of $\ell\to S$ in $L\to M$ to identify $S$ with the image of the zero section of $\pi$, and replace the Jacobi manifold $(M,L,J)$ with its local model $(NS,L_{NS},\tau^\ast J)$ around $S$.
We will then use the lifting procedure of Section~\ref{sec:lifting_Jacobi_structures}, and the results of Section~\ref{sec:BRST-charges} with the r\^ole of $M=\calE\to S$ and $E\to M=\calE$ being now played by, respectively, the normal bundle $\pi:NS\to S$ and the vertical bundle $V(NS):=\pi^\ast(NS)\to NS$.
\begin{definition}
	\label{def:BFV_complex}
	A \emph{BFV-complex} (attached to $S$ via the fat tubular neighborhood $(\tau,\underline{\smash{\tau}})$) is a differential graded Lie algebra $(\Gamma(\hat{L}),\{-,-\}_{\BFV},d_{\BFV})$ such that:
	\begin{itemize}
		\item $\{-,-\}_{\BFV}=\hat{\J}$, for some lifting $\hat{\J}$ of $\J$ to a graded Jacobi structure on $\hat{L}\to\hat{M}$,
		\item $d_{\BFV}=\{\Omega_{\BRST},-\}_{\BFV}$, where $\Omega_{\BRST}$ is some $0$-BRST charge wrt $\hat{\J}$.
	\end{itemize}
\end{definition}
\begin{remark}
	\label{rem:BFV_complex}
	In more geometric terms, a BFV-complex can be seen, in particular, as the graded Jacobi manifold $(\hat{M},\hat{L},\hat{\J}\equiv\{-,-\}_{\BFV})$ further equipped with the homological Hamiltonian derivation $d_{\BFV}$.
\end{remark}
This section aims at showing that in the Jacobi setting, as already in the Poisson setting~\cite{schatz2010invariance,schatz2011moduli}, the BFV-complex is actually independent, to some extent, and up to isomorphisms, of the fat tubular neighborhood, it is a homological resolution of the reduced Gerstenhaber-Jacobi algebra of $S$, and encodes the moduli space of formal coisotropic deformations of $S$ under Hamiltonian equivalence.

\subsection{Gauge invariance of the BFV-complex}
\label{subsec:gauge_invariance_BFV}

Let $(M,L,\J)$ be a Jacobi manifold, and let $S\subset M$ be a coisotropic submanifold.
The BFV-complex of $S$ is actually independent of the choice of the (fat) tubular neighborhood, at least around $S$, as pointed out by the following.

\begin{theorem}
	\label{theor:gauge_invariance_BFV_complex}
	Let $(\tau_0,\underline{\smash{\tau}}_0)$ and $(\tau_1,\underline{\smash{\tau}}_1)$ be fat tubular neighborhoods of $\ell\to S$ in $L\to M$, and set $\J^0:=\tau_0^\ast\J$ and $\J^1:=\tau_1^\ast\J$.
	Pick liftings $\hat{\J}^i$ of $\J^i$ to $\hat{L}\to\hat{M}$, and let $\Omega^i_{\BRST}$ be a $0$-BRST charge wrt $\hat\J^i$, with $i=0,1$.
	Then there exist open neighborhoods $U_0$ and $U_1$ of $S$ in $NS$, and a degree $0$ graded Jacobi bundle isomorphism $\phi:(\hat M,\hat L,\hat\J^0)|_{U_0}\longrightarrow (\hat M,\hat L,\hat\J^1)|_{U_1}$, such that $\phi^\ast(\Omega^1_{\BRST})=\Omega^0_{\BRST}$, and a fortiori $\phi^\ast d_{\BFV}^1=d_{\BFV}^0$.
\end{theorem}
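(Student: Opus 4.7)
The plan is to assemble $\phi$ as a composition of three pieces: a classical isomorphism between the two tubular-neighborhood models of $(M,L,\J)$ around $S$, a graded-geometric correction supplied by Theorem~\ref{theor:uniqueness_BFV_brackets}, and a final Hamiltonian correction supplied by Theorem~\ref{theor:uniqueness_BRST-charge}.

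First, near $S$ both $\tau_0$ and $\tau_1$ are line bundle embeddings $L_{NS}\to L$ that restrict to the identity on $\ell\to S$. Shrinking if necessary, the composition $\psi:=\tau_1^{-1}\circ\tau_0:L_{NS}|_{V_0}\to L_{NS}|_{V_1}$ is a well-defined line bundle isomorphism, covering $\underline\psi:=\underline\tau_1^{-1}\circ\underline\tau_0$, with $\underline\psi|_S=\id_S$ and $\psi|_\ell=\id_\ell$; by construction $\psi^\ast\J^1=\tau_0^\ast\J=\J^0$. Second, using that $E=V(NS)\simeq\pi^\ast NS$, I lift $\psi$ to a degree $0$ graded line bundle isomorphism $\hat\psi:\hat L|_{V_0}\to\hat L|_{V_1}$, preserving the $\bbN^2$-bi-grading, the tautological bracket $G$, and (by the naturality of the tautological section under a carefully chosen lift) the section $\Omega_E$. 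Then $\hat\psi^\ast\hat\J^1$ is another lifting of $\J^0$ to $\hat L|_{V_0}$, so Theorem~\ref{theor:uniqueness_BFV_brackets} produces a degree $0$ graded automorphism $\alpha$ of $\hat L|_{V_0}$ enjoying the triangular property~\eqref{eq:theor:uniqueness_BFV_brackets} and satisfying $\hat\J^0=(\hat\psi\circ\alpha)^\ast\hat\J^1$. Set $\tilde\phi:=\hat\psi\circ\alpha$; it is a degree $0$ graded Jacobi bundle isomorphism between $(\hat M,\hat L,\hat\J^0)|_{V_0}$ and $(\hat M,\hat L,\hat\J^1)|_{V_1}$.

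Third, the pullback $\tilde\phi^\ast\Omega^1_{\BRST}$ is a degree $1$ MC element of $(\Gamma(\hat L),\{-,-\}_{\hat\J^0})$. Its bi-degree $(1,0)$ component equals $\Omega_E$: indeed $\hat\psi^\ast\Omega_E=\Omega_E$ by construction, and the triangular property of $\alpha$ forces $\alpha^\ast\Omega_E-\Omega_E$ to lie in $\bigoplus_{k\geq1}\Gamma(\hat L)^{(1+k,k)}$, which projects to zero in bi-degree $(1,0)$. Consequently $\tilde\phi^\ast\Omega^1_{\BRST}$ is itself a $0$-BRST charge wrt $\hat\J^0$. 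Theorem~\ref{theor:uniqueness_BRST-charge} then yields $\Phi\in\Ham_{\geq 2}(\hat M,\hat L,\hat\J^0)$ with $\Phi^\ast(\tilde\phi^\ast\Omega^1_{\BRST})=\Omega^0_{\BRST}$, and I set $\phi:=\tilde\phi\circ\Phi$, restricted to suitable open neighborhoods $U_0\subset V_0$ and $U_1\subset V_1$. By construction $\phi^\ast\hat\J^1=\hat\J^0$ and $\phi^\ast\Omega^1_{\BRST}=\Omega^0_{\BRST}$, from which $\phi^\ast d_{\BFV}^1=d_{\BFV}^0$ follows immediately.

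The main obstacle I expect is the precise construction of $\hat\psi$ and the verification that it can be arranged to preserve $\Omega_E$. The underlying diffeomorphism $\underline\psi$ fixes $S$ pointwise but does not in general preserve the fibration $\pi:NS\to S$, so the identification $V(NS)\simeq\pi^\ast NS$ is not natural under $\underline\psi$; one needs to exploit the special form $\psi=\tau_1^{-1}\circ\tau_0$, or, failing that, to absorb any residual bi-degree $(1,0)$ discrepancy into the $s$-dependence of the BRST charge via a preliminary Hamiltonian correction before invoking Theorem~\ref{theor:uniqueness_BRST-charge}. Once this step is made rigorous, the rest of the argument is a clean assembly of the uniqueness results of Sections~\ref{sec:lifting_Jacobi_structures} and~\ref{sec:BRST-charges}.
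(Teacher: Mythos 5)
Your assembly strategy (produce a graded isomorphism matching the liftings, then invoke Theorem~\ref{theor:uniqueness_BFV_brackets} and finally Theorem~\ref{theor:uniqueness_BRST-charge}) is exactly the skeleton of the paper's proof, and your bookkeeping in the third step --- using the triangular property~\eqref{eq:theor:uniqueness_BFV_brackets} to see that $\alpha$ does not disturb the bi-degree $(1,0)$ component --- is correct. The genuine gap is the one you flag yourself: the existence of the lift $\hat\psi$ of $\psi=\tau_1^{-1}\circ\tau_0$ preserving the bi-grading, $G$, and $\Omega_E$. This is not a technical detail that can be deferred; it is the actual content of the theorem. Since $\underline{\smash{\psi}}$ is a diffeomorphism fixing $S$ pointwise but neither fiberwise linear nor fibration-preserving, there is no natural action of $\psi$ on $E=V(NS)\simeq\pi^\ast(NS)$ and $E^\ast\otimes L$, hence no canonical bi-degree $(0,0)$ extension to $\hat L\to\hat M$, and certainly no reason for any ad hoc extension to satisfy $\hat\psi^\ast G=G$ and $\hat\psi^\ast\Omega_E=\Omega_E$ simultaneously. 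Your fallback --- absorbing a residual $(1,0)$ discrepancy by a preliminary Hamiltonian correction --- also does not come for free: Theorem~\ref{theor:uniqueness_BRST-charge} only compares two $s$-BRST charges for the \emph{same} $s$ and the \emph{same} lifting, so you would first have to show that the discrepant $(1,0)$ component is of the form $A^\ast\Omega_E$ for some $A\in\Gamma(\operatorname{GL}_+(E))$ (as in Proposition~\ref{prop:geometric_MC}), which again requires an argument.

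The paper resolves precisely this point by \emph{not} working with $\tau_1^{-1}\circ\tau_0$ in one piece. Using the isotopy uniqueness of (fat) tubular neighborhoods, it reduces to two special cases: (i) $\tau_1\circ F=\tau_0$ with $F$ covering a genuine \emph{vector bundle} automorphism $\underline{\smash{F}}$ of $NS\to S$ --- here the lift $\calF$ is written down explicitly via~\eqref{eq:proof:gauge_invariance1} and manifestly preserves $G$ and $\Omega_{NS}$; and (ii) $\tau_0,\tau_1$ joined by a smooth path of fat tubular neighborhoods --- here the nonlinear part is handled infinitesimally, by lifting the path $\{F_t\}_{t\in I}$ through Proposition~\ref{prop:main_result} (which solves an ODE for the $\End(E)$-component of the generating derivation, using Lemma~\ref{lem:technical_lemma}, so as to transport $\Omega_E$ and the lifting correctly at each time). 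If you replace your single $\hat\psi$ by this two-case decomposition, the rest of your argument goes through and coincides with the paper's.
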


\begin{proof}
	The main idea of the proof is to show the existence of open neighborhoods $U_0$ and $U_1$ of $S$ in $NS$, and a bi-degree $(0,0)$ graded line bundle isomorphism $\phi$ from $\hat L|_{U_0}\to\hat M|_{U_0}$ to $\hat L|_{U_1}\to\hat M|_{U_1}$, such that $\phi^\ast\hat{\J}^1$ is a lifting of $\J^0$ to $\hat L|_{U_0}\to\hat M|_{U_0}$, and $\phi^\ast(\Omega^1_{\BRST})$ is a $0$-BRST charge wrt $\phi^\ast\hat{\J}^1$.
	After doing this, the proof will be completed using Theorems~\ref{theor:uniqueness_BFV_brackets} and~\ref{theor:uniqueness_BRST-charge}.
	
	The standard uniqueness, up to isotopy, of tubular neighborhoods (cf.~\cite[Chapter 4, Theorem 5.3]{hirsch}) can be adapted to fat tubular neighborhoods (cf.~\cite[Lemma 3.20]{LOTV}).
	Accordingly it will be enough to consider the following two special cases:
	\begin{enumerate}
		\item $\tau_1\circ F=\tau_0$, for some automorphism $F$ of the line bundle $L_{NS}\to NS$, covering an automorphism $\underline{\smash{F}}$ of the normal bundle $NS\overset{\pi}{\to} S$, such that $F=\id$ on $L_{NS}|_S\simeq\ell$,
		\item $\tau_0=\calT_0$ and $\tau_1=\calT_1$, for some smooth path $\{(\calT_t,\underline{\smash{\calT}}_t)\}_{t\in I}$ of fat tubular neighborhoods of $\ell\to S$ in $L\to M$.
	\end{enumerate}
	
	{\sc First case.}
	Let $\underline{F}^\dagger:N^\ast S\to N^\ast S$ be the inverse of the transpose of the vector bundle automorphism $\underline{F}:NS\to NS$.
	There is a bi-degree $(0,0)$ automorphism $\calF$ of the graded line bundle $\hat{L}\to\hat{M}$ uniquely determined by
	\begin{equation}
	\label{eq:proof:gauge_invariance1}
	\calF^\ast\lambda=F^\ast\lambda,\quad\calF^\ast(\pi^\ast\eta)=\pi^\ast(\underline{\smash{F}}^\ast\eta),\quad\calF^\ast((\pi^\ast\alpha)\otimes\lambda)=\pi^\ast((\underline{\smash{F^\dagger}})^\ast\alpha)\otimes F^\ast\lambda,
	\end{equation}
	for all $\lambda\in\Gamma(\hat{L})^{(0,0)}=\Gamma(L_{NS})$, $\eta\in\Gamma(NS)$, and $\alpha\in\Gamma(N^\ast S)$.
	By its very construction, $\calF$ satisfies:
	\begin{equation*}
	\calF^\ast\Omega_{NS}=\Omega_{NS},\quad \calF^\ast G=G,\quad p\circ\calF^\ast=F^\ast\circ p.
	\end{equation*}
	It follows that $\calF^\ast\hat{\J}^1$ is a lifting of $\J^0$ to $\hat{L}\to\hat{M}$, and $\calF^\ast\Omega_{\BRST}^1$ is a $0$-BRST charge wrt $\calF^\ast\hat{\J}^1$.
	
	{\sc Second case.}
	We can find an open neighborhood $V$ of $S$ in $NS$, and a smooth path $\{(F_t,\underline{\smash{F}}_t)\}_{t\in I}$ of line bundle embeddings of $L_{NS}|_V\to V$ into $L_{NS}\to NS$ such that
	\begin{itemize}
		\item $\calT_0=\calT_t\circ F_t$, so that, in particular, $F_0=\id$ on $L_{NS}|_V$,
		\item $F_t$ agrees with the identity map on $L_{NS}|_S\simeq\ell$.
	\end{itemize}
	Consequently, $\J^t:=\calT_t^\ast\J$ is a Jacobi structure, and the image of the zero section of $\pi:NS\to S$ is coisotropic wrt $\J^t$.
	Additionally, $(F_t)_\ast\J^0=\J^t$ and $\underline{\smash{F}}_t(\im 0)=\im 0$, for all $t\in I$.
	Hence, in view of Proposition~\ref{prop:main_result}, $\{F_t\}_{t\in I}$ can be lifted to a smooth path $\{\calF_t\}_{t\in I}$ of bi-degree $(0,0)$ graded line bundle embeddings of $\hat{L}|_V\to\hat{M}|_V$ into $\hat{L}\to\hat{M}$ such that, for all $t\in I$,
	\begin{itemize}
		\item $(\calF_t)_\ast\hat{\J}^0$ is a lifting of $\J^t$ to $\hat{L}|_{F_t(V)}\to\hat{M}|_{F_t(V)}$,
		\item $(\calF_t)_\ast\Omega_{\BRST}^0$ is a $0$-BRST charge wrt $(\calF_t)_\ast\hat{\J}^0$.
	\end{itemize}
	In particular, $U_0:=V$ and $U_1:=F_1(V)$ are open neighborhoods of $S$ in $NS$, and $\phi:=\calF_1$ is a bi-degree $(0,0)$ graded line bundle isomorphism from $\hat{L}|_{U_0}\to\hat{M}|_{U_0}$ to $\hat{L}|_{U_1}\to\hat{M}|_{U_1}$ with all the desired properties.
\end{proof}

\subsection{The BFV-complex and the homological Jacobi reduction of a coisotropic submanifold}
\label{subsec:BFV_homological_resolution}

Let $\J$ be a Jacobi structure on the line bundle $L\to M$, and let $S\subset M$ be a coisotropic submanifold wrt $\J$.
According to~\cite[Prop.~3.6]{LOTV}, this means that $((N_\ell S)^\ast,\ell)$ is a Jacobi subalgebroid of $(J^1L,L)$.
Set $\frakg(S):=\Gamma(\wedge(N_\ell S)\otimes\ell)$, and denote by $d_{dR}:\frakg(S)\to\frakg(S)$ the de Rham differential of the Jacobi algebroid $((N_\ell S)^\ast,\ell)$.
Differential $d_{dR}$ is completely determined by
\begin{equation*}
d_{dR}\circ P=P\circ d_{\J},
\end{equation*}
where the degree $0$ graded module epimorphism  $P:\Diff^\star(L[1])\to\frakg(S)[1]$ is the canonical projection defined by setting $P(\lambda)=\lambda|_S$, $\langle P(\square),(df\otimes\lambda)|_S\rangle=\square(f\lambda)|_S$, for all $\lambda\in\Gamma(L)$, and $f\in I_S$.
In the following we will understand the module isomorphism $\Gamma(L_{\textnormal{red}})\overset{\simeq}{\longrightarrow}H^0(\frakg(S),d_{dR})$, $\lambda+\Gamma_S\longmapsto[\lambda|_S]$, introduced in Section~\ref{sec:Jacobi_reduction}.

\begin{proposition}
	\label{prop:BFV_homological_resolution}
	For every BFV-complex $(\Gamma(\hat{L}),\{-,-\}_{\BFV},d_{\BFV})$ of $S$, its cohomology is canonically isomorphic to the de Rham cohomology of the Jacobi algebroid of $S$
	\begin{equation*}
	H^\bullet(\Gamma(\hat{L}),d_{\BFV})\simeq H^\bullet(\frakg(S),d_{dR}).
	\end{equation*}
	Specifically, from Section~\ref{subsec:second_relevant_contraction_data}, the map $\wp[0]$ is a quasi-isomorphism from $(\Gamma(\hat{L}),d_{\BFV})$ to $(\frakg(S),d_{dR})$.
\end{proposition}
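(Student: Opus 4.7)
The plan is to obtain the quasi-isomorphism statement by applying the Homological Perturbation Lemma (HPL) to the set of contraction data from Proposition~\ref{prop:2contraction_data} (at $s=0$), using the perturbation $\delta:=d_{\BFV}-d[0]$. Decomposing $\hat{\J}=\sum_{k\geq 0}\hat{\J}_k$ and $\Omega_{\BRST}=\sum_{j\geq 0}\Omega_j$ as in Remarks~\ref{rem:BFV_brackets} and~\ref{rem:BRST_charge}, one has $\delta=\sum_{(j,k)\neq(0,0)}\{\Omega_j,-\}_{\hat{\J}_k}$, and each summand shifts bi-degree by $(j+k,j+k-1)$. Since $h[0]$ has bi-degree $(0,1)$, the composite $\delta h[0]$ shifts bi-degree by $(j+k,j+k)$ with $j+k\geq 1$; it is therefore compatible with the anti-ghost filtration in such a way that $(\id-\delta h[0])$ is invertible (in the filtered sense), and the HPL applies.

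HPL then produces deformed contraction data between $(\Gamma(\hat{L}),d_{\BFV})$ and $(\frakg(S),d')$, with deformed projection $\wp'[0]=\sum_{n\geq 0}\wp[0](\delta h[0])^n$ and deformed differential $d'=\sum_{n\geq 0}\wp[0]\,\delta\,(h[0]\delta)^n\,\iota$. The key simplification is that $\wp[0]$ annihilates $\calL^\bullet_{\geq 1}$, while both $\delta h[0]$ and $h[0]\delta\iota$ take values in $\calL^\bullet_{\geq 1}$ (by the same bi-degree analysis). Therefore all summands with $n\geq 1$ vanish in the expansions of both $\wp'[0]$ and $d'$, so $\wp'[0]=\wp[0]$ and $d'=\wp[0]\,\delta\,\iota=\wp[0]\,d_{\BFV}\,\iota$ (using $d[0]\iota=0$). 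This at once exhibits $\wp[0]$ as a quasi-isomorphism from $(\Gamma(\hat{L}),d_{\BFV})$ to $(\frakg(S),d')$.

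The principal obstacle is then to identify $d'$ with $d_{dR}$. A further bi-degree count shows that, for $\alpha\in\frakg(S)$, the only contributions to $\wp[0]\{\Omega_{\BRST},\iota(\alpha)\}_{\hat{\J}}$ surviving projection come from the two summands with $j+k=1$:
\[
d'\alpha=\wp[0]\{\Omega_1,\iota(\alpha)\}_G+\wp[0]\{\Omega_E[0],\iota(\alpha)\}_{\hat{\J}_1}.
\]
Now $\hat{\J}_1$ satisfies $p(\hat{\J}_1)=\J$ by~(\ref{eq:rem:BFV_brackets2}), and $\Omega_1$ is constrained by the first obstruction $2d[0]\Omega_1+\{\Omega_E[0],\Omega_E[0]\}_{\hat{\J}_1}=0$ coming from~(\ref{eq:rem:BRST_charge2}), which is precisely the coisotropicity condition of $S$ exploited in Lemma~\ref{lem:BRST_coisotropic}. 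These two pieces of data encode precisely the anchor, the Lie bracket, and the representation of the Jacobi subalgebroid $(N_\ell{}^\ast S,\ell)$ of $(J^1L,L)$ (cf.~Propositions~\ref{prop:jacb2} and~\ref{prop:conormal}). Moreover $d'$ is automatically a degree-$1$ graded module derivation of $\frakg(S)$ over $\Gamma(\wedge N_\ell S)$, because $\iota$ and $\wp[0]$ are graded module morphisms covering graded algebra morphisms and $d_{\BFV}$ is a graded derivation. Hence $d'$ is determined by its restriction to the generators $\Gamma(\ell)$ and $\Gamma(N_\ell S)$, and comparing there with $d_{dR}$ via the defining relation $d_{dR}\circ P=P\circ d_{\J}$ yields $d'=d_{dR}$, whence the canonical isomorphism on cohomology.
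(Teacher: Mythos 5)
Your proposal is correct and follows essentially the same route as the paper's proof: the Homological Perturbation Lemma applied to the contraction data of Proposition~\ref{prop:2contraction_data} at $s=0$ with perturbation $\delta=d_{\BFV}-d[0]$, the bi-degree count giving $\wp[0]'=\wp[0]$ and $d'=\wp[0]\,\delta\,\iota$, and the identification $d'=d_{dR}$ by checking on the generators $\Gamma(\ell)$ and a local frame of $NS$ using $\hat{\J}_1$, $\Omega_1$ and the first obstruction equation~\eqref{eq:rem:BRST_charge2}. The only difference is that the paper carries out that final computation on generators explicitly in local coordinates, whereas you leave it as a sketch.
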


\begin{proof}
	Let $(\Gamma(\hat{L}),\{-,-\}_{\BFV},d_{\BFV})$ be a BFV-complex of $S$.
	Assume that this latter has been constructed by choosing a fat tubular neighborhood $(\tau,\underline{\smash{\tau}})$ of $\ell\to S$ into $L\to M$, a lifting of $\J$ to a Jacobi structure $\hat{\J}=\sum_{k=0}^\infty\hat{\J}_k$ on $\hat{L}\to\hat{M}$, and a $0$-BRST charge $\Omega_{\BRST}=\sum_{k=0}^\infty\Omega_k$ wrt $\hat{\J}$.
	
	Using again the terminology of~\cite{Crainic_perturbation}, $\delta:=d_{\BFV}-d[0]$ is a small perturbation of the contraction data~\eqref{eq:2contraction_data_1}, with $s=0$.
	Indeed
	\begin{align}
	\label{eq:prop:lifting_CE_cohom1}
	\delta\in\bigoplus_{k\geq 0}(\Diff \hat{L})^{(k+1,k)},\qquad
	\delta h[0]\in\bigoplus_{k\geq 1}\End(\hat{L},\hat{L})^{(k,k)},
	\end{align}
	so that $\delta h[0]$ is nilpotent and $\id-\delta h[0]$ is invertible with $(\id-\delta h[0])^{-1}=\sum_{k=0}^\infty(\delta h[0])^k$.
	Hence the Homological Perturbation Lemma~\cite{brown1967twisted,Crainic_perturbation} can be applied taking as input the contraction data~\eqref{eq:2contraction_data_1} and its small perturbation $\delta$.
	The resulting output is given by a new deformed set of contraction data
	\begin{equation*}
	\begin{tikzpicture}[>= stealth,baseline=(current bounding box.center)]
	\node (u) at (0,0) {$(\Gamma(\hat{L}),d_{\BFV})$};
	\node (d) at (5,0) {$(\frakg(S),d')$};
	\draw [transform canvas={yshift=-0.5ex},->] (d) to node [below] {\footnotesize $\iota'$} (u);
	\draw [transform canvas={yshift=0.5ex},<-] (d) to node [above] {\footnotesize $\wp[0]'$} (u);
	\draw [->] (u.south west) .. controls +(210:1) and +(150:1) .. node[left=2pt] {\footnotesize $h[0]'$} (u.north west);
	\end{tikzpicture}.
	\end{equation*}
	From the explicit formulas for $d',\wp[0]',\iota',h[0]'$, and the very definition of $\wp[0]$, it follows that
	\begin{equation*}
	\wp[0]'=\wp[0],\qquad d'=\wp[0]\delta\iota.
	\end{equation*}
	Now it remains to prove that $d'=d_{dR}$.
	Both $d_{dR}$ and $d'$ are homological derivations of the graded module $\frakg(S)$, hence it is enough to check that they coincide on (local) generators, i.e.~on:
	\begin{enumerate}
		\item arbitrary sections $\lambda$ of $\ell\to S$,
		\item elements of a local frame $\eta^A$ of $NS\to S$.
	\end{enumerate}
	Since $\{\Omega_E,-\}_{\hat{\J}_1}+\{\Omega_1,-\}_G$ is the $(1,0)$ bi-degree component of $\delta$, 
	it follows that
	\begin{multline*}
	d'\lambda=(\wp[0]\delta\iota)\lambda=(\wp[0]\delta)(\pi^\ast\lambda)=\left.\{\pi^\ast\lambda,\Omega_E\}_{\hat{\J}_1}\right|_S=\\=\left.(\eta^A\otimes\mu^\ast)\{\pi^\ast\lambda,y_A\mu\}_{\J}\right|_S=(P\circ d_{\J})(\pi^\ast\lambda)=d_{dR}\lambda.
	\end{multline*}
	for all $\lambda\in\Gamma(\ell)$.
	Moreover,
	\begin{equation}
	\label{eq:proof:BFV_homological_resolution1}
	d'\eta^A=(\wp[0]\delta\iota)\eta^A=(\wp[0]\delta)\xi^A=\left.\left(\{\xi^A,\Omega_E\}_{\hat{\J}_1}+\{\xi^A,\Omega_1\}_G\right)\right|_S.
	\end{equation}
	From~\eqref{eq:rem:BRST_charge2}, with $s=0$ and $h=1$, it follows that, locally,
	\begin{equation}
	\label{eq:proof:BFV_homological_resolution2}
	\left.\Delta_{y_A}\{\Omega_1,\Omega_E\}_{\hat{\J}_1}\right|_S=-2\left.\{\Omega_1,\xi^A\}_G\right|_S.
	\end{equation}
	Finally, plugging~\eqref{eq:proof:BFV_homological_resolution2} into~\eqref{eq:proof:BFV_homological_resolution1}, we get
	\begin{equation*}
	d'\eta^A=\frac{1}{2}\left.\ldsb\hat{\J}_1,\Delta_{y_A}\rdsb(\Omega_E,\Omega_E)\right|_S=\frac{1}{2}\eta^A\eta^B\left.\ldsb\J,\Delta_{y_A}\rdsb(y_A\mu,y_B\mu)\right|_S=(P\circ d_{\J})\Delta_{y_A}=d_{dR}\eta_A.\qedhere
	\end{equation*}
\end{proof}

It is now straightforward to see that each BFV-complex of $S$ provides a homological resolution of the reduced Gerstenhaber-Jacobi algebra of $S$.

\begin{corollary}
	\label{cor:BFV_homological_resolution}
	The degree $0$ graded module isomorphism $\wp[0]_\ast:H^\bullet(\Gamma(\hat{L}),d_{\BFV})\to H^\bullet(\frakg(S),d_{dR})$ intertwines, in degree $0$, the bracket induced by $\{-,-\}_{\BFV}$ on $H^0(\Gamma(\hat{L}),d_{\BFV})$ and the reduced Jacobi bracket $\{-,-\}_{\textnormal{red}}$ on $H^0(\frakg(S),d_{dR})\simeq\Gamma(L_{\textnormal{red}})$.
\end{corollary}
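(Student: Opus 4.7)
Since $d_{\BFV}=\{\Omega_{\BRST},-\}_{\BFV}$ is an inner derivation of the graded Jacobi bracket $\{-,-\}_{\BFV}$, the bracket descends automatically to $H^\bullet(\Gamma(\hat L),d_{\BFV})$. My plan is to verify the intertwining in degree $0$ by working with explicit cocycle representatives whose bi-degree $(0,0)$ component is prescribed.

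Given $\bar\lambda=\lambda+\Gamma_S$, $\bar\mu=\mu+\Gamma_S$ in $\Gamma(L_{\textnormal{red}})$ with $\lambda,\mu\in N(\Gamma_S)$, I use the fat tubular neighborhood to view $\lambda,\mu$ as sections of $L_{NS}\to NS$ near $S$, hence as elements of $\Gamma(\hat L)^{(0,0)}$. The first step is to construct $d_{\BFV}$-cocycle lifts $\tilde\lambda=\lambda+\sum_{k\geq 1}\lambda^{(k,k)}$ and $\tilde\mu=\mu+\sum_{k\geq 1}\mu^{(k,k)}$ via a step-by-step obstruction argument based on the contraction data~\eqref{eq:2contraction_data_1}. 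At first order the equation reduces to $d[0]\lambda^{(1,1)}=-\{\Omega_E,\lambda\}_{\hat\J_1}$ (the term $\{\Omega_1,\lambda\}_G$ vanishes by the tautological property below), and applying $\wp[0]$ to the right-hand side yields an expression proportional to $X_\lambda(y_A)|_S\,\eta^A$, which is zero precisely because $\lambda\in N(\Gamma_S)$, i.e., $X_\lambda$ is tangent to $S$. Higher-order obstructions can be handled inductively: the surjectivity of the quasi-isomorphism $\wp[0]$ (Proposition~\ref{prop:2contraction_data} perturbed by $d_{\BFV}-d[0]$) produces some cocycle lift $\tilde\lambda_0$, and the freedom to modify $\tilde\lambda_0$ by $d_{\BFV}$-coboundaries (whose bi-degree $(0,0)$ parts span all of $\Gamma_S$, via $\eta^{(0,1)}\in\Gamma(E^\ast\otimes L)$ with $\{\Omega_E,\eta^{(0,1)}\}_G$ matching the prescribed $y_A$-contribution) allows one to adjust the $(0,0)$-component to equal $\lambda$. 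By construction $\wp[0](\tilde\lambda)=\lambda|_S$ corresponds to $\bar\lambda$ under $\Gamma(L_\textnormal{red})\cong H^0(\frakg(S),d_{dR})$, and similarly for $\tilde\mu$.

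Next I compute $\wp[0]\{\tilde\lambda,\tilde\mu\}_{\BFV}$ by extracting the bi-degree $(0,0)$ component and restricting to $S$. A bi-degree count on $\{\tilde\lambda,\tilde\mu\}_{\BFV}=\sum_m\{\tilde\lambda,\tilde\mu\}_{\hat\J_m}$ shows that only the triples $(i,j,m)$ of indices with $i+j+m=1$ contribute, producing
\[
\{\lambda,\mu\}_{\hat\J_1}+\{\lambda^{(1,1)},\mu\}_G+\{\lambda,\mu^{(1,1)}\}_G.
\]
By Definition~\ref{def:BFV_brackets} (lifting property of $\hat\J$), the first term equals $\{\lambda,\mu\}_\J$. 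The other two terms vanish by a tautological feature of $G$: for any $g\rho\in\Gamma(\hat L)^{(0,0)}$ (with $\rho$ a local frame of $L_{NS}$), the Hamiltonian derivation $\Delta^G_{g\rho}=\{g\rho,-\}_G$ is identically zero, because $G$ pairs generators of $E$ only with generators of $E^\ast\otimes L$ (Definition~\ref{def:G}), and a bi-degree $(0,0)$ section contains neither. Consequently $\wp[0]\{\tilde\lambda,\tilde\mu\}_{\BFV}=\{\lambda,\mu\}_\J|_S$, which corresponds under $\Gamma(L_\textnormal{red})\cong H^0(\frakg(S),d_{dR})$ to $\{\lambda,\mu\}_\J+\Gamma_S=\{\bar\lambda,\bar\mu\}_{\textnormal{red}}$.

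The main technical obstacle is establishing the existence of lifts with the prescribed bi-degree $(0,0)$ component; this rests crucially on the assumption $\lambda,\mu\in N(\Gamma_S)$. Once the lifts are in hand, the vanishing of the two ``extra'' terms follows purely from the tautological structure of $G$ and is independent of the choices of $\hat\J$ and $\Omega_{\BRST}$.
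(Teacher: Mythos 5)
Your argument is correct, and its computational core --- the bi-degree count isolating the triples $(i,j,m)$ with $i+j+m=1$, the vanishing of $\{-,-\}_G$ against bi-degree $(0,0)$ sections, and the identification $\{\lambda,\mu\}_{\hat\J_1}=\{\lambda,\mu\}_{\J}$ via the lifting property --- is exactly the computation the paper performs. Where you differ is in the direction of travel: the paper simply picks \emph{arbitrary} $d_{\BFV}$-closed degree $0$ sections $\lambda_i=\sum_k\lambda_i^{(k,k)}$ and pushes their bracket down, observing that $\wp[0]\{\lambda_1,\lambda_2\}_{\BFV}=\{\lambda_1^{(0,0)},\lambda_2^{(0,0)}\}_{\J}|_S$ and that the $(1,0)$-component of the cocycle condition forces $\lambda_i^{(0,0)}\in N(\Gamma_S)$; since $\wp[0]_\ast$ is already known to be an isomorphism by Proposition~\ref{prop:BFV_homological_resolution}, checking the identity on arbitrary representatives suffices. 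You instead start from $N(\Gamma_S)/\Gamma_S$ and build cocycle lifts with \emph{prescribed} $(0,0)$-component, which costs you the first-order obstruction analysis and the coboundary-adjustment argument (both of which are sound: the obstruction $X_\lambda(y_A)|_S\,\eta^A$ vanishes for $\lambda\in N(\Gamma_S)$, and $(0,0)$-parts of coboundaries, namely $\alpha(\Omega_E)$ for $\alpha\in\Gamma(E^\ast\otimes L)$, do exhaust $\Gamma_S$ by the fiberwise Hadamard lemma). This extra machinery is not needed --- taking $\tilde\lambda=\iota'(\lambda|_S)$ from the perturbed contraction data, or just working with arbitrary cocycles as the paper does, short-circuits it --- but it has the modest virtue of making explicit where the normalizer condition enters and which representative maps to which class.
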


\begin{proof}
	Pick arbitrary $d_{\BFV}$-closed degree $0$ sections $\lambda_i=\sum_{k=1}^\infty\lambda_i^k\in\Gamma(\hat{L})^0$, with $\lambda_i^k\in\Gamma(\hat{L})^{(k,k)}$, for all $k\in\bbN$, and $i=1,2$.
	From the construction of $\wp:=\wp[0]$ and the BFV-complex, it follows that
	\begin{equation*}
	\wp_\ast\left[\{\lambda_1,\lambda_2\}_{\BFV}\right]\!=\!\left[\wp\left(\{\lambda_1,\lambda_2\}_{\hat{\J}}\right)\right]\!=\!\left[\left.\{\lambda^0_1,\lambda^0_2\}_{\J}\right|_S\right]\!=\!\left\{\left[\left.\lambda^0_1\right|_S\right]\!,\!\left[\left.\lambda^0_2\right|_S\right]\right\}_{\textnormal{red}}\!=\!\left\{\wp_\ast[\lambda_1],\wp_\ast[\lambda_2]\right\}_{\textnormal{red}}.\qedhere
	\end{equation*}	
\end{proof}

Hence the reduced Gerstenhaber-Jacobi algebra of a coisotropic submanifold $S$ admits two different homological resolutions: one provided by the BFV-complex and another one given by the $L_\infty$-algebra.
In fact, as shown in the next subsection, these two resolutions are strictly related.

\subsection{The BFV-complex and the \texorpdfstring{$L_\infty$}{L-infinity}-algebra of a coisotropic submanifold}
\label{subsec:L_infty_qi}

Let $\J$ be a Jacobi structure on a line bundle $L\to M$, and let $S\subset M$ be a coisotropic submanifold of $(M,L,\J)$.
Additionally, let $(\Gamma(\hat{L}),\{-,-\}_{\BFV},d_{\BFV})$ be a BFV-complex associated to $S$ via the choice of:
\begin{itemize}
	\item a fat tubular neighborhood $(\tau,\underline{\tau})$ of $\ell\to S$ into $L\to M$,
	\item a lifting of $\J$ to a Jacobi structure $\hat{\J}$ on $\hat{L}\to\hat{M}$,
	\item a $0$-BRST charge $\Omega_{\BRST}$ wrt $\hat{\J}$.
\end{itemize}
In view of the proof of Proposition~\ref{prop:BFV_homological_resolution}, after a choice of $(\tau,\underline{\tau})$ and $(\Gamma(\hat{L}),\{-,-\}_{\BFV},d_{\BFV})$, the corresponding set of contraction data~\eqref{eq:2contraction_data_1}, with $s=0$, gets deformed into a new set of contraction data
\begin{equation}
\label{eq:deformed_second_contraction_data}
\begin{tikzpicture}[>= stealth,baseline=(current bounding box.center)]
\node (u) at (0,0) {$(\Gamma(\hat{L}),d_{\BFV})$};
\node (d) at (5,0) {$(\frakg(S),d_{dR})$};
\draw [transform canvas={yshift=-0.5ex},->] (d) to node [below] {\footnotesize $\iota'$} (u);
\draw [transform canvas={yshift=0.5ex},<-] (d) to node [above] {\footnotesize $\wp[0]$} (u);
\draw [->] (u.south west) .. controls +(210:1) and +(150:1) .. node[left=2pt] {\footnotesize $h[0]'$} (u.north west);
\end{tikzpicture},
\end{equation}
where $\iota'=\sum_{k=0}^\infty(h[0]\delta)^k\iota$ and $h[0]'=\sum_{k=0}^\infty h[0](\delta h[0])^k$, with $\delta=d_{\BFV}-d[0]$.

As described in~\cite[Section~3.3]{LOTV}, a fat tubular neighborhood $(\tau,\underline{\tau})$ allows to get a right inverse of the canonical projection $P$ as the unique degree $0$ graded module morphism $I:\frakg(S)[1]\to\Diff^\star(L[1])$ such that $I(\lambda)=\pi^\ast\lambda$, and $I(\eta)(f\otimes\lambda)=(\eta^v f)\otimes\lambda$, for all $\lambda\in\Gamma(\ell)$, $\eta\in\Gamma(NS)$, and $f\in C^\infty(NS)$.
Here $\eta^v$ is the vertical lift of $\eta$: the unique vertical vector field on $NS$ which is constant along the fibers and agrees with $\eta$ along $S$.
The quadruple $(\Diff^\star(L[1]),P,I,\J)$ is a set of $V$-data, as defined in~\cite[Section 1.2]{fregier2014simultaneous}, so that, according to Voronov~\cite{Voronov2005higher1}, there is a (flat) $L_\infty$-algebra structure $\{\frakm_k\}_{k\geq 1}$ on $\frakg(S)$ given by higher derived brackets 
\begin{equation*}
\frakm_k(g_1,\ldots,g_k)=(-)^\sharp P\ldsb\ldsb\ldots\ldsb\J,Ig_1\rdsb,\ldots\rdsb,Ig_k\rdsb,
\end{equation*}
where $(-)^\sharp$ denotes a certain sign coming from d\'ecalage isomorphism (cf., e.g., \cite[Section 1]{fiorenza2007structures}).
It is easy to see that $\frakm_1$ coincides with the de Rham differential of the Jacobi algebroid $((N_\ell S)^\ast,\ell)$ associated to $S$.
Hence, from Section~\ref{sec:Jacobi_reduction}, it follows that the $L_\infty$-algebra is a homological resolution of the reduced Gerstenhaber-Jacobi algebra of $S$.

The following theorem constructs, by homotopy transfer (cf., e.g., \cite[Section 10.3]{loday2012algebraic}), an $L_\infty$-quasi-isomorphism between the two homological resolutions of the reduced Gerstenhaber-Jacobi algebra of $S$, i.e.~the BFV-complex and the $L_\infty$-algebra.
In this way we extend, from the Poisson to the Jacobi case, a result by Sch\"atz~\cite{schatz2009bfv}.
\begin{theorem}
	\label{theor:L_infty_qi}
	For every BFV-complex $(\Gamma(\hat{L}),\{-,-\}_{\BFV},d_{\BFV})$ of $S$ there exists an $L_\infty$-quasi-isomorphism
	\begin{equation*}
	\iota'_\infty:(\frakg(S),\{\frakm_k\}_{k\geq 1})\longrightarrow(\Gamma(\hat{L}),\{-,-\}_{\BFV},d_{\BFV}).
	\end{equation*}
\end{theorem}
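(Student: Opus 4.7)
The natural strategy is to obtain $\iota'_\infty$ by $L_\infty$-homotopy transfer along the contraction data~\eqref{eq:deformed_second_contraction_data}. After the standard d\'ecalage, the BFV-complex $(\Gamma(\hat{L}), \{-,-\}_{\BFV}, d_{\BFV})$ is a differential graded Lie algebra. Transferring this dgLa structure along the contraction $(\iota', \wp[0], h[0]')$, e.g., via the Kontsevich tree formula (cf.~\cite[Section~10.3]{loday2012algebraic}), produces a canonical $L_\infty$-structure $\{\frakm'_k\}_{k \geq 1}$ on $\frakg(S)$ with $\frakm'_1 = d_{dR}$, together with an $L_\infty$-morphism $\iota'_\infty : (\frakg(S), \{\frakm'_k\}) \to (\Gamma(\hat L), \{-,-\}_{\BFV}, d_{\BFV})$ whose linear Taylor coefficient is $\iota'$. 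Since $\iota'$ is already a quasi-isomorphism of complexes (it sits inside a contraction), $\iota'_\infty$ is automatically an $L_\infty$-quasi-isomorphism.

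It then suffices to prove that $\frakm'_k = \frakm_k$ for all $k \geq 1$. The transferred bracket $\frakm'_k$ is expressed as a sum over binary rooted trees with $k$ leaves, the leaves labeled by $\iota'$, the internal edges by $h[0]'$, the internal vertices by $\{-,-\}_{\BFV}$, and the root by $\wp[0]$. By contrast, the Voronov bracket $\frakm_k$ is a single nested expression $P \ldsb \ldots \ldsb \J, I g_1 \rdsb, \ldots \rdsb, I g_k \rdsb$. My plan for the comparison is to expand $\iota' = \sum_{j \geq 0}(h[0]\delta)^j \iota$ and $h[0]' = \sum_{j \geq 0} h[0](\delta h[0])^j$ with $\delta = d_{\BFV} - d[0]$, and to substitute the explicit recursive form of $\hat{\J} = G + i_\nabla \J + \cdots$ and $\Omega_{\BRST} = \Omega_E + \cdots$ given by Remarks~\ref{rem:BFV_brackets} and~\ref{rem:BRST_charge}. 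Repeated use of the compatibility of $i_\nabla$ with $p$ and with the brackets (Propositions~\ref{prop:p}, \ref{prop:i_nabla}, and~\ref{prop:flatness}), together with the graded Jacobi identity for $\{-,-\}_{\BFV}$, should cause the resulting tree sum to collapse to the single iterated bracket defining $\frakm_k$.

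The main obstacle is precisely this combinatorial matching of tree sums. It extends to the Jacobi setting the argument developed by Sch\"atz in the Poisson case~\cite{schatz2009bfv}. The cleanest route, in our view, is to recognize that both $\{\frakm_k\}$ and $\{\frakm'_k\}$ arise from Voronov's higher derived bracket construction: the former applied to the $V$-data $(\Diff^\star(L[1]), P, I, \J)$, the latter to an analogous set of $V$-data on $\Diff^\star(\hat L[1])$ built from $\hat{\J}$ and $\Omega_{\BRST}$. The identification is then reduced to the quasi-isomorphism between these two sets of $V$-data supplied by the contraction data~\eqref{eq:1contraction_data_1}, namely by $p$ and $i_\nabla$. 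Once this structural identification is in place, the theorem follows.
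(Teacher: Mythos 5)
Your strategy---$L_\infty$-homotopy transfer of the dgLa structure along the deformed contraction data~\eqref{eq:deformed_second_contraction_data}, followed by identifying the transferred brackets with Voronov's higher derived brackets $\frakm_k$---is exactly the route the paper takes: its proof of Theorem~\ref{theor:L_infty_qi} is a one-line deferral to Sch\"atz's Poisson-case argument~\cite[Theorem 5]{schatz2009bfv}, which proceeds in precisely this way (and note that, as the paper points out in Appendix~B, the side conditions on the contraction data are needed exactly here to make the transfer work). Your sketch is therefore consistent with the paper's approach, though, like the paper itself, it leaves the combinatorial identification $\frakm'_k=\frakm_k$ to the cited reference rather than carrying it out.
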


\begin{proof}
	The proof is an adaptation of the proof of the analogous result in \cite[Theorem 5]{schatz2009bfv} and we omit it.
\end{proof}

\begin{theorem}
	\label{theor:BFV-complex_formal_moduli}
	Let $(M,L,J \equiv \{-,-\})$ be a Jacobi manifold, and let $S\subset M$ be a coisotropic submanifold.
	The BFV-complex of $S$ controls the formal coisotropic deformation problem of $S$ under Hamiltonian equivalence.
	Indeed there exists a 1--1 correspondence between the moduli space of formal coisotropic deformations of $S$, under Hamiltonian equivalence, and the moduli space of formal MC elements of the BFV-complex, under gauge equivalence.
\end{theorem}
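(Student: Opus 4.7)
The plan is to assemble the result by combining a known control theorem for the $L_\infty$-algebra with the $L_\infty$-quasi-isomorphism provided by Theorem~\ref{theor:L_infty_qi}. First I would recall from \cite{LOTV} the fact that the $L_\infty$-algebra $(\frakg(S),\{\frakm_k\}_{k\geq 1})$ controls the formal coisotropic deformation problem of $S$ under Hamiltonian equivalence: there is a canonical bijection between the moduli space of formal coisotropic deformations of $S$ (up to Hamiltonian equivalence) and the moduli space $\MC(\frakg(S))/{\sim}$ of formal MC elements of $\frakg(S)$ up to $L_\infty$-gauge equivalence. This reduces the theorem to comparing the formal MC moduli spaces of $\frakg(S)$ and of the BFV-complex.

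Next I would invoke Theorem~\ref{theor:L_infty_qi}, which produces an $L_\infty$-quasi-isomorphism
\begin{equation*}
\iota'_\infty:(\frakg(S),\{\frakm_k\}_{k\geq 1})\longrightarrow(\Gamma(\hat{L}),\{-,-\}_{\BFV},d_{\BFV}).
\end{equation*}
The key general principle I would then apply is that an $L_\infty$-quasi-isomorphism between (suitably complete, or working at the level of formal power series) $L_\infty$-algebras induces a bijection between the associated moduli spaces of formal MC elements modulo gauge equivalence. Pushing forward along $\iota'_\infty$ gives a map on formal MC elements; the standard obstruction-theoretic argument (inductive construction order by order in the formal parameter, using acyclicity of the mapping cone of a quasi-isomorphism) produces an inverse on moduli. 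Composing with the bijection from \cite{LOTV} yields the desired 1--1 correspondence between formal coisotropic deformations of $S$ modulo Hamiltonian equivalence and formal MC elements of the BFV-complex modulo gauge equivalence.

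The main obstacle I expect is not the existence of the bijection itself, which follows from general $L_\infty$-homotopical algebra, but rather verifying the convergence/completeness setup needed for the formal MC theory to apply. Specifically, one must work with the filtration $\{\calL_{\geq n}^\bullet\}_{n\geq 0}$ of $\Gamma(\hat{L})$ introduced in Remark~\ref{rem:filtration_Ham}, check that both $\{-,-\}_{\BFV}$ and $d_{\BFV}$ are compatible with it, and check that the $L_\infty$-structures on $\frakg(S)$ and on the BFV-complex, as well as the Taylor components of $\iota'_\infty$ obtained by homotopy transfer, respect a complete decreasing filtration in the formal parameter. Once this bookkeeping is in place, the standard theorem on gauge equivalence classes of formal MC elements under $L_\infty$-quasi-isomorphisms (see, e.g., the arguments in \cite{schatz2011moduli} adapted to the present Jacobi setting) gives the result.
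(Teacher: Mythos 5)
Your proposal is correct and follows essentially the same route as the paper: cite \cite{LOTV} for the fact that the $L_\infty$-algebra controls the formal coisotropic deformation problem under Hamiltonian equivalence, invoke the general principle that $L_\infty$-quasi-isomorphic algebras have isomorphic formal MC moduli spaces (the paper cites \cite[Section 7]{doubek2007deformation} for this), and conclude via Theorem~\ref{theor:L_infty_qi}. The extra remarks on filtrations and completeness are reasonable bookkeeping but the paper treats the whole statement as an immediate corollary.
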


\begin{proof}
	The $L_\infty$-algebra of $S$ controls the formal coisotropic deformation problem of $S$ under Hamiltonian equivalence, (see~\cite[Section 4.4]{LOTV}).
	Furthermore two $L_\infty$-quasi-isomorphic $L_\infty$-algebras control the same formal coisotropic deformation problem (see, e.g.,~\cite[Section 7]{doubek2007deformation}).
	Hence the statement is a corollary of Theorem~\ref{theor:L_infty_qi}.
\end{proof}

In the next section, we will show that the BFV-complex controls the coisotropic deformation problem also at the non formal level, under both Hamiltonian and Jacobi equivalence.

\section{The BFV-complex and the coisotropic deformation problem}
\label{sec:coisotropic_deformation_problem}

In this section we will show that the BFV-complex associated, via a fat tubular neighborhood, to a coisotropic submanifold $S$ of a Jacobi manifold encodes the whole information about the small coisotropic deformations of $S$ lying within the tubular neighborhood, and their moduli spaces under Hamiltonian and Jacobi equivalence.

Let $(M,L,\J)$ be a Jacobi manifold, and let $S\subset M$ be a coisotropic submanifold.
Let $(\Gamma(\hat{L}),\hat{\J}=\{-,-\}_{\BFV},d_{\BFV})$ be a BFV-complex, with $d_{\BFV}=\{\Omega_{\BRST},-\}_{\BFV}$, attached to $S$ via a fat tubular neighborhood $(\tau,\underline{\smash{\tau}})$.
Recall that, in particular, $(\tau,\underline{\smash{\tau}})$ is used to identify $S$ with the image of the zero section of the normal bundle $\pi:NS\to S$, introduce $L_{NS}:=\pi^\ast (L|_S)\to NS$ and $E:=\pi^\ast(NS)\to NS$, and replace the Jacobi manifold $(M,L,\J)$ with its local model $(NS,L_{NS},\tau^\ast\J)$ around $S$.
Let us fix some notation which will be used in the following.
We will denote by $C(L,\J)$ the set of those $s\in\Gamma(\pi)$ whose image is a coisotropic submanifold of $(M,L,\J)$.
The elements of $C(L,\J)$, called \emph{coisotropic sections}, can be seen as the coisotropic submanifolds, lying within the tubular neighborhood, which are small deformations of $S$.
We will denote by $\BRST(\hat{L},\hat{\J})$ the set of those $\Omega\in\Gamma(\hat{L})$ which are $s$-BRST charges wrt $\hat{\J}$ for some arbitrary $s\in\Gamma(\pi)$ (cf.~Definition~\ref{def:BRST-charge}).
The elements of $\BRST(\hat{L},\hat{\J})$ will be simply called \emph{BRST charges}.

\begin{proposition}
	\label{prop:coisotropic_deformation_space} \ 
	\newline\noindent
	1) The space $\BRST(\hat{L},\hat{\J})$ is invariant under the natural action of $\Ham_{\geq 2}(\hat{M},\hat{L},\hat{\J})$ on $\Gamma(\hat{L})$ (see Remark~\ref{rem:filtration_Ham} for the meaning of $\Ham_{\geq 2}(\hat{M},\hat{L},\hat{\J})$).
	\newline\noindent
	2) For any $\Omega\in\BRST(\hat{L},\hat{\J})$, there is a unique $s_\Omega\in C(L,\J)$, such that $\Omega$ is an $s_\Omega$-BRST charge.
	Section $s_\Omega$ is implicitly determined by the following relation
	\begin{equation*}
	\im(s_\Omega)=\textnormal{``zero locus of $\operatorname{pr}^{(1,0)}\Omega$''}.
	\end{equation*}
	3) There is a 1--1 correspondence between $\BRST(\hat{L},\hat{\J})/\Ham_{\geq 2}(\hat{M},\hat{L},\hat{\J})$ and $C(L,\J)$ mapping $\Ham_{\geq 2}(\hat{M},\hat{L},\hat{\J}).\Omega$ to $s_\Omega$.
\end{proposition}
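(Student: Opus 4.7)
The strategy is to prove parts~(2), (1), (3) in this order: part~(2) constructs the map $\Omega\mapsto s_\Omega$ directly from the definition of a BRST charge, part~(1) verifies that this map is constant on the orbits of $\Ham_{\geq 2}(\hat{M},\hat{L},\hat{\J})$, and part~(3) then combines this invariance with Theorems~\ref{theor:existence_BRST-charge} and~\ref{theor:uniqueness_BRST-charge}.

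For part~(2), by Definition~\ref{def:BRST-charge} combined with Proposition~\ref{prop:delta_s}, a section $\Omega\in\Gamma(\hat{L})^1$ is an $s$-BRST charge iff $\operatorname{pr}^{(1,0)}\Omega=\Omega_E-\pi^\ast s$. Since the pull-back $\pi^\ast\colon\Gamma(\pi)\to\Gamma(E)$ is injective, $s$ is uniquely determined by $\Omega$; explicitly, $s_\Omega=-\operatorname{pr}^{(1,0)}\Omega|_S$, using that $\Omega_E$ vanishes on the zero section $S\subset\calE$. Moreover, Theorem~\ref{theor:existence_BRST-charge} guarantees that the existence of the $s_\Omega$-BRST charge $\Omega$ forces $\im(s_\Omega)$ to be coisotropic, whence $s_\Omega\in C(L,\J)$. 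Finally, at a point $v\in\calE=M$, the tautological identification $E_v=\calE_{\pi(v)}$ gives $\operatorname{pr}^{(1,0)}\Omega(v)=v-s_\Omega(\pi(v))$, which vanishes iff $v\in\im(s_\Omega)$, as claimed.

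For part~(1), fix $\Omega\in\BRST(\hat{L},\hat{\J})$ and $\phi\in\Ham_{\geq 2}(\hat{M},\hat{L},\hat{\J})$. Since $\phi$ is a Jacobi automorphism, $\phi^\ast\Omega$ is still an MC element; hence it suffices to show $\operatorname{pr}^{(1,0)}(\phi^\ast\Omega)=\operatorname{pr}^{(1,0)}\Omega$, which will imply both that $\phi^\ast\Omega\in\BRST(\hat{L},\hat{\J})$ and that $s_{\phi^\ast\Omega}=s_\Omega$. Choose a smooth path $\{\phi_t\}_{t\in I}$, with $\phi_0=\id$ and $\phi_1=\phi$, integrating $\{\lambda_t,-\}_{\hat{\J}}$ for some $\{\lambda_t\}_{t\in I}\subset\calL^0_{\geq 2}$. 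Decomposing $\hat{\J}=\sum_{j\geq 0}\hat{\J}_j$ as in Remark~\ref{rem:BFV_brackets}, with $\hat{\J}_j$ of bi-degree $(j-1,j-1)$, and noting that each $\lambda_t$ is a sum of bi-homogeneous sections of bi-degree $(k,k)$ with $k\geq 2$, a direct bi-degree count gives
\begin{equation*}
\{\lambda_t,a\}_{\hat{\J}}\in\bigoplus_{r\geq 1}\Gamma(\hat{L})^{(p+r,q+r)},\qquad a\in\Gamma(\hat{L})^{(p,q)}.
\end{equation*}
Integrating along the flow, which is well-defined since $\{\calL^\bullet_{\geq n}\}_{n\geq 0}$ is a finite decreasing filtration (cf.~Remark~\ref{rem:filtration_Ham}), one obtains $\phi_t^\ast a-a\in\bigoplus_{r\geq 1}\Gamma(\hat{L})^{(p+r,q+r)}$ for every bi-homogeneous $a$. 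Applying this component-wise to $\Omega=\sum_i\Omega_i$ with $\Omega_i\in\Gamma(\hat{L})^{(i+1,i)}$, the only contribution to $\phi^\ast\Omega$ landing in bi-degree $(1,0)$ comes from $\Omega_0$ with zero shift, so $\operatorname{pr}^{(1,0)}(\phi^\ast\Omega)=\Omega_0=\Omega_E[s_\Omega]$.

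For part~(3), part~(1) ensures that $\Omega\mapsto s_\Omega$ descends to a well-defined map $\BRST(\hat{L},\hat{\J})/\Ham_{\geq 2}(\hat{M},\hat{L},\hat{\J})\to C(L,\J)$. Surjectivity is exactly Theorem~\ref{theor:existence_BRST-charge}: every $s\in C(L,\J)$ admits some $s$-BRST charge. Injectivity is exactly Theorem~\ref{theor:uniqueness_BRST-charge}: any two $s$-BRST charges for the same $s$ are identified by an element of $\Ham_{\geq 2}(\hat{M},\hat{L},\hat{\J})$. The main technical obstacle is the bi-degree book-keeping in part~(1); it rests entirely on the compatibility of $\hat{\J}$ with the bi-grading and on the very definition of $\Ham_{\geq 2}$ as generated by Hamiltonians of anti-ghost degree at least $2$, so that the induced flow preserves the bi-graded filtration up to strictly positive shifts.
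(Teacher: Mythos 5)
Your proposal is correct and follows essentially the same route as the paper, which simply observes that the statement is a straightforward consequence of Definition~\ref{def:BRST-charge} together with Theorems~\ref{theor:existence_BRST-charge} and~\ref{theor:uniqueness_BRST-charge}. The extra content you supply — the identification of $s_\Omega$ from $\operatorname{pr}^{(1,0)}\Omega$ and the bi-degree count showing that the $\Ham_{\geq 2}$-action shifts bi-degrees by $(r,r)$ with $r\geq 1$ and hence fixes the $(1,0)$-component — is exactly the bookkeeping the paper leaves implicit, and it is carried out correctly.
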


\begin{proof}
	It is a straightforward consequence of Definition~\ref{def:BRST-charge}, and Theorems~\ref{theor:existence_BRST-charge} and~\ref{theor:uniqueness_BRST-charge}.
\end{proof}

Now we introduce the notion of geometric MC element of the BFV-complex by slightly adapting the analogous one given by Sch\"atz in the Poisson case~\cite[Sec.~3.4]{schatz2011moduli}.

\begin{proposition}
	\label{prop:geometric_MC}
	Let $\Omega$ be a MC element of $(\Gamma(\hat{L}),\{-,-\}_{\BFV},d_{\BFV})$.
	For any $s\in C(L,\J)$, the following conditions are equivalent:
	\begin{enumerate}
		\item $\Phi^\ast(\Omega_{\BRST}+\Omega)$ is an $s$-BRST charge wrt $\hat{\J}$, for some $\Phi\in\Ham_{\geq 1}(\hat{M},\hat{L},\hat{\J})$,
		\item $A^\ast(\operatorname{pr}^{(1,0)}(\Omega_{\BRST}+\Omega))=\Omega_E[s]$, for some section $A\in\Gamma(\operatorname{GL}_+(E))$. 
	\end{enumerate}
	If the equivalent conditions (1)-(2) hold, then $s$, also denoted by $s_\Omega$, is completely determined by $\Omega$ through the relation:
	\begin{equation*}
	\im s_\Omega=\textnormal{``zero locus of $\operatorname{pr}^{(1,0)}(\Omega_{\BRST}+\Omega)$''}.
	\end{equation*}
\end{proposition}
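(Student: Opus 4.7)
The plan is to reduce both implications to a bi-degree bookkeeping describing how $\Ham_{\geq 1}(\hat M,\hat L,\hat\J)$ acts on the bi-degree $(1,0)$ component of a section, and to identify that action with $\Gamma(\operatorname{GL}_+(E))$. I would first observe that $\Omega_{\BRST}+\Omega$ is automatically a Maurer-Cartan element of the graded Lie algebra $(\Gamma(\hat L),\{-,-\}_{\hat\J})$: the hypothesis that $\Omega_{\BRST}$ is a $0$-BRST charge gives $\{\Omega_{\BRST},\Omega_{\BRST}\}_{\hat\J}=0$; the MC equation for $\Omega$ in the BFV-DGLA reads $\{\Omega_{\BRST},\Omega\}_{\hat\J}+\tfrac12\{\Omega,\Omega\}_{\hat\J}=0$; summing yields $\{\Omega_{\BRST}+\Omega,\Omega_{\BRST}+\Omega\}_{\hat\J}=0$. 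Since every Hamiltonian automorphism preserves $\{-,-\}_{\hat\J}$, the section $\Phi^\ast(\Omega_{\BRST}+\Omega)$ is MC for every $\Phi\in\Ham_{\geq 1}$. By Definition~\ref{def:BRST-charge}, condition (1) is therefore equivalent to the existence of $\Phi\in\Ham_{\geq 1}$ such that $\operatorname{pr}^{(1,0)}\Phi^\ast(\Omega_{\BRST}+\Omega)=\Omega_E[s]$.

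The core step is the bi-degree analysis. Writing $\hat\J=\sum_{k\geq 0}\hat\J_k$ with $\hat\J_k\in\Diff^2(\hat L[1])^{(k-1,k-1)}$ and $\lambda_t=\sum_{k\geq 1}\lambda_t^{(k,k)}$ with $\lambda_t^{(k,k)}\in\Gamma(\hat L)^{(k,k)}$, the bracket $\{\lambda_t^{(k,k)},-\}_{\hat\J_i}$ shifts bi-degree by $(k+i-1,k+i-1)$; for $k\geq 1,i\geq 0$ this is never negative, and vanishes only when $k=1,i=0$, i.e.\ for $\{\lambda_t^{(1,1)},-\}_G$. Consequently, if $\{\Phi_t\}_{t\in I}$ is the Hamiltonian isotopy of $\lambda_t\in\calL^0_{\geq 1}$, then on $\Gamma(\hat L)^{(1,0)}=\Gamma(E)$ the family $\operatorname{pr}^{(1,0)}\Phi_t^\ast$ satisfies the autonomous linear ODE
\begin{equation*}
\frac{d}{dt}\operatorname{pr}^{(1,0)}\Phi_t^\ast\sigma=\{\lambda_t^{(1,1)},\operatorname{pr}^{(1,0)}\Phi_t^\ast\sigma\}_G.
\end{equation*}
Via the canonical isomorphism $\Gamma(\hat L)^{(1,1)}\cong\Gamma(\End E)$ and the defining property~\eqref{eq:def:G} of $G$, the right-hand side implements the standard infinitesimal action of $\Gamma(\End E)$ on $\Gamma(E)$, so its time-$1$ flow $A$ lies in the path-component of $\id_E$, i.e.\ in $\Gamma(\operatorname{GL}_+(E))$. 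Conversely, every $A\in\Gamma(\operatorname{GL}_+(E))$ is the time-$1$ flow of some path $\beta_t\in\Gamma(\End E)$, which lifts to $\lambda_t^{(1,1)}\in\Gamma(\hat L)^{(1,1)}\subset\calL^0_{\geq 1}$ and yields a $\Phi\in\Ham_{\geq 1}$ inducing $A$ on $\operatorname{pr}^{(1,0)}$.

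These two facts deliver both implications as well as the formula for $s_\Omega$. For (1)$\Rightarrow$(2) I would extract $A\in\Gamma(\operatorname{GL}_+(E))$ as the time-$1$ flow described above; for (2)$\Rightarrow$(1), realize the given $A$ as such a time-$1$ flow, set $\Phi:=\Phi_1$, and conclude from the MC property of $\Phi^\ast(\Omega_{\BRST}+\Omega)$ together with its $(1,0)$-part being $A^\ast\operatorname{pr}^{(1,0)}(\Omega_{\BRST}+\Omega)=\Omega_E[s]$ that it is an $s$-BRST charge. For the zero-locus statement, fibrewise invertibility of $A\in\Gamma(\operatorname{GL}_+(E))$ preserves zero loci, and $\Omega_E[s]=\Omega_E-\pi^\ast s$ vanishes at $(x,v)\in NS$ exactly when $v=s(x)$; hence the zero locus of $\operatorname{pr}^{(1,0)}(\Omega_{\BRST}+\Omega)$ is $\im s$, which uniquely pins down $s_\Omega$. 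The main technical obstacle is the bi-degree analysis in the second paragraph: one has to rule out contributions to $\operatorname{pr}^{(1,0)}\Phi_t^\ast$ coming from $\hat\J_k$ with $k\geq 1$ or $\lambda_t^{(k,k)}$ with $k\geq 2$, so that the induced action on the $(1,0)$-slot genuinely closes up into an autonomous first-order ODE on $\Gamma(E)$; everything else follows formally from this reduction and from the properties of Hamiltonian automorphisms established earlier.
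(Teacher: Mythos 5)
Your proposal is correct and follows essentially the same route as the paper: both reduce everything to the observation that, for $\lambda_t\in\calL^0_{\geq 1}$, the bi-degree $(0,0)$ component of $\{\lambda_t,-\}_{\hat{\J}}$ is $\{\operatorname{pr}^{(1,1)}\lambda_t,-\}_G$, so that $\operatorname{pr}^{(1,0)}\circ\Phi_t^\ast$ acts on $\Gamma(\hat{L})^{(1,0)}=\Gamma(E)$ through the flow $A_t$ of $a_t=\operatorname{pr}^{(1,1)}\lambda_t\in\Gamma(\End(E))$, starting at $\id_E$. The paper packages this computation via the auxiliary path $\Psi_t$ of automorphisms of $(\hat{M},\hat{L},G)$ together with Lemma~\ref{lem:integrating_graded_do}, whereas you derive the linear ODE for $\operatorname{pr}^{(1,0)}\Phi_t^\ast\sigma$ directly by bi-degree counting; these are the same argument.
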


\begin{proof}
	As a preliminary step we recall that the relation $\Omega'=\Omega_{\BRST}+\Omega$ establishes a one-to-one correspondence between MC elements $\Omega'$ of the graded Lie algebra $(\Gamma(\hat L),\{-,-\}_{\BFV})$ and MC elements $\Omega$ of the differential graded Lie algebra $(\Gamma(\hat L),\{-,-\}_{\BFV},d_{\BFV})$.
	Additionally the natural action of $\Ham_{\geq 1}(\hat M,\hat L,\hat J)$ on $\Gamma(\hat L)$ preserves the space of MC elements of  $(\Gamma(\hat L),\{-,-\}_{\BFV})$.
	
	Now we prove the equivalence of (1) and (2).
	Let $\{\lambda_t\}_{t\in I}\subset\calL^0_{\geq 1}$ and $\{a_t\}_{t\in I}\subset\Gamma(\hat{L})^{(1,1)}=\Gamma(\End(E))$ be smooth paths such that $a_t=\operatorname{pr}^{(1,1)}\lambda_t$.
	Since the $(0,0)$ bi-degree component of $\{\lambda_t,-\}_{\hat{\J}}$ reduces to $\{a_t,-\}_G$, whose symbol is zero, Lemma~\ref{lem:integrating_graded_do} guarantees that $\{\{\lambda_t,-\}_{\hat{\J}}\}_{t\in I}$ and $\{\{a_t,-\}_G\}_{t\in I}$ integrate to smooth paths $\{\Phi_t\}_{t\in I}\subset\Ham_{\geq 1}(\hat{M},\hat{L},\hat{\J})$ and $\{\Psi_t\}_{t\in I}\subset\Ham(\hat{M},\hat{L},G)^{(0,0)}$, respectively, so that
	\begin{equation}
	\label{eq:proof:geometric_MC_1}
	\Phi_t^\ast\lambda=\Psi_t^\ast\lambda\Mod\bigoplus_{k\geq 1}\Gamma(\hat{L})^{(p+k,q+k)},\quad\textnormal{for all}\ (p,q)\in\bbN^2,\ \lambda\in\Gamma(\hat{L})^{(p,q)}.
	\end{equation}
	Furthermore, from the very definition of the tautological Jacobi structure $G$ on $\hat{L}$, it follows that
	\begin{equation}
	\label{eq:proof:geometric_MC_2}
	\Psi_t= S_{{\scriptscriptstyle C^\infty(M)}}((A_t\otimes\id_{L^\ast})\oplus A_t^\ast)\otimes\id_L,
	\end{equation}
	where $\{A_t\}_{t\in I}\subset\Gamma(\operatorname{GL}(E))$ is the smooth path, with $A_0=\id_E$, which integrates $\{a_t\}_{t\in I}\subset\Gamma(\End(E))$, and is explicitly given by the time-ordered exponential
	\begin{equation*}
	A_t=\calT\exp\left(\int_0^t a_sds\right):=\sum_{n=0}^\infty\frac{1}{n!}\int_0^t\int_0^{s_{n-1}}\dots\int_0^{s_1}\left(a_{s_{n-1}}\circ a_{s_n}\circ\ldots\circ a_{s_0}\right)ds_{n-1}\dots ds_1ds_0.
	\end{equation*}
	
	Finally, \eqref{eq:proof:geometric_MC_1} and~\eqref{eq:proof:geometric_MC_2} imply that $\operatorname{pr}^{(1,0)}(\Phi_t^\ast\Omega')=A_t^\ast(\operatorname{pr}^{(1,0)}\Omega')$, for every $\Omega'\in\Gamma(\hat{L})^1$, which is enough to conclude the proof.
\end{proof}

\begin{definition}
	\label{def:geometric_MC}
	A \emph{geometric MC element of the BFV-complex} is a MC element $\Omega$ of $(\Gamma(\hat{L}),\{-,-\}_{\BFV},d_{\BFV})$ which satisfies, for some $s\in C(L,\J)$, the equivalent conditions (1)-(2) in Proposition~\ref{prop:geometric_MC}.
\end{definition}

In the following, $\MC_{\textnormal{geom}}(\BFV)$ will denote the set of all geometric MC elements of the BFV-complex.
In view of Proposition~\ref{prop:geometric_MC}, this set identifies with the orbit described by $\BRST(\hat{L},\hat{\J})$ under the natural action of $\Ham_{\geq 1}(\hat{M},\hat{L},\hat{\J})$ on $\Gamma(\hat{L})$, so that the induced action of $\Ham_{\geq 1}(\hat{M},\hat{L},\hat{\J})$ on $\MC_{\textnormal{geom}}(\BFV)$ is given by $\Phi\cdot\Omega=\Phi^\ast(\Omega_{\BRST}+\Omega)-\Omega_{\BRST}$.
The following theorem shows that, as already in the Poisson case~\cite[Theorem 2]{schatz2011moduli}, also in the Jacobi setting the BFV-complex of a coisotropic submanifold $S$ encodes the small coisotropic deformations of $S$.

\begin{theorem}
	\label{theor:coisotropic_def_space}
	There is a 1--1 correspondence between $\MC_{\textnormal{geom}}(\BFV)/\Ham_{\geq 1}(\hat{M},\hat{L},\hat{\J})$ and $C(L,\J)$ mapping $\Ham_{\geq 1}(\hat{M},\hat{L},\hat{\J}).\Omega$ to $s_\Omega$.
\end{theorem}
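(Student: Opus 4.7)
The plan is to establish the claimed bijection by combining Proposition~\ref{prop:coisotropic_deformation_space} with the existence/uniqueness Theorems~\ref{theor:existence_BRST-charge} and~\ref{theor:uniqueness_BRST-charge}. Recall, from the preliminary observation in the proof of Proposition~\ref{prop:geometric_MC}, the affine bijection $\Omega\leftrightarrow\Omega'=\Omega+\Omega_{\BRST}$ between MC elements of the DGLA $(\Gamma(\hat L),\{-,-\}_{\BFV},d_{\BFV})$ and MC elements of the graded Lie algebra $(\Gamma(\hat L),\{-,-\}_{\BFV})$. Under this bijection, $\MC_{\textnormal{geom}}(\BFV)$ corresponds, by Definition~\ref{def:geometric_MC}, to the $\Ham_{\geq 1}$-saturation of $\BRST(\hat L,\hat\J)$ inside the set of MC elements of the graded Lie algebra, and the induced $\Ham_{\geq 1}$-action on $\MC_{\textnormal{geom}}(\BFV)$ is conjugate to the natural pullback action on $\Gamma(\hat L)$.

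First I would verify that the assignment $[\Omega]\mapsto s_\Omega$ is well-defined. Given $\Omega\in\MC_{\textnormal{geom}}(\BFV)$, Proposition~\ref{prop:geometric_MC}(1) provides $\Phi_1\in\Ham_{\geq 1}(\hat M,\hat L,\hat\J)$ such that $\Phi_1^*(\Omega+\Omega_{\BRST})$ is an $s_\Omega$-BRST charge. Replacing $\Omega$ by $\Phi\cdot\Omega$ with $\Phi\in\Ham_{\geq 1}$ amounts to replacing $\Phi_1$ by $\Phi_1\circ\Phi^{-1}$, which still belongs to $\Ham_{\geq 1}$ once one checks that $\Ham_{\geq n}$ is a subgroup of $\Ham(\hat M,\hat L,\hat\J)$: closure under composition follows by concatenating generating Hamiltonian paths, and closure under inversion follows by time-reversal, which preserves the filtration condition $\{\lambda_t\}\subset\calL^0_{\geq n}$ of Remark~\ref{rem:filtration_Ham}. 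Hence $s_{\Phi\cdot\Omega}=s_\Omega$ and the map descends to the quotient.

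Surjectivity will follow directly from Theorem~\ref{theor:existence_BRST-charge}: for $s\in C(L,\J)$, pick any $s$-BRST charge $\Omega_s$ and set $\Omega:=\Omega_s-\Omega_{\BRST}$. A routine expansion of $d_{\BFV}\Omega+\tfrac{1}{2}\{\Omega,\Omega\}_{\BFV}$ -- using that both $\Omega_s$ and $\Omega_{\BRST}$ satisfy the MC equation for the graded Lie bracket $\{-,-\}_{\BFV}$ -- shows that $\Omega$ is a MC element of the DGLA. Taking $\Phi_1=\id$ in Proposition~\ref{prop:geometric_MC}(1) then shows $\Omega\in\MC_{\textnormal{geom}}(\BFV)$ with $s_\Omega=s$.

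For injectivity, given $\Omega,\Omega'\in\MC_{\textnormal{geom}}(\BFV)$ with $s_\Omega=s_{\Omega'}=s$, I would choose $\Phi_1,\Phi_2\in\Ham_{\geq 1}$ so that $\Phi_1^*(\Omega+\Omega_{\BRST})$ and $\Phi_2^*(\Omega'+\Omega_{\BRST})$ are both $s$-BRST charges, and invoke the uniqueness Theorem~\ref{theor:uniqueness_BRST-charge} to produce $\Psi\in\Ham_{\geq 2}\subseteq\Ham_{\geq 1}$ intertwining the two. The composite $\Phi:=\Phi_1\circ\Psi\circ\Phi_2^{-1}\in\Ham_{\geq 1}$ will then satisfy $\Phi^*(\Omega+\Omega_{\BRST})=\Omega'+\Omega_{\BRST}$, i.e.~$\Omega'=\Phi\cdot\Omega$. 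The main obstacle is the careful bookkeeping of the pullback calculus $\alpha^*\beta^*=(\beta\circ\alpha)^*$ together with the group-theoretic verifications above; once these are secured, the conclusion is essentially a formal consequence of the existence/uniqueness results of Section~\ref{sec:BRST-charges} combined with the affine shift by $\Omega_{\BRST}$.
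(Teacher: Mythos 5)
Your proposal is correct and follows essentially the same route as the paper: the paper proves this theorem as a direct consequence of Propositions~\ref{prop:coisotropic_deformation_space} and~\ref{prop:geometric_MC}, which in turn rest on Theorems~\ref{theor:existence_BRST-charge} and~\ref{theor:uniqueness_BRST-charge}, and your argument simply unfolds that chain (affine shift by $\Omega_{\BRST}$, well-definedness via the group property of $\Ham_{\geq 1}$, surjectivity from existence of BRST charges, injectivity from their uniqueness up to $\Ham_{\geq 2}$). The only caveats are the composition-order details in the pullback calculus, which you already flag and which do not affect the argument.
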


\begin{proof}
	It is a straightforward consequence of Propositions~\ref{prop:coisotropic_deformation_space} and~\ref{prop:geometric_MC}.
\end{proof}

The BFV-complex does also encode the information about the (local) moduli space of coisotropic deformations of $S$ under Hamiltonian equivalence.

\begin{definition}
	\label{def:Ham_homotopy_geometric_MC}
	A \emph{Hamiltonian homotopy} of geometric MC elements of the BFV-complex consists of
	\begin{itemize}
		\item a smooth path $\{\Omega_t\}_{t\in I}$ of geometric MC elements of $(\Gamma(\hat{L}),\{-,-\}_{\BFV},d_{\BFV})$,
		\item a smooth path $\{\Phi_t\}_{t\in I}$ of automorphisms of $(\hat{M},\hat{L},\hat{\J})$, with $\Phi_0=\id_{\hat{L}}$, which integrates $\{\lambda_t,-\}_{\hat{\J}}$, for some smooth path $\{\lambda_t\}_{t\in I}\subset\Gamma(\hat{L})^0$ (cf.~Definition~\ref{def:Hamiltonian_family}),
	\end{itemize}
	such that they are related by the compatibility condition $\Phi_t^\ast(\Omega_{\BRST}+\Omega_t)=\Omega_{\BRST}+\Omega_0$.
	Such Hamiltonian homotopy is said to \emph{interpolate} the geometric MC elements $\Omega_0$ and $\Omega_1$.
	If geometric MC elements $\Omega_0$ and $\Omega_1$ are interpolated by an Hamiltonian homotopy, then they are called \emph{Hamiltonian equivalent}, and we write $\Omega_0\sim_{\Ham}\Omega_1$.
	Indeed $\sim_{\Ham}$ is an equivalence on $\MC_{\textnormal{geom}}(\BFV)$.
\end{definition}

\begin{lemma}
	\label{lem:coisotropic_moduli_space}
	Let $s$ be an arbitrary coisotropic section of $\pi$ in $(L,\J)$.
	\newline\noindent
	1) Any two $s$-BRST charges wrt $\hat{\J}$ are Hamiltonian equivalent.
	\newline\noindent
	2) For any $\Omega_0,\Omega_1\in\MC_{\textnormal{geom}}(\BFV)$, if $s_{\Omega_0}=s_{\Omega_1}=s$ then $\Omega_0\sim_{\Ham}\Omega_1$.
\end{lemma}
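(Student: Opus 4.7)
The plan is to reduce both parts to Theorem~\ref{theor:uniqueness_BRST-charge} and Proposition~\ref{prop:geometric_MC}, constructing the required Hamiltonian homotopies explicitly. For part 1, given $s$-BRST charges $\Omega,\Omega'$ wrt $\hat\J$, Theorem~\ref{theor:uniqueness_BRST-charge} yields $\phi\in\Ham_{\geq 2}(\hat M,\hat L,\hat\J)$ with $\phi^\ast\Omega'=\Omega$. Writing $\phi=\Phi_1$ for a smooth path $\{\Phi_t\}_{t\in I}$ of Hamiltonian automorphisms integrating $\{\lambda_t,-\}_{\hat\J}$ with $\{\lambda_t\}_{t\in I}\subset\calL^0_{\geq 2}$ and $\Phi_0=\id_{\hat L}$, I would set
\[
\Omega_t:=(\Phi_t^{-1})^\ast\Omega-\Omega_{\BRST},
\]
so that $\Omega_0=\Omega-\Omega_{\BRST}$, $\Omega_1=\Omega'-\Omega_{\BRST}$, and the compatibility $\Phi_t^\ast(\Omega_{\BRST}+\Omega_t)=\Omega_{\BRST}+\Omega_0$ holds by construction. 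The essential point is that each $\Omega_t$ lies in $\MC_{\textnormal{geom}}(\BFV)$: it is a MC element of the differential graded Lie algebra because $\Phi_t^{-1}$ is a Jacobi automorphism, and since $\lambda_t\in\calL^0_{\geq 2}$ forces $\operatorname{pr}^{(1,1)}\lambda_t=0$, the time-ordered exponential argument in the proof of Proposition~\ref{prop:geometric_MC} specializes to $A_t=\id$, so $(\Phi_t^{-1})^\ast$ preserves the bi-degree $(1,0)$ projection and $\Omega_{\BRST}+\Omega_t$ is in fact itself an $s$-BRST charge, which is more than enough to conclude geometricity with $s_{\Omega_t}=s$.

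For part 2, I would first record as a preliminary fact that the natural action of $\Ham_{\geq 1}(\hat M,\hat L,\hat\J)$ on $\MC_{\textnormal{geom}}(\BFV)$ is by Hamiltonian equivalence. Exactly the same construction as in part 1, but with Hamiltonians $\lambda_t\in\calL^0_{\geq 1}$, produces a smooth path $\Omega_t:=(\Phi_t^{-1})^\ast(\Omega_{\BRST}+\Omega)-\Omega_{\BRST}$ of MC elements; their geometricity now follows from Proposition~\ref{prop:geometric_MC}(2), with the bi-degree $(1,0)$ projection transformed by the section $A_t\in\Gamma(\operatorname{GL}_+(E))$ given by the time-ordered exponential of $\operatorname{pr}^{(1,1)}\lambda_t$. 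Given then $\Omega_0,\Omega_1\in\MC_{\textnormal{geom}}(\BFV)$ with $s_{\Omega_0}=s_{\Omega_1}=s$, Proposition~\ref{prop:geometric_MC} furnishes $\Phi_i\in\Ham_{\geq 1}$ such that $\Phi_i^\ast(\Omega_{\BRST}+\Omega_i)$ is an $s$-BRST charge wrt $\hat\J$; the preliminary fact gives $\Omega_i\sim_{\Ham}\Phi_i\cdot\Omega_i$ for $i=0,1$, while part 1 applied to the two $s$-BRST charges $\Omega_{\BRST}+\Phi_0\cdot\Omega_0$ and $\Omega_{\BRST}+\Phi_1\cdot\Omega_1$ gives $\Phi_0\cdot\Omega_0\sim_{\Ham}\Phi_1\cdot\Omega_1$. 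Transitivity of $\sim_{\Ham}$ (as asserted in Definition~\ref{def:Ham_homotopy_geometric_MC}) then yields $\Omega_0\sim_{\Ham}\Omega_1$.

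The bulk of the technical work, and what I expect to be the main obstacle, is the systematic verification that every intermediate $\Omega_t$ along the constructed paths remains a geometric MC element. This amounts to tracking how Hamiltonian flows built from Hamiltonians in $\calL^0_{\geq n}$ transform the bi-degree $(1,0)$ projection, and reduces, at each instance, to a refinement of the time-ordered exponential computation already performed in the proof of Proposition~\ref{prop:geometric_MC}; once this bookkeeping is in place, both statements follow essentially formally from that proposition together with Theorem~\ref{theor:uniqueness_BRST-charge}.
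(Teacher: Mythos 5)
Your argument is correct and follows exactly the route the paper takes: its proof of this lemma is the one-line remark that it is ``a straightforward consequence of Theorem~\ref{theor:uniqueness_BRST-charge} and Proposition~\ref{prop:geometric_MC}'', and your write-up simply makes explicit the path construction $\Omega_t=(\Phi_t^{-1})^\ast(\Omega_{\BRST}+\Omega)-\Omega_{\BRST}$ and the bi-degree bookkeeping (via the time-ordered exponential from the proof of Proposition~\ref{prop:geometric_MC}) that the authors leave implicit. No gaps.
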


\begin{proof}
	It is a straightforward consequence of Theorem~\ref{theor:uniqueness_BRST-charge} and Proposition~\ref{prop:geometric_MC}.
\end{proof}

An analogous notion of Hamiltonian equivalence for coisotropic sections of a Jacobi manifold, within a fat tubular neighborhood, appears already in~\cite{LOTV}.
For the reader's convenience we will present it again here.

\begin{definition}
	\label{def:Ham_homotopy_coisotropic_sections}
	A \emph{Hamiltonian homotopy} of coisotropic sections consists of
	\begin{itemize}
		\item a smooth path $\{s_t\}_{t\in I}\subset C(L,\J)$ of coisotropic sections of $(L,\J)$, and
		\item a smooth path $\{(F_t,\underline{\smash{F_t}})\}_{t\in I}\subset\Aut(M,L,\J)$ of automorphisms of $(M,L,\J)$, with $F_0=\id_L$, which integrates $\{\lambda_t,-\}_{\J}$, for some smooth path $\{\lambda_t\}_{t\in I}\subset\Gamma(L)$,
	\end{itemize}
	such that they are related by the compatibility condition $\im s_t=\underline{\smash{F_t}}(\im s_0)$.
	Such an Hamiltonian homotopy $\{(F_t,\underline{\smash{F_t}})\}_{t\in I}$ is said to \emph{interpolate} the coisotropic sections $s_0$ and $s_1$.
	If coisotropic sections $s_0$ and $s_1$ are interpolated by an Hamiltonian homotopy, then they are called \emph{Hamiltonian equivalent}, and we write $s_0\sim_{\Ham}s_1$.
	Indeed $\sim_{\Ham}$ is an equivalence relation on $C(L,\J)$.
\end{definition}

The following theorem shows that, as already in the Poisson case~\cite[Theorem 4]{schatz2011moduli}, also in the Jacobi setting the BFV-complex of a coisotropic submanifold $S$ encodes the local moduli space of coisotropic deformations of $S$ under Hamiltonian equivalence.

\begin{theorem}
	\label{theor:Mod_Ham}
	There is a 1--1 correspondence between $\MC_{\textnormal{geom}}(\BFV)/{\sim_{\Ham}}$ and $C(L,\J)/{\sim_{\Ham}}$ mapping $[\Omega]_{\Ham}$ to $[s_\Omega]_{\Ham}$.
\end{theorem}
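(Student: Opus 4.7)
The plan is to build on Theorem~\ref{theor:coisotropic_def_space}, which yields a bijection $[\Omega]_{\Ham_{\geq 1}}\leftrightarrow s_\Omega$ between $\MC_{\textnormal{geom}}(\BFV)/\Ham_{\geq 1}(\hat{M},\hat{L},\hat{\J})$ and $C(L,\J)$. Since every $\Ham_{\geq 1}$-equivalence is, by definition, an instance of $\sim_{\Ham}$ of geometric MC elements (the Hamiltonian path lies in $\calL^0_{\geq 1}\subset\Gamma(\hat{L})^0$), this bijection composes with the quotient $C(L,\J)\twoheadrightarrow C(L,\J)/{\sim_{\Ham}}$ to a surjective map $\MC_{\textnormal{geom}}(\BFV)/\Ham_{\geq 1}\twoheadrightarrow C(L,\J)/{\sim_{\Ham}}$. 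To promote this to the desired bijection, I need to verify two implications: (I) $\Omega_0\sim_{\Ham}\Omega_1 \Rightarrow s_{\Omega_0}\sim_{\Ham}s_{\Omega_1}$ (well-definedness), and (II) $s_{\Omega_0}\sim_{\Ham}s_{\Omega_1} \Rightarrow \Omega_0\sim_{\Ham}\Omega_1$ (injectivity).

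For (I), I start with a Hamiltonian homotopy $(\Omega_t,\Phi_t)$ of geometric MC elements with $\Phi_t$ integrating $\{\lambda_t,-\}_{\hat{\J}}$ for some smooth path $\{\lambda_t\}_{t\in I}\subset\Gamma(\hat{L})^0$. The strategy is to project the homotopy down to $(M,L,\J)$: I set $\mu_t:=p(\lambda_t)\in\Gamma(L)$, let $F_t$ be the Hamiltonian automorphism of $(M,L,\J)$ integrating $\{\mu_t,-\}_{\J}$, and define $s_t:=s_{\Omega_t}$, which is smooth in $t$ because $\im s_t$ is the zero locus of $\operatorname{pr}^{(1,0)}(\Omega_{\BRST}+\Omega_t)$. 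The heart of the argument is the geometric identity $\underline{F_t}(\im s_0)=\im s_t$, which I would establish by differentiating in $t$: the LHS velocity is $X_{\mu_t}$, while the RHS velocity comes from $\frac{d}{dt}\operatorname{pr}^{(1,0)}(\Omega_{\BRST}+\Omega_t)=-\operatorname{pr}^{(1,0)}\{\lambda_t,\Omega_{\BRST}+\Omega_t\}_{\hat{\J}}$. Using Propositions~\ref{prop:p} and~\ref{prop:i_nabla}, the expansion $\hat{\J}=\sum_k\hat{\J}_k$ of Remark~\ref{rem:BFV_brackets}, and the lifting property of Definition~\ref{def:BFV_brackets}, one sees that this RHS velocity restricts on $\im s_t$ to exactly $X_{\mu_t}$ acting on $\Omega_E[s_t]$. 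Combined with $\underline{F_0}=\id_M$, uniqueness of ODE solutions yields $\underline{F_t}(\im s_0)=\im s_t$ for all $t$, so $(s_t,F_t)$ is the sought Hamiltonian homotopy of coisotropic sections.

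For (II), I start with a Hamiltonian homotopy $(s_t,F_t)$ of coisotropic sections, with $F_t$ integrating $\{\mu_t,-\}_{\J}$ for some smooth path $\{\mu_t\}_{t\in I}\subset\Gamma(L)$. Viewing $\mu_t\in\Gamma(L)\subset\Gamma(\hat{L})^{(0,0)}\subset\Gamma(\hat{L})^0$, I integrate $\{\mu_t,-\}_{\hat{\J}}$ to a smooth path $\hat{\Phi}_t$ of automorphisms of $(\hat{M},\hat{L},\hat{\J})$ with $\hat{\Phi}_0=\id$. Picking $\Omega_0\in\MC_{\textnormal{geom}}(\BFV)$ with $s_{\Omega_0}=s_0$, set $\Omega_t:=(\hat{\Phi}_t^{-1})^\ast(\Omega_{\BRST}+\Omega_0)-\Omega_{\BRST}$; then $(\Omega_t,\hat{\Phi}_t)$ is a Hamiltonian homotopy of geometric MC elements. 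Applying (I) to this new homotopy, the projected Hamiltonian is $p(\mu_t)=\mu_t$, whose flow is $F_t$, so by uniqueness of ODE solutions $s_{\Omega_t}=s_t$ for all $t$. In particular $s_{\Omega_1}=s_1=s_{\Omega_1'}$ for any target $\Omega_1'\in\MC_{\textnormal{geom}}(\BFV)$ with $s_{\Omega_1'}=s_1$, and Lemma~\ref{lem:coisotropic_moduli_space}(2) forces $\Omega_1\sim_{\Ham}\Omega_1'$; chained with $\Omega_0\sim_{\Ham}\Omega_1$ just constructed, this gives $\Omega_0\sim_{\Ham}\Omega_1'$ as required.

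The main obstacle is the infinitesimal intertwining underpinning (I): checking that $\operatorname{pr}^{(1,0)}\{\lambda_t,\Omega_{\BRST}+\Omega_t\}_{\hat{\J}}\big|_{\im s_t}$ really coincides with $X_{\mu_t}$ acting on $\Omega_E[s_t]$. This requires a careful bookkeeping of bi-degrees for each summand of $\hat{\J}=\sum_k\hat{\J}_k$ and for the homogeneous components of $\lambda_t$ and $\Omega_{\BRST}+\Omega_t$, exploiting the intertwining $p\ldsb\square_1,\square_2\rdsb=\ldsb p\square_1,p\square_2\rdsb$ on $(0,0)$-components (Proposition~\ref{prop:p}(2)) together with the lifting conditions of Definition~\ref{def:BFV_brackets} to isolate the contributions that survive the $(1,0)$-projection and the restriction to $\im s_t$. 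Once this infinitesimal identity is in hand, both (I) and (II), and hence the theorem, follow by integration and a direct appeal to Lemma~\ref{lem:coisotropic_moduli_space}.
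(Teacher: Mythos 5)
Your overall skeleton (reduce to the two implications, get surjectivity from Theorem~\ref{theor:coisotropic_def_space}, finish injectivity with Lemma~\ref{lem:coisotropic_moduli_space}(2) and transitivity of $\sim_{\Ham}$) matches the paper, and your direction (I) is essentially the paper's argument: the paper simply sets $F_t^\ast:=\operatorname{pr}^{(0,0)}\circ\Phi_t^\ast|_{\Gamma(\hat{L})^{(0,0)}}$ and $\underline{\lambda}_t:=\operatorname{pr}^{(0,0)}\lambda_t$. The ``velocity comparison'' you flag as the main obstacle is not actually needed: by Lemma~\ref{lem:integrating_graded_do}, $\Phi_t^\ast$ preserves the filtration by bi-degree up to terms in $\bigoplus_{k\geq 1}\Gamma(\hat{L})^{(p+k,q+k)}$, so $\operatorname{pr}^{(1,0)}(\Omega_{\BRST}+\Omega_t)=(\tilde\Phi_t^{-1})^\ast\operatorname{pr}^{(1,0)}(\Omega_{\BRST}+\Omega_0)$ with $\tilde\Phi_t$ a bundle automorphism covering $\underline{F_t}$, and the identity $\im s_t=\underline{F_t}(\im s_0)$ of zero loci is immediate.

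The genuine gap is in direction (II). You integrate $\{\mu_t,-\}_{\hat{\J}}$ with $\mu_t\in\Gamma(L)\subset\Gamma(\hat{L})^{(0,0)}$ and assert that $\Omega_t:=(\hat\Phi_t^{-1})^\ast(\Omega_{\BRST}+\Omega_0)-\Omega_{\BRST}$ is a Hamiltonian homotopy of \emph{geometric} MC elements. Being an MC element is automatic, but geometricity is exactly the nontrivial point: Definition~\ref{def:Ham_homotopy_geometric_MC} requires each $\Omega_t$ to be geometric, i.e.\ by Proposition~\ref{prop:geometric_MC}(2) one must exhibit $A_t\in\Gamma(\operatorname{GL}_+(E))$ with $A_t^\ast\bigl(\operatorname{pr}^{(1,0)}(\Omega_{\BRST}+\Omega_t)\bigr)=\Omega_E[s_t]$. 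The naive lift $\{\mu_t,-\}_{\hat{\J}}$ only guarantees that $\operatorname{pr}^{(1,0)}(\Omega_{\BRST}+\Omega_t)=(\tilde\Phi_t^{-1})^\ast\Omega_E[s_0]$ has zero locus $\im s_t$; producing the $A_t$ requires a division/transversality argument of the type in Lemma~\ref{lem:technical_lemma}, which you neither invoke nor prove. This is precisely why the paper does not use the naive lift: Proposition~\ref{prop:main_result_bis} corrects the Hamiltonian to $\hat{\lambda}_t=\lambda_t+\tilde a_t$ with $\tilde a_t\in\Gamma(\End(E))=\Gamma(\hat{L})^{(1,1)}$, the correction being obtained by solving $\frac{d}{dt}e_t=\tilde a_t(e_t)$ via Lemma~\ref{lem:technical_lemma}, so that the transported charge is an honest $s_t$-BRST charge wrt $\hat{\J}^\nabla$; a final conjugation by the automorphism $\calG$ of Theorem~\ref{theor:uniqueness_BFV_brackets} transfers the construction from $\hat{\J}^\nabla$ to the given $\hat{\J}$. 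Your route can be repaired by supplying the missing division lemma (with moving zero locus $\im s_t$) and checking $A_t\in\operatorname{GL}_+$, but as written the key step is asserted rather than proved.
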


\begin{proof}
	We have to prove that, for all $\Omega_0,\Omega_1\in\MC_{\textnormal{geom}}(\BFV)$, the following conditions are equivalent:
	\begin{itemize}
		\item[a)] $\Omega_0$ and $\Omega_1$ are Hamiltonian equivalent,
		\item[b)] $s_0:=s_{\Omega_0}$ and $s_1:=s_{\Omega_1}$ are Hamiltonian equivalent.
	\end{itemize}
	
	a) $\Longrightarrow$ b).
	Assume there is a Hamiltonian homotopy of geometric MC elements given by $\{\Omega_t\}_{t\in I}\subset\MC_{\textnormal{geom}}(\BFV)$ and $\{\Phi_t\}_{t\in I}\subset\Aut(\hat{M},\hat{L},\hat{\J})$, with
	\begin{equation*}
	\Phi_0=\id_{\hat{L}},\quad\frac{d}{dt}\Phi_t^\ast=\{\lambda_t,-\}_{\hat{\J}}\circ\Phi_t^\ast,
	\end{equation*}
	for some $\{\lambda_t\}_{t\in I}\subset\Gamma(\hat{L})^0$.
	The latter can be canonically projected onto a Hamiltonian homotopy of coisotropic sections of $(L,\J)$, given by $\{s_t\}_{t\in I}\subset C(L,\J)$ and $\{(F_t,\underline{\smash{F_t}})\}_{t\in I}\subset\Aut(M,L,\J)$, with
	\begin{equation*}
	F_0=\id_{L},\quad\frac{d}{dt}F_t^\ast=\{\underline\lambda_t,-\}_{\J}\circ F_t^\ast,
	\end{equation*}
	for some $\{\underline\lambda_t\}_{t\in I}\subset\Gamma(L)$.
	Such projection is defined by setting
	\begin{equation*}
	s_t:=s_{\Omega_t},\quad \underline\lambda_t:=\operatorname{pr}^{(0,0)}\lambda_t,\quad F_t^\ast=\operatorname{pr}^{(0,0)}\circ \left.\Phi_t^\ast\right|_{\Gamma(\hat{L})^{(0,0)}}.
	\end{equation*}
	
	b) $\Longrightarrow$ a).
	Assume there is a Hamiltonian homotopy of coisotropic sections of $(L,\J)$ given by $\{s_t\}_{t\in I}\subset C(L,\J)$ and $\{(F_t,\underline{\smash{F_t}})\}_{t\in I}\subset\Aut(M,L,\J)$.
	In view of Lemma~\ref{lem:coisotropic_moduli_space}, it is enough to lift the latter to a Hamiltonian homotopy of geometric MC elements of $(\Gamma(\hat L),\hat J=\{-,-\}_{\BFV},d_{\BFV})$ intertwining $\Omega_0'-\Omega_{\BRST}$ and  $\Omega_1'-\Omega_{\BRST}$, where $\Omega_i'$ is an $s_i$-BRST charge wrt $\hat J$ for $i=0,1$.
	
	So, let $\nabla$ be a $\Diff L$-connection in $L\to M$ obtained by pulling back, along $NS\overset{\pi}{\to}S$, a $\Diff\ell$-connection in $NS\overset{\pi}{\to}S$.
	For an arbitrary $s_0$-BRST charge $\tilde\Omega_0$ wrt $\hat{\J}^\nabla$, Proposition~\ref{prop:main_result_bis} allows us to lift $\{F_t\}_{t\in I}$ to a smooth path $\{\calF_t\}_{t\in I}\subset\Aut(\hat{M},\hat{L},\hat{\J}^\nabla)$ such that
	\begin{equation*}
	\calF_0=\id_{\hat{L}},\quad\frac{d}{dt}\calF_t^\ast=\{\lambda_t,-\}_{\hat{\J}^\nabla}\circ\calF_t^\ast,
	\end{equation*}
	for some smooth path $\{\lambda_t\}_{t\in I}\subset\Gamma(\hat{L})^0$, and additionally $\tilde\Omega_t:=(\calF_t)_\ast\tilde\Omega_0$ is an $s_t$-BRST charge wrt $\hat{\J}^\nabla$.
	In view of Theorem~\ref{theor:uniqueness_BFV_brackets}, there is an automorphism $\calG$ of the graded line bundle $\hat{L}\to\hat{M}$ such that $\calG^\ast(\hat{\J}^\nabla)=\hat{\J}$, and $\Omega'_t:=\calG^\ast\tilde\Omega_t$ is an $s_t$-BRST charge wrt $\hat{\J}$.
	Hence $\{\Phi_t:=\calG^{-1}\circ\calF_t\circ\calG\}_{t\in I}$ and $\{\Omega_t:=\Omega'_t-\Omega_{\BRST}\}_{t\in I}$ provide the required Hamiltonian homotopy of geometric MC elements.
\end{proof}

The BFV-complex does also encode the information about the (local) moduli space of coisotropic deformations of $S$ under Jacobi equivalence.

\begin{definition}
	\label{def:Jac_homotopy_geometric_MC}
	A \emph{Jacobi homotopy} of geometric MC elements of the BFV-complex consists of
	\begin{itemize}
		\item a smooth path $\{\Omega_t\}_{t\in I}$ of geometric MC elements of $(\Gamma(\hat{L}),\{-,-\}_{\BFV},d_{\BFV})$,
		\item a smooth path $\{\Phi_t\}_{t\in I}$ of Jacobi automorphisms of $(\hat{M},\hat{L},\hat{\J})$ with $\Phi_0=\id_{\hat L}$ (cf.~Definition~\ref{def:Jacobi_automorphism}),
	\end{itemize}
	such that they are related by the compatibility condition $\Phi_t^\ast(\Omega_{\BRST}+\Omega_t)=\Omega_{\BRST}+\Omega_0$.
	Such Jacobi homotopy is said to \emph{interpolate} the geometric MC elements $\Omega_0$ and $\Omega_1$.
	If geometric MC elements $\Omega_0$ and $\Omega_1$ are interpolated by a Jacobi homotopy, then they are called \emph{Jacobi equivalent}, and we write $\Omega_0\sim_{\Jac}\Omega_1$.
	Indeed $\sim_{\Jac}$ is an equivalence on $\MC_{\textnormal{geom}}(\BFV)$ coarser than $\sim_{\Ham}$.
\end{definition}

We present now the analogous notion of Jacobi equivalence for coisotropic sections within a fat tubular neighborhood.

\begin{definition}
	\label{def:Jac_homotopy_coisotropic_sections}
	A \emph{Jacobi homotopy} of coisotropic sections of $(L,\J)$ consists of
	\begin{itemize}
		\item a smooth path $\{s_t\}_{t\in I}\subset C(L,\J)$ of coisotropic sections of $(L,\J)$, and
		\item a smooth path $\{(F_t,\underline{\smash{F_t}})\}_{t\in I}\subset\Aut(M,L,\J)$, with $F_0=\id_L$, of Jacobi automorphisms of $(M,L,\J)$ (cf.~\cite[Definition~2.19]{LOTV}),
	\end{itemize}
	such that they are related by compatibility condition $\im s_t=\underline{\smash{F_t}}(\im s_0)$.
	Such Jacobi homotopy is said to \emph{interpolate} coisotropic sections $s_0$ and $s_1$.
	If coisotropic sections $s_0$ and $s_1$ are interpolated by a Jacobi homotopy, they are called \emph{Jacobi equivalent}, and we write $s_0\sim_{\Jac}s_1$.
	Indeed $\sim_{\Jac}$ is an equivalence relation on $C(L,\J)$ coarser than $\sim_{\Ham}$.
\end{definition}

The following theorem shows that, in the Jacobi setting, and a fortiori in the Poisson setting, the BFV-complex of a coisotropic submanifold $S$ encodes the local moduli space of coisotropic deformations of $S$ under Jacobi equivalence.
\begin{theorem}
	\label{theor:Mod_Jac}
	There is a 1--1 correspondence between $\MC_{\textnormal{geom}}(\BFV)/{\sim_{\Jac}}$ and $C(L,\J)/{\sim_{\Jac}}$ mapping $[\Omega]_{\Jac}$ to $[s_\Omega]_{\Jac}$.
\end{theorem}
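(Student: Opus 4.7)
The plan is to adapt the proof of Theorem~\ref{theor:Mod_Ham} essentially verbatim, replacing ``Hamiltonian'' by ``Jacobi'' throughout and invoking Jacobi analogues of the underlying integration/lifting results. Concretely, the content of the theorem reduces to two implications: (a) if $\Omega_0 \sim_{\Jac} \Omega_1$ in $\MC_{\textnormal{geom}}(\BFV)$ then $s_{\Omega_0} \sim_{\Jac} s_{\Omega_1}$ in $C(L,\J)$, and (b) its converse. Surjectivity of $[\Omega]_{\Jac} \mapsto [s_\Omega]_{\Jac}$ is automatic from Theorem~\ref{theor:coisotropic_def_space}, so well-definedness and injectivity are the whole point.

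For (a), I would project the interpolating Jacobi homotopy $(\{\Omega_t\}_{t\in I}, \{\Phi_t\}_{t\in I})$ down to $(M,L,\J)$. Setting $s_t := s_{\Omega_t}$ and $F_t^\ast := \operatorname{pr}^{(0,0)} \circ \Phi_t^\ast|_{\Gamma(\hat L)^{(0,0)}}$, Proposition~\ref{prop:p} together with Definition~\ref{def:BFV_brackets} ensure that $\{F_t\}_{t\in I}$ is a smooth path of Jacobi automorphisms of $(M,L,\J)$. The invariance $\Phi_t^\ast(\Omega_{\BRST}+\Omega_t)=\Omega_{\BRST}+\Omega_0$, combined with the zero-locus characterization of $s_\Omega$ in Proposition~\ref{prop:geometric_MC}, then yields the compatibility $\im s_t = \underline{F_t}(\im s_0)$.

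For (b), I would mirror the corresponding direction in the proof of Theorem~\ref{theor:Mod_Ham}, but with the Jacobi counterpart of Proposition~\ref{prop:main_result_bis} in place of the Hamiltonian one. Given a Jacobi homotopy $(\{s_t\}, \{F_t\})$ of coisotropic sections, fix a $\Diff L$-connection $\nabla$ pulled back from a $\Diff \ell$-connection on $NS \to S$, and lift $\{F_t\}$ to a smooth path $\{\calF_t\}_{t\in I} \subset \Aut(\hat{M},\hat{L},\hat{\J}^\nabla)$ with $\calF_0 = \id_{\hat{L}}$ and such that $(\calF_t)_\ast \tilde{\Omega}_0$ is an $s_t$-BRST charge wrt $\hat{\J}^\nabla$, where $\tilde{\Omega}_0$ is any fixed $s_0$-BRST charge wrt $\hat{\J}^\nabla$. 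Apply Theorem~\ref{theor:uniqueness_BFV_brackets} to produce an isomorphism $\calG$ with $\calG^\ast \hat{\J}^\nabla = \hat{\J}$; then $\Phi_t := \calG^{-1} \circ \calF_t \circ \calG$ and $\Omega_t := \calG^\ast (\calF_t)_\ast \tilde{\Omega}_0 - \Omega_{\BRST}$ form a Jacobi homotopy of geometric MC elements whose endpoints are $\sim_{\Jac}$-equivalent to $\Omega_0$ and $\Omega_1$ by Lemma~\ref{lem:coisotropic_moduli_space} and the refinement $\sim_{\Ham}\ \subseteq\ \sim_{\Jac}$.

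The key obstacle is precisely the lifting step in (b). The Hamiltonian version of Proposition~\ref{prop:main_result_bis} integrates a time-dependent Hamiltonian section of $\hat{L}$ whose filtration order is directly controlled, whereas in the Jacobi case the infinitesimal generators of $\{F_t\}$ are only Jacobi derivations of $(M,L,\J)$, not necessarily Hamiltonian. The step-by-step obstruction argument behind Proposition~\ref{prop:main_result_bis} must therefore be re-run allowing arbitrary Jacobi (rather than purely Hamiltonian) generators. I expect this to go through because the contraction data~\eqref{eq:1contraction_data_1} and the uniqueness statement in Theorem~\ref{theor:uniqueness_BFV_brackets} interact with the full graded Jacobi bracket, and because the projection $p$ of Proposition~\ref{prop:p} sends Jacobi derivations of $(\hat{M},\hat{L},\hat{\J}^\nabla)$ to Jacobi derivations of $(M,L,\J)$; this will be the technical heart of the argument, and everything else is a formal reshuffling of the Hamiltonian proof.
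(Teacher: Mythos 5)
Direction a) of your argument matches the paper's proof: the projection $F_t^\ast := \operatorname{pr}^{(0,0)}\circ\Phi_t^\ast|_{\Gamma(\hat{L})^{(0,0)}}$, $s_t := s_{\Omega_t}$ is exactly what is done there, and the compatibility $\im s_t = \underline{\smash{F_t}}(\im s_0)$ follows as you say.

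Direction b), however, has a genuine gap exactly where you flag it. You rest the whole construction on a ``Jacobi counterpart of Proposition~\ref{prop:main_result_bis}'', i.e.\ a lifting of $\{F_t\}_{t\in I}$ to a path $\{\calF_t\}_{t\in I}\subset\Aut(\hat{M},\hat{L},\hat{\J}^\nabla)$ that genuinely \emph{preserves} $\hat{\J}^\nabla$, and you only ``expect'' the step-by-step obstruction argument to go through with arbitrary Jacobi (non-Hamiltonian) generators. That lemma is not available and is not how the paper proceeds. The paper's proof instead invokes Proposition~\ref{prop:main_result}, which asks much less of the lift: it produces bi-degree $(0,0)$ graded line bundle automorphisms $\calF_t$ for which $(\calF_t)_\ast\hat{\J} =: \hat{\J}_t$ is merely \emph{some} lifting of $\J$ (varying with $t$), together with $\tilde\Omega_t := (\calF_t)_\ast\tilde\Omega_0$ an $s_t$-BRST charge wrt $\hat{\J}_t$. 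Because $\hat{\J}_t$ moves with $t$, the correction back to $\hat{\J}$ cannot be a single conjugation $\calG^{-1}\circ\calF_t\circ\calG$ as in your proposal; one needs a \emph{time-dependent} path $\{\calG_t\}_{t\in I}$ with $\calG_0=\id_{\hat L}$ and $(\calG_t)_\ast\hat{\J}_t=\hat{\J}$, supplied by Theorem~\ref{theor:uniqueness_BFV_brackets}, and then $\Phi_t:=\calG_t\circ\calF_t$, $\Omega_t:=(\calG_t)_\ast\tilde\Omega_t-\Omega_{\BRST}$. This two-step route (arbitrary lift, then pathwise correction by the uniqueness theorem) is precisely what lets the paper avoid re-running the obstruction method with non-Hamiltonian generators; your version, as written, postpones the main difficulty to a statement that still has to be proved. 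Note also that the hypotheses of Proposition~\ref{prop:main_result_bis} require $\{F_t\}$ to integrate a Hamiltonian path $\{\lambda_t,-\}_{\J}$, which is exactly what a Jacobi homotopy does not provide, so appealing to that proposition (or a formal relabelling of it) is not legitimate here.
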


\begin{proof}
	We have to prove that, for all $\Omega_0,\Omega_1\in\MC_{\textnormal{geom}}(\BFV)$, the following conditions are equivalent:
	\begin{itemize}
		\item[a)] $\Omega_0$ and $\Omega_1$ are Jacobi equivalent,
		\item[b)] $s_0:=s_{\Omega_0}$ and $s_1:=s_{\Omega_1}$ are Jacobi equivalent.
	\end{itemize}
	a) $\Longrightarrow$ b).
	Assume there is a Jacobi homotopy of geometric MC elements given by $\{\Omega_t\}_{t\in I}\subset\MC_{\textnormal{geom}}(\BFV)$ and $\{\Phi_t\}_{t\in I}\subset\Aut(\hat{M},\hat{L},\hat{\J})$.
	The latter can be canonically projected onto a Jacobi homotopy of coisotropic sections given by $\{s_t\}_{t\in I}\subset C(L,\J)$ and $\{(F_t,\underline{\smash{F_t}})\}_{t\in I}\subset\Aut(M,L,\J)$.
	Such projection is defined by setting
	\begin{equation*}
	s_t:=s_{\Omega_t},\qquad F_t^\ast:=({\operatorname{pr}^{(0,0)}}\circ{\Phi_t^\ast})|_{\Gamma(\hat{L})^{(0,0)}}.
	\end{equation*}
	
	b) $\Longrightarrow$ a).
	Assume there is a Jacobi homotopy of coisotropic sections of $(L,\J)$ given by $\{s_t\}_{t\in I}\subset C(L,\J)$ and $\{(F_t,\underline{\smash{F_t}})\}_{t\in I}\subset\Aut(M,L,\J)$.
	In view of Lemma~\ref{lem:coisotropic_moduli_space}, it is enough to lift the latter to a Jacobi homotopy of geometric MC elements of $(\Gamma(\hat L),\hat J=\{-,-\}_{\BFV},d_{\BFV})$ intertwining $\Omega_0'-\Omega_{\BRST}$ and  $\Omega_1'-\Omega_{\BRST}$, where $\Omega_i'$ is an $s_i$-BRST charge wrt $\hat J$ for $i=0,1$.
	
	So, fix an $s_0$-BRST charge $\tilde\Omega_0$ wrt $\hat{\J}$.
	Proposition~\ref{prop:main_result} allows us to lift $\{F_t\}_{t\in I}$ to a smooth path $\{\calF_t\}_{t\in I}$ of bi-degree $(0,0)$ graded automorphisms of $\hat{L}\to\hat{M}$, with $\calF_0=\id_{\hat L}$, such that
	\begin{itemize}
		\item $\hat{\J}_t:=(\calF_t)_\ast\hat{\J}$ is a lifting of $\J$ to a Jacobi structure on $\hat{L}\to\hat{M}$,
		\item $\tilde\Omega_t:=(\calF_t)_\ast\tilde\Omega_0$ is a $s_t$-BRST charge wrt $\hat{\J}_t$.
	\end{itemize}
	Hence, in view of Theorem~\ref{theor:uniqueness_BFV_brackets}, there is also a smooth path $\{\calG_t\}_{t\in I}$ of automorphisms of the graded line bundle $\hat{L}\to\hat{M}$, with $\calG_0=\id_{\hat{L}}$, such that $(\calG_t)_\ast\hat{\J}_t=\hat{\J}$, and $\Omega'_t:=(\calG_t)_\ast\tilde\Omega_t$ is an $s_t$-BRST charge wrt $\hat{\J}$.
	Finally, $\{\Phi_t:=\calG_t\circ\calF_t\}_{t\in I}$ and $\{\Omega_t:=\Omega'_t-\Omega_{\BRST}\}_{t\in I}$ provide the desired Jacobi homotopy of geometric MC elements.
\end{proof}

\section{An example}
\label{sec:example}

In this section we briefly describe an example of obstructed coisotropic submanifold in a contact manifold in terms of the associated BFV-complex.
This example was first considered in~\cite[Examples 3.5 and 3.8]{tortorella2016rigidity} and it is inspired by an analogous one in the symplectic setting by Zambon~\cite{zambon2008example}.
For more details see~\cite{tortorella2017thesis} where the same example is also re-interpreted in terms of the associated $L_\infty$-algebra.

We consider the trivial vector bundle $\calE:=\bbT^5\times\bbR^2\overset{\pi}{\longrightarrow} S:=\bbT^5$, and the contact structure on $\calE$ defined by the following global contact form on $\calE$
\begin{equation*}
\theta_\calE:=y_1d\phi_1+y_2d\phi_2+\sin\phi_3d\phi_4+\cos\phi_3d\phi_5.
\end{equation*}
where $\phi_1,\ldots,\phi_5$ are the angular coordinates on $\bbT^5$, and $y_1,y_2$ are the Euclidean coordinates on $\bbR^2$.
It is easy to see that $S\simeq\bbT^5\times\{(0,0)\}$ is a coisotropic submanifold of $\calE$.
We will denote by $J$, or $\{-,-\}_J$, the Jacobi structure on the trivial line bundle $L=\calE\times\bbR\to\calE$ which is canonically determined by the contact structure on $\calE$.

Let us start constructing the BFV-complex of $S$.
Denote by $\eta^1,\eta^2$ the canonical global frame of $\calE\overset{\pi}{\to}S$, so that $\xi^1:=\pi^\ast\eta^1,\xi^2:=\pi^\ast\eta^2$ is the canonical global frame of $E:=\pi^\ast\calE=\calE\times\bbR^2\to\calE$.
Then we have that
\begin{equation*}
C^\infty(\hat{M})
=C^\infty(\calE)\otimes_{C^\infty(S)}\Gamma(S^\bullet(\calE[-1]\oplus \calE^\ast[1]))=C^\infty(\calE)\otimes_{\bbR}\wedge^\bullet(\bbR^2\oplus(\bbR^2)^\ast),
\end{equation*}
and $\Gamma(\hat{L})$ coincides with $C^\infty(\hat{M})$ seen as a left module over itself.
Since $E\to\calE$ is the trivial vector bundle, we can pick the trivial flat $DL$-connection $\nabla$ in $E$.
Then, according to Proposition~\ref{prop:existence_BFV_brackets}, the lifted Jacobi structure defining the BFV-bracket $\{-,-\}_{\BFV}$ reduces to $\hat J^\nabla=G+\iota_\nabla(J)$.
We can easily compute
\begin{equation}
\label{eq:lifted_Jacobi_example}
\begin{aligned}
\hat{\J}^\nabla&=\frac{\partial}{\partial\phi_3} X+Y\left(y_1\frac{\partial}{\partial y_1}+y_2\frac{\partial}{\partial y_2}\right)-\frac{\partial}{\partial\phi_1}\frac{\partial}{\partial y_1}-\frac{\partial}{\partial\phi_2}\frac{\partial}{\partial y_2}\\
&\phantom{=}-Y\left(\id-\xi^1\frac{\partial}{\partial\xi^1}-\xi^2\frac{\partial}{\partial\xi^2}\right)+\frac{\partial}{\partial\xi^1}\frac{\partial}{\partial\xi^\ast_1}+\frac{\partial}{\partial\xi^2}\frac{\partial}{\partial\xi^\ast_2}.
\end{aligned}
\end{equation}
where $X:=\cos\phi_3\frac{\partial}{\partial \phi_4}-\sin\phi_3\frac{\partial}{\partial \phi_5}$ and $Y:=\sin\phi_3\frac{\partial}{\partial \phi_4}+\cos\phi_3\frac{\partial}{\partial \phi_5}$.
Further $\Omega_E=y_1\xi^1+y_2\xi^2$ satisfies the MC-equation $\{\Omega_E,\Omega_E\}_{\BFV}=0$.
Hence the procedure for the construction of the $s$-BRST-charge, described in the proof of Theorem~\ref{theor:existence_BRST-charge}, for $s=0$ produces as its output $\Omega_{\BRST}=\Omega_E$.
So $d_{\BFV}:=\{\Omega_{\BRST},-\}_{\BFV}$ is given by
\begin{equation*}
d_{\BFV}=y_1\frac{\partial}{\partial\xi_1^\ast}+y_2\frac{\partial}{\partial\xi_2^\ast}-\xi^1\frac{\partial}{\partial\phi_1}-\xi^2\frac{\partial}{\partial\phi_2}+(y_1\xi^1+y_2\xi^2)Y.
\end{equation*}

Finally we describe the space $C(L,J)$ of coisotropic sections of $\pi:\calE\to S$.
Let $s$ be an arbitrary section of $\pi$.
Hence $s=f_1\eta^1+f_2\eta^2$, and $\Omega_E[s]=(y_1-f_1)\xi^1+(y_2-f_2)\xi^2$, for arbitrary $f_1,f_2\in C^\infty(\bbT^5)$.
From~\eqref{eq:lifted_Jacobi_example} it follows that
\begin{equation*}
\label{eq:obstructed_example_contact_BFV_coisotropic1}
\{\Omega_E[s],\Omega_E[s]\}_{\BFV}=2\left(\frac{\partial f_1}{\partial \phi_3}Xf_2-\frac{\partial f_2}{\partial \phi_3}Xf_1+\frac{\partial f_1}{\partial\phi_2}-\frac{\partial f_2}{\partial\phi_1}+y_1Yf_2-y_2Yf_1\right)\xi^1\xi^2.
\end{equation*}
Applying $\wp[s]$ to the latter, Lemma~\ref{lem:BRST_coisotropic} provides us with a complete description of $C(L,J)$, i.e.~$s=f_1\eta^1+f_2\eta^2$ is a coisotropic section iff $f_1,f_2\in C^\infty(\bbT^5)$ satisfy the following non-linear first order pde
\begin{equation*}
\label{eq:obstructed_example_contact_BFV_nonlinear_pde}
\frac{\partial f_1}{\partial \phi_3}Xf_2-\frac{\partial f_2}{\partial \phi_3}Xf_1+\frac{\partial f_1}{\partial\phi_2}-\frac{\partial f_2}{\partial\phi_1}+f_1Yf_2-f_2Yf_1=0,
\end{equation*}
which duly agrees with that one found in~\cite[Eq.~3.1]{tortorella2016rigidity} by analytical methods.

\appendix
\renewcommand*{\thesection}{\Alph{section}}

\section{Graded symmetric multi-derivations on graded line bundles}
\label{app:graded_multi-do}

In this Appendix we collect basic facts, including conventions and notations, concerning graded symmetric multi-derivations on graded line bundles, and, in particular, graded Jacobi structures and Jacobi bi-derivations.
In doing so we will use the language of $\bbZ$-graded differential geometry (see, e.g.,~\cite{Mehta2006}).

\subsection{Graded symmetric multi-derivations}

Let $\scrM$ be a $\bbZ$-graded manifold and let $\scrP,\scrQ$ be graded vector bundles over it.
A \emph{degree $k$ graded first order differential operator} from $\scrP$ to $\scrQ$ is a degree $k$ graded $\bbR$-linear map $\square:\Gamma(\scrP)\to\Gamma(\scrQ)$, such that, for all $a_1,a_2\in C^\infty(\scrM)$, 
\begin{equation*}
[[\square,a_1],a_2]=0.
\end{equation*}
A \emph{degree $k$ graded derivation} of $\scrP$ is a degree $k$ graded $\bbR$-linear map $\square:\Gamma(\scrP)\to\Gamma(\scrP)$, such that there is a (necessarily unique) vector field $X_\square\in\frakX(\scrM)$, the \emph{symbol} of $\scrM$, satisfying the following graded Leibniz rule\begin{equation*}
\square(a p)=X_\square(a) p+(-)^{|a|k}a\square(p),
\end{equation*}
for all homogeneous $a\in C^\infty(\scrM)$, and $p\in\Gamma(\scrP)$.
Here and throughout the paper we denote by $|v|$ the degree of a homogeneous element $v$ in a graded vector space.

\begin{example}
	We denote by $\bbR_{\scrM} := \scrM \times \bbR \to\scrM$ the trivial line bundle over $\scrM$, so that $\Gamma(\bbR_{\scrM})$ coincides with $C^\infty(\scrM)$ seen as a left module over itself.
	Then degree $k$ graded derivations $\square$ from $\bbR_{\scrM}$ to $\bbR_{\scrM}$ identify with the pairs $(X,f)$ of degree $k$ homogeneous $X\in\frakX(\scrM)$ and $f\in C^\infty(\scrM)$ by means of the relation
	\begin{equation*}
	\square(a)=X(a)+(-)^{|a|k}af,
	\end{equation*}
	for all homogeneous $a\in C^\infty(\scrM)$.
\end{example}

\begin{remark}
	A derivation of a graded vector bundle $\scrP$ is, in particular, a first order differential operator.
	In general, the converse is not true, unless $\scrP\to\scrM$ is a (graded) line bundle.
	In this case derivations of $\scrP$ are the same as first order differential operators $\Gamma (\scrP) \to \Gamma (\scrP)$.
\end{remark}

\begin{remark}
	\label{rem:LRalgebra}
	Denote by $\Diff(\scrP)^k$ the space of degree $k$ graded derivations of $\scrP$.
	Then the whole space of graded derivations of $\scrP$
	\begin{equation*}
	\Diff(\scrP)^\bullet:=\bigoplus_{k\in\bbZ}\Diff(\scrP)^k
	\end{equation*}
	is a graded $C^\infty(\scrM)$-module, and also a graded Lie algebra, with the Lie bracket given by the usual graded commutator $[-,-]$.
	Additionally, the \emph{symbol map} $\Diff(\scrP)\to\frakX(\scrM),\ \square\mapsto X_\square$, is both a $C^\infty(\scrM)$-linear map and a Lie algebra morphism, and satisfies the following compatibility condition   
	\begin{equation*}
	[\square_1,a\square_2]=X_{\square_1}(a)\square_2+(-)^{|a||\square_1|}a[\square_1,\square_2],
	\end{equation*}
	for all homogeneous $a\in C^\infty(\scrM)$, and $\square_1,\square_2\in\Diff(\scrP)$, i.e.~the pair $(C^\infty(\scrM),\Diff(\scrP))$ is a graded Lie-Rinehart algebra (see, e.g.,~\cite{huebschmann2004lie} and~\cite{vitagliano2015homotopy}).
	Hence $D(\scrP)$ is the module of sections of a graded Lie algebroid over $\scrM$, called the \emph{gauge algebroid} (or the \emph{Atiyah algebroid}) of $\scrP$ (for more details about the Atiyah algebroid, at least in the non-graded case, see, e.g.,~\cite{mackenzie2005general, KM2002,vitagliano2015generalized}).
	Abusing the notation, we will often denote the gauge algebroid by the same symbol $D(\scrP)$ as for its sections.
	Accordingly, we will speak about, e.g., $D(\scrP)$-connections, representations of $D(\scrP)$, etc., without further comments.
	Notice that $\scrP$ carries a canonical representation of the gauge algebroid $D(\scrP)$, the \emph{tautological representation}, given by the action of derivations on sections.
	For more information about Lie algebroid, Lie algebroid connections, and, in particular, Lie algebroid representations, we refer the reader to \cite{mackenzie2005general, CF2010} and references therein.
\end{remark}

A \emph{degree $k$ graded symmetric first order $n$-ary differential operator} from $\scrP$ to $\scrQ$ is a degree $k$ graded symmetric $\bbR$-multilinear map
\begin{equation}
\square:\underbrace{\Gamma(\scrP)\times\cdots\times\Gamma(\scrP)}_{n-\textnormal{times}}\longrightarrow\Gamma(\scrQ)
\end{equation}
which is a graded differential operator from $\scrP$ to $\scrQ$ in each entry.

\begin{example}
	Degree $k$ graded symmetric first order n-ary differential operators $\square$ from $\bbR_{\scrM}$ to $\bbR_{\scrM}$ identify with pairs $(\Lambda,\Gamma)$ of degree $k$ homogeneous multivectors $\Lambda\in\frakX^k(\scrM)$ and $\Gamma\in\frakX^{k-1}(\scrM)$ by means of the relation
	\begin{equation*}
	\square(a_1,\ldots,a_n)=\Lambda(a_1,\ldots,a_n)+\sum_{i=1}^n(-)^{|a_i|(|a_{i+1}|+\ldots+|a_n|)}\Gamma(a_1,\ldots,\widehat{a_i},\ldots,a_n)a_i,
	\end{equation*}
	for all homogeneous $a_1,\ldots,a_n\in C^\infty(\scrM)$.
\end{example}

\begin{remark}
	Let $\scrL\to\scrM$ be a graded line bundle.
	We denote by $\Diff^n(\scrL,\bbR_{\scrM})^\bullet:=\bigoplus_{k\in\bbZ}\Diff^n(\scrL,\bbR_{\scrM})^k$ the space of graded symmetric $n$-ary first order differential operators from $\scrL$ to $\bbR_{\scrM}$.
	The whole space of graded symmetric multi-differential operators
	\begin{equation*}
	\Diff^\star(\scrL,\bbR_{\scrM})^\bullet:=\bigoplus_{n\in\bbZ}\Diff^n(\scrL,\bbR_{\scrM})^\bullet,
	\end{equation*}
	is a graded left $C^\infty(\scrM)$-module in the obvious way (we set $\Diff^0(\scrL,\bbR_{\scrM}):=C^\infty(\scrM)$, and $\Diff^n(\scrL,\bbR_{\scrM}):=0$, for all $n<0$).
	Here, and throughout the paper, the superscripts ${}^\star$ and ${}^\bullet$ refer to, respectively, the arity and the total degree.
	The $C^\infty(\scrM)$-module $\Diff^\star(\scrL,\bbR_{\scrM})$ is, additionally, a unital associative graded commutative $\bbR$-algebra whose product is given by
	\begin{equation}
	\label{eq:product_of_multi-do}
	(\Delta\cdot\square)(\lambda_1,\ldots,\lambda_{h+k})=\sum_{\tau\in S_{h,k}}(-)^{\chi}\epsilon(\tau,\boldsymbol\lambda)\Delta(\lambda_{\tau(1)},\ldots,\lambda_{\tau(h)})\cdot\square(\lambda_{\tau(h+1)},\ldots,\lambda_{\tau(h+k)}),
	\end{equation}
	for all homogeneous $\Delta\in\Diff^h(\scrL,\bbR_{\scrM}),\square\in\Diff^k(\scrL,\bbR_{\scrM})$, and $\lambda_1,\ldots,\lambda_{h+k}\in\Gamma(\scrL)$.
	In~\eqref{eq:product_of_multi-do}, $\chi:=\sum_{i=1}^{h}|\square||\lambda_{\tau(i)}|$, and $\epsilon(\tau,\boldsymbol{\lambda})$ denotes the symmetric Koszul symbol corresponding to the un-shuffle permutation $\tau \in S_{h,k}$ and the $(h+k)$-tuple $\boldsymbol{\lambda}:=(\lambda_1,\ldots,\lambda_{h+k})$.
	Accordingly, we have a canonical (degree $0$) graded $C^\infty(\scrM)$-algebra isomorphism
	\begin{equation*}
	\Diff^\star(\scrL,\bbR_{\scrM})\simeq S_{{\scriptscriptstyle C^\infty(\scrM)}}\Diff^1(\scrL,\bbR_{\scrM}),
	\end{equation*}
	where $S_{{\scriptscriptstyle C^\infty(\scrM)}}$ denotes the graded symmetric algebra over $C^\infty(\scrM)$.
\end{remark}

\begin{remark}
	Let $\scrL\to\scrM$ be a graded line bundle.
	We denote by $\Diff^n(\scrL)^\bullet:=\bigoplus_{k\in\bbZ}\Diff^n(\scrL)^k$ the space of graded symmetric $n$-ary first order differential operators from $\scrL$ to $\scrL$.
	Since $\scrL$ is a line bundle, the operators in $\Diff^n(\scrL)^\bullet$ are, in fact, derivations in each entry.
	Accordingly, we will also call them (graded symmetric) \emph{multi-derivations}.
	In particular, $\Diff^1(\scrL)^\bullet=\Diff(\scrL)^\bullet$.
	The whole space of graded symmetric multi-derivations
	\begin{equation*}
	\Diff^\star(\scrL)^\bullet:=\bigoplus_{n\in\bbZ}\Diff^n(\scrL)^\bullet,
	\end{equation*}
	is a graded left $C^\infty(\scrM)$-module in the obvious way (we set $\Diff^0(\scrL):=\Gamma(\scrL)$, and $\Diff^n(\scrL):=0$, for all $n<0$).
	The $C^\infty(\scrM)$-module $\Diff^\star(\scrL)$ is, additionally, a graded $\Diff^\star(\scrL,\bbR_{\scrM})$-module whose product is given by a formula similar to~\eqref{eq:product_of_multi-do}.
	Accordingly, we have a canonical (degree $0$) isomorphism of graded $\Diff^\star(\scrL,\bbR_{\scrM})$-modules
	\begin{equation*}
	\Diff^\star(\scrL)\simeq\Diff^\star(\scrL,\bbR_{\scrM})\otimes_{{\scriptscriptstyle C^\infty(\scrM)}}\Gamma(\scrL).
	\end{equation*}
\end{remark}

Now we recall the notion of Gerstenhaber-Jacobi algebra (\cite[Definition~B.1]{LOTV} and \cite[Definition 1.9]{tortorella2017thesis}).
Our definition slightly generalizes the notion of Gerstenhaber-Jacobi algebra as defined in~\cite{GM2001} (see also Example~\ref{ex:GJ_algebra}).
\begin{definition}
	\label{def:GJ_algebra}
	A \emph{Gerstenhaber-Jacobi algebra} consists of a unital associative graded commutative algebra $\calA$ and a graded $\calA$-module $\calL$, equipped with a graded Lie bracket $[-,-]$ on $\calL$ and an action by derivations of $\calL$ on $\calA$, i.e.~a (degree $0$) graded Lie algebra morphism $X_{(-)}:\calL\to\Der(\calA)$, such that
	\begin{equation*}
	[\lambda,a\mu]=X_\lambda(a)\mu+(-)^{|a||\lambda|}a[\lambda,\mu],\qquad a\in\calA,\quad \lambda,\mu\in\calL.
	\end{equation*}
\end{definition}

\begin{example}
	\label{ex:GJ_algebra}
	In the special case $\calL=\calA[1]$, we recover the notion of Gerstenhaber-Jacobi algebra as defined in~\cite{GM2001}.
	Additionally, if $\calL=\calA[1]$, and $X_a=[a,-]$ for all $a\in\calA[1]$, then we recover the standard notion of Gerstenhaber algebra.
	For further examples of Gerstenhaber-Jacobi algebras we refer the reader to~\cite[Sec.~1.3]{tortorella2017thesis}.
\end{example}

\begin{remark}
	We will speak about Gerstenhaber-Jacobi algebras even in the case when both $\calA$ and $\calL$ are concentrated in degree 0.
	For instance, we will speak about the \emph{Gerstenhaber-Jacobi} algebra $(C^\infty (M), \Gamma (L))$ of a Jacobi manifold $(M,L,\{-,-\})$.
	This allows us to distinguish from the more popular (but less general) \emph{Jacobi algebras} which are non-graded Gerstenhaber-Jacobi algebras such that $\calL = \calA$.
\end{remark}

The following proposition provides a canonical example of a Gerstenhaber-Jacobi algebra of key importance for this paper.

\begin{proposition}
	\label{prop:GJalgebra}
	For every graded line bundle $\scrL\to\scrM$, there is a natural Gerstenhaber-Jacobi algebra structure $(\ldsb-,-\rdsb,X_{(-)})$ on $(\Diff^\star(\scrL,\bbR_{\scrM}),\Diff^\star(\scrL))$, uniquely determined by
	\begin{equation}\label{eq:SJbrackets}
	\ldsb \square,\square'\rdsb=[\square,\square'],\qquad
	\ldsb \square,\lambda\rdsb=\square(\lambda),\qquad
	\ldsb \lambda,\mu\rdsb=0,
	\end{equation}
	for all $\square,\square'\in\Diff(\scrL)$, and $\lambda,\mu\in\Gamma(\scrL)$.
	The Lie bracket $\ldsb-,-\rdsb$ is called \emph{the Schouten-Jacobi bracket}.
\end{proposition}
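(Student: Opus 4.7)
The plan is to proceed in three steps: uniqueness, construction of the structure, and verification of the graded Jacobi identity.

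For \emph{uniqueness}, I would observe that, in any Gerstenhaber-Jacobi algebra $(\calA, \calL)$, both $X_{(-)}\colon \calL \to \Der(\calA)$ and $[-,-]$ are completely determined by their values on a set of generators of the pair $(\calA, \calL)$. Indeed, the Leibniz rule $[\lambda, a\mu] = X_\lambda(a)\mu + (-)^{|a||\lambda|} a[\lambda,\mu]$, combined with the graded antisymmetry of $[-,-]$ forced by the Lie bracket axiom, gives a derivation property in the first slot as well; and $X_\lambda$ is itself a graded derivation of $\calA$ by definition. In our situation, $\Diff^\star(\scrL, \bbR_\scrM) = S^\bullet_{{\scriptscriptstyle C^\infty(\scrM)}}\Diff^1(\scrL, \bbR_\scrM)$ and $\Diff^\star(\scrL) = \Diff^\star(\scrL, \bbR_\scrM)\otimes_{{\scriptscriptstyle C^\infty(\scrM)}}\Gamma(\scrL)$, so the pair is generated over $C^\infty(\scrM)$ by $\Diff^1(\scrL, \bbR_\scrM)$ together with $\Gamma(\scrL)$. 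The prescribed values \eqref{eq:SJbrackets} therefore fix $\ldsb-,-\rdsb$ and $X_{(-)}$ uniquely, once extended by the symbol map $\Diff(\scrL) \to \frakX(\scrM)$ on $C^\infty(\scrM)$.

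For \emph{existence}, I would construct $\ldsb-,-\rdsb$ inductively on total arity, starting from \eqref{eq:SJbrackets} and extending through products via the Leibniz rules. The only non-trivial point is well-definedness: one must verify that the bracket $\ldsb P, Q\rdsb$ obtained by two different parsings of $P$ and $Q$ as products of generators agrees. This is a standard compatibility check, reducing essentially to the identity $\ldsb\square, a\lambda\rdsb = X_\square(a)\lambda + (-)^{|a||\square|}a\,\square(\lambda)$, which holds because $\square$ is a derivation of $\scrL$ with symbol $X_\square$. The action $X_{(-)}$ on $\Diff^\star(\scrL, \bbR_\scrM)$ is built analogously. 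Alternatively, one can write a closed Koszul-type formula in local coordinates given by a trivialization of $\scrL$ and check that it is independent of the trivialization, which is the route followed in the non-graded case in~\cite{LOTV}.

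The \emph{main obstacle} is the graded Jacobi identity. Since $\ldsb-,-\rdsb$ is, by construction, a graded derivation in each of its two arguments, the Jacobiator
\begin{equation*}
J(P,Q,R) := \ldsb P, \ldsb Q, R\rdsb\rdsb - (-)^{|P||Q|}\ldsb Q, \ldsb P, R\rdsb\rdsb - \ldsb\ldsb P, Q\rdsb, R\rdsb
\end{equation*}
is itself a graded derivation in each of its three slots. It therefore suffices to verify $J \equiv 0$ on triples of generators chosen from $\Gamma(\scrL) \cup \Diff(\scrL)$. Most triples vanish for arity/degree reasons, and the remaining non-trivial cases collapse either to the derivation property of elements of $\Diff(\scrL)$ on $\Gamma(\scrL)$, or to the graded Jacobi identity for the commutator $[-,-]$ on $\Diff(\scrL)^\bullet$, which is part of the graded Lie-Rinehart structure recorded in Remark~\ref{rem:LRalgebra}. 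Compatibility of $X_{(-)}$ with $\ldsb-,-\rdsb$ and the fact that $X_{(-)}$ is a graded Lie algebra morphism are verified in parallel on the same generating set, completing the argument.
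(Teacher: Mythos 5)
Your argument is essentially correct, but it takes a genuinely different route from the paper. The paper does not verify anything by hand: it observes that $\scrL$ carries the tautological representation of its gauge algebroid $\Diff(\scrL)$ (Remark~\ref{rem:LRalgebra}) and then invokes a general structural result (Proposition 1.12 of~\cite{tortorella2017thesis}, of which Proposition~\ref{prop:GJalgebra} is the graded instance via Proposition 1.14 there), namely that any Lie algebroid representation on a line bundle induces a Gerstenhaber--Jacobi algebra on the corresponding multi-sections. What that buys is brevity and conceptual clarity --- the Schouten--Jacobi bracket is exhibited as one instance of a uniform construction, and the Jacobi identity is inherited rather than re-proved. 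Your direct construction buys self-containedness and makes visible exactly which identities are being used; it is in the spirit of Remark~\ref{rem:Gerstenhaber_product_and_SJ_bracket}, where the closed formula $\ldsb \square,\square'\rdsb=\square\bullet\square'-(-)^{|\square||\square'|}\square'\bullet\square$ in terms of the Gerstenhaber product is recorded --- taking that formula as the definition would streamline your existence step and make the Jacobi identity a formal computation with $\bullet$. One point you gloss over in the uniqueness step deserves a sentence: \eqref{eq:SJbrackets} prescribes nothing directly about $X_\square(\Delta)$ for $\Delta\in\Diff^1(\scrL,\bbR_{\scrM})$, so you should note that this value is forced by the Leibniz rule applied to $\ldsb\square,\Delta\lambda\rdsb$ with $\Delta\lambda\in\Diff^1(\scrL)$ and $\lambda$ a local frame of the line bundle $\scrL$ (this is where the line-bundle hypothesis enters); with that bootstrap in place, your induction on arity and the generator-wise verification of the Jacobiator (which, as you correctly stress, must be run in parallel with the verification that $X_{(-)}$ is a graded Lie algebra morphism, since the Jacobiator in a given slot is a derivation only covering the corresponding defect of $X_{(-)}$) go through.
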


Since $\scrL$ carries a representation of the gauge algebroid $\Diff(\scrL)$ (Remark~\ref{rem:LRalgebra}), Proposition~\ref{prop:GJalgebra} (as the graded version of Proposition 1.14 in~\cite{tortorella2017thesis}) is an immediate corollary of Proposition 1.12 in~\cite{tortorella2017thesis}.

\begin{remark}
	\label{rem:Gerstenhaber_product_and_SJ_bracket}
	When computing with multi-derivations it is very helpful to have an explicit expression for the Schouten-Jacobi bracket.
	It is easy to see that
	\begin{equation}
	\label{eq:Gerstenhaber_product_and_SJ_bracket}
	\ldsb\square,\square'\rdsb=\square\bullet \square'-(-)^{|\square||\square'|}\square'\bullet\square,
	\end{equation}
	for all homogeneous $\square,\square'\in\Diff^\star(\scrL)$.
	In~\eqref{eq:Gerstenhaber_product_and_SJ_bracket}, we have denoted by $\bullet$ the \emph{Gerstenhaber product}
	(of multi-derivations).
	The latter is defined by 
	\begin{equation*}
	\square_1\bullet\square_2(\lambda_1,\ldots,\lambda_{k_1+k_2+1})=\!\!\!\!\sum_{\tau\in S_{k_2+1,k_1}}\!\!\!\!\epsilon(\tau,\boldsymbol\lambda)\square_1(\square_2(\lambda_{\tau(1)},\ldots,\lambda_{\tau(k_2+1)}),\lambda_{\tau(k_2+2)},\ldots,\lambda_{\tau(k_1+k_2+1)}),
	\end{equation*}
	for all homogeneous $\square_i\in\Diff^{k_i+1}(\scrL)$, with $i=1,2$, and $\lambda_1,\ldots,\lambda_{k_1+k_2+1}\in\Gamma(\scrL)$.
	It is also helpful to point out that, it follows from~\eqref{eq:Gerstenhaber_product_and_SJ_bracket} that
	\begin{equation}
	\label{eq:Gerstenhaber_product_and_SJ_bracket_bis}
	\square(\lambda_1,\ldots,\lambda_n)=\ldsb\ldsb\ldots\ldsb\square,\lambda_1\rdsb,\ldots\rdsb,\lambda_n\rdsb,\qquad\lambda_1,\ldots,\lambda_n\in\Gamma(\scrL).
	\end{equation}
	for all $\square\in\Diff^n(\scrL)$.
\end{remark}

\subsection{Graded Jacobi bundles}

In this subsection we introduce Jacobi structures on graded line bundles, and their equivalent description in terms of Jacobi bi-derivations.

\begin{definition}
	\label{def:graded_Jacobi_structure}
	A \emph{graded Jacobi structure} on a graded line bundle $\scrL\to\scrM$ is given by a \emph{graded Jacobi bracket} $\{-,-\} :\Gamma(\scrL)\times\Gamma (\scrL)\rightarrow\Gamma(\scrL)$, i.e.~a (degree $0$) graded Lie bracket which is a first order differential operator, hence a derivation, in each entry.
	A \emph{graded Jacobi bundle} (over $\scrM$) is a graded line bundle (over $\scrM$) equipped with a graded Jacobi structure.
	A \emph{graded Jacobi manifold} is a graded manifold equipped with a graded Jacobi bundle over it.
\end{definition}
A Jacobi structure on a graded line bundle $\scrL\to\scrM$ is the same as a Gerstenhaber-Jacobi algebra structure on $(C^\infty(\scrM),\Gamma(\scrL))$.

\begin{remark}\label{rem:shift}
	According to Definition \ref{def:graded_Jacobi_structure}, a graded Jacobi bracket is a \emph{skew-symmetric} bi-derivation.
	However, it turns out that formulas get much simplified if we understand graded Jacobi structures as \emph{symmetric} bi-derivations on a shifted line bundle, via \emph{d\'ecalage}.
	This is made precise below.
\end{remark}

\begin{definition}
	\label{def:Jacobi_bi-do}
	A \emph{graded Jacobi bi-derivation} on a graded line bundle $\scrL\to\scrM$ is a degree $1$ graded symmetric bi-derivation $J\in\Diff^2(\scrL)^1$ such that $\ldsb J,J\rdsb=0$.
\end{definition}

Let $\scrL\to\scrM$ be a graded line bundle.
The following proposition identifies, in a canonical way, Jacobi bi-derivations $J$ on $\scrL[1]$ with Jacobi structures $\{-,-\}$ on $\scrL$, thus clarifying the content of Remark \ref{rem:shift}.
Within this identification, adopted throughout the paper, the Jacobi brackets corresponding to $\J$ are also denoted by $\{-,-\}_{\J}$.
\begin{proposition}
	\label{prop:Jacobi_bi-do}
	There is a canonical 1--1 correspondence between graded Jacobi brackets $\{-,-\}$ on $\scrL$ and graded Jacobi bi-derivations $J$ on $\scrL[1]$ given by the following relation
	\begin{equation*}
	\{s\lambda_1,s\lambda_2\}=(-)^{|\lambda_1|}s(J(\lambda_1,\lambda_2)),
	\end{equation*}
	for all homogeneous $\lambda_1,\lambda_2\in\Gamma(\scrL[1])$,
	where $s:\Gamma(\scrL[1])\to\Gamma(\scrL)$ denotes the suspension map.
\end{proposition}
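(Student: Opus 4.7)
The assertion is a standard \emph{d\'ecalage} statement: the suspension $s\colon\Gamma(\scrL[1])\to\Gamma(\scrL)$, of intrinsic degree $+1$, transports graded symmetric structures on $\scrL[1]$ into graded skew-symmetric ones on $\scrL$ (and shifts arities and degrees accordingly), and conversely. My plan is to (i) show the formula $\{s\lambda_1,s\lambda_2\}=(-)^{|\lambda_1|}s(J(\lambda_1,\lambda_2))$ defines a bijection between bi-derivations of the two relevant types, and (ii) show that under this bijection the Jacobi identity for $\{-,-\}$ corresponds to $\ldsb J,J\rdsb=0$.

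For part (i), I would take a graded symmetric bi-derivation $J$ on $\scrL[1]$ of degree $1$, define $\{-,-\}$ via the displayed formula, and verify three properties. First, degrees match: for homogeneous $\lambda_i\in\Gamma(\scrL[1])$ we have $|s\lambda_i|=|\lambda_i|+1$ and $|J(\lambda_1,\lambda_2)|=|\lambda_1|+|\lambda_2|+1$ in $\Gamma(\scrL[1])$, so both sides have degree $|\lambda_1|+|\lambda_2|+2$ in $\Gamma(\scrL)$, i.e.\ the bracket is of degree $0$. Second, graded skew-symmetry of $\{-,-\}$ in $\scrL$ (where the gradings are $|\lambda_i|+1$) is equivalent, after collecting the Koszul signs $(-)^{|\lambda_1|}$ on each side, to graded symmetry of $J$ in $\scrL[1]$ (where the gradings are $|\lambda_i|$); this is the elementary computation $(|\lambda_1|+1)(|\lambda_2|+1)+|\lambda_1|+|\lambda_2|+1\equiv|\lambda_1||\lambda_2|\pmod{2}$. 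Third, since $s$ is $C^\infty(\scrM)$-linear up to sign (namely $s(a\lambda)=(-)^{|a|}as\lambda$), the derivation property of $J$ in each entry on $\scrL[1]$ translates into the derivation property of $\{-,-\}$ in each entry on $\scrL$, with matching symbols. The inverse assignment $\{-,-\}\mapsto J$ is given by the same formula read backwards, and the two assignments are clearly mutually inverse.

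For part (ii), I would use Remark~\ref{rem:Gerstenhaber_product_and_SJ_bracket}: since $J$ has odd degree, the Gerstenhaber product yields $\ldsb J,J\rdsb = 2\, J\bullet J$, and the ternary operator $J\bullet J$ is the graded symmetric sum
\[
(J\bullet J)(\lambda_1,\lambda_2,\lambda_3)=\sum_{\tau\in S_{2,1}}\epsilon(\tau,\boldsymbol\lambda)\,J\bigl(J(\lambda_{\tau(1)},\lambda_{\tau(2)}),\lambda_{\tau(3)}\bigr).
\]
Applying $s$ and substituting $\lambda_i=s^{-1}\mu_i$ for $\mu_i\in\Gamma(\scrL)$, a Koszul sign bookkeeping converts each of the three shuffled terms into the correspondingly signed term of the graded Jacobiator of $\{-,-\}$ evaluated on $(\mu_1,\mu_2,\mu_3)$. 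Hence $\ldsb J,J\rdsb=0$ if and only if the graded Jacobi identity holds for $\{-,-\}$.

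The main obstacle is simply the careful sign bookkeeping when transporting the ternary identity across the suspension; all three Koszul signs—from $s^{-1}$ acting on products, from the symmetric Koszul signs $\epsilon(\tau,\boldsymbol\lambda)$ in $J\bullet J$, and from the skew-symmetric Koszul signs in the Jacobi identity for $\{-,-\}$—must combine coherently. Once one verifies the computation on a single shuffle (say $\tau=\mathrm{id}$), the remaining two cases follow by graded (skew-)symmetry, completing the proof.
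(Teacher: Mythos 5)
Your proposal is correct and is essentially the argument the paper has in mind: the paper's proof simply defers to "the same argument used in the un-graded case" (citing \cite[Lemma 2.11(2)]{LOTV} and \cite[Prop.~2.7]{tortorella2017thesis}), and what you write out — the d\'ecalage sign check $(|\lambda_1|+1)(|\lambda_2|+1)+|\lambda_1|+|\lambda_2|+1\equiv|\lambda_1||\lambda_2|\pmod 2$ identifying symmetry with skew-symmetry, the transport of the bi-derivation property along $s$, and the identification of $\ldsb J,J\rdsb=2\,J\bullet J$ with the graded Jacobiator — is precisely that standard verification. No gaps; your sign bookkeeping is consistent with the paper's conventions (degree $0$ bracket on $\Gamma(\scrL)$, degree $1$ symmetric bi-derivation on $\scrL[1]$, and the Gerstenhaber product of Remark~\ref{rem:Gerstenhaber_product_and_SJ_bracket}).
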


\begin{proof}
	It follows from the same argument used in the un-graded case, see~\cite[Lemma 2.11(2)]{LOTV} and~\cite[Prop.~2.7]{tortorella2017thesis}.
\end{proof}

\begin{remark}
	\label{rem:d_J}
	We will often denote by $\{-,-\}_J$ the (graded) Jacobi bracket corresponding to a (graded) Jacobi bi-derivation $J$.
	Sometimes we will simply identify $J$ and $\{-,-\}_J$ and write $J \equiv \{-,-\}_J$ (or $J \equiv \{-,-\}$).
	
	Let $(\scrM,\scrL,\J)$ be a graded Jacobi manifold.
	There is a differential graded Lie algebra attached to $\scrM$, namely $(\Diff^\star(\scrL[1]),d_\J,\ldsb-,-\rdsb)$, with $d_\J:=\ldsb\J,-\rdsb$.
	The cohomology of $(\Diff^\star(\scrL[1]),d_\J)$ is called the Chevalley--Eilenberg cohomology of $(\scrM,\scrL,\J)$, and it is denoted by $H_{CE}(\scrM,\scrL,\J)$.
\end{remark}

We now discuss automorphisms of a graded Jacobi manifold $(\scrM,\scrL,\J\equiv \{-,-\})$.
\begin{definition}
	\label{def:Jacobi_automorphism}
	A \emph{Jacobi automorphism} of $(\scrM,\scrL,\J)$ is a degree $0$ graded automorphism $\Phi$ of the graded line bundle $\scrL\to\scrM$ such that $\Phi^\ast\J=\J$, i.e.
	\begin{equation*}
	\Phi^\ast\{\lambda_1,\lambda_2\}_{\J}\equiv\{\Phi^\ast\lambda_1,\Phi^\ast\lambda_2\}_{\J},\ \textnormal{for all}\ \lambda_1,\lambda_2\in\Gamma(\scrL).
	\end{equation*}
\end{definition}
The group of Jacobi automorphisms of $(\scrM,\scrL,\J)$ is denoted by $\Aut(\scrM,\scrL,\J)$.
The Lie subalgebra $\mathfrak{aut}(\scrM,\scrL,\J)\subset\Diff(\scrL)^0$ of infinitesimal Jacobi automorphisms, or \emph{Jacobi derivations}, of $(\scrM,\scrL,\J)$ consists of those degree $0$ graded derivations $\square\in\Diff(\scrL)^0$ such that $\ldsb\J,\square\rdsb=0$, i.e.
\begin{equation*}
\square\{\lambda_1,\lambda_2\}_{\J}\equiv\{\square\lambda_1,\lambda_2\}_{\J}+\{\lambda_1,\square\lambda_2\}_{\J},\ \textnormal{for all}\ \lambda_1,\lambda_2\in\Gamma(\scrL).
\end{equation*}

\begin{definition}
	\label{def:Hamiltonian_family}
	For a smooth path $\{\lambda_t\}_{t\in I}\subset\Gamma(\scrL)^0$ and a smooth path $\{\Phi_t\}_{t\in I}$ of degree 0 automorphisms of $\scrL \to \scrM$, we say that $\{\lambda_t,-\}_{\J}$ \emph{integrates} to $\{\Phi_t\}_{t\in I}$ if
	\begin{equation*}
	\Phi_0=\id_{\scrL},\quad \frac{d}{dt}\Phi_t^\ast=\{\lambda_t,-\}_{\J}\circ\Phi_t^\ast.
	\end{equation*}
	In this case $\{\Phi_t\}_{t\in I}$ consists of Jacobi automorphisms and it is called the \emph{smooth path of Hamiltonian automorphisms} associated with the smooth path of \emph{Hamiltonian sections} $\{\lambda_t\}_{t\in I}$.
\end{definition}

\begin{definition}
	\label{def:Hamiltonian_single}
	A \emph{Hamiltonian automorphism} of $(\scrM,\scrL,\J)$ is a degree $0$ graded automorphism $\Phi\in\Aut(\scrM,\scrL,\J)$ such that $\Phi=\Phi_1$, for some smooth path of Hamiltonian automorphisms $\{\Phi_t\}_{t\in I}$.
\end{definition}
The group of Hamiltonian automorphisms of $(\scrM,\scrL,\J)$ is denoted by $\Ham(\scrM,\scrL,\J)$.
The Lie subalgebra $\mathfrak{ham}(\scrM,\scrL,\J)\subset\mathfrak{aut}(\scrM,\scrL,\J)$ of infinitesimal Hamiltonian automorphisms, or \emph{Hamiltonian derivations}, of $(\scrM,\scrL,\J)$ consists of those degree $0$ graded derivations $\square\in\Diff(\scrL)^0$ of the form $\ldsb\J,\lambda\rdsb=\{\lambda,-\}_{\J}$, for some $\lambda\in\Gamma(\scrL)^0$.

\subsection{Viewing \texorpdfstring{$\Diff^\star(\scrL)$}{D*(scrL)} as module of sections of a graded vector bundle over \texorpdfstring{$\scrM$}{scrM}}
\label{subsec:psi_nabla}

Let $\scrM$ be a graded manifold with support $M$, and let $\scrL\to\scrM$ be a graded line bundle.
From Batchelor-Gawedzki Theorem, there is a, non-necessarily canonical, graded module isomorphism 
\begin{equation}
\label{eq:Batchelor}
\Gamma(\scrL)\simeq S_{{\scriptscriptstyle C^\infty(M)}}\Gamma(F^\ast)\otimes_{{\scriptscriptstyle C^\infty(M)}}\Gamma(P), 
\end{equation}
covering a graded algebra isomorphism $C^\infty(\scrM)\simeq S_{{\scriptscriptstyle C^\infty(M)}}\Gamma(F^\ast)$, where $F\to M$ is a graded vector bundle, and $P\to M$ is a graded line bundle.
Additionally, a $\Diff(P)$-connection in $F\to M$ determines a graded module isomorphism
\begin{equation}\label{eq:iso_Batch}
\Diff^\star(\scrL)\simeq S_{{\scriptscriptstyle C^\infty(M)}}\Gamma(F^\ast\oplus F_P\oplus(J^1P)^\ast )\otimes_{{\scriptscriptstyle C^\infty(M)}}\Gamma(P),
\end{equation}
covering a graded algebra isomorphism $\Diff^\star(\scrL,\bbR_{\scrM})\simeq S_{{\scriptscriptstyle C^\infty(M)}}\Gamma(F^\ast\oplus F_P\oplus(J^1P)^\ast)$, where $F_P:=F\otimes P^\ast$.
The goal of this subsection is to describe isomorphism (\ref{eq:iso_Batch}) explicitly.

\begin{definition}
	\label{def:vertical_graded_derivation}
	A vector field $X\in\frakX(\scrM)$ is said to be \emph{vertical} if it is in the kernel of the canonical fibration $\scrM\to M$, i.e.~$X(f)=0$, for all $f\in C^\infty(M)\subset C^\infty(\scrM)$.
\end{definition}

Denote by $\vder(\scrM)$ the set of vertical vector fields on $\scrM$.
It is both a graded $C^\infty(\scrM)$-submodule and a graded Lie subalgebra of $\frakX(\scrM)$.

\begin{remark}
	\label{rem:A-derivations}	\
	
	1) By restricting vertical vector fields to $\Gamma(F^\ast)\subset C^\infty(\scrM)$, we get a degree $0$ graded $C^\infty(M)$-module isomorphism
	\begin{equation}
	\label{eq:A-derivations1}
	\vder(\scrM)\overset{\simeq}{\longrightarrow} C^\infty(\scrM)\underset{{\scriptscriptstyle C^\infty(M)}}{\otimes}\Gamma(F).
	\end{equation}
	
	2) There exists a short exact sequence of degree $0$ graded $C^\infty(\scrM)$-module morphisms
	\begin{equation}
	\label{eq:seq1}
	0 \longrightarrow\VDer(\scrM) \longrightarrow\Diff(\scrL)\longrightarrow C^\infty(\scrM)\underset{{\scriptscriptstyle C^\infty(M)}}{\otimes}\Diff(P) \longrightarrow 0.
	\end{equation}
	The arrow $\VDer(\scrM)\longrightarrow\Diff(\scrL)$, written $Z\longmapsto\bbD_Z$, is defined by setting $\bbD_Z(a\otimes\lambda)=Z(a)\otimes\lambda$, for all $Z\in\VDer(\scrM)$, $a\in C^\infty(\scrM)$, and $\lambda\in\Gamma(P)$.
	In its turn, the arrow $\Diff(\scrL)\longrightarrow C^\infty(\scrM)\otimes_{{\scriptscriptstyle C^\infty(M)}}\Diff(P)$ is obtained by restricting derivations to $\Gamma(P)$.
\end{remark}

Now, a $\Diff(P)$-connection $\nabla$ in $F\to M$ determines a $C^\infty(M)$-linear map $\bar{\nabla}:\Diff(P)\to\Diff(\scrL)$, $\square\mapsto\bar{\nabla}_{\square}$ via\begin{equation*}
\bar{\nabla}_\square(\alpha\otimes p)=(\nabla^\ast_\square\alpha)\otimes p+\alpha\otimes(\square p),
\end{equation*}
for all $p\in\Gamma(P)$, and $\alpha\in\Gamma(F^\ast)$, where $\nabla^\ast$ is the $D(P)$-connection in $F^\ast$ dual to $\nabla$.

\begin{proposition}
	\label{prop:psi_nabla}
	The $C^\infty(M)$-linear map $\bar{\nabla}:\Diff(P)\to\Diff(\scrL)$ extends, by $C^\infty(\scrM)$-linearity, to a splitting of the short exact sequence~\eqref{eq:seq1}, so it determines a degree 0 graded $C^\infty(\scrM)$-module isomorphism
	\begin{equation}
	\label{eq:prop:psi_nabla}
	\phi_\nabla : \Diff(\scrL)\overset{\simeq}{\longrightarrow}\left[C^\infty(\scrM)\underset{{\scriptscriptstyle C^\infty(M)}}{\otimes}\Gamma\left(F_P\oplus (J^1P)^\ast\right)\right]\underset{{\scriptscriptstyle C^\infty(\scrM)}}{\otimes}\Gamma(\scrL).
	\end{equation}
	Moreover, there is a unique degree $0$ graded $C^\infty(\scrM)$-module isomorphism 
	\begin{equation*}
	\psi_\nabla : \Diff^\star(\scrL)\overset{\simeq}{\longrightarrow} S_{{\scriptscriptstyle C^\infty(M)}}(\Gamma(F^\ast\oplus F_P))\otimes_{{\scriptscriptstyle C^\infty(M)}}\Diff^\star(P),
	\end{equation*}
	covering a graded $C^\infty(\scrM)$-algebra isomorphism $\underline{\smash{\psi_\nabla}}:\Diff^\star(\scrL,\bbR_\scrM)\to S_{{\scriptscriptstyle C^\infty(M)}}\Gamma(F^\ast\oplus F_P\oplus (J^1P)^\ast)$,
	such that: 1) $\psi_\nabla$ agrees with the identity map on $\Gamma(\scrL)$, and 2) $\psi_\nabla$ agrees with $\phi_\nabla$ on $\Diff(\scrL)$.
\end{proposition}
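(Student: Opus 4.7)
The proof splits into two steps mirroring the two assertions of the proposition.

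\emph{Step 1.} To prove the first assertion, I would verify in succession that: (a) each $\bar\nabla_\square$ is a well-defined graded derivation of $\scrL$, (b) the map $\square\mapsto\bar\nabla_\square$ is $C^\infty(M)$-linear so extends $C^\infty(\scrM)$-linearly, (c) this extension splits the projection in~\eqref{eq:seq1}, and (d) the standard tensor-product identifications put the resulting isomorphism into the claimed form of $\phi_\nabla$. For (a), by Batchelor~\eqref{eq:Batchelor} $\Gamma(\scrL)$ is generated over $C^\infty(\scrM)\simeq S_{{\scriptscriptstyle C^\infty(M)}}\Gamma(F^\ast)$ by $\Gamma(P)$, so a derivation is specified by its symbol and its action on $\Gamma(P)$. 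I declare $\bar\nabla_\square|_{\Gamma(P)}:=\square$ and take the symbol $X_{\bar\nabla_\square}$ to be the unique vector field on $\scrM$ acting as $X_\square$ on $C^\infty(M)$ and as $\nabla^\ast_\square$ on $\Gamma(F^\ast)\subset C^\infty(\scrM)$; the defining Leibniz rule of the $\Diff(P)$-connection $\nabla$ ensures both that $X_{\bar\nabla_\square}$ extends consistently to the symmetric algebra $C^\infty(\scrM)$ and that $\bar\nabla_\square(fp)=\square(fp)$ for $f\in C^\infty(M)$, $p\in\Gamma(P)$, recovering the stated formula on $\Gamma(F^\ast)\otimes_{{\scriptscriptstyle C^\infty(M)}}\Gamma(P)$. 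Points (b) and (c) are immediate from the construction, and for (d) it suffices to combine \eqref{eq:A-derivations1} for $\VDer(\scrM)$ with the natural identifications $\Gamma(F)=\Gamma(F_P)\otimes_{{\scriptscriptstyle C^\infty(M)}}\Gamma(P)$, $\Diff(P)=\Gamma((J^1P)^\ast)\otimes_{{\scriptscriptstyle C^\infty(M)}}\Gamma(P)$, and $\Gamma(\scrL)\simeq C^\infty(\scrM)\otimes_{{\scriptscriptstyle C^\infty(M)}}\Gamma(P)$.

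\emph{Step 2.} For the second assertion I would bootstrap from Step 1 via the algebra presentation $\Diff^\star(\scrL,\bbR_\scrM)\simeq S_{{\scriptscriptstyle C^\infty(\scrM)}}\Diff^1(\scrL,\bbR_\scrM)$ and the module factorization $\Diff^\star(\scrL)\simeq\Diff^\star(\scrL,\bbR_\scrM)\otimes_{{\scriptscriptstyle C^\infty(\scrM)}}\Gamma(\scrL)$. The symbol component of $\phi_\nabla$ restricts to a $C^\infty(\scrM)$-module isomorphism between $\Diff^1(\scrL,\bbR_\scrM)$ and the degree-$1$ piece of $S_{{\scriptscriptstyle C^\infty(M)}}\Gamma(F^\ast\oplus F_P\oplus(J^1P)^\ast)$, and the universal property of the symmetric algebra extends this uniquely to a graded $C^\infty(\scrM)$-algebra isomorphism $\underline{\smash{\psi_\nabla}}$; bijectivity propagates from the generating piece. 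Setting $\psi_\nabla:=\underline{\smash{\psi_\nabla}}\otimes\id_{\Gamma(\scrL)}$ produces the desired degree $0$ graded $C^\infty(\scrM)$-module isomorphism covering $\underline{\smash{\psi_\nabla}}$. The normalization conditions (1) and (2) hold by construction and are enough to force uniqueness, since $\Diff^\star(\scrL)$ is generated as a $\Diff^\star(\scrL,\bbR_\scrM)$-module by $\Gamma(\scrL)$ and $\Diff^\star(\scrL,\bbR_\scrM)$ is generated as an algebra by $\Diff^1(\scrL,\bbR_\scrM)$.

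The principal technicality is in Step 1(a), where one must verify that the prescribed partial actions of $X_{\bar\nabla_\square}$ on $C^\infty(M)$ and on $\Gamma(F^\ast)$ really assemble into a single, well-defined vector field on $\scrM$ compatibly with the symmetric-algebra structure and the $\bbZ$-grading; this is precisely what the Leibniz rule of the $\Diff(P)$-connection is for. Once this is in place, the remainder of the argument is a routine (but notationally heavy) exercise in tensor-product and symmetric-algebra manipulations.
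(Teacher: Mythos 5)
Your argument is correct and fills in exactly the routine verification that the paper leaves out (the authors state only that the proof of Proposition~\ref{prop:psi_nabla} is straightforward): you check that $\bar{\nabla}_\square$ is a well-defined derivation with the prescribed symbol, that the extension splits~\eqref{eq:seq1}, and then propagate $\phi_\nabla$ to $\Diff^\star(\scrL)$ via $\Diff^\star(\scrL,\bbR_{\scrM})\simeq S_{{\scriptscriptstyle C^\infty(\scrM)}}\Diff^1(\scrL,\bbR_{\scrM})$ and $\Diff^\star(\scrL)\simeq\Diff^\star(\scrL,\bbR_{\scrM})\otimes_{{\scriptscriptstyle C^\infty(\scrM)}}\Gamma(\scrL)$. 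The only cosmetic point is that the ``degree~$1$ piece'' you invoke should be understood as the component of tensorial degree one in $F_P\oplus(J^1P)^\ast$ (i.e.~$C^\infty(\scrM)\otimes_{{\scriptscriptstyle C^\infty(M)}}\Gamma(F_P\oplus(J^1P)^\ast)$), not the total symmetric degree, but this does not affect the argument.
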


The preceding proposition, whose proof is straightforward, plays a key r\^ole in the construction of the set of contraction data~\eqref{eq:1contraction_data_1}, and in the proof of Proposition~\ref{prop:main_result} as well.

\section{A step-by-step obstruction method}
\label{app:SBSO}

Our construction of the BFV-complex of a coisotropic submanifold is entirely based on the step-by-step obstruction method of homological perturbation theory.
Indeed the central results in the BFV-construction, namely Theorems~\ref{theor:existence_BFV_brackets} and~\ref{theor:uniqueness_BFV_brackets} and Theorems~\ref{theor:existence_BRST-charge} and~\ref{theor:uniqueness_BRST-charge}, can be proved through a direct application of this method.
The aim of this Appendix is to provide a self-contained version of the step-by-step obstruction method, which is well-suited for the objectives of the present paper.
In doing this, we will adapt and integrate~\cite[Section 4.1]{Stasheff1997}.

The setting of the step-by-step obstruction method is the following:
a set of contraction data (cf.~Definition~\ref{def:contraction_data})
\begin{equation}
\label{eq:homotopy_equivalence}
\begin{tikzpicture}[>= stealth,baseline=(current bounding box.center)]
\node (u) at (0,0) {$(X,d)$};
\node (d) at (5,0) {$(Y,0)$};
\draw [transform canvas={yshift=-0.5ex},->] (d) to node [below] {\footnotesize $I$} (u);
\draw [transform canvas={yshift=+0.5ex},<-] (d) to node [above] {\footnotesize $P$} (u);
\draw [->] (u.south west) .. controls +(210:1) and +(150:1) .. node[left=2pt] {\footnotesize $H$} (u.north west);
\end{tikzpicture},
\end{equation}
and a decreasing filtration of $X$ by graded subspaces $\{\calF_n\}_{n\in\bbZ}$ such that
\begin{equation}
\label{eq:SBSO_1}
\begin{gathered}
d\calF_n\subset\calF_{n-1},\quad H\calF_n\subset\calF_{n+1},\ \textnormal{for all}\ n\in\bbZ,\\
P\calF_{N+1} = 0,\ \textnormal{for some}\ N\geq-1.
\end{gathered}
\end{equation}
Additionally, it is important to assume that decreasing filtration $\{\calF_n\}_{n\in\bbZ}$ eventually terminates, in each homogeneous component separately, i.e., for each $i\in\bbZ$, we have $X^i\cap\calF_n=0$ for all $n \gg 0$.
Finally suppose there is a (degree $0$) graded Lie bracket $[-,-]$ on $X$, and a degree $1$ element $\bar{Q}\in X^1\cap\calF_{-1}$, such that
\begin{equation}
\label{eq:SBSO_2}
\begin{gathered}{}
[\calF_m,\calF_n] \subset\calF_{m+n},\ \textnormal{for all}\ m,n\in\bbZ,\\
[\bar{Q},\Omega]\equiv d\Omega\Mod{\calF_n},\ \textnormal{for all}\ n\in\bbZ,\ \textnormal{and}\ \Omega\in\calF_n.
\end{gathered}
\end{equation}

The question is whether or not $\bar{Q}$ can be deformed to a Maurer-Cartan (MC) element $Q$ of $(X,[-,-])$ such that $Q\equiv\bar{Q}\Mod\calF_{N+1}$.
The step-by-step obstruction method provides an answer.
Firstly, Proposition~\ref{prop:SBSO_existence} points out a necessary and sufficient condition for the existence of such MC elements, and explicitly constructs one of them, in the affirmative case.
Secondly, Proposition~\ref{prop:SBSO_uniqueness} establishes the uniqueness, up to isomorphisms, of such MC elements.

\begin{proposition}[Existence]
	\label{prop:SBSO_existence}
	There exists a MC element $Q$ of $(X,[-,-])$, such that $Q\equiv\bar{Q}\Mod\calF_{N+1}$, iff the following condition holds:
	\begin{equation}
	\label{eq:prop:SBSO_existence}
	[\bar{Q},\bar{Q}]\in\calF_{N}\cap\ker P.
	\end{equation}
\end{proposition}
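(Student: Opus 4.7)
The proof splits into the standard two directions: necessity is a direct algebraic check, while sufficiency is the Stasheff-style step-by-step obstruction argument, with the contraction identity $\mathrm{id}_X = IP + dH + Hd$ serving as the key lever for inverting $d$ up to filtration.

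For necessity, I would write any candidate $Q$ as $\bar Q + R$ with $R \in \calF_{N+1}$ and expand $0 = [Q,Q] = [\bar Q, \bar Q] + 2[\bar Q, R] + [R,R]$. Bracket-filtration compatibility immediately yields $[\bar Q, R] \in \calF_N$ and $[R,R] \in \calF_{2N+2} \subseteq \calF_{N+1}$, hence $[\bar Q, \bar Q] \in \calF_N$. To obtain $P[\bar Q, \bar Q] = 0$, I would combine $[\bar Q, R] \equiv dR \pmod{\calF_{N+1}}$ with $Pd = 0$ (since $P$ is a cochain map into a complex with trivial differential) and $P\calF_{N+1} = 0$, while $P[R,R] = 0$ is immediate from $[R,R] \in \calF_{N+1}$.

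For sufficiency, my plan is to build $Q$ as a series $Q = \bar Q + \sum_{k \geq 1} S_k$ with $S_k \in \calF_{N+k}$; the hypothesis that the filtration terminates in each homogeneous component guarantees that the partial sums $Q_k := \bar Q + S_1 + \cdots + S_k$ stabilize degree-wise, so the limit is well-defined in $X$. Setting $O_k := \tfrac{1}{2}[Q_k, Q_k]$, the inductive hypothesis is $O_k \in \calF_{N+k}$, true for $k=0$ by assumption. At the $(k+1)$-th stage I would define $S_{k+1} := -HO_k \in \calF_{N+k+1}$; a short expansion using $[\bar Q, S_{k+1}] \equiv dS_{k+1} \pmod{\calF_{N+k+1}}$ together with bracket-filtration estimates reduces the induction to verifying the identity
\[
O_{k+1} \equiv IPO_k + HdO_k \pmod{\calF_{N+k+1}},
\]
which is obtained by applying $\mathrm{id}_X = IP + dH + Hd$ to $O_k$.

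The main technical point will be to show that both terms on the right lie in $\calF_{N+k+1}$. The term $IPO_k$ should be immediate: for $k \geq 1$ we have $O_k \in \calF_{N+k} \subseteq \calF_{N+1}$, so $PO_k \in P\calF_{N+1} = 0$; for $k = 0$ the vanishing $PO_0 = 0$ is exactly hypothesis~\eqref{eq:prop:SBSO_existence}. The subtler claim is $HdO_k \in \calF_{N+k+1}$, which amounts to showing $dO_k \in \calF_{N+k}$ (one level deeper than the a priori bound $\calF_{N+k-1}$). I expect to obtain this by invoking the graded Jacobi identity to conclude $[Q_k, O_k] = \tfrac{1}{2}[Q_k, [Q_k, Q_k]] = 0$, hence $[\bar Q, O_k] = -\sum_{i=1}^{k}[S_i, O_k]$. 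The right-hand side lies in $\calF_{N+k}$ by bracket-filtration compatibility (since $[S_i, O_k] \in \calF_{2N+i+k} \subseteq \calF_{N+k}$ for $i \geq 1$ and $N \geq -1$), and combined with $[\bar Q, O_k] \equiv dO_k \pmod{\calF_{N+k}}$ this gives the required $dO_k \in \calF_{N+k}$, closing the induction.
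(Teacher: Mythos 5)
Your proof is correct and follows essentially the same route as the paper: the identical expansion of $[Q,Q]$ for necessity, and for sufficiency the same step-by-step construction in which the correction term is ($\pm$) $H$ applied to the obstruction, the graded Jacobi identity shows the obstruction is $d$-closed one filtration level deeper than the a priori bound, and the contraction identity together with $P$-annihilation closes the induction. The only deviations are the shifted indexing ($S_k\in\calF_{N+k}$ versus $Q_n\in\calF_n$) and the opposite sign convention for the homotopy (the paper uses $I\circ P-\id=dH+Hd$, whence $Q_{k+1}=+\tfrac12 H[Q(k),Q(k)]$), both immaterial.
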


\begin{proof} \ 
	
	$(\Longrightarrow)$
	Let $Q$ be a MC element of $(X,[-,-])$ such that $Q\equiv\bar{Q}\Mod\calF_{N+1}$.
	Then we get immediately
	\begin{equation*}
	0=[Q,Q]=[\bar{Q},\bar{Q}]+2[\bar{Q},Q-\bar{Q}]+[Q-\bar{Q},Q-\bar{Q}]\equiv[\bar{Q},\bar{Q}]+2d(Q-\bar{Q})\Mod\calF_{N+1}.
	\end{equation*}
	Hence, from~\eqref{eq:SBSO_1} and $P\circ d=0$, it follows that $P([\bar{Q},\bar{Q}])=0$, and $[\bar{Q},\bar{Q}]\in\calF_N$.
	
	$(\Longleftarrow)$
	Assume that $[\bar{Q},\bar{Q}]\in\calF_N\cap\ker P$.
	The main idea of the proof is to construct the desired MC element $Q$ through a perturbative expansion
	\begin{equation}
	\label{eq:perturbative_expansion}
	Q=\bar{Q}+\sum_{n>N} Q_n,
	\end{equation}
	where $\{Q_n\}_{n>N}$ is a (necessarily finite) sequence in $X^1$, with $Q_n\in\calF_n$, such that, for every $k\geq N$,
	\begin{equation*}
	Q(k):=\bar{Q}+\sum_{N<h\leq k}Q_h\Longrightarrow [Q(k),Q(k)]\in\calF_k.
	\end{equation*}
	Clearly if such sequence $\{Q_n\}_{n>N}$ exists, then~\eqref{eq:perturbative_expansion} is actually a finite sum and  provides a MC element $Q$ of $(X,[-,-])$ such that $Q\equiv\bar{Q}\Mod\calF_{N+1}$.
	Now, we show how to set up a recursive procedure to construct the $Q_n$.
	
	{\sc Proof of the Main Idea.}
	Assume we constructed the sequence $\{Q_n\}_{n>N}$ up to the term of filtration degree $k$, for some $k\geq N$.
	Then the next term in the sequence can be obtained by setting $Q_{k+1}:=\frac 12H[Q(k),Q(k)]$.
	Indeed, from either the hypothesis on $\bar{Q}$ (if $k=N$), or the inductive hypothesis (if $k>N$), it follows, in any case, that $[Q(k),Q(k)]\in\ker P$.
	Moreover, from Jacobi identity, we get
	\begin{equation*}
	0=[Q(k),[Q(k),Q(k)]]\equiv[\bar{Q},[Q(k),Q(k)]]\Mod\calF_k\equiv d[Q(k),Q(k)]\Mod\calF_{k}.
	\end{equation*}
	Hence $[Q(k),Q(k)]$ is annihilated by $P$, and is $d$-closed up to terms of filtration degree $k$, so that
	\begin{align*}
	[Q(k)+Q_{k+1},Q(k)+Q_{k+1}]&=[Q(k),Q(k)]+2[Q(k),Q_{k+1}]+[Q_{k+1},Q_{k+1}]\\
	&\equiv(\id+d\circ H)[Q(k),Q(k)]\Mod\calF_{k+1}\\
	&\equiv(I\circ P-H\circ d)[Q(k),Q(k)]\Mod\calF_{k+1}\\
	&\equiv 0\Mod\calF_{k+1}.\qedhere
	\end{align*}
\end{proof}

\begin{proposition}[Uniqueness]
	\label{prop:SBSO_uniqueness}
	Let $Q^i$ be a MC element of $(X,[-,-])$, such that $Q^i\equiv\bar{Q}\Mod\calF_{N+1}$, for $i=0,1$.
	Then there exists an automorphism $\phi$ of $(X,[-,-])$ such that $\phi(Q^0)=Q^1$.
	Moreover such automorphism $\phi$ can be chosen so that $\phi(\Omega)\equiv\Omega\Mod\calF_{n+N+2}$, for all $n\in\bbZ$, and $\Omega\in\calF_n$.
\end{proposition}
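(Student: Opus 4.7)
My plan is to construct $\phi$ as a composition of inner automorphisms of the form $\exp(\ad_{\lambda_k})$, iteratively correcting $Q^0$ so as to bring it closer and closer to $Q^1$, in complete analogy with the perturbative construction used to prove Proposition~\ref{prop:SBSO_existence}. Since each $\ad_{\lambda_k}$ is a degree $0$ graded derivation of the Lie bracket $[-,-]$, each $\exp(\ad_{\lambda_k})$ is automatically an automorphism of the graded Lie algebra $(X,[-,-])$, and the same is true for the composition $\phi$. So the real work is to choose the $\lambda_k$ appropriately and to control both the convergence of the composition and the filtration behavior.

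More precisely, I would set $Q^0_0:=Q^0$, $D_0:=Q^1-Q^0\in\calF_{N+1}$, and recursively define
\begin{equation*}
\lambda_k:=-H(D_k)\in\calF_{N+2+k},\qquad Q^0_{k+1}:=\exp(\ad_{\lambda_k})(Q^0_k),\qquad D_{k+1}:=Q^1-Q^0_{k+1},
\end{equation*}
and I would prove, by induction on $k\geq 0$, the claim that $D_k\in\calF_{N+1+k}$. The base case is the hypothesis $Q^i\equiv\bar{Q}\Mod\calF_{N+1}$. For the inductive step, I would need two facts about $D_k$: first, $PD_k=0$, which holds automatically because $D_k\in\calF_{N+1+k}\subseteq\calF_{N+1}\subseteq\ker P$; second, $dD_k\in\calF_{N+1+k}$, obtained by expanding $0=[Q^1,Q^1]=[Q^0_k+D_k,Q^0_k+D_k]$ (recall $Q^0_k$ is a MC element as the image of $Q^0$ under a Lie algebra automorphism) to get $2[Q^0_k,D_k]+[D_k,D_k]=0$, and observing that $[Q^0_k-\bar{Q},D_k]\in\calF_{2N+2+k}$ and $[D_k,D_k]\in\calF_{2N+2+2k}$ are both contained in $\calF_{N+1+k}$ as soon as $N\geq -1$, while $[\bar{Q},D_k]\equiv dD_k\Mod\calF_{N+1+k}$. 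With these two facts in hand, the contraction identity $dH+Hd=\id-IP$ yields $d\lambda_k=-dH(D_k)\equiv -D_k\Mod\calF_{N+2+k}$.

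To conclude the inductive step, I would expand $\exp(\ad_{\lambda_k})(Q^0_k)\equiv Q^0_k+[\lambda_k,Q^0_k]\Mod\calF_{N+2+k}$, using that $\ad_{\lambda_k}$ shifts the filtration by $N+2+k$ and hence its higher powers land in $\calF_{N+2+k}$; and further reduce $[\lambda_k,Q^0_k]=[\lambda_k,\bar{Q}]+[\lambda_k,Q^0_k-\bar{Q}]\equiv d\lambda_k\Mod\calF_{N+2+k}$, since $[\lambda_k,Q^0_k-\bar{Q}]\in\calF_{2N+3+k}\subseteq\calF_{N+2+k}$. Combining, one gets $D_{k+1}\equiv D_k+d\lambda_k\equiv 0\Mod\calF_{N+2+k}$, closing the induction.

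The remaining point is convergence: by the termination assumption on the filtration, in every fixed homogeneous degree only finitely many of the $\lambda_k$ act non-trivially, so the composition $\phi:=\cdots\exp(\ad_{\lambda_2})\exp(\ad_{\lambda_1})\exp(\ad_{\lambda_0})$ is a well-defined graded Lie algebra automorphism, and $\phi(Q^0)=Q^1$ since $D_k\to 0$. The filtration-shift property $\phi(\Omega)\equiv\Omega\Mod\calF_{n+N+2}$ for $\Omega\in\calF_n$ then follows at once, since every $\lambda_k\in\calF_{N+2+k}$ gives $[\lambda_k,\Omega]\in\calF_{N+2+k+n}\subseteq\calF_{N+2+n}$, and the congruence is preserved under composition. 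The main obstacle is the bookkeeping needed to ensure $dD_k\in\calF_{N+1+k}$ at each stage; this is precisely where the inequality $N\geq -1$ is used in an essential way.
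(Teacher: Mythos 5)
Your proposal is correct and is essentially the paper's own argument: the paper likewise corrects $Q^0$ step by step by inner automorphisms $\exp(\ad_R)$ with $R=H(Q^1-Q^0)$ at each stage, establishes the same inductive claim (that the discrepancy gains one filtration degree per step, using $P(Q^1-Q^0)=0$, the approximate $d$-closedness extracted from the two Maurer--Cartan equations, and the homotopy identity), and concludes by the termination of the filtration in each homogeneous component. The only discrepancies are sign conventions: the paper uses $I\circ P-\id=[d,H]$ and hence takes $R=+H(D_k)$ rather than your $\lambda_k=-H(D_k)$, and since $|\lambda_k|=0$ graded antisymmetry gives $[\lambda_k,\bar{Q}]=-[\bar{Q},\lambda_k]\equiv-d\lambda_k$, so your intermediate congruence $[\lambda_k,Q^0_k]\equiv d\lambda_k$ should read $[\lambda_k,Q^0_k]\equiv -d\lambda_k \Mod \calF_{N+2+k}$ --- after which your final formula $D_{k+1}\equiv D_k+d\lambda_k\equiv 0$ is exactly right and the induction closes as claimed.
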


\begin{proof}
	The main idea of the proof is to check that, for every $n\geq N+1$, if $Q^0$ and $Q^1$ coincide up to terms of filtration degree $n$, i.e.
	\begin{equation}
	\label{eq:proof:prop:SBSO_uniqueness1}
	Q^1\equiv Q^0\Mod\calF_n,
	\end{equation}
	then there is $R\in\calF_{n+1}\subset\calF_1$ such that $Q^1$ and $(\exp R)Q^0:=\sum_{k=0}^\infty\frac{1}{k!}\operatorname{ad}_{R}^k Q^0$, with $\operatorname{ad}_R:=[R,-]$, coincide up to terms of filtration degree $n+1$, i.e.
	\begin{equation}
	\label{eq:proof:prop:SBSO_uniqueness2}
	(\exp R)(Q^0)\equiv Q^1\Mod\calF_{n+1}.
	\end{equation}
	The statement of the Proposition will then follow because the decreasing filtration eventually terminates in each homogeneous component separately, and $Q^0\equiv Q^1\Mod\calF_{N+1}$, by hypothesis.
	
	{\sc Proof of the Main Idea.}
	If~\eqref{eq:proof:prop:SBSO_uniqueness1} holds, then~\eqref{eq:proof:prop:SBSO_uniqueness2} is satisfied by setting $R:=H(Q^1-Q^0)$.
	Indeed, from the Maurer-Cartan equation for $Q^0$ and $Q^1$, it follows that 
	\begin{equation*}
	0=[Q^1,Q^1]=2[Q^0,Q^1-Q^0]+[Q^1-Q^0,Q^1-Q^0]\equiv 2d(Q^1-Q^0)\Mod\calF_n,	\end{equation*}
	i.e.~$Q^1-Q^0$ is $d$-closed up to terms of filtration degree $n$.
	Hence we also get
	\begin{equation*}
	Q^1-Q^0=(I\circ P-d\circ H-H\circ d)(Q^1-Q^0)\equiv[R,Q^0]\Mod\calF_{n+1},
	\end{equation*}
	and so $(\exp R)Q^0\equiv Q^1\Mod\calF_{n+1}$, as needed.
\end{proof}

By the same argument used in the proof of Proposition~\ref{prop:SBSO_uniqueness}, we get the following.

\begin{corollary}
	\label{cor:SBSO_uniqueness}	
	Let $Q$ be a MC element of $(X,[-,-])$, with $Q\equiv\bar{Q}\Mod(\calF_{N}\cap\ker P)$.
	If $N\geq 0$ and~\eqref{eq:prop:SBSO_existence} holds, then there is an automorphism $\phi$ of $(X,[-,-])$ such that $\phi(Q)\equiv\bar{Q}\Mod\calF_{N+1}$.
	Additionally such automorphism $\phi$ can be chosen so that $\phi(\Omega)\equiv\Omega\Mod\calF_{n+N+1}$, for all $n\in\bbZ$, and $\Omega\in\calF_n$.
\end{corollary}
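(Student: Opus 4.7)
The approach is to mimic the single-step argument in the proof of Proposition~\ref{prop:SBSO_uniqueness} by taking the inner automorphism $\phi := \exp(\operatorname{ad}_R)$ associated with $R := H(\bar{Q} - Q)$. Since $Q - \bar{Q} \in \calF_N$ and $H\calF_N \subseteq \calF_{N+1}$, one has $R \in \calF_{N+1}$. The filtration compatibility $[\calF_m,\calF_n] \subseteq \calF_{m+n}$ implies that $\operatorname{ad}_R$ raises filtration degree by at least $N+1 \geq 1$; consequently the exponential series $\phi = \sum_{k\geq 0}\tfrac{1}{k!}\operatorname{ad}_R^{k}$ is well-defined (each homogeneous component terminates by the assumed finiteness of the filtration), and $\phi$ is an automorphism of the graded Lie algebra $(X,[-,-])$. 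Moreover, since $\operatorname{ad}_R^{k}(\Omega) \in \calF_{n+k(N+1)}\subseteq \calF_{n+N+1}$ for $k \geq 1$ and $\Omega \in \calF_n$, the additional congruence $\phi(\Omega) \equiv \Omega \pmod{\calF_{n+N+1}}$ holds automatically.

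It then remains to verify $\phi(Q) \equiv \bar{Q} \pmod{\calF_{N+1}}$. Since all terms $\tfrac{1}{k!}\operatorname{ad}_R^{k}(Q)$ with $k \geq 2$ lie in $\calF_{k(N+1)-1} \subseteq \calF_{2N+1} \subseteq \calF_{N+1}$, the claim reduces to showing $[R,Q] \equiv \bar{Q} - Q \pmod{\calF_{N+1}}$. Splitting $[R,Q] = [R,\bar{Q}] + [R,Q - \bar{Q}]$, the second summand lies in $[\calF_{N+1}, \calF_N] \subseteq \calF_{2N+1} \subseteq \calF_{N+1}$. For the first, graded antisymmetry together with~\eqref{eq:SBSO_2} yield $[R,\bar{Q}] \equiv -dR \pmod{\calF_{N+1}}$. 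Applying the contraction identity $\id = IP - dH - Hd$ to $\bar{Q} - Q$, and using $P(\bar{Q}-Q) = 0$, one obtains
\begin{equation*}
dR = dH(\bar{Q} - Q) = (Q - \bar{Q}) - Hd(\bar{Q} - Q),
\end{equation*}
whence $[R,Q] \equiv (\bar{Q} - Q) + Hd(\bar{Q} - Q) \pmod{\calF_{N+1}}$, and finally $\phi(Q) \equiv \bar{Q} + Hd(\bar{Q}-Q) \pmod{\calF_{N+1}}$.

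The main obstacle, and the only place where the hypotheses~\eqref{eq:prop:SBSO_existence} and $N \geq 0$ enter substantially, is the final step: showing $Hd(\bar{Q} - Q) \in \calF_{N+1}$, equivalently $d(\bar{Q} - Q) \in \calF_N$. By~\eqref{eq:SBSO_2} applied to $\bar{Q} - Q \in \calF_N$, one has $d(\bar{Q} - Q) \equiv [\bar{Q}, \bar{Q} - Q] \pmod{\calF_N}$; expanding the Maurer--Cartan equation $[Q,Q] = 0$ around $\bar{Q}$ gives
\begin{equation*}
[\bar{Q}, \bar{Q} - Q] = \tfrac{1}{2}[\bar{Q}, \bar{Q}] + \tfrac{1}{2}[Q - \bar{Q}, Q - \bar{Q}].
\end{equation*}
The first summand lies in $\calF_N$ by hypothesis~\eqref{eq:prop:SBSO_existence}, and the second in $\calF_{2N} \subseteq \calF_N$ because $N\geq 0$. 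Hence $d(\bar{Q} - Q) \in \calF_N$, so $Hd(\bar{Q} - Q) \in \calF_{N+1}$, completing the verification.
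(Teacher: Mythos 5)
Your proof is correct and follows exactly the route the paper intends: the paper disposes of this corollary with the single remark that it follows ``by the same argument used in the proof of Proposition~\ref{prop:SBSO_uniqueness}'', and what you have written is precisely the single-step version of that argument, with the necessary adaptations spelled out. In particular, you correctly locate the three points where the modified hypotheses enter --- the explicit $\ker P$ assumption replacing the automatic vanishing $P\calF_{N+1}=0$, the condition~\eqref{eq:prop:SBSO_existence} substituting for the Maurer--Cartan equation of $\bar{Q}$, and $N\geq 0$ guaranteeing $\calF_{2N}\subseteq\calF_N$ and $\calF_{2N+1}\subseteq\calF_{N+1}$.
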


\begin{remark}
	Notice that the \emph{side conditions} satisfied by contraction data~\eqref{eq:homotopy_equivalence}, do not play any role in the proofs of Propositions~\ref{prop:SBSO_existence}, \ref{prop:SBSO_uniqueness} and Corollary~\ref{cor:SBSO_uniqueness}, and they could be actually relaxed.
	Actually, there is only one place in the paper where the side conditions are truly relevant. Namely, we need the homotopy equivalence~\eqref{eq:2contraction_data_1} to be a set of true contraction data when proving Theorem~\ref{theor:L_infty_qi}. Indeed, the side conditions are necessary to implement the \emph{homotopy transfer} that generates the higher brackets in the $L_\infty$-algebra from the BFV-complex.
\end{remark}

\section{Some auxiliary technical results}
\label{app:technical_tools}

The aim of this Appendix is to state and prove Propositions~\ref{prop:main_result} and~\ref{prop:main_result_bis}, which represent the technical tools at the basis of most of the main results in the paper, namely Theorems~\ref{theor:gauge_invariance_BFV_complex},~\ref{theor:Mod_Ham}, and~\ref{theor:Mod_Jac}.
Actually those theorems are just straightforward applications of Propositions~\ref{prop:main_result} and~\ref{prop:main_result_bis}.

In this section we will work within the local model established in Section~\ref{sec:BRST-charges}.
Let us start with two preliminary lemmas.
\begin{lemma}
	\label{lem:technical_lemma}
	Let $s$ be a section of the vector bundle $\pi:NS\to S$, and let $\{e_t\}_{t\in I}$ be a smooth path of sections of the pull-back vector bundle $E:=\pi^\ast(NS)\to NS$.
	Suppose that
	\begin{enumerate}
		\item\label{enum:lem:technical_lemma1}
		$e_0=\Omega_E[s]$,
		\item\label{enum:lem:technical_lemma2}
		$\textnormal{``zero locus of $e_t$''}=\im(s)$,
		\item\label{enum:lem:technical_lemma3}
		$\left.e_t\right|_{E_x}$ intersects transversally the restriction to $E_x$ of the zero section of $E\to NS$, for all $x\in S$.
	\end{enumerate}
	Then there exists a smooth path $\{A_t\}_{t\in I}\subset\Gamma(\operatorname{GL}(E))$, with $A_0=\id_E$, such that
	\begin{equation}
	\label{eq:lem:technical_lemma}
	e_t=A_te_0,
	\end{equation}
	or equivalently there is a smooth path $\{a_t\}_{t\in I}\subset\End(E)$ such that
	\begin{equation}
	\label{eq:lem:technical_lemma_bis}
	\frac{d}{dt}e_t=a_te_t.
	\end{equation}
\end{lemma}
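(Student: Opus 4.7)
The two formulations~(\ref{eq:lem:technical_lemma}) and~(\ref{eq:lem:technical_lemma_bis}) are equivalent: from $\{A_t\}$ one obtains $a_t := (\tfrac{d}{dt}A_t)A_t^{-1}$, while conversely integrating the linear ODE $\tfrac{d}{dt}A_t = a_t A_t$ with initial condition $A_0 = \id_E$ recovers $\{A_t\}$ (global existence on $I$ is standard for a linear ODE in the fibers of a vector bundle). I would therefore focus on producing a smooth family $\{a_t\}\subset\Gamma(\End(E))$ with $\tfrac{d}{dt}e_t = a_t e_t$; this equation is affine in $a_t$ at each point and hence amenable to patching via partitions of unity.

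The heart of the matter is a parametric Hadamard computation near $\im(s)$. In local coordinates $(u^i, y_A)$ on $NS$ adapted to a local frame $\eta^A$ of $\pi:NS\to S$, write $s = g_A(u)\eta^A$ and $e_t = f^t_A(u,y)\xi^A$, where $\xi^A := \pi^\ast\eta^A$. Hypotheses~(\ref{enum:lem:technical_lemma1}) and~(\ref{enum:lem:technical_lemma2}) read $f^0_A(u,y) = y_A - g_A(u)$ and $f^t_A(u, g(u)) = 0$; differentiation of the latter in $t$ also gives $\partial_t f^t_A(u, g(u)) = 0$. Hadamard's lemma in the fiber variables, applied to both $f^t_A$ and $\partial_t f^t_A$, yields smooth coefficients $B^t_{AB}, C^t_{AB}$ satisfying
\begin{equation*}
f^t_A = B^t_{AB}(y_B - g_B(u)), \qquad \partial_t f^t_A = C^t_{AB}(y_B - g_B(u)),
\end{equation*}
with $B^t_{AB}|_{y=g(u)} = (\partial_{y_B} f^t_A)|_{y=g(u)}$. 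By hypothesis~(\ref{enum:lem:technical_lemma3}), this boundary matrix is invertible for every $u$, so $B^t$ is invertible on an open neighborhood $U$ of $\im(s)$; there the endomorphism $a^{\mathrm{loc}}_t := C^t(B^t)^{-1}$ is a smooth local solution of $\tfrac{d}{dt}e_t = a^{\mathrm{loc}}_t e_t$.

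The main obstacle --- and the only nontrivial remaining step --- is the local-to-global passage. At any point $p\notin\im(s)$, one has $e_t(p)\neq 0$, so the evaluation map $\End(E_p)\to E_p$, $a\mapsto a\cdot e_t(p)$, is surjective, and a local smooth solution $a^\alpha_t$ of $\tfrac{d}{dt}e_t = a^\alpha_t e_t$ exists near $p$ by elementary linear algebra (choose a local frame of $E$ whose first vector is $e_t$ and solve the equation componentwise). Together with the local solution provided by Hadamard's lemma on $U$, such solutions cover $NS$. A subordinate partition of unity $\{\rho_\alpha\}$ then yields the global section $a_t := \sum_\alpha \rho_\alpha a^\alpha_t\in\Gamma(\End(E))$, which still satisfies $\tfrac{d}{dt}e_t = a_t e_t$ by the affine character of the constraint. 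Integrating the linear ODE $\tfrac{d}{dt}A_t = a_t A_t$ with $A_0 = \id_E$ delivers the desired $\{A_t\}\subset\Gamma(\operatorname{GL}(E))$.
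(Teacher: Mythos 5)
Your argument is correct and is essentially the paper's: near $\im(s)$ the parametric Hadamard factorization you use is precisely what the homotopy $h[s]$ of Proposition~\ref{prop:2contraction_data} computes (the paper sets $A_t:=-h[s]e_t$ and invokes hypothesis~(3) for its fiberwise invertibility on a neighborhood of $\im(s)$, exactly as you invoke it for $B^t$), while away from $\im(s)$ both proofs solve $\tfrac{d}{dt}e_t=a_te_t$ pointwise using only that $e_t$ is nowhere zero there. The one organizational difference is that you work uniformly with the infinitesimal version $a_t$ and make the local-to-global step explicit (affineness of the constraint in $a_t$ plus a partition of unity, then integration of the linear ODE), a gluing the paper leaves implicit behind the remark that ``the proposition has a local character.''
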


\begin{proof}
	The proposition has a local character, and it is enough to work in a neighborhood of an arbitrary point $x\in NS$.
	We distinguish two cases: $x\notin\im(s)$ and $x\in\im(s)$.
	
	\textsc{First Case.}
	Set $N':=NS\smallsetminus\im(s)$, and equip the vector bundle $E':=E|_{N'}\to N'$ with a Riemannian metric.
	Denote by $F_t\to N'$ the rank-$1$ vector subbundle of $E'\to N'$ generated by $e_t|_{N'}$.
	A smooth path $\{a_t\}_{t\in I}\subset\End(E)$ satisfying~\eqref{eq:lem:technical_lemma_bis} can be obtained by the following composition of vector bundle morphisms: $E'\overset{P_t}{\longrightarrow}F_t\overset{I_t}{\longrightarrow}E'$, where $P_t:E'\to F_t$ is the orthogonal projection, and $I_t:F_t\to E'$ is given by $I_t(e_t|_{N'})=\left(\frac{d}{dt}e_t\right)|_{N'}$.
	
	\textsc{Second Case.}
	Since $e_t\in\Gamma(\hat{L})^{(1,0)}$, and $d[s]\in\Diff(\hat{L})^{(0,1)}$, it follows, by bi-degree reasons, that $d[s](e_t)=0$.
	Even more, $e_t$ is actually a co-boundary of $(\Gamma(\hat{L}),d[s])$.
	Indeed hypothesis~\eqref{enum:lem:technical_lemma2} guarantees that $\wp[s]e_t=0$.
	Hence, in view of Proposition~\ref{prop:2contraction_data}, setting $A_t:=-h[s]e_t$, we get
	\begin{equation*}
	e_t=d[s]A_t=\{\Omega_E[s],A_t\}_G=A_t\Omega_E[s]=A_te_0,
	\end{equation*}
	where, in the last step, we used hypothesis~\eqref{enum:lem:technical_lemma1}.
	Finally, a simple computation in local coordinates shows that hypothesis~\eqref{enum:lem:technical_lemma3} is equivalent to the fiberwise invertibility of $A_t:=-h[s]e_t$ on $\im(s)$, and so also on some open neighborhood of $\im(s)$ in $NS$.
\end{proof}

\begin{lemma}
	\label{lem:integrating_graded_do}
	Fix a smooth path $\{\square_t\}_{t\in I}\subset\Diff(\hat{L})^0$, with $I:=[0,1]$.
	Denote by $\{\tilde\square_t\}_{t\in I}\subset\Diff(\hat{L})^{(0,0)}$, $\{\slashed\square_t\}_{t\in I}\subset\Diff L$ and $\{X_t\}_{t\in I}\subset\frakX(M)$ the smooth paths defined by setting $\tilde\square_t:=\operatorname{pr}^{(0,0)}\square_t$, $\slashed\square_t:=\left.\square_t\right|_{\Gamma(L)}$, and $X_t:=\sigma_{\slashed\square_t}$ respectively.
	The following conditions are equivalent:
	\begin{enumerate}
		\item\label{enum:integrating_graded_do1}
		$\{\square_t\}_{t\in I}$ integrates to a smooth path $\{\Phi_t\}_{t\in I}$ of automorphisms of $\hat{L} \to \hat{M}$;
		\item\label{enum:integrating_graded_do2}
		$\{\tilde\square_t\}_{t\in I}$ integrates to a smooth path $\{\tilde\Phi_t\}_{t\in I}$ of bi-degree $(0,0)$ automorphisms of $\hat{L}\to\hat{M}$;  
		\item\label{enum:integrating_graded_do3}
		$\{\slashed\square_t\}_{t\in I}$ integrates to a smooth path $\{(F_t,\underline{\smash{F_t}})\}_{t\in I}$ of automorphisms of $L \to M$;
		\item\label{enum:integrating_graded_do4}
		$\{X_t\}_{t\in I}$ integrates to a smooth path $\{\underline{\smash{F_t}}\}_{t\in I}$ of diffeomorphisms of $M$.
	\end{enumerate}
	Moreover, if the equivalent conditions (1)-(4) are satisfied, then the following relations hold:
	\begin{gather*}
	\Phi_t^\ast\lambda=\tilde\Phi_t^\ast\lambda\Mod\bigoplus_{k\geq 1}\Gamma(\hat{L})^{(p+k,q+k)},\ \textnormal{for all}\ (p,q)\in\bbN^2,\ \lambda\in\Gamma(\hat{L})^{(p,q)},\\
	\tilde\Phi^\ast\lambda=F_t^\ast\lambda,\ \textnormal{for all}\ \lambda\in\Gamma(L).	
	\end{gather*}
\end{lemma}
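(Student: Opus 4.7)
The plan is to establish the chain of implications $(1)\Rightarrow(2)\Rightarrow(3)\Rightarrow(4)$ trivially, while the substantive work lies in the reverse implications $(4)\Rightarrow(3)\Rightarrow(2)\Rightarrow(1)$. The trivial direction uses only the functorial operations: projecting to bi-degree $(0,0)$ yields $\tilde\Phi_t$ from $\Phi_t$, restricting to $\Gamma(L)\subset\Gamma(\hat L)^{(0,0)}$ yields $F_t$ from $\tilde\Phi_t$, and taking symbols yields $\underline{F_t}$ from $F_t$. The compatibility of these operations with time-differentiation matches the analogous relations among $\square_t,\tilde\square_t,\slashed\square_t,X_t$.

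For the substantive direction, I first handle $(4)\Rightarrow(3)$ by the classical theory of the Atiyah algebroid: given the flow $\underline{F_t}$ of $X_t$ on $M$, the derivation $\slashed\square_t\in\Diff(L)^0$ corresponds to a time-dependent vector field on the total space of $L$ that is linear on fibres and projects to $X_t$; this integrates to line bundle automorphisms $F_t:L\to L$ covering $\underline{F_t}$. Next, for $(3)\Rightarrow(2)$, I observe that a bi-degree $(0,0)$ graded automorphism of $\hat L\to\hat M$ is completely determined by its action on the local generators $\Gamma(L)$, $\Gamma(E)$, $\Gamma(E^\ast\otimes L)$, which it preserves. The component of $\tilde\square_t$ acting on $\Gamma(E)$ (resp.~$\Gamma(E^\ast\otimes L)$) is an ordinary derivation of that finite-rank vector bundle with symbol $X_t$; integrating all three components in parallel by the same argument as $(4)\Rightarrow(3)$ yields the desired $\tilde\Phi_t$.

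The main step is $(2)\Rightarrow(1)$, which is where the finite-dimensional nature of $\hat M$ (crucially, the finite rank $r$ of $E$) is exploited. I write $\square_t=\tilde\square_t+\square'_t$, where $\square'_t$ collects the bi-degree $(n,n)$ components with $n\neq 0$, and seek $\Phi_t$ in factorised form $\Phi_t=\tilde\Phi_t\circ\Psi_t$. Then $\Psi_t$ must satisfy $\partial_t\Psi_t^\ast=\Psi_t^\ast\circ\square''_t$ with $\Psi_0=\operatorname{id}_{\hat L}$, where $\square''_t:=\tilde\Phi_t^{-1}\circ\square'_t\circ\tilde\Phi_t$ is again a derivation whose bi-degree components are all strictly nonzero. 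Since $\Gamma(\hat L)^{(h,k)}=\Gamma((\wedge^h E_L)\otimes(\wedge^k E^\ast)\otimes L)$ vanishes outside $0\leq h,k\leq r$, the derivation $\square''_t$ acts nilpotently on each fixed bi-degree component: iterated application eventually exits the permitted range. Consequently, expanding $\Psi_t^\ast$ on a local generating system of $\Gamma(\hat L)$ yields a finite triangular system of linear first-order ODEs whose coefficients live in finite-rank bundles over $M$, with the horizontal direction controlled by $\underline{F_t}$; solvability on $I$ follows from the completeness of $\underline{F_t}$. The two auxiliary identities stated at the end follow automatically: the second because $\tilde\Phi_t$ is bi-degree preserving and agrees with $F_t$ on $\Gamma(\hat L)^{(0,0)}=\Gamma(L)$ by construction, the first because $\Psi_t^\ast=\operatorname{id}+\{\text{strictly higher bi-degree corrections}\}$.

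The main obstacle is controlling the correction term in $(2)\Rightarrow(1)$: a priori, integrating a degree $0$ graded derivation is an infinite-dimensional evolution problem, and the key insight that makes the plan work is that finiteness of $\operatorname{rank}E$ bounds the bi-weight $h+k$, so the strictly bi-degree-raising part becomes nilpotent on any given section. I expect the bookkeeping of bi-degrees and the verification that the order-by-order construction assembles into a globally defined graded automorphism (and not merely a formal expression) to require the most care, though all the analytic content reduces to elementary ODE theory on $M$.
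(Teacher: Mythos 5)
Your plan is correct and is exactly the argument the paper has in mind: the paper's own proof of this lemma is literally ``It is straightforward,'' and your elaboration --- reduce everything to the symbol on $M$ via the classical integration of derivations of finite-rank vector bundles (the Atiyah-algebroid picture), then recover $\Phi_t$ from $\tilde\Phi_t$ by a correction whose strictly bi-degree-raising part is nilpotent because $\operatorname{rank}E<\infty$, yielding a finite triangular linear ODE system along the flow of $X_t$ --- is the natural and intended filling-in of the details. The only point to watch is the bookkeeping in the factorisation $\Phi_t^\ast=\Psi_t^\ast\circ\tilde\Phi_t^\ast$, where the paper's convention $\frac{d}{dt}\Phi_t^\ast=\square_t\circ\Phi_t^\ast$ places the conjugation slightly differently from what you wrote, but this does not affect the structure or validity of the argument.
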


\begin{proof}
	It is straightforward.
\end{proof}

Let us fix the setting for Proposition~\ref{prop:main_result}.
Assume we have the following smooth paths
\begin{itemize}
	\item $\{\J_t\}_{t\in I}\subset\Diff^2(L[1])$ such that $\J_t$ is a Jacobi structure on $L\to M$,
	\item $\{s_t\}_{t\in I}\subset\Gamma(\pi)$, such that $\im(s_t)$ is a coisotropic submanifold of $(M,L,\J_t)$,
	\item A smooth path $\{(F_t,\underline F_t)\}_{t\in I}$ of automorphisms of $L \to M$, with $\underline{\smash{F}}_0=\id_M$ and $F_0=\id_L$, such that $\im s_t=\underline F_t(\im s_0)$ and $\J_t=F_t^\ast\J_0$.
\end{itemize}
Fix moreover the following objects:
\begin{itemize}
	\item $\hat{\J}_0$, a lifting of $\J_0$ to a Jacobi structure on $\hat{L}\to\hat{M}$,
	\item $\Omega_0$, an $s_0$-BRST charge wrt $\hat{\J}_0$.
\end{itemize}
Our aim is to find a lifting of $\{F_t\}_{t\in I}$ to a suitable smooth path $\{\calF_t\}_{t\in I}$ of bi-degree $(0,0)$ automorphisms of $\hat{L}\to\hat{M}$, with $\calF_0=\id_{\hat{L}}$.
This is accomplished by the following.

\begin{proposition}
	\label{prop:main_result}
	There exists a smooth path $\{\calF_t\}_{t\in I}$ of bi-degree $(0,0)$ automorphisms of $\hat{L}\to\hat{M}$, with $\calF_0=\id_{\hat{L}}$, such that
	\begin{enumerate}
		\item\label{enum:main_result1}
		$\{\calF_t\}_{t\in I}$ is a lifting of $\{F_t\}_{t\in I}$, i.e.~$\calF_t|_L=F_t$,
		\item\label{enum:main_result2}
		$\hat\J_t:=(\calF_t)_\ast\hat\J_0$ is a lifting of $\J_t$ to $\hat{L}\to\hat{M}$,
		\item\label{enum:main_result3}
		$\Omega_t:=(\calF_t)_\ast\Omega_0$ is an $s_t$-BRST charge wrt $\hat{\J}_t$.
	\end{enumerate}	
\end{proposition}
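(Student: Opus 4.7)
The plan is to realize $\calF_t$ as the flow of a time-dependent bi-degree $(0,0)$ derivation $D_t \in \Diff(\hat L)^{(0,0)}$, constructed via the step-by-step obstruction method of Appendix~\ref{app:SBSO} applied within the anti-ghost filtration of Remark~\ref{rem:filtration_Ham}.

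I would begin by differentiating the three conditions in $t$, converting them into infinitesimal requirements on $D_t$. Let $\square_t \in \Diff L$ be the derivation with $\frac{d}{dt}F_t^\ast = \square_t^\ast \circ F_t^\ast$ and $F_0 = \id_L$; by Lemma~\ref{lem:integrating_graded_do}, any smooth path $D_t$ with $\operatorname{pr}^{(0,0)}D_t|_{\Gamma(L)} = \square_t$ integrates to a path $\calF_t$ of bi-degree $(0,0)$ automorphisms realizing condition~\eqref{enum:main_result1}. Conditions~\eqref{enum:main_result2} and~\eqref{enum:main_result3} then become evolution-type constraints on $D_t$: roughly, $\operatorname{pr}^{(0,0)}\ldsb D_t,\hat\J_t\rdsb$ must reproduce the infinitesimal change of a lifting of $\J_t$, while $\operatorname{pr}^{(1,0)}(D_t \Omega_t)$ must equal $\frac{d}{dt}\Omega_E[s_t]$.

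Next, I would pick a $\Diff L$-connection $\nabla$ in $E\to M$ and take as starting ansatz $D_t^{(0)} := \nabla_{\square_t}$. This already realizes condition~\eqref{enum:main_result1}, and its defects with respect to~\eqref{enum:main_result2} and~\eqref{enum:main_result3} lie in positive anti-ghost degree. I would then correct $D_t^{(0)}$ inductively by adding perturbations $R_t^{(k)}\in \Diff(\hat L)^{(0,0)}$ of anti-ghost weight $\geq k$, solving the resulting obstruction equations using the contraction data~\eqref{eq:1contraction_data_1} of Proposition~\ref{prop:1contraction_data_1} (for the lifting condition, whose defect is controlled by $d_G$-cohomology) and the contraction data~\eqref{eq:2contraction_data_1} of Proposition~\ref{prop:2contraction_data} applied with the smooth family $s_t$ (cf.~Remark~\ref{rem:2contraction__data}, for the BRST-charge condition, whose defect is controlled by $d[s_t]$-cohomology). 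Since the anti-ghost filtration is locally finite on each homogeneous component of $\Diff(\hat L)$, the induction terminates in finitely many steps, and smooth $t$-dependence is preserved throughout because both homotopies and the connection $\nabla$ act smoothly on smooth families.

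The main obstacle will be carrying out the inductive step for conditions~\eqref{enum:main_result2} and~\eqref{enum:main_result3} \emph{simultaneously}: the primitive added at stage $k$ to restore the lifting property must not destroy the BRST-charge property and vice versa. This compatibility should follow because the two defects live in complementary sectors under the ghost/anti-ghost bi-grading — the Jacobi defect is a bi-derivation while the BRST defect is a section of $\hat L$ — so the two homotopy operators act on disjoint pieces of $\Diff^\star(\hat L[1])$ and can be combined additively into a single correction $R_t^{(k)}$. A secondary technical point, which I expect to handle along the way, is that Proposition~\ref{prop:SBSO_uniqueness} (applied at each stage to fix the remaining gauge freedom in $R_t^{(k)}$) must be invoked in a $t$-parametric form, which is legitimate because the homotopies $H_\nabla$ and $h[s_t]$ both depend smoothly on $t$.
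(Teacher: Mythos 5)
Your overall strategy --- integrating a time-dependent bi-degree $(0,0)$ derivation built from a $\Diff L$-connection --- points in the right direction, but the obstruction-theoretic induction you build around it is solving a non-problem, while the actual crux is left unaddressed. Observe that $(\calF_t)_\ast\hat\J_0$ is automatically a Jacobi bi-derivation and $(\calF_t)_\ast\Omega_0$ is automatically a MC element of it, for \emph{any} automorphism $\calF_t$; conditions (2) and (3) therefore only constrain the leading bi-degree components, namely $\operatorname{pr}^{(-1,-1)}\hat\J_t=G$, $p(\hat\J_t)=\J_t$, and $\operatorname{pr}^{(1,0)}\Omega_t=\Omega_E[s_t]$. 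If the generator $\square_t$ is taken of pure bi-degree $(0,0)$, i.e.\ $\square_t=\bbD_{a_t}+\bbD_{b_t}+\nabla_{\slashed\square_t}$ under the splitting $\Diff(\hat{L})^{(0,0)}\simeq\End(E)\oplus\End(E^\ast)\oplus\Diff L$ induced by $\nabla$, then $\calF_t$ preserves the bi-grading and the three conditions reduce to: $\slashed\square_t$ generates $F_t$; $b_t=-a_t^\ast$ (so that $G$ is preserved); and $a_t$ solves a linear equation coming from (3). There are no higher anti-ghost defects to kill, so the inductive scaffolding --- and your worry about the compatibility of the two corrections --- evaporates. Your claim that the two defects ``live in complementary sectors'' is also not the operative mechanism: any correction to $\square_t$ acts on both $\hat\J_t$ and $\Omega_t$; the actual decoupling comes from the constraint $b_t=-a_t^\ast$, which turns the $\End(E)\oplus\End(E^\ast)$ part of $\square_t$ into the $G$-Hamiltonian derivation $\{a_t,-\}_G$.

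The genuine gap is in handling condition (3). Writing $e_t:=\Phi_t^\ast(\Omega_E[s_t])$, where $\Phi_t$ integrates $\nabla_{\slashed\square_t}$, what remains is to produce a path $\tilde a_t\in\Gamma(\End(E))$ with $\frac{d}{dt}e_t=\tilde a_te_t$. This is precisely where the hypotheses $\im s_t=\underline{\smash{F_t}}(\im s_0)$ and the fibrewise transversality of $\Omega_E[s]$ enter, and it is the content of Lemma~\ref{lem:technical_lemma}: near $\im(s_0)$ one uses the homotopy $h[s_0]$ to get $A_t:=-h[s_0]e_t$ with $e_t=A_te_0$, and transversality to show that $A_t$ is invertible there; away from $\im(s_0)$ the section $e_t$ is nowhere zero, the contraction data give no invertibility, and one must construct $\tilde a_t$ by hand using a fibre metric and orthogonal projection onto the line spanned by $e_t$. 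Your appeal to the defect ``being controlled by $d[s_t]$-cohomology'' gestures at the first half of this but does not deliver an endomorphism-valued solution, does not invoke transversality, and says nothing about the region off the zero locus; without this lemma (or a substitute) the proof is incomplete. A minor further point: Proposition~\ref{prop:SBSO_uniqueness} concerns MC elements of a graded Lie algebra, so invoking it ``in a $t$-parametric form to fix gauge freedom in $R_t^{(k)}$'' does not apply to the time-dependent derivations you are constructing; the only integration statement needed is Lemma~\ref{lem:integrating_graded_do}.
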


\begin{proof}
	We will show explicitly how to construct a smooth path $\{\square_t\}_{t\in I}\subset\Diff(\hat{L})^{(0,0)}$ integrating to a smooth path $\{\calF_t\}_{t\in\ I}$ of bi-degree $(0,0)$ automorphisms of $\hat L\to\hat M$, so that
	\begin{equation*}
	\calF_0=\id_{\hat L},\quad\frac{d}{dt}\calF_t^\ast=\square_t\circ\calF_t^\ast,
	\end{equation*}
	and moreover the latter satisfies conditions~\eqref{enum:main_result1}--\eqref{enum:main_result3} in the statement.
	
	Fix an arbitrary $\Diff\ell$-connection $\nabla$ in $\pi:NS\to S$.
	By pull-back along $\pi$, we also get a $\Diff L$-connection in $E\to NS$, again denoted by $\nabla$.
	Arguing as in Sections~\ref{subsec:first_relevant_contraction_data} and~\ref{subsec:psi_nabla}, the latter $\nabla$ determines a degree $0$ graded $C^\infty(\hat{M})$-module isomorphism, of bi-degree $(0,0)$,
	\begin{equation*}
	\Diff\hat{L}\simeq C^\infty(\hat{M})\underset{{\scriptscriptstyle C^\infty(M)}}{\otimes}\big(\underbrace{\Gamma((E_L)^\ast)}_{(-1,0)}[1]\oplus\underbrace{\Gamma(E)}_{(0,-1)}[-1]\oplus\underbrace{\Diff L}_{(0,0)}\big)
	\end{equation*}
	Focussing on the ghost/anti-ghost bi-degree $(0,0)$ component, we get, in particular, an isomorphism of $C^\infty(M)$-modules
	\begin{equation*}
	\Diff(\hat{L})^{(0,0)}\simeq\End(E)\oplus\End(E^\ast)\oplus\Diff L.
	\end{equation*}
	
	Consequently, for any path $\{\square_t\}_{t\in I}\subset\Diff(\hat{L})^{(0,0)}$ there exist $\{a_t\}_{t\in I}\subset\End(E)$, $\{b_t\}_{t\in I}\subset\End(E^\ast)$, and $\{\slashed\square_t\}_{t\in I}\subset\Diff L$ uniquely determined by
	\begin{equation*}
	\square_t=\bbD_{a_t}+\bbD_{b_t}+\nabla_{\slashed\square_t}.
	\end{equation*}
	Here we interpret the endomorphisms $a_t,b_t$ as vertical vector fields on $\hat{M}$ via (\ref{eq:A-derivations1}) (with $F = (E_L)^\ast [1] \oplus E[-1]$) and use the arrow $\bbD:\VDer(\hat{M})\to D(\hat{L})$ of Remark \ref{rem:A-derivations} (with $\scrM = \hat{M}$ and $\scrL = \hat{L}$).
	For every $Z\in\VDer(\hat{M})$, the symbol of $\bbD_Z$ is $Z$ and vanishes on $C^\infty(M)$.
	Hence Lemma~\ref{lem:integrating_graded_do} guarantees that the condition on $\{\square_t\}_{t\in I}\subset\Diff(\hat{L})^{(0,0)}$ to integrate to a smooth path $\{\calF_t\}_{t\in I}$ of bi-degree $(0,0)$ automorphisms of $\hat{L}\to\hat{M}$, is equivalent to both
	\begin{itemize}
		\item $\{\nabla_{\slashed\square_t}\}_{t\in I}\subset\Diff(\hat{L})^{(0,0)}$, and integrates to a smooth path $\{\Phi_t\}_{t\in I}$ of bi-degree $(0,0)$ automorphisms of $\hat{L}\to\hat{M}$,
		\item $\{\slashed\square_t\}_{t\in I}\subset\Diff L$ integrates to a smooth path $\{\phi_t\}_{t\in I}$ of automorphisms of $L \to M$.
	\end{itemize}
	Now, it follows that $\{\Phi_t^{-1}(\bbD_{a_t}+\bbD_{b_t})\}_{t\in I}=\{\bbD_{{\tilde a}{}_t}+\bbD_{\tilde b_t}\}_{t\in I}$, for some $\{\tilde a_t\}_{t\in I}\subset\End(E)$, and $\{\tilde b_t\}_{t\in I}\subset\End(E^\ast)$.
	Therefore Lemma~\ref{lem:integrating_graded_do} again implies that $\{\Phi_t^{-1}(\bbD_{a_t}+\bbD_{b_t})\}_{t\in I}$ integrates to $\Psi_t:= S_{{\scriptscriptstyle C^\infty(M)}}((A_t\otimes\id_{L^\ast})\oplus B_t)\otimes\id_L$, for some smooth paths $\{A_t\}_{t\in I}\subset\Gamma(\operatorname{GL}(E))$ and $\{B_t\}_{t\in I}\subset\Gamma(\operatorname{GL}(E^\ast))$, with $A_0=\id_E$ and $B_0=\id_{E^\ast}$.
	Hence we get the following decomposition
	\begin{equation*}
	\calF_t=\Phi_t\circ\Psi_t.
	\end{equation*}
	
	The construction of $\{\Phi_t\}_{t\in I},\{\Psi_t\}_{t\in I}$ guarantees that
	\begin{equation*}
	\Phi_t|_L=\phi_t,\qquad \Phi_t G=G,\qquad
	\Psi_t|_L=\id_L,\qquad \Psi_t|_{\hat{L}^{(1,0)}}=A_t,\qquad \Psi_t|_{\hat{L}^{(0,1)}}=B_t\otimes\id_L.
	\end{equation*}
	Now condition~\eqref{enum:main_result1} can be equivalently rewritten as
	\begin{equation}
	\label{eq:proof:main_resultI}
	\phi_t=F_t,
	\end{equation}
	which completely determines $\Phi_t$, and so also $\slashed\square_t$.
	Condition~\eqref{enum:main_result2} splits into two conditions: $\phi_t\J_0=\J_t\equiv F_t\J_0$, and $\Psi_tG=G$.
	Hence it reduces to $B_t^\ast A_t=\id_E$, for all $t\in I$, and can be equivalently rewritten as
	\begin{equation*}
	\tilde b_t^\ast+\tilde a_t=0.
	\end{equation*}
	Hence $\bbD_{\tilde a_t}+\bbD_{\tilde b_t}=\{\tilde a_t,-\}_G$, and so $\{\Psi_t\}_{t\in I}$ is obtained by integration of
	\begin{equation}
	\label{eq:proof:main_resultII}
	\Psi_0=\id_{\hat{L}},\quad \frac{d}{dt}\Psi_t=\{\tilde a_t,-\}_G\circ\Psi_t.
	\end{equation}
	The latter does not yet tell us anything about $\tilde a_t$.
	However, condition~\eqref{enum:main_result3} becomes $A_t\Omega_E[s_0]=\Phi_t^\ast(\Omega_E[s_t])$, or equivalently
	\begin{equation}
	\label{eq:proof:main_resultIII}
	\frac{d}{dt}(e_t)=\tilde a_t(e_t),
	\end{equation}
	where $e_t:=\Phi_t^\ast(\Omega_E[s_t])$, for all $t\in I$.
	From the choice of $\nabla$ and the construction of $\{\Phi_t\}_{t\in I}$, the smooth path $\{e_t\}_{t\in I}\subset\Gamma(\hat{L})^{(1,0)}=\Gamma(E)$ satisfies the hypothesis of Lemma~\ref{lem:technical_lemma}, and so equation~\eqref{eq:proof:main_resultIII} admits a (non-unique) solution $\{\tilde a_t\}_{t\in I}\subset\Gamma(\hat{L})^{(1,0)}$.
	This concludes the proof.
\end{proof}

Now, we fix the setting for Proposition~\ref{prop:main_result_bis}.
Fix a Jacobi structure $\J$ on $L\to M$.
Suppose we have the following:
\begin{itemize}
	\item a smooth path $\{s_t\}_{t\in I}\subset C(L,\J)$ of coisotropic sections of $(L,\J)$, and
	\item a smooth path $\{(F_t,\underline F_t)\}_{t\in I}$ of automorphisms of $(M,L,\J)$, with $F_0=\id_L$, integrating $\{\lambda_t,-\}_{\J}$, for some smooth path $\{\lambda_t\}_{t\in I}\subset\Gamma(L)$, and $\im s_t=\underline F_t(\im s_0)$.
\end{itemize}
Choose a $\Diff\ell$-connection $\nabla$ in $\pi:M\equiv NS\to S$, and pull it back along $\pi$ to get a $\Diff L$-connection in $E=\pi^\ast(NS)\to NS$ which will be again denoted by $\nabla$.
Fix moreover $\Omega_0$, an $s_0$-BRST charge wrt $\hat{\J}^\nabla$.

Our aim is to find a lifting of $\{F_t\}_{t\in I}\subset\Aut(M,L,\J)$ to a suitable smooth path $\{\calF_t\}_{t\in I}\subset\Aut(\hat{M},\hat{L},\hat{\J}^\nabla)$, with $\calF_0=\id_{\hat{L}}$.
This aim is accomplished by the following.

\begin{proposition}
	\label{prop:main_result_bis}
	There exist a smooth path $\{\hat{\lambda}_t\}_{t\in I}\subset\Gamma(\hat{L})^0$, and a  smooth path $\{\calF_t\}_{t\in I}\subset\Aut(\hat{M},\hat{L},\hat{\J}^\nabla)$, which integrates $\{\hat{\lambda}_t,-\}_{\hat{\J}^\nabla}$, such that
	\begin{enumerate}
		\item\label{enum:main_result_bis1}
		$\{\calF_t\}_{t\in I}\subset\Aut(\hat{M},\hat{L},\hat{\J}^\nabla)$ is a lifting of $\{F_t\}_{t\in I}\subset\Aut(M,L,\J)$, i.e.~$\operatorname{pr}^{(0,0)}\circ\calF_t|_L=F_t$,
		\item\label{enum:main_result_bis2}
		$\Omega_t:=(\calF_t)_\ast\Omega_0$ is an $s_t$-BRST charge wrt $\hat{\J}^\nabla$.
	\end{enumerate}	
\end{proposition}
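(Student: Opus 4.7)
The plan is to first produce a smooth path of bi-degree $(0,0)$ graded automorphisms lifting $\{F_t\}$ with the correct push-forward and BRST-charge properties, then correct it into a smooth path of Jacobi automorphisms of $(\hat M,\hat L,\hat\J^\nabla)$, and finally upgrade this path to a Hamiltonian one by a cohomological argument. Since each $F_t$ is a Jacobi automorphism, $(F_t)_*\J = \J$ is constant in $t$. Applying Proposition~\ref{prop:main_result} with $\J_t\equiv\J$ to the given data yields a smooth path $\{\calF_t'\}_{t\in I}$ of bi-degree $(0,0)$ graded automorphisms of $\hat L\to\hat M$, with $\calF_0' = \id_{\hat L}$ and $\calF_t'|_L = F_t$, such that $\hat\J_t' := (\calF_t')_*\hat\J^\nabla$ is a lifting of $\J$ and $\Omega_t' := (\calF_t')_*\Omega_0$ is an $s_t$-BRST charge wrt $\hat\J_t'$.

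Now both $\hat\J^\nabla$ and $\hat\J_t'$ are liftings of $\J$, so Theorem~\ref{theor:uniqueness_BFV_brackets} furnishes, for each $t$, a degree $0$ graded automorphism $\calG_t$ of $\hat L\to\hat M$ with $(\calG_t)_*\hat\J_t' = \hat\J^\nabla$ satisfying the bi-degree-lowering property $\calG_t(\Omega)-\Omega\in\bigoplus_{k\geq 1}\Gamma(\hat L)^{(p+k,q+k)}$ for every $\Omega\in\Gamma(\hat L)^{(p,q)}$. The step-by-step construction of $\calG_t$ underlying Proposition~\ref{prop:SBSO_uniqueness} and Corollary~\ref{cor:SBSO_uniqueness} is canonical once the contraction data are fixed, so $\calG_t$ may be chosen smoothly in $t$ with $\calG_0 = \id_{\hat L}$. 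Setting $\calF_t := \calG_t\circ\calF_t'$, we obtain $(\calF_t)_*\hat\J^\nabla = (\calG_t)_*\hat\J_t' = \hat\J^\nabla$, whence $\calF_t\in\Aut(\hat M,\hat L,\hat\J^\nabla)$ and $\calF_0 = \id_{\hat L}$. The bi-degree-lowering property of $\calG_t$ immediately gives $\operatorname{pr}^{(0,0)}\calF_t|_L = F_t$, so condition (1) of the Proposition holds, and $\operatorname{pr}^{(1,0)}\Omega_t = \operatorname{pr}^{(1,0)}(\calG_t)_*\Omega_t' = \operatorname{pr}^{(1,0)}\Omega_t' = \Omega_E[s_t]$, where $\Omega_t := (\calF_t)_*\Omega_0$, so $\Omega_t$ is an $s_t$-BRST charge wrt $\hat\J^\nabla$, i.e.~condition (2).

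The main obstacle remains: exhibiting a smooth path $\{\hat\lambda_t\}\subset\Gamma(\hat L)^0$ whose Hamiltonian flow is $\calF_t$. The infinitesimal generator $X_t$, defined by $\tfrac{d}{dt}\calF_t^* = X_t\circ\calF_t^*$, is a $d_{\hat\J^\nabla}$-closed degree $0$ derivation of $\hat L$. By condition (1) and the chain rule, $p(X_t) = \{\lambda_t,-\}_{\J} = d_{\J}\lambda_t$ is $d_{\J}$-exact. Proposition~\ref{prop:lifting_CE_cohom} supplies a canonical isomorphism $H^\bullet_{CE}(\hat M,\hat L,\hat\J^\nabla)\simeq H^\bullet_{CE}(M,L,\J)$, whose inverse is induced at the chain level by the Homological Perturbation Lemma deformation $p'$ of $p$; since the class of $p'(X_t)$ agrees with that of $p(X_t)$ in $H^\bullet_{CE}(M,L,\J)$ (both representing the same cocycle up to a $d_{\J}$-coboundary), we conclude $[X_t] = 0$, so $X_t$ is $d_{\hat\J^\nabla}$-exact. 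A smooth primitive is provided explicitly by $\hat\lambda_t := \lambda_t + H_\nabla'(X_t - d_{\hat\J^\nabla}\lambda_t)$, where $H_\nabla'$ is the deformed homotopy constructed in the proof of Proposition~\ref{prop:lifting_CE_cohom}; the identity $d_{\hat\J^\nabla}(\hat\lambda_t) = X_t$ follows from the contraction relation $[d_{\hat\J^\nabla},H_\nabla'] = i_\nabla'p' - \id$, combined with the closedness of $X_t - d_{\hat\J^\nabla}\lambda_t$ and the vanishing of its class under $p'$. Since $d_{\hat\J^\nabla}(\hat\lambda_t) = \{\hat\lambda_t,-\}_{\hat\J^\nabla}$, the path $\calF_t$ integrates $\{\hat\lambda_t,-\}_{\hat\J^\nabla}$, completing the proof.
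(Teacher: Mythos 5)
Your route is genuinely different from the paper's. The paper does not first build a path of Jacobi automorphisms and then prove a posteriori that its generator is Hamiltonian; instead it constructs the generator as a Hamiltonian derivation from the outset: since $F_t$ integrates $\{\lambda_t,-\}_{\J}$ and $\nabla$ is pulled back from $S$, one has the key identity $\nabla_{\slashed\square_t}=\{\lambda_t,-\}_{i_\nabla\J}$, so the generator $\{\tilde a_t,-\}_G+\nabla_{\slashed\square_t}$ produced by the scheme of Proposition~\ref{prop:main_result} is exactly the bi-degree $(0,0)$ part of $\{\lambda_t+\tilde a_t,-\}_{\hat\J^\nabla}$, and $\hat\lambda_t:=\lambda_t+\tilde a_t$ does the job directly via Lemma~\ref{lem:integrating_graded_do}. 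Your approach (compose with a smooth family $\calG_t$ from Theorem~\ref{theor:uniqueness_BFV_brackets}, then upgrade to Hamiltonian cohomologically) is closer to what the paper does for the \emph{Jacobi}-equivalence statement (Theorem~\ref{theor:Mod_Jac}), where no Hamiltonian upgrade is needed; the paper's direct construction avoids precisely the extra cohomological step you need.

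That extra step contains the two real problems. First, your explicit primitive is wrong: with $Y:=X_t-d_{\hat\J^\nabla}\lambda_t$ one has $d_{\hat\J^\nabla}H_\nabla'(Y)=i_\nabla'p'(Y)-Y$ (since $Y$ is closed), and $p'(Y)=p'(X_t)-d_{\J}p'(\lambda_t)=0$, so your $\hat\lambda_t=\lambda_t+H_\nabla'(Y)$ satisfies $d_{\hat\J^\nabla}\hat\lambda_t=2d_{\hat\J^\nabla}\lambda_t-X_t$, not $X_t$. The sign must be flipped: $\hat\lambda_t=\lambda_t-H_\nabla'\bigl(X_t-d_{\hat\J^\nabla}\lambda_t\bigr)$ (equivalently $i_\nabla'\lambda_t-H_\nabla'(X_t)$) is a correct primitive. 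Second, the assertion that $p'(X_t)$ and $p(X_t)$ represent the same class is left unjustified — note that $p$ is \emph{not} a chain map for $(d_{\hat\J^\nabla},d_{\J})$, so $[p(X_t)]$ is not even a priori meaningful. What saves the argument is that the generator of $\calG_t\circ\calF_t'$ has components only in bi-degrees $(m,m)$ with $m\geq 0$ (the generator of $\calF_t'$ sits in bi-degree $(0,0)$ and $\calG_t$ only shifts bi-degrees by $(k,k)$, $k\geq 1$); since $\delta H_\nabla$ raises bi-degree by at least $(1,1)$, this gives $p'(X_t)=p(X_t)$ on the nose, and the same bi-degree bookkeeping is what legitimizes your chain-rule claim $p(X_t)=\{\lambda_t,-\}_{\J}$. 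These points need to be spelled out, as does the smooth dependence of $\calG_t$ on $t$ with $\calG_0=\id_{\hat L}$ (a finite composition of exponentials built from $H_\nabla(\hat\J^\nabla-\hat\J_t')$, which vanishes at $t=0$). With those repairs your argument goes through, but as written the final formula fails and the central cohomological identification is asserted rather than proved.
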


\begin{proof}
	Setting $\slashed\square_t=\{\lambda_t,-\}_{\J}$, from the local coordinate expression of $i_\nabla$ in Remark~\ref{rem:local_expression_i_nabla}, it is straightforward to get
	\begin{equation}
	\label{eq:main_result_bis1}
	\nabla_{\slashed\square_t}=\{\lambda_t,-\}_{i_\nabla\J}.
	\end{equation}
	The scheme used in the proof of Proposition~\ref{prop:main_result} applies as well in the current special situation, where $\hat{\J}^0=\hat{\J}^\nabla$, and $\J_t=\J$, and guarantees the existence of a smooth path $\{\tilde a_t\}_{t\in I}\subset\Gamma(\hat{L})^{(1,1)}=\Gamma(\End(E))$ and a smooth path $\{\tilde\calF_t\}_{t\in I}$ of bi-degree $(0,0)$ automorphisms of $\hat{L} \to \hat{M}$, with
	\begin{equation}
	\label{eq:main_result_bis2}
	\tilde\calF_0=\id_{\hat{L}},\quad\frac{d}{dt}\tilde\calF_t^\ast=(\{\tilde a_t,-\}_G+\nabla_{\slashed\square_t})\circ\tilde\calF_t^\ast,
	\end{equation}
	such that, in particular, $\tilde\calF_t$ is a lifting of $F_t$, and $\operatorname{pr}^{(1,0)}(\tilde\calF_t)_\ast\Omega_0=\Omega_E[s_t]$.
	Define a smooth path $\{\hat{\lambda}_t\}_{t\in I}\subset\Gamma(\hat{L})^0$ by setting $\hat{\lambda}_t:=\lambda_t+\tilde a_t$.
	Because of Lemma~\ref{lem:integrating_graded_do}, from~\eqref{eq:main_result_bis1} and~\eqref{eq:main_result_bis2} it follows that $\{\hat{\lambda}_t,-\}_{\hat{\J}^\nabla}$ integrates to a smooth path $\{\calF_t\}_{t\in I}$, with $\calF_0=\id_{\hat{L}}$, satifying conditions~\eqref{enum:main_result_bis1} and ~\eqref{enum:main_result_bis2}.
\end{proof}

\section*{Acknowledgements}
HVL thanks LV for inviting her to Salerno in 2014 where a part of this project has been discussed.
HVL is partially supported by RVO:67985840.
AGT is grateful to Janusz~Grabowski for having mentored him during the WCMCS Ph.D.~internship at IMPAN, where a part of this work was done.
The stay of AGT at IMPAN has been partially supported by the Warsaw Center of Mathematics and Computer Science and by PRIN 2010-11 ``Variet\`a reali e complesse: geometria, topologia e analisi armonica'' of University of Florence.
AGT is partially supported by GNSAGA of INdAM, Italy. LV is member of the GNSAGA of INdAM, Italy
\small

\end{document}